\newtheorem{lemma}{Lemma}%[section]
\newtheorem{theorem}[lemma]{Theorem}%[section]
\newtheorem{proposition}[lemma]{Proposition}%[section]
\newtheorem*{defin}{Definition}%[section]
\newtheorem{remark}[lemma]{Remark}%[section]
\newtheorem*{rem}{Remark}
\newtheorem*{notation}{Notation}%[section]
\newcommand{\eps}{\varepsilon}  % Epsilon
\newcommand{\Del}{\Delta}	
\newcommand{\alp}{\alpha}
\newcommand{\bet}{\beta}
\newcommand{\del}{\delta}
\newcommand{\om}{\omega}
\newcommand{\ro}{\varrho}
\newcommand{\dis}{\displaystyle}
\newcommand{\calA}{\mathcal{A}}
\newcommand{\sfA}{\mathsf{A}}
\newcommand{\sfB}{\mathsf{B}}
\newcommand{\calH}{\mathcal{H}}
\newcommand{\sfK}{\mathsf{K}}
\newcommand{\calL}{\mathcal{L}}
\newcommand{\sfM}{\mathsf{M}}
\newcommand{\frakq}{\mathfrak{q}}
\newcommand{\calT}{\mathcal{T}}
\newcommand{\sfU}{\mathsf{U}}
\newcommand{\calV}{\mathcal{V}}
\newcommand{\sfV}{\mathsf{V}}
\newcommand{\sfW}{\mathsf{W}}
\newcommand{\sfX}{\mathsf{X}}
\newcommand{\sfY}{\mathsf{Y}}
\newcommand{\bbC}{\mathbb{C}}
\newcommand{\bbN}{\mathbb{N}}
\newcommand{\bbR}{\mathbb{R}}
\newcommand{\bbRz}{\mathbb{R}_{\neq0}}
\newcommand{\bbRzu}{\mathbb{R}_{\neq0,1}}
\newcommand{\bbZ}{\mathbb{Z}}
\newcommand{\ot}{\otimes}
\newcommand{\ssi}{\Leftrightarrow}
\newcommand{\rar}{\Rightarrow}
\newcommand{\lac }{\left\{ }
\newcommand{\rac }{\right\} }
\newcommand{\pa}[1]{\left( #1 \right)}
\newcommand{\brkt }[1]{\left\{ #1 \right\} }
\newcommand{\udl}{\underline}
\numberwithin{equation}{section}
\DeclareMathOperator{\Hom}{Hom}	
\DeclareMathOperator{\End}{End}	
\DeclareMathOperator{\Ker}{Ker}
\DeclareMathOperator{\Id}{Id}
\DeclareMathOperator{\PGL}{PGL}
\begin{document}
\date{}
%%%%% Title - section
    \title{$\mathfrak{sl}_3$ Matrix Dilogarithm as a $6j$-Symbol}
     \author{Mucyo Karemera%GAETAN BAKALLI
     \\
    %\vspace{0.5cm} \\
         %\vspace{1cm}\\
      \small{ Department of Mathematics and Statistics,
        Auburn University}%        USA
    }
\maketitle
\begin{abstract}
\noindent
We construct quantum invariants of 3-manifolds based on a $\mathfrak{sl}_3$ matrix dilogarithm proposed by Kashaev. This matrix dilogarithm is an $\mathfrak{sl}_3$ analogue of the (cyclic) quantum dilogarithm used to define Kashaev's invariants as well as Baseilhac and Benedetti's quantum hyperbolic invariants. 
In this article, we show that the $\mathfrak{sl}_3$ matrix dilogarithm can be considered as a 6$j$-symbol associated to modules of a quantum group related to $U_q(\mathfrak{sl}_3)$. Moreover, we show that the quantum invariants aforementioned allow to define a $\mathfrak{sl}_3$ version of Kashaev's invariants, opening a route to define a $\mathfrak{sl}_3$ version of Baseilhac and Benedetti's quantum hyperbolic invariants.

\end{abstract}%

%%%%%%%%%%%%%%%%%%%%%%%%%%%%%%%%%%%%%%%%%%%%%%%%%%%%%%%%%%%%%%%%%%%%%%%%%%%%%%%%%%%%%%%%%%%%%%%%%%%%
%%%%%%%%%%%%%%%%%%%%%%%%%%%%%%%%%%%%%%%%%%%%%%%%%%%%%%%%%%%%%%%%%%%%%%%%%%%%%%%%%%%%%%%%%%%%%%%%%%%%
%\newpage

\selectlanguage{english} %pour que le texte soit en anglais

%\tableofcontents

%  \chapter{Construction of a  \texorpdfstring{$\hat{\Psi}$}{\unichar{"03A8}\unichar{"0302}}-system}

%\textasciicircum\unichar{"03a8} \unichar{"02C6}
%\unichar{"0305} unicode pour overline pour le psi bar..... 

%%%%%%%%%%%%%%%%%%%%%%%%%%%%%%%%%%%%%%%%%%%%%%%%%%%%%%%%%%%%%%%%%%%%%%%%%%%
%new section
%%%%%%%%%%%%%%%%%%%%%%%%%%%%%%%%%%%%%%%%%%%%%%%%%%%%%%%%%%%%%%%%%%%%%%%%%%%

\section{Introduction}

Turaev and Viro first observed in \cite{turaev1992state} that the category of representations of the quantum group $U_q(\mathfrak{sl}_2)$ gives rise to topological invariants of 3-manifolds. The invariants are obtained as state sums on triangulated 3-manifolds. The key ingredients of the state sums are the $6j$-symbols.  The $6j$-symbols are naturally associated with combinatorial tetrahedra. In the case of finite dimensional representation theory of the angular momentum, as well as of its $q$-deformation, the numerical values of $6j$-symbols are specified by six irreducible representations associated with the edges of the tetrahedron. In particular, they realize explicitly the tetrahedral symmetries. In the Turaev-Viro theory, a specification of the deformation parameter to roots of unity allows to separate a sector in the representation category with a finite set of irreducible representations, and the 3-manifold invariant is obtained by summing over all labelings of the edges of a triangulation by representations from this finite set.

A related construction to the Turaev-Viro invariants was made by Kashaev in \cite{kashaev1994quantum} and \cite{kashaev1995link}. Kashaev's invariants are defined as state sums on triangulations of the pair $(M,L)$, where $M$ is a 3-manifold and $L\subset M$ is a link, using charged versions of $6j$-symbols associated to certain modules of a Borel subalgebra of $U_q(\mathfrak{sl}_2)$. These invariants led him to the volume conjecture \cite{kashaev1997hyperbolic}. The particular feature of his $6j$-symbols is that they are constructed using the quantum dilogarithm function defined by Faddeev and Kashaev in \cite{faddeev1994quantum}. Moreover, these $6j$-symbols depend on a continuous parameter and the tetrahedral symmetries are realized implicitly through non-trivial transformation matrices. %
In parallel, Kashaev proposed in \cite{kashaev1999matrix}  matrix generalizations of the Rogers dilogarithm associated to associative algebras. He showed that the quantum dilogarithm is an example of such matrix dilogarithm associated to a Borel subalgebra of $U_q(\mathfrak{sl}_2)$ and he exhibited a matrix dilogarithm that is associated to a quantum group related to $U_q(\mathfrak{sl}_3)$. We refer to the latter as the $\mathfrak{sl}_3$ matrix dilogarithm.

Kashaev's invariants have been generalized in two ways. A topological generalization has been accomplished by Baseilhac and Benedetti in \cite{baseilhac2004quantum} where they define quantum hyperbolic invariants of 3-manifolds also based on the quantum dilogarithm. They interpret the complexified continuous parameters entering the $6j$-symbols as shape parameters characterizing isometry classes of ideal tetrahedra. % in Thurston's gluing equations. 
This enables them to construct quantum invariants which are functions on deformation varieties of cusped 3-manifolds. 

This topological framework has been used, to some extent, in the other generalization of Geer, Kashaev and Turaev in \cite{geer2012tetrahedral}. This construction is a categorical generalization of the Kashaev-Baseilhac-Benedetti construction. It introduces the notion of a $\hat{\Psi}$-system in a monoidal abelian category which provides a general framework for charged $6j$-symbols. However, in this general context, the dependence of $6j$-symbols on continuous variables does not necessarily reduce only to one variable. Thus the interpretation in terms of the shape of hyperbolic ideal tetrahedra is not evident.

In this article, we use the framework proposed by Geer, Kashaev and Turaev in \cite{geer2012tetrahedral}, to construct quantum invariants of 3-manifolds based on the $\mathfrak{sl}_3$ matrix dilogarithm defined by Kashaev in \cite{kashaev1999matrix}. 
We also show that the $6j$-symbols involved in this construction, defined using this matrix dilogarithm, are similar to the one used in the Kashaev-Baseilhac-Benedetti theory in the sense that they also only depend on one parameter, allowing interpretation in terms of shape parameters of ideal hyperbolic tetrahedra. These results allow us to define a $\mathfrak{sl}_3$ analogue of Kashaev's invariants and
leads us to expect that a $\mathfrak{sl}_3$ version of Baseilhac and Benedetti's quantum hyperbolic invariants can be constructed. %the associated invariants will have interesting asymptotic behavior related to hyperbolic geometry.

% The main result of this paper is the construction of a $\hat{\Psi}$-system in the category of modules of a quantum group related to $U_q(\mathfrak{sl}_3)$, named $\calA_{\om,t}$, leading to 3-manifolds invariants. %In other words, we show that Conjecture 42 in \cite[]{geer2012tetrahedral} holds true for a quantum group related to $U_q(\mathfrak{sl}_3)$. 
% Moreover, the $6j$-symbols involved in this construction are similar to the one used in the Kashaev-Baseilhac-Benedetti theory in the sense that they {\color{red}are constructed using a generalized .... of the quantum dilogarithm intro in \cite{kashaev1999matrix}. As a consequence}, these $6j$-symbols also only depend on one parameter allowing interpretation in terms of shape parameters of ideal hyperbolic tetrahedra. These results allow one to expect that the associated invariants will have interesting asymptotic behavior related to hyperbolic geometry.

The paper is organized as follows. In Section \ref{hopf.algebra.A_w,t.red.cycl.mod}, we define the Hopf algebra $\calA_{\om,t}$ and consider a family of $\calA_{\om,t}$-modules. In Section \ref{sec.psi.sys.in.Awt}, we show that this family allows us to define a $\Psi$-system in the category of $\calA_{\om,t}$-modules. The $\mathfrak{sl}_3$ matrix dilogarithm play a key role in this construction. In Sections \ref{sec.op.in.H} we define operators that will allow us to extend the $\Psi$-system into a $\hat\Psi$-system and in Section \ref{sec.6j.symb}, we defined the $6j$-symbols using
(again) the $\mathfrak{sl}_3$ matrix dilogarithm. Finally, in Section \ref{sec.hat.psi.sys.in.Awt}, we prove the existence of the quantum invariants and we define the associated state sum in Section \ref{hyperbolic.invariant}. We also show, in Section \ref{hyperbolic.invariant}, how these invariants can be viewed as an $\mathfrak{sl}_3$ analogue of Kashaev's invariants. 

% (Theorem \ref{main.thm}).%, i.e. that the $\Psi$-system, previously defined, can be extended to a $\hat\Psi$-system such that \cite[Conjecture 42]{geer2012tetrahedral} holds true.    %In Section \ref{sec:state:sum}, our main result (Theorem \ref{main.thm}) is stated and the family of 3-manifolds invariants is defined. In Section \ref{sec:constr:psi}, using the quantum dilogarithm function, we construct a $\Psi$-system. Finally, in Section \ref{sec:constr:hat:psi}, we extend it to a $\hat{\Psi}$-system and prove Theorem \ref{main.thm}. 

%%%%%%%%%%%%%%%%%%%%%%%%%%%%%%
%new section
%%%%%%%%%%%%%%%%%%%%%%%%%%%%%%

%\section[Construction of a $\Psi$-system]{Construction of a \texorpdfstring{$\Psi$}{\unichar{"03A8}}-system}\label{sec:constr:psi}

%\section{Construction of a \texorpdfstring{$\Psi$}{\unichar{"03A8}}-system}\label{sec:constr:psi}

% \subsection[The Hopf algebra $\calA_{\om,t}$ and its reduced cyclic modules]{The Hopf algebra \texorpdfstring{$\calA_{\om,t}$}{A\_\{\unichar{"03C9},t\}} and its reduced cyclic modules}\label{hopf.algebra.A_w,t.red.cycl.mod}

\section{The Hopf algebra \texorpdfstring{$\calA_{\om,t}$}{A\_\{\unichar{"03C9},t\}} and its reduced cyclic modules}\label{hopf.algebra.A_w,t.red.cycl.mod}
The two-paramerter quantum groups $U_{r,s}(\mathfrak{sl}_n)$ have been introduced by Takeuchi in \cite{takeuchi1990two}. In \cite{benkart2004two}, $r$ and $s$ are non zero elements in a filed $\mathbb{K}$ such that $r\neq s$ and $U_{r,s}(\mathfrak{sl}_n)$ is defined as the unital associative algebra over $\mathbb{K}$ generated by elements $k_i^{\pm 1}, (k'_i)^{\pm 1}, e_i, f_i $ $(1\le i< n)$ which satisfy the  following relations  

\begin{itemize}

\item[(R1)] The $k_i^{\pm 1}, (k'_j)^{\pm 1}$ all commutes with one another and $k_ik_i^{-1}=k'_j(k'_j)^{-1}=1$, \\

\item[(R2)] $k_ie_j=r^{\del_{i,j}-\del_{i,j+1}}s^{\del_{i+1,j}-\del_{i,j}}e_jk_i$ \ \ \text{and} \ \ $k_if_j=r^{\del_{i,j+1}-\del_{i,j}}s^{\del_{i,j}-\del_{i+1,j}}f_jk_i$, \\

\item[(R3)] $k'_ie_j=r^{\del_{i+1,j}-\del_{i,j}}s^{\del_{i,j}-\del_{i,j+1}}e_jk'_i$ \ \ \text{and} \ \ $k'_if_j=r^{\del_{i,j}-\del_{i+1,j}}s^{\del_{i,j+1}-\del_{i,j}}f_jk'_i$, \\ 

\item[(R4)] $[e_i,f_j]=\dfrac{\del_{i,j}}{r-s}(k_i-k'_i)$, \\

\item[(R5)] $[e_i,e_j]=[f_i,f_j]=0$ if $|i-j|>1$, \\

\item[(R6)] $e_i^2e_{i+1} = (r+s)e_ie_{i+1}e_i  +rse_{i+1}e_i^2$,
			\\
		 $e_ie_{i+1}^2 = (r+s)e_{i+1}e_ie_{i+1} + rse_{i+1}^2e_i$, 	\\

\item[(R7)]  $f_i^2f_{i+1} = (r^{-1}+s^{-1})f_if_{i+1}f_i  +(rs)^{-1}f_{i+1}f_i^2$,
			\\
		 $f_if_{i+1}^2 = (r^{-1}+s^{-1})f_{i+1}f_if_{i+1} + (rs)^{-1}f_{i+1}^2f_i$,

\end{itemize}
where $\delta$ is Kronecker's delta.
The algebra $U_{r,s}(\mathfrak{sl}_n)$ is a Hopf algebra with coproduct $\Del$ defined by 
\begin{align*}
\Del\left(k_i\right) &= k_i \ot k_i   &\Del(e_i)  &= e_i\ot 1+k_i\ot e_i, 
\\
\Del\left(k'_i\right) &= k'_i \ot k'_i,  &\Del(f_i)  &= 1\ot f_i+f_i\ot k'_i,
\end{align*}
% %
and counit $\eps$ and antipode $\gamma$ defined by
\begin{align*}
     \eps(k_i)=\eps(k'_i) &= 1, & \gamma(k_i) &= k_i^{-1}, & \gamma(k'_i) &=  (k'_i)^{-1},
\\
  \eps(e_i)=\eps(f_i) &= 0, & \gamma(e_i) &= -k_i^{-1}e_i, & \gamma(f_i) &= -f_i(k'_i)^{-1}. 
\end{align*}

When $r=q$ and $s=q^{-1}$, $U_{r,s}(\mathfrak{sl}_n)$ modulo the Hopf ideal generated by the elements $k'_i-k_i^{-1}, 1\le i <n$, is $U_{q}(\mathfrak{sl}_n)$.

\smallskip
The algebra $\calA_{\om,t}$ that we are about to introduce, is obtained as a certain quotient of the Borel subalgebra $BU_{r,s}(\mathfrak{sl}_3)$ of $U_{r,s}(\mathfrak{sl}_3)$ generated by $k_i^{\pm 1}, e_i$, for a certain choice of $r,s\in\bbC$.

% \subsection[The Hopf algebra $\calA_{\om,t}$]{The Hopf algebra \texorpdfstring{$\calA_{\om,t}$}{A\_\{\unichar{"03C9},t\}}}
\subsection{The Hopf algebra \texorpdfstring{$\calA_{\om,t}$}{A\_\{\unichar{"03C9},t\}}}

We fix an integer $t$, a natural number $N\notin3\bbN$ which divides $t^2+t+1$ and a primitive $N$-th root of unity $\om$. Remark that $N$ and $t$ are relatively prime and $N$ is odd. From the latter, we can define $\om^{\frac{1}{2}}=\om^{\frac{N+1}{2}}$. In the sequel, for any $a\in\frac{1}{2}\bbZ$, we will simply write $\om^a$ instead of $\om^{a(N+1)}$ and we will also write $\bbZ_N$ instead of $\bbZ/N\bbZ$.

\smallskip
The Hopf algebra $\calA_{\om,t}$ is defined as the quotient of $BU_{\om^t,\om^{t+1}}(\mathfrak{sl}_3)$ by the Hopf ideal generated by $k_1^N-1$ and $k_2-k_1^t$. Therefore, the generators $k_1,e_1$ and $e_2$ satisfy the following relations
\begin{equation}\begin{array}{c}\label{rel.Awt}
  k_1^N = 1, \ \ k_1e_1 = \om^{-1}e_1k_1, \ \ k_1e_2 =  \om^{t+1}e_2k_1,
\\
  e_1^2e_2 = (\om^t+\om^{t+1})e_1e_2e_1  +\om^{2t+1}e_2e_1^2 , 
\\
  e_1e_2^2 = (\om^t+\om^{t+1})e_2e_1e_2 + \om^{2t+1}e_2^2e_1.  
\end{array}\end{equation}
Note that given our assumptions on $N$ and $t$, one can deduce that all the powers of $\om$ in \eqref{rel.Awt} have a multiplicative inverse in $\bbZ_N$.
The coproduct $\Del : \calA_{\om,t} \to \calA_{\om,t}\ot \calA_{\om,t}$ is given by
\begin{align}\label{Del.Awt}
 \Del(k_1)  = k_1\ot k_1,  \ \  \Del(e_1)  = e_1\ot 1+k_1\ot e_1, \ \ \Del(e_2)  = e_2\ot 1+k_1^t\ot e_2,
\end{align}
the counit $\eps : \calA_{\om,t} \to \bbC$ by
\begin{align}\label{counit.Awt}
 \eps(k_1)  = 1, \ \ \eps(e_1) = 0, \ \ \eps(e_2)  = 0, 
\end{align}
and the antipode $\gamma : \calA_{\om,t} \to \calA_{\om,t}$ by
\begin{align}\label{antip.Awt}
\gamma(k_1) = k_1^{-1}, \ \ \gamma(e_1)  = -k_1^{-1}e_1, \ \ \gamma(e_2)  = -k_1^{-t}e_2. 
\end{align}

% \subsection[Reduced cyclic $\calA_{\om,t}$-modules]{Reduced cyclic \texorpdfstring{$\calA_{\om,t}$}{A\_\{\unichar{"03C9},t\}}-modules}
\subsection{Reduced cyclic \texorpdfstring{$\calA_{\om,t}$}{A\_\{\unichar{"03C9},t\}}-modules}
In what follows, all the vector spaces are finite dimensional $\bbC$-vector spaces. We will also use the following notations:
%
%\begin{notation}

\begin{enumerate}
  %  \item $\bbZ_N = \bbZ / N\bbZ$,
   \item $\calV = \bbC^N\ot\bbC^N$ and\, $\calA = \End(\calV)$,
    \item $\left\{v_\alp\right\}_{\alp\in\bbZ_N}$ is the canonical basis of $\bbC^N$, where the index $\alpha$ taking its values in the set $\{0,1,\dots,N-1\}$ is interpreted as an element of $\bbZ_N$,
   
    %\item The Kronecker symbol is denoted $\delta_{i,j}$
    \item the canonical basis $\{v_\alpha\ot v_\beta\}_{\alpha,\beta\in\bbZ_N}$ of $\calV$ is denoted $\{v_{(\alpha,\beta)}\}_{(\alpha,\beta)\in\bbZ_N^2}$, and we do similarly with any other basis $\left\{u_\alp\ot u_\beta\right\}_{\alp,\beta\in\bbZ_N}$ of $\calV$.
    
   %  \item $\bbRz = \bbR\backslash\{0\}$ and\, $\bbRzu = \bbR\backslash\{0,1\}$.
\end{enumerate}
%\end{notation}

%Let $\left\{v_\alp\right\}_{\alp\in\bbZ_N}$ be the canonical basis of $\bbC^N$, where the index $\alpha$ taking its values in the set $\{0,1,\dots,N-1\}$ is interpreted as an element of $\bbZ_N$. 
Let $\sfX,\sfY\in\End(\bbC^N)$ be the invertible operators defined by
\begin{align}\label{commXY}
\sfX v_\alp=\om^{-\alp}v_\alp, \ \ \sfY v_\alp=v_{\alp+1}.
\end{align}
\begin{lemma}\label{algebra.A}
The algebra $\End(\bbC^N)$ is generated by $\brkt{\sfX,\sfY}$. In particular, $\calA$ is generated by $\sfX_1,\sfX_2,\sfY_1,\sfY_2$ where
\begin{align*}
\sfX_1=\sfX\ot\Id_{\bbC^N}, \ \ \sfY_1=\sfY\ot\Id_{\bbC^N}, \ \ \sfX_2=\Id_{\bbC^N}\ot \sfX, \ \ \sfY_2=\Id_{\bbC^N}\ot \sfY.
\end{align*}  
\end{lemma}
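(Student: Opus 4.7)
The plan is to show that the matrix units $E_{\alpha,\beta} \in \End(\bbC^N)$, defined by $E_{\alpha,\beta}v_\gamma = \del_{\beta,\gamma}v_\alpha$, all lie in the subalgebra generated by $\{\sfX,\sfY\}$. Since they form a basis of $\End(\bbC^N)$, this will suffice for the first part.

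First I would exploit the spectral theory of $\sfX$. By \eqref{commXY}, $\sfX$ is diagonal in the basis $\{v_\alp\}$ with eigenvalues $\om^{-\alp}$ for $\alp\in\bbZ_N$. Since $\om$ is a primitive $N$-th root of unity, these $N$ eigenvalues are pairwise distinct, so $\sfX$ is diagonalizable with simple spectrum. Lagrange interpolation then produces, for each $\alp\in\bbZ_N$, a polynomial $P_\alp\in\bbC[x]$ such that $P_\alp(\om^{-\bet})=\del_{\alp,\bet}$; evaluating at $\sfX$ yields $P_\alp(\sfX)=E_{\alp,\alp}$. Thus every diagonal matrix unit is a polynomial in $\sfX$.

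Next I would use $\sfY$ to shift between rows. A direct computation from \eqref{commXY} gives $\sfY^{k} v_\bet = v_{\bet+k}$, so $\sfY^{\alp-\bet}E_{\bet,\bet}v_\gam = \del_{\bet,\gam}v_\alp = E_{\alp,\bet}v_\gam$, i.e.\ $E_{\alp,\bet} = \sfY^{\alp-\bet}P_\bet(\sfX)$. Consequently every $E_{\alp,\bet}$ belongs to the subalgebra generated by $\sfX$ and $\sfY$, which therefore equals $\End(\bbC^N)$.

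For the second assertion I would use the canonical identification $\calA = \End(\bbC^N\ot\bbC^N) \cong \End(\bbC^N)\ot\End(\bbC^N)$, under which $\sfX_i,\sfY_i$ correspond to $\sfX,\sfY$ acting on the $i$-th tensor factor. Any element of $\calA$ is a linear combination of elementary tensors $A\ot B = (A\ot\Id)(\Id\ot B)$, and by the first part $A$ and $B$ are polynomials in $\sfX,\sfY$; applying these polynomials to $(\sfX_1,\sfY_1)$ and $(\sfX_2,\sfY_2)$ respectively expresses $A\ot B$ as a polynomial in $\sfX_1,\sfX_2,\sfY_1,\sfY_2$. Hence these four operators generate $\calA$.

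I do not anticipate a real obstacle: the argument is standard linear algebra once one observes that the eigenvalues of $\sfX$ are distinct. The one point that genuinely uses the hypotheses of the setup is the primitivity of $\om$, without which $\sfX$ would fail to be cyclic and Lagrange interpolation would not recover all spectral projectors.
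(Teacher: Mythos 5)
Your proposal is correct and follows essentially the same route as the paper: you realize the diagonal matrix units as polynomials in $\sfX$ (your Lagrange interpolation is exactly what the paper's explicit average $E_{i,i}=\tfrac{1}{N}\sum_{j}\om^{ij}\sfX^j$ computes) and then use powers of $\sfY$ to produce the off-diagonal units, with the tensor-factor argument for $\calA$ being the same standard observation the paper leaves implicit. Your direct identity $E_{\alp,\bet}=\sfY^{\alp-\bet}P_\bet(\sfX)$ is a slight streamlining of the paper's stepwise product of subdiagonal units, but it is not a genuinely different argument.
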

\begin{proof}
We consider the canonical basis $\left\{v_\alp\right\}_{\alp\in\bbZ_N}$ of $\bbC^N$ and its dual basis $\left\{\bar{v}_\alp\right\}_{\alp\in\bbZ_N}$. The set $\brkt{E_{i,j}}_{i,j\in\bbZ_N}\subset \End(\bbC^N)$ defined by  
\begin{align*}
\left< \bar{v}_\bet | E_{i,j} | v_\alp\right>= \del_{\alp,j}\del_{\bet,i}, \ \ \forall i,j,\alp,\bet\in\bbZ_N,
\end{align*}
is a linear basis of $\End(\bbC^N)$. It is easy to check that we have the following relation
\begin{align}\label{rel.Eij}
E_{i,j}E_{k,l}=\del_{j,k}E_{i,l}, \ \ \forall i,j,k,l\in\bbZ_N.
\end{align}

First, we compute, for $i,\alp,\bet\in\bbZ_N$,
\begin{align*}
\frac{1}{N}\sum_{j\in\bbZ_N}\left<\bar{v}_\bet | \om^{ij}\sfX^j | v_\alp\right> &=  \frac{1}{N}\sum_{j\in\bbZ_N} \om^{ij}\om^{-\alp j}\del_{\alp,\bet} 
=  \frac{1}{N}\sum_{j\in\bbZ_N} \om^{j(i-\alp)}\del_{\alp,\bet}  
\\
&=  \frac{1}{N}N\del_{i,\alp}\del_{\alp,\bet} = \left<\bar{v}_\bet | E_{i,i} | v_\alp\right>.
\end{align*}
Hence, for all $i\in\bbZ_N$ we have $\displaystyle E_{i,i}=\frac{1}{N}\sum_{j\in\bbZ_N}\om^{ij}\sfX^j$.

Now, using relation \eqref{rel.Eij} and since $\sfY=\sum_{j\in\bbZ_N}E_{j+1,j}$, we can compute, for all $j,k\in\bbZ_{N}$
\begin{align*}
E_{k,k}\sfY E_{i,i} =  E_{k,k}\sum_{j\in\bbZ_N}E_{j+1,j}E_{i,i} = E_{k,k}\sum_{i\in\bbZ_N}\del_{i,j}E_{j+1,i}= E_{k,k}E_{i+1,i} 
=\del_{k,i+1}E_{i+1,i}.
\end{align*} 
Hence, for all $i\in\bbZ_N$, $E_{i+1,i}=E_{i+1,i+1}\sfY E_{i,i}$. 

Finally, for any $i,j\in\bbZ_N$ we have 
\begin{align*}
E_{i,i-1}E_{i-1,i-2}\cdots E_{i-(i-j)+1,i-(i-j)}=E_{i,i-1} E_{i-1,i-2}\cdots E_{j+1,j}=E_{i,j}.
\end{align*}
Therefore, the operators $\sfX$ and $\sfY$ generate $\End(\bbC^N)$. 
\end{proof}
We now consider the following operators in $\mathcal{A}$
\begin{align}\label{def.el.calA}
X=\sfX_1, \ \ Y=\sfY_1, \ \ U=\sfY_1^{-t}\sfX_1^{t}\sfX_2^{-t}, \ \ V=\sfY_1^{-t}\sfX_1^{t+1}\sfY_2.
\end{align}
We can easily see that we have $X^N=Y^N=U^N=V^N=\Id_\calV$. Moreover, by Lemma \ref{algebra.A}, $\mathcal{A}$ is generated by $\brkt{X,Y,U,V}$ since 
\begin{align*}
\mathsf{X}_1=X, \ \ \mathsf{Y}_1=Y, \ \ \sfX_2=(X^{-t}Y^tU)^{t+1}, \ \ \sfY_2=X^{-t-1}Y^tV.
\end{align*}

The operators in \eqref{def.el.calA} are used in the following Proposition to define a $\calA_{\om,t}$-module structure on $\calV$. In the sequel, $\bbR\backslash\{0\}$ will be denoted by $\bbRz$.

\begin{proposition}\label{prop.Awt.module.structure}
For any $p\in\bbR_{\neq0}$, let $V_p$ be the space $\calV$ provided with a $\calA_{\om,t}$-module structure $\pi_p:\calA_{\om,t}\to\End(V_p)$ defined by
 \begin{equation}\begin{split}\label{act.Awt}
	\begin{array}{l}
  	\pi_p(k_1)  =  X, \ \ \pi_p(e_1)  =  p^\frac{1}{N}Y,
	\\
	 \pi_p(e_2)  =  \left(\frac{1}{2}p\right)^\frac{1}{N}\pa{U+V}Y^{-1}, 
  	 \end{array}
  \end{split} \end{equation}
where the $N$th root is chosen to be the unique real root. Furthermore, $V_p$ is cyclic, i.e. the operators $\pi_p(k_1),\pi_p(e_1)$ and $\pi_p(e_2)$ are invertible. 
\end{proposition}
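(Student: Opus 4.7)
The strategy is to reduce everything to the single Weyl commutation $\sfX\sfY = \om^{-1}\sfY\sfX$ (which reads off \eqref{commXY}) and then to check each defining relation of \eqref{rel.Awt} by direct substitution. Extending that relation tensor-factor-wise to $\calA$ via $\sfX^a\sfY^b = \om^{-ab}\sfY^b\sfX^a$ and substituting the definitions \eqref{def.el.calA}, I would first derive the commutation table
\begin{align*}
XY &= \om^{-1}YX, & XU &= \om^{t}UX, & XV &= \om^{t}VX,\\
YU &= \om^{t}UY, & YV &= \om^{t+1}VY, & UV &= VU,
\end{align*}
together with $X^N=Y^N=U^N=V^N=\Id_\calV$. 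The most delicate entry is the commutativity of $U$ and $V$: both products rearrange to $\om^{t^2+t}\sfY_1^{-2t}\sfX_1^{2t+1}\sfY_2\sfX_2^{-t}$ after careful bookkeeping of $\om$-prefactors.

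With this table, each relation of \eqref{rel.Awt} becomes a short verification. The relation $\pi_p(k_1)^N=\Id$ is immediate from $X^N=\Id$; the Heisenberg-type relations between $\pi_p(k_1)$ and $\pi_p(e_i)$ reduce to $XY = \om^{-1}YX$ and to $X(U+V)Y^{-1} = \om^{t+1}(U+V)Y^{-1}X$, the latter following by combining $X(U+V)=\om^{t}(U+V)X$ with $XY^{-1}=\om Y^{-1}X$. For the two Serre-type cubic relations, I would expand each side in the basis $U^aV^bY^c$ and compare coefficients. Commutativity of $U$ and $V$ reduces $(U+V)^2$ to the three monomials $U^2,UV,V^2$, and the relations $YU^a = \om^{ta}U^aY$, $YV^b = \om^{(t+1)b}V^bY$ make the prefactors readable off by inspection, so that both cubic identities collapse to elementary equalities of Laurent polynomials in $\om$.

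Finally, cyclicity amounts to the invertibility of $X$, $Y$ and $(U+V)Y^{-1}$. The first two are immediate; for the third it suffices to show that $U+V$ is invertible. Since $U$ and $V$ commute and each satisfies $W^N=\Id$, they are simultaneously diagonalizable over $\bbC$, with joint eigenvalues $(\lambda,\mu)$ both $N$-th roots of unity, and the corresponding eigenvalue $\lambda+\mu$ of $U+V$ vanishes only when $-1$ is itself an $N$-th root of unity, i.e.\ only when $N$ is even. Since the standing assumptions on $N$ force it to be odd (as observed just before the introduction of $\om^{1/2}$), $U+V$ is invertible. The main obstacle is really the two Serre relations, where the commutativity of $U$ and $V$ is what allows the $\om$-power bookkeeping to close up; everything else is routine.
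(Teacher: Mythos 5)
Your proposal is correct, and on the homomorphism part it is essentially the check the paper compresses into ``one easily sees that $\pi_p$ is an algebra morphism'' after noting $\sfX\sfY=\om^{-1}\sfY\sfX$: your commutation table (including $UV=VU$ and $X^N=Y^N=U^N=V^N=\Id_\calV$) is accurate and is exactly what makes that verification go through. One caveat when you expand the cubic relations: what actually holds is $\pi_p(e_1)^2\pi_p(e_2)-(\om^t+\om^{t+1})\pi_p(e_1)\pi_p(e_2)\pi_p(e_1)+\om^{2t+1}\pi_p(e_2)\pi_p(e_1)^2=0$ (and similarly for the second one), i.e.\ the two-parameter Serre relations with a minus sign on the $\om^{2t+1}$-term; this minus-sign form is also what the later relations \eqref{rel.uv} force, so the plus sign printed in \eqref{rel.Awt} is a sign slip and your coefficient comparison should be run against the corrected form, where it does collapse as you describe. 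Where you genuinely diverge from the paper is the invertibility of $\pi_p(e_2)$: the paper applies the $q$-binomial formula to the $\om^{-1}$-commuting operators $UY^{-1}$ and $VY^{-1}$ to obtain $\pi_p(e_2)^N=p\,\Id_\calV$ (formula \eqref{e2N}), while you simultaneously diagonalize the commuting operators $U,V$ and use that $N$ is odd, so no sum $\lambda+\mu$ of two $N$-th roots of unity can vanish. Your spectral argument is valid and more elementary, but the paper's computation buys more than invertibility: the explicit identity $\pi_p(e_2)^N=p\,\Id_\calV$ is reused later (for instance in the proof of Lemma \ref{lemma.tens}), so it is worth recording even if you establish cyclicity your way.
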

\begin{defin}
We call $V_p$ a {\it reduced cyclic $\calA_{\om,t}$-module of parameter} $p$. %and the basis $\lac v_\alp\rac_{\alp\in\bbZ_N^2}$ is called the \red{{\it standard basis}}.
\end{defin}
\begin{rem}
The reason why we have chosen to use the word ``reduced'' to name the cyclic $\calA_{\om,t}$-modules $V_p$ is because the set $\brkt{V_p}_{p\in\bbRz}$ is included in a much larger set of cyclic $\calA_{\om,t}$-modules. Indeed, for any $\mathfrak{p}=(p_1,p_2,p_3)\in\bbC^3$ such that $p_1\in\bbC^*$ and $p_2\notin\left\{-\om^ip_3|i\in\bbZ_N\right\}$, the space $\calV$ can be provided with a cyclic $\calA_{\om,t}$-module structure $\pi_{\mathfrak{p}}:\calA_{\om,t}\to\End(\calV)$, defined by
\begin{equation*}\begin{split}
  	\begin{array}{l} 
      	\pi_\mathfrak{p}(k_1)  =  X, \ \ \pi_\mathfrak{p}(e_1)  =  p_1Y
	\\
      	\pi_\mathfrak{p}(e_2)  =  p_1^{-t-1}(p_2U+p_3V)Y^{-1}.
  	 \end{array} 
  \end{split} \end{equation*}
The set of reduced cyclic $\calA_{\om,t}$-module is determined by triples of the form $\mathfrak{p}=\pa{p^\frac{1}{N},\pa{\frac{1}{2}p^2}^\frac{1}{N},\pa{\frac{1}{2}p^2}^\frac{1}{N}}\in\bbR^3$ where $p\in\bbRz$.
\end{rem}
Before the proof, we recall that for any variables $x$ and $y$ satisfying the relation $yx=\om xy$, the {\it $q$-binomial formula} (see e.g. \cite[Proposition IV.2.2]{kassel1995quantum}) implies that  
\begin{align*}%\label{q.bin.particular}
			(x+y)^N=x^N+y^N,
\end{align*} 
since $\om$ is a $N$-th root of unity. This formula will be used in the sequel.

\begin{proof}[Proof of Proposition \ref{prop.Awt.module.structure}]
Using equalities \eqref{commXY}, one can check that 
\begin{align*}
\sfX\sfY=\om^{-1}\sfY\sfX.
\end{align*} 
Then, using equalities \eqref{def.el.calA}, one easily sees that $\pi_p$ is an algebra morphism.

The operators $\pi_p(k_1)$ and $\pi_p(e_1)$ are clearly invertible since $p\in\bbRz$ and $\sfX,\sfY\in\End(\bbC^N)$ are invertible operators. For the invertibility of $\pi_p(e_2)$ we compute $\pi_q(e_2)^N$. Since 
\begin{align*}
UY^{-1}\cdot VY^{-1}=\om^{-1}VY^{-1}\cdot UY^{-1}
\end{align*}
we can compute $\pi_q(e_2)^N$ using the $q$-binomial formula. Indeed, we have
\begin{align}\label{e2N}\begin{split}
\pi_p(e_2)^N  &= \tfrac{1}{2}p\left[UY^{-1}+VY^{-1}\right]^N 
= \tfrac{1}{2}p\left[\left(UY^{-1}\right)^N+\left(VY^{-1}\right)^N\right] 
\\
&= \tfrac{1}{2}p\left[\Id_{\calV}+\Id_{\calV}\right] = p\Id_{\calV}.
\end{split}
\end{align}
We conclude that the operator $\pi_p(e_2)$ is invertible since $p\in\bbRz$. 
\end{proof}

In order to study the reduced cyclic $\calA_{\om,t}$-modules, we need to introduce particular elements of $\calA_{\om,t}$. First, we define $a_1,a_2\in \calA_{\om,t}$ by
\begin{align}\label{def.a}\begin{split}
 \om^t(1-\om)a_1 & = e_1e_2-\om^{t+1}e_2e_1,
 \\
 (1-\om)a_2 & = e_2e_1-\om^{-t}e_1e_2.
\end{split}\end{align}
Using \eqref{rel.Awt} and \eqref{def.a} one can check that 
\begin{equation}\label{rel.uv}\begin{array}{ll}
k_1a_1 = \om^ta_1k_1,\qquad   & k_1a_2 = \om^ta_2k_1, \\
 e_1a_1 = \om^ta_1e_1,  & e_1a_2 = \om^{t+1}a_2e_1, \\
 e_2a_1 = \om^{-t}a_1e_2,  & e_2a_2 = \om^{-(t+1)}a_2e_2, \\
 a_1a_2 = a_2a_1, & e_2e_1 = a_1+a_2.
\end{array}\end{equation}

and with \eqref{Del.Awt} we have
\begin{align}\label{Del.a}\begin{split}
\Del(a_1) = a_1\ot 1 + k_1^{t+1}\ot a_1 + k_1^te_1\ot e_2, 
\\
\Del(a_2) = a_2\ot 1+ k_1^{t+1}\ot a_2 + e_2k_1\ot e_1.
\end{split}\end{align}
Next, we consider elements $c_1,c_2\in \calA_{\om,t}$ defined by 
\begin{align}\label{def.xy}
c_1 = a_1k_1^{-t}e_1^t, \ \ c_2= a_2k_1^{-(t+1)}e_1^t.
\end{align}
By using \eqref{rel.Awt} and \eqref{rel.uv}, we deduce the following relations
\begin{equation*}\begin{array}{llc}
k_1c_1 = c_1k_1, & k_1c_2 = c_2k_1,
\\
e_1c_1 = c_1e_1, & e_1c_2 = c_2e_1, & c_1c_2=\om^tc_2c_1
\end{array}\end{equation*}

Finally, for a reduced cyclic $\calA_{\om,t}$-module $V_p$, using \eqref{act.Awt}, \eqref{def.a} and \eqref{def.xy}, we have 
\begin{equation}\label{pi.a}\begin{array}{ll}
 \pi_p(a_1)  =  \left(\frac{1}{2}p^2\right)^\frac{1}{N}U,  &  \pi_p(a_2)  =  \left(\frac{1}{2}p^2\right)^\frac{1}{N}V, \\
 \\
 \pi_p(c_1) = \left(\frac{1}{2}p^{t+2}\right)^\frac{1}{N}\sfX_2^{-t},  & \pi_p(c_2) = \left(\frac{1}{2}p^{t+2}\right)^\frac{1}{N}\sfY_2.
\end{array}\end{equation}
Note that $\lac v_\alp\rac_{\alp\in\bbZ_N^2}$ is a basis of common eigenvectors of $\pi_p(k_1)$ and $\pi_p(c_1)$.

\begin{lemma}\label{equiv.rep} A reduced cyclic $\calA_{\om,t}$-module $V_p$ is simple and $V_q$ is equivalent to $V_p$ only if $p=q$. 
\end{lemma}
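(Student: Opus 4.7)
The plan is to split the statement into two independent parts—the simplicity of $V_p$ and the inequivalence of $V_p$ and $V_q$ for $p\neq q$—and to treat each using the common eigenbasis of $\pi_p(k_1)$ and $\pi_p(c_1)$ highlighted in the remark immediately preceding the lemma, together with a scalar-operator computation analogous to \eqref{e2N}.

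For simplicity, I would first read off from \eqref{act.Awt} and \eqref{pi.a} that $\pi_p(k_1)$ and $\pi_p(c_1)$ act diagonally in the basis $\brkt{v_{(\alp,\bet)}}_{(\alp,\bet)\in\bbZ_N^2}$, with eigenvalues $\om^{-\alp}$ and $\pa{\tfrac12 p^{t+2}}^{1/N}\om^{t\bet}$ respectively. The assumption $N\mid t^2+t+1$ forces $\gcd(t,N)=1$: any common divisor of $t$ and $N$ divides $t^2+t$ and $t^2+t+1$, hence divides $1$. In particular $\bet\mapsto t\bet$ is a bijection on $\bbZ_N$, so the $N^2$ eigenvalue pairs are pairwise distinct and every joint eigenspace is one-dimensional. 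Any nonzero $\calA_{\om,t}$-submodule $W$ of $V_p$ is in particular stable under $\pi_p(k_1)$ and $\pi_p(c_1)$, so $W$ decomposes as a sum of joint eigenspaces and must contain at least one basis vector $v_{(\alp_0,\bet_0)}$. I would then apply $\pi_p(e_1)=p^{1/N}\sfY_1$ and $\pi_p(c_2)=\pa{\tfrac12 p^{t+2}}^{1/N}\sfY_2$, which act on the basis as index shifts in the first and second tensor factors respectively, to sweep out every $v_{(\alp,\bet)}$ up to nonzero scalars starting from $v_{(\alp_0,\bet_0)}$. This gives $W=V_p$.

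For the inequivalence, I would mimic the computation \eqref{e2N} but for $e_1$: since $\sfY^N=\Id$, one immediately obtains $\pi_p(e_1)^N=(p^{1/N}\sfY_1)^N=p\,\Id_\calV$. A nonzero $\calA_{\om,t}$-linear map $\phi:V_p\to V_q$ must satisfy $\phi\circ \pi_p(e_1)^N=\pi_q(e_1)^N\circ\phi$, which collapses to $p\,\phi=q\,\phi$, forcing $p=q$.

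I do not anticipate a serious obstacle; the argument is essentially a bookkeeping of diagonal and shift actions on the canonical basis of $\calV$. The only point requiring care is the divisibility check producing $\gcd(t,N)=1$, which is what ensures that the joint spectrum of $\pi_p(k_1)$ and $\pi_p(c_1)$ is simple and hence that any submodule is generated by a single basis vector.
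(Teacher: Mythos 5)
Your argument is correct, and its second half (the inequivalence) is exactly the paper's: both compute $\pi_p(e_1)^N=p\,\Id_\calV$ and let an intertwiner force $p=q$. For simplicity, however, you take a more elementary, self-contained route than the paper. The paper observes that $\pi_p(k_1)$, $\pi_p(c_1^{t+1})$, $\pi_p(e_1)$, $\pi_p(c_2)$ are, up to nonzero scalars, the operators $\sfX_1,\sfX_2,\sfY_1,\sfY_2$ (here $t(t+1)\equiv-1 \bmod N$ is used to turn $\sfX_2^{-t(t+1)}$ into $\sfX_2$), and then invokes Lemma \ref{algebra.A}: the image of $\pi_p$ is all of $\calA=\End(\calV)$, so the only invariant subspaces are $\lac 0\rac$ and $V_p$. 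You instead avoid Lemma \ref{algebra.A} and argue directly on the canonical basis: the commuting diagonal operators $\pi_p(k_1)$ and $\pi_p(c_1)$ have pairwise distinct joint eigenvalue pairs $\pa{\om^{-\alp},\,s\,\om^{t\bet}}$ because $\gcd(t,N)=1$ (which indeed follows from $N\mid t^2+t+1$, and is also noted in the paper), so a nonzero submodule contains some $v_{(\alp_0,\bet_0)}$, and the shift operators $\pi_p(e_1)\propto\sfY_1$ and $\pi_p(c_2)\propto\sfY_2$ then sweep out the whole basis. Both arguments rest on the same four elements $k_1,e_1,c_1$ (or a power of it), $c_2$ and the identities \eqref{act.Awt} and \eqref{pi.a}; the paper's version is shorter because the generation statement is outsourced to Lemma \ref{algebra.A}, while yours re-proves the needed special case by an explicit weight-space and shift computation, at the cost of a little bookkeeping but with no hidden gaps.
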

\begin{proof}
The simplicity is clear. Indeed, since 
\begin{equation*}\begin{array}{ll}
  \pi_p(k_1) = \sfX_1,    \ \ &\pi_p(c_1^{t+1}) = \left(\frac{1}{2}p^{t+2}\right)^\frac{t+1}{N}\sfX_2, \\
 \\
\pi_p(e_1)=p^\frac{1}{N}\sfY_1,  \ \ &\pi_p(c_2) = \left(\frac{1}{2}p^{t+2}\right)^\frac{1}{N}\sfY_2,
\end{array}\end{equation*}
by Lemma \ref{algebra.A}, the only invariant subspaces of $V_p$ are $\lac 0\rac$ and $V_p$. %\red{il suffit de montrer que $\left<X_1,Y_1,X_2,Y_2\right>=\calA $... et en fait cela montre de plus que $\calA$ est engendre par 3 generateurs seulement, i.e., les images de $k_1,e_1,e_2$...}

If $V_p\cong V_q$ then, by definition, there is an isomorphism $S: V_p\to V_q$ such that for all $a\in \calA_{\om,t}$, $S\pi_p(a)=\pi_q(a)S$. In particular, we have 
\begin{align*}
S(p\Id_\calV)=S\pi_p(e_1^N)=\pi_q(e_1^N)S=(q\Id_\calV)S\rar p=q.
\end{align*}
\end{proof}

Let $\Hom_{\calA_{\om,t}}(V_p,V_q)$ be the set of morphisms of $\calA_{\om,t}$-modules between $V_p$ and $V_q$ i.e., the set of linear maps $f:V_p\to V_q$ such that for all $a\in A_{\om,t}$, we have
\begin{align*} 
 \pi_q(a) f=f \pi_p(a).
\end{align*}
Lemma \ref{equiv.rep} and Schur's Lemma imply that 
\begin{enumerate}

	\item if $p\neq q$ then $\Hom_{\calA_{\om,t}}(V_p,V_q)=0$,
	
	\item if $p=q$ then $\Hom_{\calA_{\om,t}}(V_p,V_p)=\End_{\calA_{\om,t}}(V_p)=\bbC\Id_\calV$.
	
\end{enumerate}
Hence, for any $f\in\End_{\calA_{\om,t}}(V_p)$, there is a unique $c\in\bbC$ such that $f=c\Id_\calV$. This number is denoted $\left<f\right>$ in the sequel.
%

% \subsection[Tensor product of reduced cyclic $\calA_{\om,t}$-modules]{Tensor product of reduced cyclic \texorpdfstring{$\calA_{\om,t}$}{A\_\{\unichar{"03C9},t\}}-modules}
\subsection{Tensor product of reduced cyclic \texorpdfstring{$\calA_{\om,t}$}{A\_\{\unichar{"03C9},t\}}-modules}
The tensor product $V_p\ot V_q$ is provided with a $\calA_{\om,t}$-module structure through 
\begin{align*}
(\pi_p\ot \pi_q)\Del : \calA_{\om,t}\to\End(V_p\ot V_q).
\end{align*} 

\begin{defin}
An {\it admissible pair} $(V_p,V_q)$ is a pair of reduced cyclic $\calA_{\om,t}$-modules such that $p\not =-q$. In such case, we will also say that the pair $(p,q)$ is admissible. 
\end{defin}

The reason we are interested in admissible pairs is that their tensor product decomposes as a direct sum of reduced cyclic $\calA_{\om,t}$-modules. 

\begin{lemma}\label{lemma.tens}
We have 
 \begin{align*}
(\pi_p\ot \pi_q)\Del(e_1)^N & =(p+q)\Id_\calV\ot\Id_\calV, \\
(\pi_p\ot \pi_q)\Del(c_1)^N & =(\pi_p\ot \pi_q)\Del(c_2)^N=\tfrac{1}{2}(p+q)^{t+2}\Id_\calV\ot\Id_\calV.
\end{align*} 
\end{lemma}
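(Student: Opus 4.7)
My plan is to compute each $N$-th power by invoking the $q$-binomial identity $(x+y)^N = x^N + y^N$ for any two variables with $yx = \om xy$, and its immediate extension to a finite sum of pairwise $\om$-commuting terms. Write $\Pi := (\pi_p\ot\pi_q)\Del$ throughout.

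For $\Pi(e_1)^N$: from \eqref{Del.Awt} and \eqref{act.Awt}, $\Pi(e_1) = p^{1/N}\sfY_1\ot\Id_\calV + \sfX_1\ot q^{1/N}\sfY_1$. The relation $\sfX\sfY=\om^{-1}\sfY\sfX$ makes the two summands $\om$-commute, so the $q$-binomial formula gives
\begin{align*}
\Pi(e_1)^N = p(\sfY_1^N\ot\Id_\calV) + q(\sfX_1^N\ot\sfY_1^N) = (p+q)\Id_\calV\ot\Id_\calV,
\end{align*}
using $\sfX^N = \sfY^N = \Id_{\bbC^N}$.

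For $\Pi(c_i)^N$, the strategy is to reduce to $N$-th powers already understood. Using the commutation relations \eqref{rel.uv} together with $e_1 k_1^{-1} = \om^{-1} k_1^{-1} e_1$ and the congruence $t^2 + t \equiv -1 \pmod N$, a straightforward induction on $n$ yields
\begin{align*}
c_1^n = \om^{-t^2\binom{n}{2}}a_1^n k_1^{-nt}e_1^{nt}, \qquad c_2^n = \om^{\binom{n}{2}}a_2^n k_1^{-n(t+1)}e_1^{nt}
\end{align*}
inside $\calA_{\om,t}$. Specializing to $n=N$, the phases vanish because $\om^N = 1$ and $N \mid N(N-1)/2$ (as $N$ is odd), and $k_1^N = 1$; this yields the algebraic identities $c_i^N = a_i^N\,e_1^{Nt}$ in $\calA_{\om,t}$.

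It remains to establish $\Pi(a_i)^N = \tfrac12(p+q)^2\Id_\calV\ot\Id_\calV$, since then $\Pi(c_i^N) = \Pi(a_i)^N\,\Pi(e_1)^{tN} = \tfrac{1}{2}(p+q)^{t+2}\Id_\calV\ot\Id_\calV$. From \eqref{Del.a}, decompose $\Pi(a_1) = T_1+T_2+T_3$ with $T_1 = \pi_p(a_1)\ot 1$, $T_2 = \pi_p(k_1^{t+1})\ot\pi_q(a_1)$, $T_3 = \pi_p(k_1^t e_1)\ot\pi_q(e_2)$, and analogously for $a_2$. Using \eqref{rel.Awt}, \eqref{rel.uv} and $t^2+t+1\equiv 0\pmod N$, one checks that after the reordering $(T_1,T_3,T_2)$ the three summands satisfy $X_iX_j = \om X_jX_i$ for $i<j$, so the generalized $q$-binomial identity gives $\Pi(a_1)^N = T_1^N + T_2^N + T_3^N$. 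Each term is scalar: $T_1^N = \tfrac{p^2}{2}\Id_\calV\ot\Id_\calV$ (from $\pi_p(a_1)^N=\tfrac{p^2}{2}\Id_\calV$ via $U^N=\Id_\calV$), $T_2^N = \tfrac{q^2}{2}\Id_\calV\ot\Id_\calV$ analogously, and $T_3^N = pq\,\Id_\calV\ot\Id_\calV$, using the induction argument above to reduce $(k_1^t e_1)^N$ to $e_1^N$ and then \eqref{e2N} to compute $\pi_q(e_2)^N = q\Id_\calV$. Summing gives $\Pi(a_1)^N = \tfrac12(p+q)^2\Id_\calV\ot\Id_\calV$, and the case of $a_2$ is identical with $V^N=\Id_\calV$ in place of $U^N=\Id_\calV$.

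The main obstacle is the pairwise $\om$-commutation verification for the three summands of $\Pi(a_i)$: this requires combining the algebra relations with the arithmetic identity $t^2+t+1\equiv 0\pmod N$ at several points, and choosing the right reordering so that all three commutators have the same sign $\om$.
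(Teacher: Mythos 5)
Your proposal is correct and follows essentially the same route as the paper: the $q$-binomial formula for $\Pi(e_1)^N$, the reduction $c_i^N=a_i^Ne_1^{Nt}$ (the paper writes $\Del(c_i)^N=\Del(a_i^Nk_1^{-N(\cdot)}e_1^{Nt})$ with the phases cancelling implicitly), and the three-term $q$-binomial applied to the decomposition of $\Del(a_i)$ from \eqref{Del.a}, using $t^2+t+1\equiv 0 \pmod N$. Your explicit check of the pairwise $\om$-commutation and the choice of ordering simply spells out what the paper compresses into ``using \eqref{rel.uv} again, one can use the $q$-binomial formula since $t$ and $t+1$ are invertible in $\bbZ_N$.''
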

\begin{proof}
Let $(V_p,V_q)$ be a pair of reduced cyclic $\calA_{\om,t}$-modules. 
For $(\pi_p\ot \pi_q)\Del(e_1)$ we compute
\begin{align*}
(\pi_p\ot\pi_q)\Del(e_1)^N  &= (\pi_p\ot\pi_q)\pa{\Del(e_1)^N} = (\pi_p\ot\pi_q)\pa{(e_1\ot 1+ k_1\ot e_1)^N} 
\\
&= (\pi_p\ot\pi_q)\pa{e_1^N\ot 1+ k_1^N\ot e_1^N}= pY^{N}\ot\Id_\calV+qX^{N}\ot Y^N 
\\
&=(p+q)\Id_\calV\ot\Id_\calV
\end{align*}
where we used the $q$-binomial formula for the third equality. 

For  $(\pi_p\ot \pi_q)\Del(c_1)$ we have
\begin{align*}
(\pi_p\ot\pi_q)\Del(c_1)^N  &= (\pi_p\ot\pi_q)\Del\left((a_1k_1^{-t}e_1^{t})^N\right) = (\pi_p\ot\pi_q)\Del\left(a_1^Nk_1^{-Nt}e_1^{Nt}\right) 
\\
& = (\pi_p\ot\pi_q)\Del(a_1^N)(\pi_p\ot\pi_q)\Del(k_1^N)^{-t}(\pi_p\ot\pi_q)\Del(e_1^N)^t  
\\ 
& = (\pi_p\ot\pi_q)\Del(a_1)^N(\Id_\calV\ot\Id_\calV)^{-t}(p+q)^t\Id_\calV\ot\Id_\calV 
\\
& = (\pi_p\ot\pi_q)\Del(a_1)^N(p+q)^t\Id_\calV\ot\Id_\calV
\end{align*}
where we used \eqref{rel.Awt} and \eqref{rel.uv} for the second equality. Using \eqref{rel.uv} again, one can use the $q$-binomial formula to compute $(\pi_p\ot\pi_q)\Del(a_1)^N$ since $t$ and $t+1$ are invertible in $\bbZ_N$. Using \eqref{Del.a}, we have
\begin{align*}
(\pi_p\ot \pi_q)\Del(a_1)^N  &= (\pi_p\ot \pi_q)\pa{ a_1\ot 1 + k_1^{t+1}\ot a_1 + k_1^te_1\ot e_2 }^N 
\\
& = (\pi_p\ot \pi_q)\pa{ a_1^N\ot 1 + k_1^{N(t+1)}\ot a_1^N + k_1^{Nt}e_1^N\ot e_2^N }
 \\
& =   \tfrac{1}{2}p^2U^N\ot\Id_\calV+\tfrac{1}{2}q^2\Id_\calV\ot U^N+pq\Id_\calV\ot\Id_\calV 
 \\ 
&  = \tfrac{1}{2}(p+q)^2\Id_\calV\ot\Id_\calV			      
\end{align*}
where we used \eqref{rel.Awt}, \eqref{rel.uv} and the $q$-binomial formula for the second equality, \eqref{e2N} and \eqref{pi.a} for the third one and \eqref{rel.Awt} and \eqref{act.Awt} for the last one. Hence we have 
\begin{align*}
(\pi_p\ot\pi_q)\Del(c_1)^N  = \tfrac{1}{2}(p+q)^{t+2}\Id_\calV\ot\Id_\calV.
\end{align*}

Finally the computation of $(\pi_p\ot\pi_q)\Del(c_2)^N$ is similar to the previous one. We have 
\begin{align*}
(\pi_p\ot\pi_q)\Del(c_2)^N  &= (\pi_p\ot\pi_q)\Del\left( (a_2k_1^{-(t+1)}e_1^{t})^N\right) = (\pi_p\ot\pi_q)\Del\left(a_2^Nk_1^{-N(t+1)}e_1^{Nt}\right) 
\\
& = (\pi_p\ot\pi_q)\Del(a_2)^N(p+q)^t\Id_\calV\ot\Id_\calV 
 \\
& =   (\pi_p\ot\pi_q)(a_2\ot 1+ k_1^{t+1}\ot a_2 + e_2k_1\ot e_1)^N(p+q)^t\Id_\calV\ot\Id_\calV 
 \\
& = (\pi_p\ot\pi_q)(a_2^N\ot 1+ k_1^{N(t+1)}\ot a_2^N + e_2^Nk_1^N\ot e_1^N)(p+q)^t\Id_\calV\ot\Id_\calV
 \\	
& =  \tfrac{1}{2}(p+q)^{(t+2)}\Id_\calV\ot\Id_\calV.			
\end{align*}

\end{proof}

\begin{proposition}\label{prod.tens}
Let $(V_p,V_q)$ be an admissible pair. Then the $\calA_{\om,t}$-module $V_p\ot V_q$ is equivalent to the direct sum of $N^2$ copies of the reduced cyclic $\calA_{\om,t}$-module $V_{p+q}$.%with parameter $p+q$.
\end{proposition}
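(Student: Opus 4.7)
The plan is to show that the representation $(\pi_p\otimes\pi_q)\Delta$ on $V_p\otimes V_q$ factors through the simple quotient $\End(V_{p+q})\cong\Mat_{N^2}(\bbC)$ of $\calA_{\om,t}$, and then to conclude via the standard Wedderburn fact that every finite-dimensional module over $\Mat_{N^2}(\bbC)$ is a direct sum of copies of the defining representation. The admissibility $p+q\neq 0$ is crucial, since it is exactly what makes $V_{p+q}$ a well-defined reduced cyclic module and the normalization constants below meaningful.

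First, on $V_p\otimes V_q$ I would introduce the normalized operators
\begin{align*}
X' &:= (\pi_p\otimes\pi_q)\Delta(k_1), & Y' &:= (p+q)^{-1/N}(\pi_p\otimes\pi_q)\Delta(e_1),\\
U' &:= \left(\tfrac{1}{2}(p+q)^2\right)^{-1/N}(\pi_p\otimes\pi_q)\Delta(a_1), & V' &:= \left(\tfrac{1}{2}(p+q)^2\right)^{-1/N}(\pi_p\otimes\pi_q)\Delta(a_2).
\end{align*}
By Lemma \ref{lemma.tens}, together with the intermediate identity $(\pi_p\otimes\pi_q)\Delta(a_1)^N=\tfrac{1}{2}(p+q)^2\Id$ appearing inside its proof, we have $(X')^N=(Y')^N=(U')^N=(V')^N=\Id$. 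Because $\Delta$ is an algebra homomorphism, the relations \eqref{rel.Awt} and \eqref{rel.uv} among $k_1,e_1,a_1,a_2$ immediately imply that $X',Y',U',V'$ satisfy the very same pairwise $\om$-commutation relations as the operators $X,Y,U,V\in\calA=\End(V_{p+q})$ do (for instance $X'Y'=\om^{-1}Y'X'$, $X'U'=\om^{t}U'X'$, $Y'V'=\om^{t+1}V'Y'$, $U'V'=V'U'$, and so on).

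By Lemma \ref{algebra.A} the operators $X,Y,U,V$ generate $\calA\cong\Mat_{N^2}(\bbC)$. The pairwise commutations together with the $N$-th power relations let us reduce every word in $X,Y,U,V$ to a member of the spanning family $\{X^aY^bU^cV^d\mid 0\le a,b,c,d<N\}$, and a dimension count ($\dim\calA=N^4$) forces this family to be a basis of $\calA$. Hence the assignment $X^aY^bU^cV^d\mapsto (X')^a(Y')^b(U')^c(V')^d$ defines a unique algebra homomorphism $\Phi:\calA\to\End(V_p\otimes V_q)$, which is injective since $\calA$ is simple and $\Phi(\Id)=\Id\neq 0$. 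Moreover, by construction $\Phi\circ\pi_{p+q}$ and $(\pi_p\otimes\pi_q)\Delta$ coincide on $k_1,e_1,a_1,a_2$; together with the identity $e_2=(a_1+a_2)e_1^{-1}$ valid after applying the representations (since $e_1$ is invertible on both modules by \eqref{act.Awt} and Lemma \ref{lemma.tens}), this forces $\Phi\circ\pi_{p+q}=(\pi_p\otimes\pi_q)\Delta$ on all of $\calA_{\om,t}$.

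Consequently, $V_p\otimes V_q$ becomes a faithful module over $\Phi(\calA)\cong\Mat_{N^2}(\bbC)$ which extends its $\calA_{\om,t}$-module structure, and hence decomposes as a direct sum of copies of the defining representation $V_{p+q}$; the multiplicity is $\dim(V_p\otimes V_q)/\dim V_{p+q}=N^4/N^2=N^2$, as claimed. I expect the main obstacle to be the middle step, namely the careful bookkeeping of all the $\om$-powers and the normalization constants when transferring each commutation relation from $\calA_{\om,t}$ through $\Delta$, and the verification that the monomials $\{X^aY^bU^cV^d\}$ genuinely form a basis of $\calA$ (rather than merely a spanning set), which is what forces $\Phi$ to be well-defined and injective.
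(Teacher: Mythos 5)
Your proposal is correct, and it proves the proposition by a genuinely different route than the paper. You normalize $(\pi_p\otimes\pi_q)\Delta$ on $k_1,e_1,a_1,a_2$, check (via Lemma \ref{lemma.tens} and the identity $(\pi_p\otimes\pi_q)\Delta(a_i)^N=\tfrac12(p+q)^2\,\Id$ established inside its proof) that the normalized operators satisfy the same pairwise $\om$-commutation relations and $N$-th power relations as $X,Y,U,V$, and deduce that the whole tensor-product representation factors through $\End(V_{p+q})\cong\Mat_{N^2}(\bbC)$; Wedderburn theory then gives a decomposition into $N^4/N^2=N^2$ copies of the defining module, whose pullback along $\pi_{p+q}$ is exactly $V_{p+q}$. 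The paper instead works concretely inside $V_p\ot V_q$: using the same Lemma \ref{lemma.tens} it simultaneously diagonalizes $(\pi_p\otimes\pi_q)\Delta(k_1)$ and $(\pi_p\otimes\pi_q)\Delta(c_1)$, shows each joint eigenspace $W_{(\alp,\bet)}$ has dimension $N^2$, and, starting from a basis of $W_{(0,0)}$, generates $N^2$ explicit irreducible submodules by applying $\Delta(e_1)$ and $\Delta(c_2)$ as shift operators, identifying each as $V_{p+q}$ through Lemma \ref{equiv.rep}. Your route is more structural: it makes the role of admissibility transparent (the normalizations need $p+q\neq0$) and avoids eigenvector bookkeeping, at the price of the presentation argument you rightly flag — that the monomials $X^aY^bU^cV^d$ form a basis of $\calA$ (spanning by the commutation and $N$-th power relations, then the dimension count $N^4$), and that products of monomials are $\om$-power multiples of monomials with structure constants determined solely by the commutation data, which is what makes $\Phi$ well defined and multiplicative; equivalently one can route this through the algebra presented by these relations, which has dimension at most $N^4$ and surjects onto $\calA$. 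Your closing step, canceling the invertible image of $e_1$ in $e_2e_1=a_1+a_2$ to get agreement on $e_2$, is the same reduction the paper uses later. The paper's route is more elementary and yields explicit submodule bases, but no explicit intertwiner; that intertwiner is in effect supplied later by $S(x)$ in Proposition \ref{prop.base.Hpq}, which realizes concretely the factorization your $\Phi$ produces abstractly.
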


\begin{proof}
In order to find the submodules of $V_p\ot V_q$, we only consider the action of $k_1,c_1,e_1$ and $c_2$ on $V_p\ot V_q$.

First, $(\pi_p\ot \pi_q)\Del(k_1)=X\ot X$ is clearly diagonalizable and its spectrum is the set of all $N$-th roots of unity $\left\{\om^{\alp} | \alp\in\bbZ_N\right\}$. We write 
\begin{align*}
V_p\ot V_q=\bigoplus_{\alp\in\bbZ_N} W_{\alp}
\end{align*}
where $W_{\alp}=\Ker\pa{(\pi_p\ot\pi_q)\Del(k_1)-\om^{\alp}\Id_\calV\ot\Id_\calV}$. We have $\dim W_{\alp}=N^3$ for all $\alp\in\bbZ_N$. 

Now we show that for each $\alp\in\bbZ_N$, we can decompose $W_{\alp}$ in the following way
\begin{align*}
W_{\alp}=\bigoplus_{\bet\in\bbZ_N} W_{(\alp,\bet)}
\end{align*}
where $W_{(\alp,\bet)}\! =\! \Ker\pa{(\pi_p\ot\pi_q)\Del(c_1)|_{_{W_{\alp}}}-s\om^{t\bet}\Id_{W_{\alp}}}$ and $s\!=\!\pa{\tfrac{1}{2}(p+q)^{t+2}}^\frac{1}{N}\! \in\!\bbRz$.
By Lemma \ref{lemma.tens}, we have 
\begin{align*}
\pa{(\pi_p\ot\pi_q)\Del(c_1)}^N = (\pi_p\ot\pi_q)\Del(c_1)^N  = s^N\Id_\calV\ot\Id_\calV.
\end{align*}
This means that the minimal polynomial of $(\pi_p\ot\pi_q)\Del(c_1)$ divides %
\begin{align*}
x^N-s^N=\prod_{\bet\in\bbZ_N}(x-s\om^{\bet}).
\end{align*}
This implies that the minimal polynomial of $(\pi_p\ot\pi_q)\Del(c_1)$ has only simple zeros, which means that $(\pi_p\ot\pi_q)\Del(c_1)$ is diagonalizable. Moreover, it also implies that the spectrum of $(\pi_p\ot\pi_q)\Del(c_1)$ is contained in $\left\{s\om^{\bet} | \bet\in\bbZ_N\right\}$. Actually, its spectrum is exactly $\left\{s\om^{\bet} | \bet\in\bbZ_N\right\}$, since $(\pi_p\ot\pi_q)\Del(c_2)$ is invertible, $c_1c_2=\om^tc_2c_1$ and $t$ is invertible in $\bbZ_N$.
Futhermore, since $k_1c_1=c_1k_1$, $k_1c_2=c_2k_1$ and that $(\pi_p\ot\pi_q)\Del(c_1)$ and $(\pi_p\ot\pi_q)\Del(c_2)$ are invertible, we have 
\begin{align*}
(\pi_p\ot\pi_q)\Del(k_2)(W_{\alp})=(\pi_p\ot\pi_q)\Del(c_2)(W_{\alp})=W_{\alp}, \quad\forall \alp\in\bbZ_N,
\end{align*}
which implies that the spectrum of $(\pi_p\ot\pi_q)\Del(c_1)|_{_{W_{\alp}}}$ is $\left\{s\om^{\bet} | \bet\in\bbZ_N\right\}$. This allows us to define the eigenspaces $W_{(\alp,\bet)}$ for all $\alp,\bet\in\bbZ_N$ as announced. 

 Now, one can see that for all  $\alp,\bet\in\bbZ_N$, we have $\dim W_{(\alp,\bet)}=N^2$. Indeed, this comes from the fact that $\dim W_{\alp}=N^3$, $(\pi_p\ot\pi_q)\Del(c_2)|_{_{W_{\alp}}}$ is invertible, $c_1c_2=\om^tc_2c_1$ and $t$ is invertible in $\bbZ_N$.

Let $\left\{u_i\right\}_{i\in\bbZ_N^2}$ be a basis of $W_{0,0}$ and consider for all $\alp,\bet\in\bbZ_N$ and all $i\in\bbZ_N^2$
\begin{align*}
\xi_{(\alp,\bet),i}=\dfrac{1}{r^{\alp} s^{\bet}}(\pi_p\ot\pi_q)\Del(e_1^{\alp} c_2^{\bet})u_i\in W_{(\alp,\bet)},
\end{align*}
where $r=(p+q)^\frac{1}{N}\in\bbRz$.
By construction, we have for all $\alp,\bet\in\bbZ_N$ and all $i\in\bbZ_N^2$
\begin{align*}
(\pi_p\ot\pi_q)\Del(k_1) \xi_{(\alp,\bet),i} &=  \om^{-\alp}\xi_{(\alp,\bet),i},  \ \ &  (\pi_p\ot\pi_q)\Del(c_1) \xi_{(\alp,\bet),i} &=  s\om^{t\bet}\xi_{(\alp,\bet),i}, 
 \\ \\
 (\pi_p\ot\pi_q)\Del(e_1) \xi_{(\alp,\bet),i} &= r\xi_{(\alp+1,\bet),i}, \ \  &  (\pi_p\ot\pi_q)\Del(c_2) \xi_{(\alp,\bet),i} &= s\xi_{(\alp,\bet+1),i}.
\end{align*}
This clearly shows that for each $i\in\bbZ_N^2$, the subspace $\Xi_i\subset V_p\ot V_q$ generated by $\lac \xi_{(\alp,\bet),i}\rac_{\alp,\bet\in\bbZ_N}$ is an irreducible submodule. Furthermore, $\Xi_i$ is a reduced cyclic $\calA_{\om,t}$-module of parameter $p+q$. By Lemma \ref{equiv.rep}, the reduced cyclic $\calA_{\om,t}$-modules $\Xi_i$ are all equivalent.
\end{proof}

\begin{rem}
The dual space $V_p^*$ of a reduced cyclic $\calA_{\om,t}$-module $V_p$ can be provided with a $\calA_{\om,t}$-module structure $\pi_p^*: \calA_{\om,t}\to\End(V_p^*)$ defined, for all $a\in \calA_{\om,t}$ and all $\alp,\bet\in\bbZ_N^2$, by
\begin{align*}
\pa{\pi_p^*(a) v_\alp^*}v_\bet=v_\alp^*\pa{\pi_p(\gamma(a)) v_\bet}.
\end{align*}
This $\calA_{\om,t}$-module is actually equivalent to the reduced cyclic $\calA_{\om,t}$-module $V_{-p}$. 
\end{rem}

%%%%%%%%%%%%%%%%%%%%%%%%%%%%%%%%%%%%%%%%%%%%%%%%%%%%%%%%%%%%%%%%%%%%%%%%%%%
%new section
%%%%%%%%%%%%%%%%%%%%%%%%%%%%%%%%%%%%%%%%%%%%%%%%%%%%%%%%%%%%%%%%%%%%%%%%%%%

% \section[A $\Psi$-system in the category of $\calA_{\om,t}$-modules]{A \texorpdfstring{$\Psi$}{\unichar{"03A8}}-system in the category of \texorpdfstring{$\calA_{\om,t}$}{A\_\{\unichar{"03C9},t\}}-modules}\label{sec.psi.sys.in.Awt}

\section{A \texorpdfstring{$\Psi$}{\unichar{"03A8}}-system in the category of \texorpdfstring{$\calA_{\om,t}$}{A\_\{\unichar{"03C9},t\}}-modules}\label{sec.psi.sys.in.Awt}

%\red{blabla pour rappeler que la categorie des $A_{\om,t}$-modules est {\it abelienne}, i.e., dans notre cas, $\Hom(V,W)$ est un espace vectoriel. Et dire que pour construire le $\Psi$-system il nous faut considerer certains espaces de morphismes, les "espaces de multiplicite"\dots} %it is necessary to consider particular space of morphism. However, we are going to do slightly more by finding bases for those and express the change of basis. The latter will be useful to compute certain oprator}
%
Since $\calA_{\om,t}$ is a Hopf algebra over $\bbC$, the category of $\calA_{\om,t}$-modules, as a subcategory of the category of $\bbC$-vector spaces, is a monoidal abelian category. A $\Psi$-system in a monoidal abelian category is a family of simple object that satisfies special requirements. We are going to show that a $\Psi$-system in the category of $A_{\om,t}$-modules can be constructed with the family $\{V_p\}_{p\in\bbRz}$. We first recall the defintion of a $\Psi$-system in a monoidal category $\mathcal{C}$ with ground ring $\mathbb{K}$.
\begin{defin}[\cite{geer2012tetrahedral}]%\label{def.psi.system}%[\cite{geer2012tetrahedral}]
A $\Psi$-system in a monoidal abelian category $\mathcal{C}$ %$A_{\om,t}$-modules 
consists of 
\begin{enumerate}
	
	\item a distinguished set of simple objects $\lac V_p\rac_{p\in \mathcal{P}}$ such that $\Hom_{\cal{C}}(V_p,V_q)=0$ for all $p \neq q$,
	
	\item an involution, $\mathcal{P}\to \mathcal{P}$, $p\mapsto p^*$
	
	\item two families of morphisms $\lac b_p: \mathbb{K}\to V_p\ot V_{p^*}\rac_{p\in \mathcal{P}}$ and $\lac d_p: V_p\ot V_{p^*}\to\mathbb{K}\rac_{p\in \mathcal{P}}$ such that for all $p\in \mathcal{P}$
		\begin{align}\label{del.bp.dp}
		(\Id_{V_p}\ot d_{p^*})(b_p\ot\Id_{V_p})=\Id_{V_p}=(d_p\ot\Id_{V_p})(\Id_{V_p}\ot b_{p^*})
		\end{align}
	
	\item Let $H_r^{p,q}$ and $H_{p,q}^r$ be $\Hom_{\mathcal{C}}(V_r,V_p\ot V_q)$ and $\Hom_{\mathcal{C}}(V_p\ot V_q,V_r)$ respectively. For any $p,q\in \mathcal{P}$ such that $H_r^{p,q}\neq 0$ for some $r\in \mathcal{P}$, the identity morphism $\Id_{V_p\ot V_q}$ is in the image of the linear map
	\begin{equation*}
	\begin{array}{ccc}
		 \dis\bigoplus_{r\in \mathcal{P}} H_r^{p,q}\ot H_{p,q}^r & \longrightarrow & \End_{\mathcal{C}}(V_p\ot V_q)
		 \\
  		 x\ot y & \longmapsto & x\circ y 
	\end{array}
	\end{equation*}
\end{enumerate}
\end{defin}
%
%\noindent
 From the previous section, the first point of the above definition is satisfied for the family $\{V_p\}_{p\in\bbRz}$. Moreover, using Proposition \ref{prod.tens} and Schur's Lemma, we have 
 \begin{enumerate}
     \item $H_r^{p,q}\neq 0$ if and only if $(p,q)\in\left(\bbRz\right)^2$ is admissible and $r = p+q$,
     \item $\dim H_{p+q}^{p,q} = \dim H^{p+q}_{p,q} = N^2$.
 \end{enumerate}
In \cite{geer2012tetrahedral}, when $H_r^{p,q}\neq 0$, the spaces $H_r^{p,q}$ and $H^r_{p,q}$ are called the {\it multiplicity spaces}. In our case, for any admissible pair $(p,q)$, we are going to simplify the notation as follows, 
\begin{align*}
    \mathcal{H}_{p,q} = H_{p+q}^{p,q} %= \Hom_{A_{\om,t}}(V_{p+q}, V_p\ot V_q), 
    \ \ \quad \text{and} \ \ \quad
    \bar{\mathcal{H}}_{p,q} = H^{p+q}_{p,q}. %= \Hom_{A_{\om,t}}( V_p\ot V_q, V_{p+q}).
\end{align*}

In the following Subsections, we determine the morphisms $b_p$ and $d_p$ with the involution given by $p^*=-p$ and also determine bases for the multiplicity spaces $\mathcal{H}_{p,q}$ and $\bar{\mathcal{H}}_{p,q}$, %\blue
using the $\mathfrak{sl}_3$ matrix dilogarithm defined in \cite[Exemple 5]{kashaev1999matrix}. With these results we will be able to show that $\lac V_p\rac_{p\in\bbRz}$ give rise to a $\Psi$-system in the category of $\calA_{\om,t}$-modules.

\subsection{Determination of the morphisms \texorpdfstring{$b_p$}{b\_\{p\}} and \texorpdfstring{$d_p$}{d\_\{p\}}}~\label{subsubsection:dp:bp}
A $\calA_{\om,t}$-module structure can be provided to $\bbC$ through the counit 
\begin{align*}
\eps:\calA_{\om,t}\to\End(\bbC)\cong\bbC.
\end{align*}
Let $p^* = -p$ be the involution on $\bbRz$ and consider the $\calA_{\om,t}$-modules $V_p$ and $V_{-p}$. The morphism $d_p$ is therefore an element of $\Hom_{\calA_{\om,t}}(V_p\ot V_{-p},\bbC)$. As such, $d_p$ satisfies 
\begin{align*}
%b_p(k) \eps(a)&=(\pi_p\ot\pi_{-p})\Del(a) b_p(k)
%\\
\eps(a)d_p(u\ot v)& =d_p(\pi_p\ot\pi_{-p})\Del(a)(u\ot v)
\end{align*}
for all $a\in\calA_{\om,t}$ and all $u\ot v\in V_p\ot V_{-p}$. This equality will be used in the following Lemma.
% all $k\in\bbC$

\begin{lemma}\label{syst.d_p}
For any $p\in\bbRz$, the morphism $d_p$ is a solution of the following system of homogeneous linear equations
\begin{align*}
d_p &= d_p(X\ot X)\\
d_p &=d_p\left(XY^{-1}\ot Y\right)\\
d_p &=d_p\left(V^{-1}\ot X^{-(t+1)}U\right)\\
d_p &=\om^{-1}d_p\left(UV^{-1}\ot UV^{-1}\right).
\end{align*}
\end{lemma}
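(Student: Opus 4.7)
The plan is to apply the module-morphism condition $d_p \circ (\pi_p \otimes \pi_{-p})\Del(a) = \eps(a)\, d_p$ to well-chosen elements $a \in \calA_{\om,t}$ and simplify using the commutation relations among $X, Y, U, V$ (coming from $\sfX \sfY = \om^{-1} \sfY \sfX$ and the definitions in \eqref{def.el.calA}). Equation~(1) is immediate from $a = k_1$: since $\Del(k_1) = k_1 \otimes k_1$ and $\eps(k_1) = 1$, one directly gets $d_p = d_p \circ (X \otimes X)$. For equation~(2) I take $a = e_1$: using $\Del(e_1) = e_1 \otimes 1 + k_1 \otimes e_1$ and $\eps(e_1) = 0$, together with the fact that $N$ is odd so $(-p)^{1/N} = -p^{1/N}$, the morphism condition collapses to $d_p(Y \otimes \Id) = d_p(X \otimes Y)$, which rearranges to $d_p = d_p(XY^{-1} \otimes Y)$.

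For equations~(3) and~(4), the plan is to first extract two auxiliary identities. From $a = a_1$, using \eqref{Del.a} and \eqref{pi.a} together with the simplifications $d_p(X^t Y \otimes UY^{-1}) = d_p(X^{t+1} \otimes U)$ and $d_p(X^t Y \otimes VY^{-1}) = \om\, d_p(X^{t+1} \otimes V)$ (which follow from equation~(2) combined with $YUY^{-1} = \om^t U$ and $YVY^{-1} = \om^{t+1} V$), one obtains the identity (I): $d_p(U \otimes \Id) = \om\, d_p(X^{t+1} \otimes V)$. From $a = e_2$ one obtains (F): $d_p((U+V)Y^{-1} \otimes \Id) = d_p(X^t \otimes (U+V)Y^{-1})$. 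To extract equation~(3), I would substitute $u \mapsto Y^{-1} u$ in~(I) and simplify with equations~(1) and~(2) to get $d_p(UY^{-1} \otimes \Id) = \om\, d_p(X^{t+1}Y^{-1} \otimes V)$. Splitting~(F) into $U$- and $V$-parts and rewriting $d_p(X^t \otimes \ast\, Y^{-1})$ via equation~(2), everything except $d_p(VY^{-1} \otimes \Id) = d_p(X^{t+1}Y^{-1} \otimes U)$ cancels; a further substitution $u \mapsto Yu$, combined with equation~(1), yields $d_p(V \otimes \Id) = d_p(\Id \otimes X^{-(t+1)} U)$, which is equivalent to equation~(3).

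Equation~(4) then follows from~(I) and equation~(3). Substituting $v \mapsto Uv$ in~(I) gives $d_p(Uu \otimes Uv) = \om\, d_p(u \otimes X^{-(t+1)} VU\, v)$, while substituting $v \mapsto Vv$ in equation~(3) gives $d_p(Vu \otimes Vv) = d_p(u \otimes X^{-(t+1)} UV\, v)$; since $U$ and $V$ commute (a key consequence of $t^2 + t \equiv -1 \pmod N$), comparing the two identities yields $d_p(Uu \otimes Uv) = \om\, d_p(Vu \otimes Vv)$, which is equivalent to equation~(4). The main obstacle throughout is the bookkeeping of $\om$-phases generated by the many commutations; the arithmetic $t(t+1) \equiv -1 \pmod N$ (from $N \mid t^2 + t + 1$) is what forces these phases to conspire correctly and, crucially, what makes $U$ and $V$ commute.
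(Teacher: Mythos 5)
Your proof is correct and takes essentially the same route as the paper: the intertwiner identity $\eps(a)d_p=d_p(\pi_p\ot\pi_{-p})\Del(a)$ applied to $k_1,e_1,e_2,a_1$ and reduced with equations (1)--(2), only with the bookkeeping reorganized (you get (3) by cancelling the $a_1$-relation against the $e_2$-relation, and (4) from your identity $d_p(U\ot\Id)=\om\,d_p(X^{t+1}\ot V)$ together with (3)); in fact you carry the $\om$-phase in that intermediate identity correctly, which the paper's own displayed intermediate step momentarily drops before stating the correct equation (4). One small correction: the commutativity $UV=VU$ that you use is automatic from the definitions \eqref{def.el.calA} and $\sfX\sfY=\om^{-1}\sfY\sfX$ (the $\om$-phases arising in the two tensor factors cancel identically, for any $t$), and is not a consequence of the arithmetic condition $t^2+t\equiv-1\pmod N$.
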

\begin{proof}
For all $a\in\calA_{\om,t}$ we have 
\begin{align}\label{eq.base.d_p}
\eps(a)d_p =d_p(\pi_p\ot\pi_{-p})\Del(a). 
\end{align}
If $a=k_1$, we have $\eps(k_1)=1$ and $(\pi_p\ot\pi_{-p})\Del(k_1)=X\ot X$. Hence, equality \eqref{eq.base.d_p} becomes
\begin{align}\label{d_p.de.X}
d_p=d_p(X\ot X).
\end{align} 
If $a=e_1$, we have $\eps(e_1)=0$ and 
\begin{align*}
(\pi_p\ot\pi_{-p})\Del(e_1)=p^\frac{1}{N}Y\ot \Id_{\calA} + (-p)^\frac{1}{N}X\ot Y.
\end{align*}
Hence, equality \eqref{eq.base.d_p} is equivalent to
\begin{align*}
0=d_p(Y\ot \Id_{\calA} - X\ot Y)  \ssi d_p(Y\ot\Id_{\calA})=d_p(X\ot Y) 
\end{align*}
which leads us to
\begin{align}\label{d_p.de.Y}
d_p=d_p(XY^{-1}\ot Y).
\end{align} 
If $a=e_2$, we have $\eps(e_2)=0$ and 
\begin{align*}
(\pi_p\ot\pi_{-p})\Del(e_2)=\pa{\tfrac{1}{2}p}^\frac{1}{N}Z\ot \Id_{\calA} + \pa{-\tfrac{1}{2}p}^\frac{1}{N}X^t\ot Z,
\end{align*}
where $Z=(U+V)Y^{-1}$. A similar computation to the previous one leads us to the following equality
\begin{align}\label{d_p.de.Z}
d_p=d_p(X^tZ^{-1}\ot Z)
\end{align}
If $a=a_1$, we have $\eps(a_1)=0$ and 
\begin{align*}
(\pi_p\ot\pi_{-p})\Del(a_1)=\pa{\tfrac{1}{2}p^2}^\frac{1}{N}U\ot \Id_{\calA} +\pa{\tfrac{1}{2}p^2}^\frac{1}{N}X^{t+1}\ot U + \pa{-\tfrac{1}{2}p^2}^\frac{1}{N}X^tY\ot Z,
\end{align*}
Therefore, using equality \eqref{eq.base.d_p} we compute
\begin{align*}
0&=d_p(U\ot \Id_{\calA} + X^{t+1}\ot U -X^tY\ot Z)
\\
&=d_p(U\ot \Id_{\calA})+d_p(X^{t+1}\ot U)-d_p(X^tY\ot Z)
\\
&\stackrel{\mathclap{\eqref{d_p.de.Z}}}{=}d_p(U\ot \Id_{\calA})+d_p(X^{t+1}\ot U)-d_p(ZY\ot\Id_{\calA})
\\
&=d_p(U\ot \Id_{\calA})+d_p(X^{t+1}\ot U)-d_p(U\ot\Id_{\calA})-d_p(V\ot\Id_{\calA})
\\
&=d_p(X^{t+1}\ot U)-d_p(V\ot\Id_{\calA})
\\
&\stackrel{\mathclap{\eqref{d_p.de.X}}}{=}d_p(\Id_{\calA}\ot X^{-(t+1)}U)-d_p(V\ot\Id_{\calA}),
\end{align*}
which leads to 
\begin{align}\label{d_p.de.U}
d_p=d_p(V^{-1}\ot X^{-(t+1)}U).
\end{align}
Finally, by reconsidering the last computation, we get
\begin{align*}
0&=d_p(U\ot \Id_{\calA} + X^{t+1}\ot U -X^tY\ot Z)
\\
&=d_p(U\ot \Id_{\calA})+d_p(X^{t+1}\ot U)-\om^{-t}d_p(YX^t\ot Z)
\\
&\stackrel{\mathclap{\eqref{d_p.de.Y}}}{=}d_p(U\ot \Id_{\calA})+d_p(X^{t+1}\ot U)-\om^{-t}d_p(X^{t+1}\ot YZ)
\\
&=d_p(U\ot \Id_{\calA})+d_p(X^{t+1}\ot U)-d_p(X^{t+1}\ot U)-\om d_p(X^{t+1}\ot V)
\\
&=d_p(U\ot \Id_{\calA})-\om d_p(X^{t+1}\ot V)
\\
&\stackrel{\mathclap{\eqref{d_p.de.X}}}{=} d_p(U\ot \Id_{\calA})-\om d_p(\Id_{\calA}\ot X^{-(t+1)}V),
\end{align*}
which leads to 
\begin{align*}%\label{d_p.de.V}
d_p=d_p(U^{-1}\ot X^{-(t+1)}V).
\end{align*}
From there, we use equality \eqref{d_p.de.U} to get
\begin{align*}
d_p(U^{-1}\ot X^{-(t+1)}V)=d_p(V^{-1}\ot X^{-(t+1)}U), 
\end{align*}
which is equivalent to
\begin{align*}
d_p=\om^{-1}d_p(UV^{-1}\ot UV^{-1}).
\end{align*}
\end{proof}

In order to take advantage of Lemma \ref{syst.d_p}, we are going to consider the following particular basis of $\calV$ instead of the canonical basis $\brkt{v_\alp}_{\alp\in\bbZ_N^2}$.
\begin{lemma}\label{bases.u.de.calV}
The set $\brkt{u_\alp}_{\alp\in\bbZ_N^2}\subset\calV$ where
\begin{align*}
u_\alp=\sum_{\bet\in\bbZ_N}\om^{-\frac{1}{2}t\bet(\bet+1)+\bet\pa{\alp_2-\alp_1+\frac{1}{2}}}v_{(\alp_1,\bet)}, %\ \ \forall\alp\in\bbZ_N^2
\end{align*}
for all $\alp\in\bbZ_N^2$, is a basis of $\calV$. Its dual $\brkt{\bar{u}_\alp}_{\alp\in\bbZ_N^2}\subset\calV^*$ is given, for all $\alp\in\bbZ_N^2$, by
\begin{align*}
\bar{u}_\alp=\frac{1}{N}\sum_{\bet\in\bbZ_N}\om^{\frac{1}{2}t\bet(\bet+1)-\bet(\alp_2-\alp_1+\frac{1}{2})}\bar{v}_{(\alp_1,\bet)}. %\ \ \forall\alp\in\bbZ_N^2.
\end{align*}
\end{lemma}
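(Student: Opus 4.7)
The plan is to verify both claims simultaneously by a single computation: show that $\bar{u}_\alpha(u_\gamma) = \delta_{\alpha,\gamma}$ for all $\alpha,\gamma\in\bbZ_N^2$. If this pairing formula holds, then the $N^2$ vectors $\{u_\alpha\}$ must be linearly independent (apply any candidate linear relation to each $\bar{u}_\alpha$), and since $\dim\calV = N^2$ they form a basis of $\calV$; the formula then automatically identifies $\{\bar{u}_\alpha\}$ as the dual basis. So the whole statement reduces to verifying one identity.

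The verification itself is a routine but instructive calculation. First I would observe that $u_\alpha$ lies entirely in the coordinate subspace spanned by $\{v_{(\alpha_1,\beta)}\}_{\beta\in\bbZ_N}$, so the pairing of the dual vector $\bar{v}_{(\alpha_1,\beta)}$ with $v_{(\gamma_1,\delta)}$ gives a factor $\delta_{\alpha_1,\gamma_1}\delta_{\beta,\delta}$. Substituting the definitions and collecting exponents of $\om$, the contribution to $\bar{u}_\alpha(u_\gamma)$ becomes
\begin{align*}
\bar{u}_\alpha(u_\gamma)
&= \delta_{\alpha_1,\gamma_1}\,\frac{1}{N}\sum_{\beta\in\bbZ_N}\om^{\frac{1}{2}t\beta(\beta+1)-\beta(\alpha_2-\alpha_1+\frac{1}{2})}\,\om^{-\frac{1}{2}t\beta(\beta+1)+\beta(\gamma_2-\gamma_1+\frac{1}{2})}\\
&= \delta_{\alpha_1,\gamma_1}\,\frac{1}{N}\sum_{\beta\in\bbZ_N}\om^{\beta(\gamma_2-\alpha_2)}.
\end{align*}
The crucial cancellation here is that the quadratic Gauss-sum phases $\om^{\pm\frac{1}{2}t\beta(\beta+1)}$ appear with opposite signs in $u_\alpha$ and $\bar{u}_\alpha$ and therefore cancel, as do the constants $\om^{\pm\beta(\frac{1}{2}-\alpha_1)}$ once $\alpha_1=\gamma_1$ has been enforced.

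Finally, the standard orthogonality of $N$-th roots of unity yields $\sum_{\beta\in\bbZ_N}\om^{\beta(\gamma_2-\alpha_2)} = N\,\delta_{\alpha_2,\gamma_2}$, so $\bar{u}_\alpha(u_\gamma) = \delta_{\alpha_1,\gamma_1}\delta_{\alpha_2,\gamma_2} = \delta_{\alpha,\gamma}$, as required. I do not anticipate a main obstacle here; the only subtlety is the bookkeeping with the half-integer exponent convention $\om^a = \om^{a(N+1)}$ for $a\in\frac{1}{2}\bbZ$ fixed at the start of Section 2, which guarantees that all the $\om^{\pm\frac{1}{2}}$ factors are well-defined in $\bbZ_N$ and that the cancellations above make sense. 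One should simply verify once that the half-integer terms combine correctly modulo $N$, after which the rest is pure Fourier inversion on the finite group $\bbZ_N$.
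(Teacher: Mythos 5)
Your proof is correct. The core computation---cancelling the quadratic phases $\om^{\pm\frac{1}{2}t\bet(\bet+1)}$ and the $\om^{\pm\frac{1}{2}\bet}$ terms once $\alp_1=\gamma_1$ is enforced, then invoking $\sum_{\bet\in\bbZ_N}\om^{\bet(\gamma_2-\alp_2)}=N\del_{\alp_2,\gamma_2}$---is exactly the ``straightforward computation'' that the paper also relies on to establish $\bar{u}_\bet u_\alp=\del_{\alp,\bet}$. The difference is in how basis-hood is obtained: the paper first writes down the explicit inverse change-of-basis formulas, expressing each $v_\alp$ (resp.\ $\bar{v}_\alp$) as a linear combination of the $u_{(\alp_1,\bet)}$ (resp.\ $\bar{u}_{(\alp_1,\bet)}$), concludes that the sets are generating and hence bases by cardinality, and only then verifies the duality pairing; you instead derive everything from the single pairing identity, getting linear independence (hence basis-hood, since there are $N^2$ vectors in an $N^2$-dimensional space) and the dual-basis identification in one stroke, the latter being automatic because the dual basis is uniquely characterized by the $\del_{\alp,\gamma}$ pairing. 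Your organization is marginally more economical; the paper's version has the side benefit of recording the explicit inversion formulas. Your remark about the half-integer convention $\om^{a}=\om^{a(N+1)}$ is the right thing to flag, and it causes no trouble since the same convention is used consistently in $u_\alp$ and $\bar{u}_\alp$, so the phases cancel exactly.
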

\begin{proof}
For any $\alp\in\bbZ_N^2$ we have 
\begin{align*}
v_\alp=\frac{1}{N}\om^{\frac{1}{2}t\alp_2\pa{\alp_2+1}+\alp_2\pa{\alp_1-\frac{1}{2}}}\sum_{\bet\in\bbZ_N}\om^{-\alp_2\bet}u_{(\alp_1,\bet)}
\end{align*}
and 
\begin{align*}
\bar{v}_\alp=\om^{-\frac{1}{2}t\alp_2\pa{\alp_2+1}-\alp_2\pa{\alp_1-\frac{1}{2}}}\sum_{\bet\in\bbZ_N}\om^{\alp_2\bet}\bar{u}_{(\alp_1,\bet)}.
\end{align*}
This shows that $\brkt{u_\alp}_{\alp\in\bbZ_N^2}$, and $\brkt{\bar{u}_\alp}_{\alp\in\bbZ_N^2}$, are generating sets of $\calV$ and $\calV^*$ respectively. Hence these sets are bases since their cardinality is the same as the dimension of $\calV$ and $\calV^*$. 
A straightforward computation shows that for all $\alp,\bet\in\bbZ_N^2$ we have
\begin{align*}
\bar{u}_\bet u_\alp=\del_{\alp,\bet}
\end{align*}
which implies that $\brkt{u_\alp}_{\alp\in\bbZ_N^2}$, and $\brkt{\bar{u}_\alp}_{\alp\in\bbZ_N^2}$ are dual bases.
\end{proof}

The basis $\brkt{u_\alp}_{\alp\in\bbZ_N^2}$ satisfies the relations given in the following Lemma.

\begin{lemma}\label{Op.pour.u}
For all $\alp\in\bbZ_N^2$ and all $m\in\bbZ_N$
\begin{align*}
 X^m u_\alp & =\om^{-m\alp_1 }u_\alp, \ \  Y^m u_\alp= u_{(\alp_1+m,\alp_2+m)},
\\
U^m u_\alp &= \om^{-\frac{1}{2}m(m-1)(t+1)-mt\alp_1}u_{(\alp_1-mt,\alp_2)},
\\
V^m u_\alp & =\om^{-\frac{1}{2}m(m-1)(t+1)-m\pa{t\alp_1+\alp_2+\frac{1}{2}}}u_{(\alp_1-mt,\alp_2)}.
\end{align*}
In particular, we have for all $\alp\in\bbZ_N^2$ and all $m\in\bbZ_N$
\begin{align*}
(UV^{-1})^m u_\alp &= \om^{m(\alp_2+\frac{1}{2})} u_\alp, \\ (U^tV)^m u_\alp &= \om^{\frac{1}{2}m(m-1)+m\pa{\alp_1-\alp_2+\frac{1}{2}t}}u_{(\alp_1+m,\alp_2)}.
\end{align*}
\end{lemma}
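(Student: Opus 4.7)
My plan is to reduce each identity to a direct computation in the canonical basis, using three ingredients: the definition of $u_\alp$ as a $\bbZ_N$-Fourier-type combination of the $v_{(\alp_1,\bet)}$, the commutation $\sfX\sfY=\om^{-1}\sfY\sfX$ (equivalently $\sfX^a\sfY^{-b}=\om^{ab}\sfY^{-b}\sfX^a$), and the key number-theoretic identity $t^2+t+1\equiv 0\pmod N$ guaranteed by the standing hypothesis on $(N,t)$.

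First I would dispatch $X^m$ and $Y^m$. Since $X=\sfX_1$ acts diagonally on $v_{(\alp_1,\bet)}$ with eigenvalue $\om^{-\alp_1}$, the formula $X^mu_\alp=\om^{-m\alp_1}u_\alp$ is immediate. For $Y^m=\sfY_1^m$, which shifts $\alp_1$ without touching $\bet$, the reindexing $\bet\leftrightarrow\bet$ shows that applying $\sfY_1^m$ to the sum defining $u_\alp$ lands exactly in $u_{(\alp_1+m,\alp_2+m)}$ because $\alp_2-\alp_1=(\alp_2+m)-(\alp_1+m)$.

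The two main computations are for $U^m$ and $V^m$. For these I first pull the operators into monomial form using $\sfX_1^a\sfY_1^{-b}=\om^{ab}\sfY_1^{-b}\sfX_1^a$; a telescoping induction gives
\begin{align*}
U^m &= \om^{t^2\binom{m}{2}}\sfY_1^{-mt}\sfX_1^{mt}\sfX_2^{-mt},\\
V^m &= \om^{t(t+1)\binom{m}{2}}\sfY_1^{-mt}\sfX_1^{m(t+1)}\sfY_2^{m}.
\end{align*}
I then apply each monomial to $u_\alp=\sum_\bet\om^{-\frac{1}{2}t\bet(\bet+1)+\bet(\alp_2-\alp_1+\frac{1}{2})}v_{(\alp_1,\bet)}$ termwise. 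For $U^m$ this is immediate, using $\sfX_2^{-mt}v_{(\alp_1,\bet)}=\om^{mt\bet}v_{(\alp_1,\bet)}$; the shift of $\alp_1$ by $-mt$ is pure, and matching with $u_{(\alp_1-mt,\alp_2)}$ requires $\om^{t^2\binom{m}{2}}=\om^{-\frac{1}{2}m(m-1)(t+1)}$, i.e.\ $t^2\equiv -(t+1)\pmod N$, which holds by hypothesis. For $V^m$ the action of $\sfY_2^m$ first reindexes $\bet\mapsto\bet-m$, producing cross-terms $\om^{tm\bet}$ and $\om^{-m(\alp_2-\alp_1+\frac12)}$ after expanding $-\tfrac{1}{2}t(\bet-m)(\bet-m+1)+(\bet-m)(\alp_2-\alp_1+\frac12)$; after collecting all $\bet$-independent terms and factoring out $u_{(\alp_1-mt,\alp_2)}$ the final constraint is again $t^2+t+1\equiv 0$.

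The last two assertions follow from these. Since the $m=1$ formulas show that $U$ and $V$ each shift $\alp_1$ by $-t$, the operator $UV^{-1}$ preserves $\alp_1$ and $\alp_2$; a direct computation $UV^{-1}u_\alp=\om^{\alp_2+\frac12}u_\alp$ then iterates to $(UV^{-1})^m u_\alp=\om^{m(\alp_2+\frac12)}u_\alp$. For $(U^tV)^m$ I first note that $U$ and $V$ commute (from $a_1a_2=a_2a_1$ in \eqref{rel.uv} combined with \eqref{pi.a}), hence $(U^tV)^m=U^{mt}V^m$, and the remaining scalar and index simplifications use $mt+mt^2\equiv -m\pmod N$ so that the $\alp_1$-shift is $+m$.

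The main obstacle is bookkeeping: the powers of $\om$ produced by three separate sources (the commutation scalars arising from reordering $\sfX_1$ and $\sfY_1^{-1}$, the diagonal eigenvalues of $\sfX_1,\sfX_2$, and the change-of-summation-index contributions from $\sfY_2^m$) must all collapse via the single relation $t^2+t+1\equiv 0\pmod N$. I expect the $V^m$ calculation to be the trickiest because it combines all three sources simultaneously.
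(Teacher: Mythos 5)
Your proposal is correct and takes essentially the same route as the paper: a direct computation in the basis $\brkt{u_\alp}$ built from the defining sum over $v_{(\alp_1,\bet)}$, the commutation $\sfX\sfY=\om^{-1}\sfY\sfX$, and the congruence $t^2+t+1\equiv 0\pmod N$. The only organizational difference is that you normal-order $U^m$ and $V^m$ (and use $UV=VU$ to write $(U^tV)^m=U^{mt}V^m$) before acting on $u_\alp$, whereas the paper computes the $m=1$ action of each operator on $u_\alp$ and iterates; both come down to the same bookkeeping and invoke the same identity $t^2\equiv-(t+1)\pmod N$.
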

\begin{proof}
For any $\alp\in\bbZ_N^2$ we have, by the equalities \eqref{def.el.calA}, 
\begin{align*}
 X v_\alp &= \om^{-\alp_1} v_\alp, \ \  Y v_\alp = v_{(\alp_1+1,\alp_2)},
% \\%&% \ &\
%  Y v_\alp &= v_{(\alp_1+1,\alp_2)},
\\
U v_\alp &= \om^{-t(\alp_1-\alp_2)}v_{(\alp_1-t,\alp_2)}, 
\\%&%\ &\ 
V v_\alp &=\om^{-(t+1)\alp_1}v_{(\alp_1-t,\alp_2+1)}.
\end{align*}
Then we can easily derive the following equalities 
\begin{align*}
UV^{-1} v_\alp&= \om^{\alp_1+t\alp_2}v_{(\alp_1,\alp_2-1)}, 
\\ 
U^tV v_\alp&=\om^{-(t+1)(\alp_2+\frac{1}{2})}v_{(\alp_1+1,\alp_2+1)}.
\end{align*}

Now we determine the action of these operators on the basis $u_\alp$ for any $\alp\in\bbZ_N^2$. 
For $X$, we have 
\begin{align*}
Xu_\alp &=\sum_{\bet\in\bbZ_N}\om^{-\frac{1}{2}t\bet(\bet+1)+\bet(\alp_2-\alp_1+\frac{1}{2})}Xv_{(\alp_1,\bet)} 
\\
&=\sum_{\bet\in\bbZ_N}\om^{-\frac{1}{2}t\bet(\bet+1)+\bet(\alp_2-\alp_1+\frac{1}{2})}\om^{-\alp_1}v_{(\alp_1,\bet)}=\om^{-\alp_1}u_\alp.
\end{align*}
For $Y$, we have
\begin{align*}
Yu_\alp &=\sum_{\bet\in\bbZ_N}\om^{-\frac{1}{2}t\bet(\bet+1)+\bet(\alp_2-\alp_1+\frac{1}{2})}Yv_{(\alp_1,\bet)} 
\\
&=\sum_{\bet\in\bbZ_N}\om^{-\frac{1}{2}t\bet(\bet+1)+\bet(\alp_2-\alp_1+\frac{1}{2})}v_{(\alp_1+1,\bet)}
\\
&=\sum_{\bet\in\bbZ_N}\om^{-\frac{1}{2}t\bet(\bet+1)+\bet\pa{(\alp_2+1)-(\alp_1+1)+\frac{1}{2}}}v_{(\alp_1+1,\bet)}=u_{(\alp_1+1,\alp_2+1)}.
\end{align*}
For $U$, we have
\begin{align*}
Uu_\alp &=\sum_{\bet\in\bbZ_N}\om^{-\frac{1}{2}t\bet(\bet+1)+\bet(\alp_2-\alp_1+\frac{1}{2})}Uv_{(\alp_1,\bet)} 
\\
&=\sum_{\bet\in\bbZ_N}\om^{-\frac{1}{2}t\bet(\bet+1)+\bet(\alp_2-\alp_1+\frac{1}{2})}\om^{-t(\alp_1-\bet)}v_{(\alp_1-t,\bet)}
\\
&=\om^{-t\alp_1}\sum_{\bet\in\bbZ_N}\om^{-\frac{1}{2}t\bet(\bet+1)+\bet\pa{\alp_2-(\alp_1-t)+\frac{1}{2}}}v_{(\alp_1-t,\bet)}=\om^{-t\alp_1}u_{(\alp_1-t,\alp_2)}.
\end{align*}
For $V$, we have
\begin{align*}
Vu_\alp & =\sum_{\bet\in\bbZ_N}\om^{-\frac{1}{2}t\bet(\bet+1)+\bet(\alp_2-\alp_1+\frac{1}{2})}Vv_{(\alp_1,\bet)} 
\\
& =\sum_{\bet\in\bbZ_N}\om^{-\frac{1}{2}t\bet(\bet+1)+\bet(\alp_2-\alp_1+\frac{1}{2})}\om^{-(t+1)\alp_1}v_{(\alp_1-t,\bet+1)}
\\
& =\om^{-(t\alp_1+\alp_2+\frac{1}{2})}\sum_{\bet\in\bbZ_N}\om^{-\frac{1}{2}t(\bet+1)(\bet+2)+(\bet+1)\pa{\alp_2-(\alp_1-t)+\frac{1}{2}}}v_{(\alp_1-t,\bet+1)}
\\
& =\om^{-(t\alp_1+\alp_2+\frac{1}{2})}u_{(\alp_1-t,\alp_2)}.
\end{align*}
For $UV^{-1}$, we have
\begin{align*}
UV^{-1}u_\alp & =\sum_{\bet\in\bbZ_N}\om^{-\frac{1}{2}t\bet(\bet+1)+\bet(\alp_2-\alp_1+\frac{1}{2})}UV^{-1}v_{(\alp_1,\bet)} 
\\
& =\sum_{\bet\in\bbZ_N}\om^{-\frac{1}{2}t\bet(\bet+1)+\bet(\alp_2-\alp_1+\frac{1}{2})}\om^{\alp_1+t\bet}v_{(\alp_1,\bet-1)}
\\
& =\om^{\alp_2+\frac{1}{2}}\sum_{\bet\in\bbZ_N}\om^{-\frac{1}{2}t(\bet-1)\bet+(\bet-1)\pa{\alp_2-\alp_1+\frac{1}{2}}}v_{(\alp_1,\bet-1)}
=\om^{\alp_2+\frac{1}{2}}u_\alp.
\end{align*}
Finally, for $U^tV$, we have
\begin{align*}
U^tVu_\alp & =\sum_{\bet\in\bbZ_N}\om^{-\frac{1}{2}t\bet(\bet+1)+\bet\pa{\alp_2-\alp_1+\frac{1}{2}}}U^tVv_{(\alp_1,\bet)} 
\\
& =\sum_{\bet\in\bbZ_N}\om^{-\frac{1}{2}t\bet(\bet+1)+\bet\pa{\alp_2-\alp_1+\frac{1}{2}}}\om^{-(t+1)\pa{\bet+\frac{1}{2}}}v_{(\alp_1+1,\bet+1)}
\\
& =\om^{\alp_1-\alp_2+\frac{1}{2}t}\sum_{\bet\in\bbZ_N}\om^{-\frac{1}{2}t(\bet+1)(\bet+2)+(\bet+1)\pa{\alp_2-(\alp_1+1)+\frac{1}{2}}}v_{(\alp_1+1,\bet+1)}
\\
& =\om^{\alp_1-\alp_2+\frac{1}{2}t}u_{(\alp_1+1,\alp_2)}.
\end{align*}
The Lemma follows easily from the previous equalities.
\end{proof}
We can now determine the morphisms $b_p$ and $d_p$.
\begin{lemma}\label{det.b_p.d_p}
The morphisms $b_p: \bbC\to V_p\ot V_{-p}$ and $d_p: V_p\ot V_{-p}\to\bbC$ are  given by
\begin{align*}
b_p(1)=\sum_{\alp,\bet\in\bbZ_N^2} b_{p,\alp,\bet} u_\alp\ot u_\bet \\
%b_p(1)=\sum_{\alp_1,\alp_2,\bet_1,\bet_2\in\bbZ_N} b_{p,\alp_1,\alp_2,\bet_1,\bet_2} v_{\alp_1}\ot v_{\alp_2}\ot \bar{v}_{\bet_1}\ot \bar{v}_{\bet_2} \\
\end{align*}
where
\begin{align}\label{formule.b}
b_{p,\alp,\bet}=\del_{\alp,-\bet}\om^{\frac{1}{2}\pa{(t+1)(\alp_1-\alp_2)(\alp_1-\alp_2+2)+\alp_1^2+\alp_2}},
\end{align}
and 
\begin{align}\label{formule.d}
d_p(u_\alp\ot u_\bet)=\del_{\alp,-\bet}\om^{-\frac{1}{2}\pa{(t+1)(\alp_1-\alp_2)(\alp_1-\alp_2-2)+\alp_1^2-\alp_2}}.
\end{align}
\end{lemma}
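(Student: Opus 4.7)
The plan is to determine $d_p$ first, by solving the homogeneous linear system of Lemma \ref{syst.d_p}, and then read off $b_p$ from the duality relation \eqref{del.bp.dp}. The key computational tool throughout is Lemma \ref{Op.pour.u}, which provides the explicit action of $X$, $Y$, $U$, $V$ and their monomials on the basis $\{u_\alp\}_{\alp \in \bbZ_N^2}$ of $\calV$.

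Write $d_p(u_\alp \ot u_\bet) = d_{\alp, \bet}$ for unknown scalars. I begin with the two ``diagonal'' equations in Lemma \ref{syst.d_p}. The relation $d_p = d_p(X \ot X)$ reads $d_{\alp, \bet} = \om^{-(\alp_1 + \bet_1)} d_{\alp, \bet}$, forcing $\alp_1 + \bet_1 \equiv 0 \pmod{N}$ whenever $d_{\alp, \bet}\neq 0$. The relation $d_p = \om^{-1} d_p(UV^{-1} \ot UV^{-1})$, combined with $UV^{-1} u_\alp = \om^{\alp_2 + 1/2} u_\alp$, similarly forces $\alp_2 + \bet_2 \equiv 0 \pmod{N}$. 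Hence $d_p$ is supported on $\bet = -\alp$, which produces the Kronecker factor $\del_{\alp,-\bet}$ of \eqref{formule.d}.

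Next, set $d_\alp := d_{\alp, -\alp}$. The equation $d_p = d_p(XY^{-1} \ot Y)$, after using $Yu_{-\alp}=u_{(-\alp_1+1,-\alp_2+1)}$ and $XY^{-1}u_\alp = \om^{-(\alp_1-1)}u_{(\alp_1-1,\alp_2-1)}$, becomes a one-step shift recurrence relating $d_\alp$ to $d_{(\alp_1-1,\alp_2-1)}$ with an explicit phase factor. The equation $d_p = d_p(V^{-1} \ot X^{-(t+1)} U)$, after computing the relevant actions from Lemma \ref{Op.pour.u}, yields a second independent recurrence mixing the two coordinates of $\alp$. Since $t$, $t+1$ and $2$ are all invertible in $\bbZ_N$, the resulting two-parameter system of recurrences admits a one-dimensional solution space. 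I would verify by direct substitution that the exponent $-\tfrac{1}{2}\pa{(t+1)(\alp_1-\alp_2)(\alp_1-\alp_2-2)+\alp_1^2-\alp_2}$ in \eqref{formule.d} satisfies both recurrences, and then fix the free scalar by the normalization $d_{0,0} = 1$, which is the natural choice (any other choice would rescale both $d_p$ and $b_p$ inversely).

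For $b_p$, I expand $b_p(1) = \sum_{\alp, \bet} b_{\alp, \bet}\, u_\alp \ot u_\bet$ and impose the first half of \eqref{del.bp.dp}, namely $(\Id_{V_p}\ot d_{-p})(b_p \ot \Id_{V_p}) = \Id_{V_p}$. Because $d_{-p}$ is already determined, this becomes, coordinate by coordinate, a linear system that uniquely pins down every $b_{\alp, \bet}$ and produces the formula \eqref{formule.b}. The second identity $(d_p \ot \Id_{V_p})(\Id_{V_p} \ot b_{-p})=\Id_{V_p}$ is then automatic by the analogous calculation (and could alternatively be checked directly using Lemma \ref{bases.u.de.calV}).

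The main obstacle is the bookkeeping verification that the Gaussian exponents proposed in \eqref{formule.d} and \eqref{formule.b} are genuinely solutions of the two shift recurrences simultaneously. The delicate points are the half-integer convention $\om^{1/2} = \om^{(N+1)/2}$, the appearance of the factor $(t+1)$ in the Cartan-like exponent, and keeping careful track of the shifts in $\alp_1$ and $\alp_2$; once these are handled correctly, the uniqueness of the solution modulo normalization makes the lemma follow immediately.
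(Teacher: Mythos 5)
Your proposal follows essentially the same route as the paper's proof: the two ``diagonal'' equations of Lemma \ref{syst.d_p} force the support $\bet=-\alp$, the remaining two equations (the one with $V^{-1}\ot X^{-(t+1)}U$ being iterated, using $t^2+t+1\equiv 0$ in $\bbZ_N$, to produce a unit shift in $\alp_1$, and the one with $XY^{-1}\ot Y$ shifting both coordinates) determine $d_p$ up to the normalization $d_p(u_{(0,0)}\ot u_{(0,0)})=1$ via Lemma \ref{Op.pour.u}, and $b_p$ is then read off from \eqref{del.bp.dp}. The only cosmetic difference is that you verify the closed-form Gaussian exponent against the recurrences and invoke uniqueness, whereas the paper derives it by iterating them; this is the same argument in substance.
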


\begin{proof}
First we compute $d_p$ using Lemma \ref{syst.d_p} and Lemma \ref{Op.pour.u}. 
We start with the first and the last equality of the Lemma \ref{syst.d_p}. We have 
\begin{align*}
d_p(u_\alp\ot u_\bet)=d_p(X\ot X)(u_\alp\ot u_\bet)=\om^{-\alp_1-\bet_1}d_p(u_\alp\ot u_\bet)
\end{align*}
and 
\begin{align*}
d_p(u_\alp\ot u_\bet)=\om^{-1}d_p(UV^{-1}\ot UV^{-1})(u_\alp\ot u_\bet)=\om^{\alp_2+\bet_2}d_p(u_\alp\ot u_\bet).
\end{align*}
Hence we have 
\begin{align}
d_p(u_\alp\ot u_\bet)=\del_{\alp,-\bet}d_p(u_\alp\ot u_\bet).
\end{align}

We consider the third equality of the Lemma \ref{syst.d_p}
\begin{align*}
d_p=d_p(V^{-1}\ot X^{-(t+1)}U).
\end{align*}
Since $X,U,V\in\calA$ are invertible, this equality is equivalent to 
 \begin{align*}
d_p=d_p\pa{V^{-1}\ot X^{-(t+1)}U}^{t+1}
\end{align*}
A straightforward computation shows that
\begin{align*}
\pa{V^{-1}\ot X^{-(t+1)}U}^{t+1} = \om^{\frac{1}{2}}\pa{V^{-(t+1)}\ot X^{-t}U^{t+1}},
\end{align*}
hence, the following equality holds true
\begin{align*}
d_p= \om^{\frac{1}{2}} d_p\pa{V^{-(t+1)}\ot X^{-t}U^{t+1}}.
\end{align*}
Therefore, using Lemma \ref{Op.pour.u}, we compute 
\begin{align}\label{third.eq.d_p}
\begin{split}
d_p(u_\alp\ot u_\bet)&=\del_{\alp,-\bet}d_p(u_\alp\ot u_\bet)
\\
&=\del_{\alp,-\bet}\om^{\frac{1}{2}} d_p\pa{V^{-(t+1)}\ot X^{-t}U^{t+1}}(u_\alp\ot u_\bet)
\\
&=\del_{\alp,-\bet}\om^{\frac{1}{2}} d_p\pa{V^{-(t+1)}\ot X^{-t}U^{t+1}}(u_\alp\ot u_{-\alp})
\\
&=\del_{\alp,-\bet}\om^{-(t+2)\alp_1+(t+1)\alp_2+\frac{3}{2}t+2}d_p(u_{(\alp_1-1,\alp_2)}\ot u_{(-\alp_1+1,-\alp_2)})
% \\
% &=\del_{\alp,-\bet}\om^{-(t+2)\frac{1}{2}\alp_1(\alp_1+1)+\alp_1\pa{(t+1)\alp_2+\frac{3}{2}t+2}}d_p(u_{(\alp_1-\alp_1,\alp_2)}\ot u_{(-\alp_1+\alp_1,-\alp_2)})
\\
&=\del_{\alp,-\bet}\om^{-(t+2)\frac{1}{2}\alp_1(\alp_1+1)+\alp_1\pa{(t+1)\alp_2+\frac{3}{2}t+2}}d_p(u_{(0,\alp_2)}\ot u_{(0,-\alp_2)})
\\
&=\del_{\alp,-\bet}\om^{-\frac{1}{2}(t+2)\alp_1^2+(t+1)\alp_1\pa{\alp_2+1}}d_p(u_{(0,\alp_2)}\ot u_{(0,-\alp_2)}).
\end{split}
\end{align}

Now we use the second equality of Lemma \ref{syst.d_p} in the following way
\begin{align}\begin{split}
\label{second.eq.d_p}
d_p(u_{(0,\alp_2)}\ot u_{(0,-\alp_2)})&=d_p(XY^{-1}\ot Y)(u_{(0,\alp_2)}\ot u_{(0,-\alp_2)})
\\
&=\om d_p(u_{(-1,\alp_2-1)}\ot u_{(1,-\alp_2+1)})
\\%\overset{\eqref{third.eq.d_p}}{=}
&\stackrel{\mathclap{\eqref{third.eq.d_p}}}{=}\om^{-(t+1)\alp_2-\frac{1}{2}t}d_p(u_{(0,\alp_2-1)}\ot u_{(0,-\alp_2+1)})
% \\
% &=\om^{-\frac{1}{2}(t+1)\alp_2(\alp_2+1)-\frac{1}{2}t\alp_2}d_p(u_{(0,\alp_2-\alp_2)}\ot u_{(0,-\alp_2+\alp_2)})
\\
&=\om^{-\frac{1}{2}(t+1)\alp_2(\alp_2+1)-\frac{1}{2}t\alp_2}d_p(u_{(0,0)}\ot u_{(0,0)})
\\
&=\om^{-\frac{1}{2}(t+1)\alp_2^2-\frac{1}{2}(2t+1)\alp_2}d_p(u_{(0,0)}\ot u_{(0,0)}).
\end{split}\end{align}
Using equalities \eqref{third.eq.d_p} and \eqref{second.eq.d_p}, we get 
\begin{align*}
d_p(u_\alp\ot u_\bet)&=\del_{\alp,-\bet}\om^{-\frac{1}{2}(t+2)\alp_1^2+(t+1)\alp_1\pa{\alp_2+1}}d_p(u_{(0,\alp_2)}\ot u_{(0,-\alp_2)})
\\
&=\del_{\alp,-\bet}\om^{-\frac{1}{2}(t+2)\alp_1^2+(t+1)\alp_1\pa{\alp_2+1}}\om^{-\frac{1}{2}(t+1)\alp_2^2-\frac{1}{2}(2t+1)\alp_2}d_p(u_{(0,0)}\ot u_{(0,0)})
\\
&=\del_{\alp,-\bet}\om^{-\frac{1}{2}\pa{(t+1)(\alp_1-\alp_2)(\alp_1-\alp_2-2)+\alp_1^2-\alp_2}}d_p(u_{(0,0)}\ot u_{(0,0)}).  
\end{align*}

Finally, we set $d_p(u_{(0,0)}\ot u_{(0,0)})=1$. The formula for $b_p$ is easily computed using formula (\ref{del.bp.dp}).
\end{proof}

\subsection{Two operator valued functions}
In order to construct bases for the multiplicity spaces, we need to introduce two operator valued functions. In the sequel, $\bbR\backslash\{0,1\}$ will be denoted by $\bbRzu$.
\smallskip

Let $\sf{U}\in\calA^{\ot 2}$ be an operator such that $\sfU^N=-\Id_{\calV^{\ot 2}}$ and $x\in\bbRzu$. The first function is the {\it quantum dilogarithm} $\Psi_{x}(\sfU)$ introduced by Faddeev and Kashaev \cite{faddeev1994quantum}. It is defined as a solution of the functional equation%. $a+1 \atopwithdelims [ ] b  $
%\begin{align}\label{psi.fun.eqn}
%\frac{\Psi_{p/q}(\om^{-1}E)}{\Psi_{p/q}(E)}=\frac{p^\frac{1}{N}-q^\frac{1}{N}E}{(p+q)^\frac{1}{N}}.
%\end{align}
\begin{align}\label{psi.fun.eqn}
\frac{\Psi_{x}(\om^{-1}\sfU)}{\Psi_{x}(\sfU)}=(1-x)^\frac{1}{N}-x^\frac{1}{N}\sfU.
\end{align}
Following \cite{geer2012tetrahedral}, we can write
\begin{align*}
\Psi_{x}(\sfU)=\sum_{\alp\in\bbZ_N}\psi_{x,\alp}(-\sfU)^\alp
\end{align*}
where $\psi_{x,\alp}\in\bbC$. For all $\alp\in\bbZ_N$, (\ref{psi.fun.eqn}) implies that 
%\begin{align*}
%\psi_{p/q,\alp}= \frac{-q^\frac{1}{N}}{p^\frac{1}{N}-(p+q)^\frac{1}{N}\om^{-\alp}} \psi_{p/q,\alp-1}.
%\end{align*}
\begin{align*}
\psi_{x,\alp}= \frac{x^\frac{1}{N}}{\om^{-\alp}-(1-x)^\frac{1}{N}} \psi_{x,\alp-1}.
\end{align*}
Then, for all $\alp\in\bbZ_N$, we write 
%\begin{align*}
%\psi_{p/q,\alp}=\prod_{j=1}^{\alp}\frac{-q^\frac{1}{N}}{p^\frac{1}{N}-(p+q)^\frac{1}{N}\om^{-j}} \psi_{p/q,0},
%\end{align*}
\begin{align}%\label{def.pti.psi}
\psi_{x,\alp}=\psi_{x,0}\prod_{j=1}^{\alp}\frac{x^\frac{1}{N}}{\om^{-j}-(1-x)^\frac{1}{N}},
\end{align}
where $\psi_{x,0}\in\bbC^*$ is chosen so that $\det \pa{\Psi_x(\sfU)}=1$. Using the notation of \cite{kashaev1993star}, we have 
\begin{align}\label{def.pti.psi}
\psi_{x,\alp}=\psi_{x,0}\om^{\frac{1}{2}\alp(\alp+1)}x^\frac{\alp}{N}w\pa{(1-x)^{\frac{1}{N}}\big|\alp}
\end{align}
%%%%%%%%%%%%%%%%%%%%%%%%%%%%%%%%%%%le calcul est fait la si jamais%%%%%%%%%%%%%%%%%%%%%%%%%%%%%%%%%%%%%%%%%%%%%%%

% For any $m\in\brkt{0,\cdots,N-1}$, we can rewrite equations \eqref{def.pti.psi} and \eqref{def.pti.bar.psi} using this funcion. Namely, we have 
% \begin{multline}\label{redef.pti.psi}
% \psi_{x,m}=\psi_{x,0}\prod_{j=1}^m\frac{x^{\frac{1}{N}}}{\om^{-j}-(1-x)^{\frac{1}{N}}}=\psi_{x,0}\om^{\frac{1}{2}m(m+1)}x^\frac{m}{N}\prod_{j=1}^m\frac{1}{1-(1-x)^{\frac{1}{N}}\om^j}
% \\
% =\psi_{x,0}\om^{\frac{1}{2}m(m+1)}x^\frac{m}{N}w((1-x)^{\frac{1}{N}}|m)
% \end{multline}
% and 
% \begin{multline}\label{redef.pti.bar.psi}
% \bar{\psi}_{x,m}=\bar{\psi}_{x,0}\prod_{j=1}^m\frac{x^{\frac{1}{N}}\om}{\om^{j}-(1-x)^{\frac{1}{N}}}=\bar{\psi}_{x,0}\pa{\frac{x}{x-1}}^\frac{m}{N}\om^m\prod_{j=1}^m\frac{1}{1-(1-x)^{-\frac{1}{N}}\om^j}
% \\
% =\bar{\psi}_{x,0}\pa{\frac{x}{x-1}}^\frac{m}{N}\om^mw((1-x)^{-\frac{1}{N}}|m)
% \end{multline}

%%%%%%%%%%%%%%%%%%%%%%%%%%%%%%%%%%%%%%%%%%%%%%%%%%%%%%%%%%%%%%%%%%%%%%%%%%%%%%%%%%
where
\begin{align*}
w(x|\alp)=\prod_{j=1}^\alp\frac{1}{1-x\om^j}, 
\end{align*}
is defined for all $x\in\bbC$ such that $x^N\neq 1$ and all $\alp\in\brkt{0,\cdots,N-1}\subset\bbZ$.
The computation of the determinant of $\Psi_x(\sfU)^N$ is made possible due to the following formula shown in \cite[formula (2.4)]{karemera2016quantum}.
\begin{align}\label{Psi.to.N}
\Psi_x(\sfU)^N=\psi^N_{x,0}(1-x)^{1-N}\frac{D(1)}{D\pa{(1-x)^{-\frac{1}{N}}}D\pa{\sfU\om\pa{\tfrac{x}{1-x}}^{\frac{1}{N}}}}
\end{align}
where 
\begin{align}\label{def.D}
D(x)=\displaystyle{\prod_{j=1}^{N-1}}(1-x\om^j)^j .
\end{align}
\begin{lemma}\label{det.Psi}
For any $x\in\bbRzu$ and any operator $\sfU\in\calA^{\ot2}$ such that there exists an invertible opertator $\sfM\in\calA^{\ot2}$ such that $\sfU=-\sfM(X\ot\Id_\calV)\sfM^{-1}$, 
we have 
\begin{align}\label{def.psi0}
\psi_{x,0}^N=(1-x)^{\frac{N-1}{2}}D\pa{\pa{1-x}^{-\frac{1}{N}}}D(1)^{-1} \quad \rar \quad \det\pa{\Psi_x(\sfU)^N}=1. 
\end{align}
\end{lemma}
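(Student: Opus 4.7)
The plan is to apply $\det$ to the factorization \eqref{Psi.to.N} of $\Psi_x(\sfU)^N$. Since $D(1)$ and $D((1-x)^{-1/N})$ are scalars, while $D\bigl(\sfU\omega(x/(1-x))^{1/N}\bigr)$ is an operator on $\calV^{\otimes 2}$ of dimension $N^4$, taking determinants gives
\begin{equation*}
\det\bigl(\Psi_x(\sfU)^N\bigr) = \left[\psi_{x,0}^N(1-x)^{1-N}\,\frac{D(1)}{D\bigl((1-x)^{-1/N}\bigr)}\right]^{N^4} \cdot \det\bigl(D(\sfU y)\bigr)^{-1},
\end{equation*}
where $y=\omega(x/(1-x))^{1/N}$. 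The whole argument then reduces to computing the single operator determinant $\det D(\sfU y)$ and checking that the stated formula for $\psi_{x,0}^N$ is precisely the one that cancels everything out.

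For this determinant I would exploit the conjugation hypothesis $\sfU = -\sfM(X\otimes\Id_\calV)\sfM^{-1}$, which implies $\det D(\sfU y) = \det D(-y\,X\otimes\Id_\calV)$. The operator $X\otimes\Id_\calV$ acts on $(\bbC^N)^{\otimes 4}$ with spectrum $\{\omega^{-\alpha}:\alpha\in\bbZ_N\}$, each eigenvalue having multiplicity $N^3$. Using $D(z)=\prod_{j=1}^{N-1}(1-z\omega^j)^j$ and reorganizing the product,
\begin{equation*}
\det D(\sfU y) = \prod_{j=1}^{N-1}\left[\prod_{\alpha\in\bbZ_N}(1+y\omega^{j-\alpha})\right]^{jN^3} = \prod_{j=1}^{N-1}\left[\prod_{\beta\in\bbZ_N}(1+y\omega^\beta)\right]^{jN^3}.
\end{equation*}

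The inner product collapses via the standard identity $\prod_{\beta\in\bbZ_N}(1-z\omega^\beta)=1-z^N$, which, specialized at $z=-y$ together with $N$ odd (a hypothesis recorded at the start of Section \ref{hopf.algebra.A_w,t.red.cycl.mod}), yields $\prod_\beta(1+y\omega^\beta)=1+y^N$. Since $y^N = \omega^N\cdot x/(1-x) = x/(1-x)$, one has $1+y^N = 1/(1-x)$. Using $\sum_{j=1}^{N-1}j=N(N-1)/2$, this gives $\det D(\sfU y) = (1-x)^{-N^4(N-1)/2}$.

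Substituting back and collecting powers of $(1-x)$ leaves
\begin{equation*}
\det\bigl(\Psi_x(\sfU)^N\bigr) = \left[\psi_{x,0}^N\,(1-x)^{-(N-1)/2}\,\frac{D(1)}{D\bigl((1-x)^{-1/N}\bigr)}\right]^{N^4},
\end{equation*}
so the bracket equals $1$ precisely when $\psi_{x,0}^N=(1-x)^{(N-1)/2}D((1-x)^{-1/N})D(1)^{-1}$, which is the claim. The only delicate step is the eigenvalue bookkeeping for $-X\otimes\Id_\calV$ on $\calV^{\otimes 2}$ (getting the multiplicity $N^3$ right) and tracking the sign convention in $\prod(1+y\omega^\beta)=1+y^N$; everything else is algebraic manipulation of scalar factors.
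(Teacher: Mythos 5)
Your proof is correct and follows essentially the same route as the paper: take determinants in the factorization \eqref{Psi.to.N}, reduce $\det D\pa{\sfU\om\pa{\tfrac{x}{1-x}}^{\frac{1}{N}}}$ via the conjugation hypothesis to the eigenvalues of $X\ot\Id_\calV$ (multiplicity $N^3$), collapse the cyclic product to $(1-x)^{-N^4(N-1)/2}$, and match powers of $(1-x)$. The only difference is cosmetic: the paper absorbs the minus sign into $\pa{\tfrac{x}{x-1}}^{\frac{1}{N}}$, whereas you keep it explicit and invoke $N$ odd in $\prod_{\bet}(1+y\om^\bet)=1+y^N$, which amounts to the same bookkeeping.
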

\begin{proof}
First we compute $\det\pa{D\pa{\sfX\om\pa{\tfrac{x}{x-1}}^{\frac{1}{N}}}}$ using the matrix form of $\sfX$ in the canonical basis  $\left\{v_\alp\right\}_{\alp\in\bbZ_N^2}$ of $\bbC^N$.
\begin{multline*}
 D\pa{\sfX\om\pa{\tfrac{x}{x-1}}^{\frac{1}{N}}}=\displaystyle{\prod_{j=1}^{N-1}}\pa{\Id_{\bbC^N}-\pa{\tfrac{x}{x-1}}^{\frac{1}{N}}\om^{j+1}\sfX}^j
\\
=
\displaystyle{\prod_{j=1}^{N-1}}
\begin{pmatrix}
 1-\pa{\tfrac{x}{x-1}}^{\frac{1}{N}}\om^{j+1} & & & \\
  & 1-\pa{\tfrac{x}{x-1}}^{\frac{1}{N}}\om^{j+2} & & \\
  & & \ddots & \\
  & & & 1-\pa{\tfrac{x}{x-1}}^{\frac{1}{N}}\om^{j+N}
\end{pmatrix}
^j
\end{multline*}
Hence we have 
\begin{align*}
\det\pa{D\pa{\sfX\om\pa{\tfrac{x}{x-1}}^{\frac{1}{N}}}} 
&=\displaystyle{\prod_{j=1}^{N-1}}\displaystyle{\prod_{i=1}^{N}}\pa{1-\pa{\tfrac{x}{x-1}}^{\frac{1}{N}}\om^{j+i}}^j 
\\
&=\displaystyle{\prod_{j=1}^{N-1}}\pa{1-\pa{\pa{\tfrac{x}{x-1}}^{\frac{1}{N}}\om^{j}}^N}^j
\\
&=\displaystyle{\prod_{j=1}^{N-1}}\pa{1-\pa{\tfrac{x}{x-1}}}^j
=\displaystyle{\prod_{j=1}^{N-1}}\pa{\tfrac{1}{1-x}}^j
=\pa{\tfrac{1}{1-x}}^{\frac{N(N-1)}{2}}
\end{align*}
Since $\sfU=-\sfM(X\ot\Id_\calV)\sfM^{-1}=-\sfM(\sfX\ot\Id_{\bbC^N}\ot\Id_\calV)\sfM^{-1}$, we have 
\begin{align*}
\det\pa{D\pa{\sfU\om\pa{\tfrac{x}{1-x}}^{\frac{1}{N}}}}=\det\pa{D\pa{\sfX\om\pa{\tfrac{x}{x-1}}^{\frac{1}{N}}}}^{N^3}=(1-x)^{-\frac{N^4(N-1)}{2}}.
\end{align*}
Therefore, we have 
\begin{align*}
\det\pa{\Psi_x(\sfU)^N}&=\det\pa{\psi^N_{x,0}(1-x)^{1-N}D(1)D\pa{(1-x)^{-\frac{1}{N}}}^{-1}D\pa{\sfU\om\pa{\tfrac{x}{1-x}}^{\frac{1}{N}}}^{-1}}   
% \\
% &=\pa{\psi^N_{x,0}(1-x)^{1-N}D(1)D\pa{(1-x)^{-\frac{1}{N}}}^{-1}}^{N^4}\det\pa{D\pa{\sfU\om\pa{\tfrac{x}{1-x}}^{\frac{1}{N}}}}^{-1}
\\
&=\pa{\psi^N_{x,0}(1-x)^{1-N}D(1)D\pa{(1-x)^{-\frac{1}{N}}}^{-1}(1-x)^{\frac{N-1}{2}}}^{N^4}
\\
&=\pa{\psi^N_{x,0}(1-x)^{\frac{1-N}{2}}D(1)D\pa{(1-x)^{-\frac{1}{N}}}^{-1}}^{N^4}.
\end{align*}
So if $\psi^N_{x,0}=(1-x)^{\frac{N-1}{2}}D\pa{\pa{1-x}^{-\frac{1}{N}}}D(1)^{-1}$ then $\det\pa{\Psi_x(\sfU)^N}=1$.
\end{proof}
\noindent
From now on, for all $x\in\bbRzu$, we assume that $\psi_{x,0}$ satisfies \eqref{def.psi0} and is such that $\det\pa{\Psi_x(\sfU)}=1$. %and \eqref{def.bar.psi0} respectively.

The inverse of $\Psi_{x}(\sfU)$ that we will denote $\bar\Psi_{x}(\sfU)$ satisfies
\begin{align*}
\frac{\bar\Psi_{x}(\sfU)}{\bar\Psi_{x}(\om^{-1}\sfU)}=(1-x)^\frac{1}{N}-x^\frac{1}{N}\sfU \quad\quad \text{and} \quad\quad \Psi_{x}(\sfU)\bar\Psi_{x}(\sfU)=1.
\end{align*}
Setting
\begin{align*}
\bar\Psi_{x}(\sfU)=\sum_{\alp\in\bbZ_N}\bar\psi_{x,\alp}(-\sfU)^\alp
\end{align*}
we find, in a similar way, that for all $\alp\in\bbZ_N$
\begin{align}\label{def.pti.bar.psi}
\bar\psi_{x,\alp}=\bar{\psi}_{x,0}\pa{\tfrac{x}{x-1}}^\frac{\alp}{N}\om^\alp w\pa{(1-x)^{-\frac{1}{N}}\big|\alp}%\bar\psi_{x,0}\prod_{j=1}^{\alp}\frac{\om x^\frac{1}{N}}{\om^{j}-(1-x)^\frac{1}{N}},
\end{align}
where $\bar\psi_{x,0}\in\bbC^*$ is chosen according to $\psi_{x,0}\in\bbC^*$. In order to make this choice, we use following Lemma.% shown in \cite{karemera2016quantum}%formula, similar to the formula \eqref{Psi.to.N}. 
%
% \begin{align}\label{bar.Psi.to.N}
% \bar{\Psi}_x(\sfU)^N=\bar{\psi}^N_{x,0}N^{-N}(-x)^{-\frac{N-1}{2}}\frac{D(1)^3}{D\pa{(1-x)^{\frac{1}{N}}}D\pa{\sfU^{-1}\pa{\tfrac{1-x}{x}}^{\frac{1}{N}}}}
% \end{align}
%
% This formula and the following Lemma were shown in \cite{karemera2016quantum} \blue{donne le lemme}.
%\magenta{\begin{notation} For any $x,y\in\bbC$ such that $x^N=y^N$, we will write $x\equiv y$.\end{notation} }

\begin{lemma}[\cite{karemera2016quantum}, Lemma 2.3]\label{det.bar.Psi}
There exists $a\in\bbZ_N$ such that for any $x\in\bbRzu$ and any operator $\sfU\in\calA^{\ot2}$ such that $\sfU^N=-\Id_{\calV^{\ot2}}$ we have 
\begin{align}\label{def.bar.psi0}
\bar{\psi}_{x,0} =\om^a \ro\pa{\pa{1-x}^{\frac{1}{N}}}\psi_{x,0}^{-1} \quad \rar \quad \Psi_{x}(\sfU)\bar\Psi_{x}(\sfU)=1 
\end{align}
where 
\begin{align*}%\label{def.ro}
\ro(x)=N^{-1}\frac{1-x^N}{1-x}.%=N^{-1}(N)_x
\end{align*}
\end{lemma}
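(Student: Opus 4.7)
My plan is to show in two stages that $\Psi_x(\sfU)\bar\Psi_x(\sfU)$ can be made equal to the identity by an appropriate choice of $\bar\psi_{x,0}$: first, that this product is automatically a scalar operator; and second, that the correct scalar, extracted from a determinant identity, has the stated shape. The first stage rests only on the functional equations: since $N$ is odd and $\sfU^N=-\Id_{\calV^{\ot 2}}$, one has $(-\sfU)^N=\Id$, so both $\Psi_x(\sfU)$ and $\bar\Psi_x(\sfU)$ lie in the commutative subalgebra generated by $\sfU$, and the two functional equations combine into
\begin{align*}
\Psi_x(\om^{-1}\sfU)\bar\Psi_x(\om^{-1}\sfU) = \pa{(1-x)^{\frac{1}{N}}-x^{\frac{1}{N}}\sfU}\,\Psi_x(\sfU)\bar\Psi_x(\om^{-1}\sfU) = \Psi_x(\sfU)\bar\Psi_x(\sfU).
\end{align*}
Writing $f_x(\sfU)=\Psi_x(\sfU)\bar\Psi_x(\sfU)=\sum_{k\in\bbZ_N}c_k(-\sfU)^k$, the invariance $f_x(\om^{-1}\sfU)=f_x(\sfU)$ forces $(1-\om^{-k})c_k=0$ for every $k$, so $c_k=0$ for $k\neq 0$ and hence $f_x(\sfU)=c_0\,\Id_{\calV^{\ot 2}}$, with $c_0$ depending on $x$ only through $\psi_{x,0}$ and $\bar\psi_{x,0}$.

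For the second stage I would identify $c_0$ through its determinant. Since $\det\Psi_x(\sfU)=1$ by Lemma \ref{det.Psi}, taking determinants yields $c_0^{N^4}=\det\bar\Psi_x(\sfU)$. Running the functional-equation argument that led to \eqref{Psi.to.N}, but with $\bar\Psi_x$ replacing $\Psi_x$, gives a closed formula for $\bar\Psi_x(\sfU)^N$ in terms of $\bar\psi_{x,0}^N$ and essentially the same operator $D(\sfU\om(x/(1-x))^{1/N})$ that appears in \eqref{Psi.to.N}, whose determinant was already computed in the proof of Lemma \ref{det.Psi}. Consolidating these ingredients via the identity $\ro(y)^N = N^{-N}(1-y^N)^N(1-y)^{-N}$ reduces the equation $c_0^{N^4}=\det\bar\Psi_x(\sfU)$, after imposing $c_0=1$, to the $N$-th power relation $(\psi_{x,0}\bar\psi_{x,0})^N = \ro((1-x)^{\frac{1}{N}})^N$. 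Extracting $N$-th roots then yields the stated formula $\bar\psi_{x,0}=\om^a\,\ro((1-x)^{\frac{1}{N}})\,\psi_{x,0}^{-1}$ for some $a\in\bbZ_N$.

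The main obstacle is precisely this final $N$-th root extraction: the determinant method determines the product $\psi_{x,0}\bar\psi_{x,0}$ only up to an $N$-th root of unity, which is the source of the unspecified phase $\om^a$ in the statement. Pinning $a$ down explicitly would require directly evaluating the cyclic $q$-series $c_0=\sum_{\alpha\in\bbZ_N}\psi_{x,\alpha}\bar\psi_{x,-\alpha}$ via the closed forms \eqref{def.pti.psi} and \eqref{def.pti.bar.psi}; this amounts to a cyclic quantum-dilogarithm inversion identity, but is not needed elsewhere in the paper, which is why the statement records only the existence of such an $a$.
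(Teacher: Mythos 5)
You should first note that the paper itself gives no proof of this statement: it is imported verbatim from \cite{karemera2016quantum}, Lemma 2.3, where the normalizing constant is obtained by a direct computation; so your attempt has to stand on its own. Your Stage 1 is essentially right, and even a bit better than you state: the coefficient recursions underlying \eqref{psi.fun.eqn} and its barred analogue hold \emph{cyclically} (the wrap-around at $\alp=0$ follows from $\prod_{j=1}^{N-1}(\om^{-j}-y)=(1-y^N)/(1-y)$), so $\Psi_x(\sfU)\bar\Psi_x(\sfU)=c_0\,\Id_{\calV^{\ot2}}$ with $c_0=\sum_{\alp\in\bbZ_N}\psi_{x,\alp}\bar\psi_{x,-\alp}$ independent of $\sfU$. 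One small repair: for an arbitrary $\sfU$ with $\sfU^N=-\Id_{\calV^{\ot2}}$ the powers of $\sfU$ need not be linearly independent (e.g.\ $\sfU$ a scalar), so the step ``$(1-\om^{-k})c_k=0$'' should be run on coefficients (a formal variable, or one nondegenerate $\sfU$, and then transferred, since the $c_k$ do not involve $\sfU$).

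The genuine gap is in Stage 2, and it comes from a misreading of the quantifiers. Granting the (only sketched) barred analogue of \eqref{Psi.to.N} and the determinant evaluation of Lemma \ref{det.Psi}, your argument gives $(\psi_{x,0}\bar\psi_{x,0})^N=\ro\pa{(1-x)^{1/N}}^N$ as a \emph{necessary} condition for $c_0=1$; to turn this into the stated implication you also need that for each $x$ there is exactly one value of $\bar\psi_{x,0}$ with $c_0=1$ (i.e.\ that the cyclic sum does not vanish), which you do not address, though it is patchable via invertibility of $\Psi_x(\sfU)$. More seriously, this route only produces, for each $x$, some $a(x)\in\bbZ_N$; the lemma asserts a \emph{single} $a\in\bbZ_N$ valid for every $x\in\bbRzu$ (``there exists $a$ such that for any $x$\dots''), and this uniformity is what later allows $x$-independent constants such as $\mathtt{a},\mathtt{b}$ in Proposition \ref{det.A.et.B}. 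A determinant argument cannot deliver it: even with a continuous choice of $\psi_{x,0}$, continuity only forces $a(x)$ to be constant on each of the three connected components of $\bbR\setminus\{0,1\}$, not globally. So the actual analytic content of the lemma --- that $\sum_{\alp}\psi_{x,\alp}\bar\psi_{x,-\alp}$ equals $1$ precisely for $\bar\psi_{x,0}=\om^{a}\ro\pa{(1-x)^{1/N}}\psi_{x,0}^{-1}$ with an $x$-independent $a$ --- is exactly the cyclic inversion identity you defer at the end; your closing claim that only the bare existence of ``such an $a$'' is needed conflates the pointwise statement with the uniform one, and the uniform one is what is being cited and used.
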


\noindent
From now on, for all $x\in\bbRzu$, we assume that $\bar{\psi}_{x,0}$ satisfies \eqref{def.bar.psi0}.

\smallskip
The second operator valued function, is defined by 
\begin{align}\label{def.L.operator}
L(\sfU,\sfV)=\frac{1}{N}\sum_{\alp,\bet\in\bbZ_N}\om^{-\alp\bet}\sfU^\alp\ot \sfV^\bet.
\end{align}
where $\sfU,\sfV\in\calA$ satisfy $\sfU^N=\sfV^N=\Id_\calV$. We derive two properties in the following Lemmas.
\begin{lemma}\label{prop.L1}
For any $\alp,\bet\in\bbZ_N$ and any $\sfU,\sfV\in\calA$ such that $\sfU^N=\sfV^N=\Id_\calV$, we have 
\begin{align*}
 L(\om^\alp \sfU,\om^\bet \sfV)
 =L(\sfU,\sfV)\om^{\alp\bet}\sfU^\bet\ot \sfV^\alp.
% =\om^{ij}\alp^j\ot\bet^iL(\alp,\bet).
\end{align*} 
\end{lemma}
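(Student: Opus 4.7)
The plan is to prove the identity by direct computation from the definition, using the fact that $\sfU^N=\sfV^N=\Id_\calV$ lets the sum be naturally indexed by $\bbZ_N$, and then performing a change of summation variables.

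First, to avoid a clash with the free indices $\alp,\bet$ in the statement, I would rename the dummy variables in the definition \eqref{def.L.operator}, writing
\[
L(\sfU,\sfV)=\frac{1}{N}\sum_{i,j\in\bbZ_N}\om^{-ij}\sfU^i\ot \sfV^j.
\]
Substituting $\om^\alp\sfU$ and $\om^\bet\sfV$ for $\sfU$ and $\sfV$ respectively, and using that scalars commute with everything, gives
\[
L(\om^\alp\sfU,\om^\bet\sfV)=\frac{1}{N}\sum_{i,j\in\bbZ_N}\om^{-ij+\alp i+\bet j}\sfU^i\ot \sfV^j.
\]

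Next I would expand the claimed right-hand side in the same form. Because $\sfU^N=\sfV^N=\Id_\calV$, multiplication by $\sfU^\bet\ot \sfV^\alp$ just shifts the exponents modulo $N$:
\[
L(\sfU,\sfV)\,\om^{\alp\bet}\sfU^\bet\ot \sfV^\alp
=\frac{1}{N}\sum_{i,j\in\bbZ_N}\om^{-ij+\alp\bet}\sfU^{i+\bet}\ot \sfV^{j+\alp}.
\]
Now perform the change of summation variables $i'=i+\bet$, $j'=j+\alp$ in $\bbZ_N$. The exponent transforms as
\[
-ij+\alp\bet=-(i'-\bet)(j'-\alp)+\alp\bet=-i'j'+\alp i'+\bet j',
\]
so the right-hand side becomes exactly $\frac{1}{N}\sum_{i',j'}\om^{-i'j'+\alp i'+\bet j'}\sfU^{i'}\ot \sfV^{j'}$, matching the expression for $L(\om^\alp\sfU,\om^\bet\sfV)$ obtained above.

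There is essentially no obstacle here; the statement is a combinatorial identity in the finite abelian group $\bbZ_N$, and the only thing to be careful about is that the change of variables is well-defined precisely because $\sfU$ and $\sfV$ have order dividing $N$, so the index set truly is $\bbZ_N$ and the shift by $\bet$ (resp.\ $\alp$) is a bijection on the summation range.
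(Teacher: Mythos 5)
Your proof is correct and is essentially the same argument as the paper's: a direct expansion of $L$ from its definition followed by a shift of the summation indices in $\bbZ_N$, justified by $\sfU^N=\sfV^N=\Id_\calV$. The only cosmetic difference is that the paper transforms the left-hand side step by step into the right-hand side, whereas you expand both sides and match them after the change of variables.
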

\begin{proof}
We compute
\begin{align*}
L(\om^\alp \sfU,\om^\bet \sfV) &=  \frac{1}{N}\sum_{\gamma,\del\in\bbZ_N}\om^{-\gamma\del+\alp\gamma+\bet\del}\sfU^\gamma\ot \sfV^\del
\\
&=\frac{1}{N}\sum_{\gamma,\del\in\bbZ_N}\om^{-\gamma(\del-\alp)+\bet\del}\sfU^\gamma\ot \sfV^{\del-\alp+\alp}
\\
&=  \frac{1}{N}\sum_{\gamma,\del\in\bbZ_N}\om^{-\gamma\del+\bet(\del+\alp)}\sfU^\gamma\ot \sfV^{\del}\sfV^\alp
\\
&=\frac{1}{N}\sum_{\gamma,\del\in\bbZ_N}\om^{-(\gamma-\bet)\del+\alp\bet}\sfU^{\gamma-\bet+\bet}\ot \sfV^{\del}\sfV^\alp
\\
&=  \frac{1}{N}\sum_{\gamma,\del\in\bbZ_N}\om^{-\gamma\del}\sfU^{\gamma}\ot \sfV^{\del}\om^{\alp\bet} \sfU^\bet\ot \sfV^\alp
\\
&=L(\sfU,\sfV)\om^{\alp\bet}\sfU^\bet\ot \sfV^\alp.
\end{align*}
\end{proof}

\begin{lemma}\label{det.L}
For any $\sfU\in\calA$ such that $\sfU^N=\Id_\calV$, we have 
\begin{align}
\det\pa{L(\sfU,X)}=1.
\end{align}
\end{lemma}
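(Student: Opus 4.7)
The plan is to simultaneously diagonalize the two operators $\sfU \ot \Id_\calV$ and $\Id_\calV \ot X$ acting on $\calV \ot \calV$, compute the spectrum of $L(\sfU, X)$ explicitly via a discrete Fourier identity, and then use the special form of $X = \sfX_1$ to show that the product of the eigenvalues is $1$.

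First I would observe that $\sfU \ot \Id_\calV$ and $\Id_\calV \ot X$ act on different tensor factors of $\calV \ot \calV$ and hence commute. Since $\sfU^N = \Id_\calV$ by hypothesis and $X^N = \Id_\calV$ by the remark following \eqref{def.el.calA}, both operators are diagonalizable with eigenvalues that are $N$-th roots of unity. I would then choose bases $\brkt{e_i}_{i=1}^{N^2}$ and $\brkt{f_j}_{j=1}^{N^2}$ of $\calV$ with $\sfU e_i = \om^{u_i} e_i$ and $X f_j = \om^{x_j} f_j$, where $u_i, x_j \in \bbZ_N$, so that $\brkt{e_i \ot f_j}$ is a simultaneous eigenbasis for every $\sfU^\alp \ot X^\bet$ and hence for $L(\sfU, X)$.

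The corresponding eigenvalue of $L(\sfU, X)$ is then evaluated using the finite Fourier identity $\frac{1}{N}\sum_{\alp \in \bbZ_N} \om^{\alp(u_i - \bet)} = \del_{\bet, u_i}$:
\[
L(\sfU, X)(e_i \ot f_j) = \frac{1}{N}\sum_{\alp,\bet \in \bbZ_N} \om^{-\alp\bet + \alp u_i + \bet x_j}\, e_i \ot f_j = \om^{u_i x_j}\, e_i \ot f_j.
\]
Taking the product of these eigenvalues yields
\[
\det L(\sfU, X) = \prod_{i,j} \om^{u_i x_j} = \om^{\left(\sum_i u_i\right)\left(\sum_j x_j\right)}.
\]

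Finally, I would exploit $X = \sfX_1 = \sfX \ot \Id_{\bbC^N}$ on $\calV = \bbC^N \ot \bbC^N$. By \eqref{commXY} the spectrum of $\sfX$ on $\bbC^N$ is $\brkt{\om^{-\alp}}_{\alp \in \bbZ_N}$, so $X$ on $\calV$ has these same $N$ eigenvalues, each with multiplicity $N$. Taking representatives $x_j \in \brkt{0, 1, \dots, N-1}$,
\[
\sum_j x_j = N \sum_{\alp=0}^{N-1} \alp = \frac{N^2(N-1)}{2},
\]
which is divisible by $N$. Hence $\left(\sum_i u_i\right)\left(\sum_j x_j\right) \equiv 0 \pmod{N}$, giving $\det L(\sfU, X) = 1$. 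The only point that requires some care is the Fourier collapse identifying the eigenvalues as $\om^{u_i x_j}$; the rest is essentially bookkeeping, relying on the fact that $X$ comes from a single $\bbC^N$ factor and so its spectral multiplicities are divisible by $N$.
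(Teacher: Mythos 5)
Your proof is correct, and it takes a somewhat different route from the paper's. The paper never diagonalizes $\sfU$: it conjugates $L(\sfU,X)$ by the flip $\sfM$, recognizes $\frac{1}{N}\sum_{\bet}\om^{-\alp\bet}X^\bet$ as the rank-$N$ spectral projections of $X=\sfX\ot\Id_{\bbC^N}$, and thus puts $\sfM L(\sfU,X)\sfM^{-1}$ in block-diagonal form with blocks $\Id_{\bbC^N}\ot\sfU^\alp$, so the determinant becomes a power of $\det(\sfU)$ whose exponent is a multiple of $N$ and hence equals $1$ by $\sfU^N=\Id_\calV$. You instead use $\sfU^N=\Id_\calV$ up front to diagonalize $\sfU$ as well, simultaneously diagonalize $\sfU\ot\Id_\calV$ and $\Id_\calV\ot X$, and compute the full spectrum of $L(\sfU,X)$ explicitly as $\brkt{\om^{u_ix_j}}$ via the same discrete Fourier collapse; the conclusion then follows because the eigenvalue exponents of $X$ sum to a multiple of $N$, precisely since $X$ lives on a single $\bbC^N$ factor and so has all spectral multiplicities equal to $N$. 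The two arguments hinge on the same two facts (the geometric-sum collapse and the multiplicity-$N$ structure of $X$), but yours yields slightly more — the complete spectrum of $L(\sfU,X)$ — while the paper's avoids any choice of eigenbasis for $\sfU$ and stays at the level of block operators, which is why it needs the hypothesis $\sfU^N=\Id_\calV$ only in the very last determinant step.
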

\begin{proof}
We consider the operators of $\calA$ in their matrix form in the basis $\brkt{v_\alp}_{\alp\in\bbZ_N^2}$ of $\calV$. For any $\bet\in\bbZ_N$, we compute
\begin{align*}
X^\bet=\sfX^\bet\ot\Id_{\bbC^N}
=\begin{pmatrix}
 \framebox{$\Id_{\bbC^N}$} & & & \\
  & \framebox{$\om^\bet\Id_{\bbC^N}$} & & \\
  & & \ddots & \\
  & & & \framebox{$\om^{\bet(N-1)}\Id_{\bbC^N}$} 
\end{pmatrix}.
\end{align*}
Thereby, for any $\alp\in\bbZ_N$, we have
\begin{align*}
\sum_{\bet\in\bbZ_N}\om^{-\alp\bet}X^\bet
 &=\sum_{\bet\in\bbZ_N}
\begin{pmatrix}
 \framebox{$\om^{-\alp\bet}\Id_{\bbC^N}$} & & & \\
  & \framebox{$\om^{\bet(1-\alp)}\Id_{\bbC^N}$} & & \\
  & & \ddots & \\
  & & & \framebox{$\om^{\bet(N-1-\alp)}\Id_{\bbC^N}$} 
\end{pmatrix}
\\
&=N\begin{pmatrix}
 \framebox{$\del_{\alp,0}\Id_{\bbC^N}$} & & & \\
  & \framebox{$\del_{\alp,1}\Id_{\bbC^N}$} & & \\
  & & \ddots & \\
  & & & \framebox{$\del_{\alp,N-1}\Id_{\bbC^N}$} 
\end{pmatrix}.
\end{align*}
Let $\sfM\in\calA^{\ot2}$ be the permutation matrix such that for all $\sfV,\sfW\in\calA$ we have 
$$\sfW\ot\sfV=\sfM\sfV\ot\sfW\sfM^{-1}.$$
Then we compute 
\begin{align*}
\sfM L(\sfU,X)\sfM^{-1}
\!&\!=\frac{1}{N}\sum_{\alp,\bet\in\bbZ_N}\om^{-\alp\bet}X^\bet\ot\sfU^\alp 
=\sum_{\alp\in\bbZ_N}\pa{\frac{1}{N}\sum_{\bet\in\bbZ_N}\om^{-\alp\bet}X^\bet\ot\sfU^\alp}
\\
&=\!\!\sum_{\alp\in\bbZ_N}\!\!
\begin{pmatrix}
 \framebox{$\del_{\alp,0}\Id_{\bbC^N}\ot\sfU^\alp$} & & & \\
  & \!\!\framebox{$\del_{\alp,1}\Id_{\bbC^N}\ot\sfU^\alp$} & & \\
  & & \!\!\ddots & \\
  & & & \!\!\framebox{$\del_{\alp,N-1}\Id_{\bbC^N}\ot\sfU^\alp$} 
\end{pmatrix}
\\
&=
\begin{pmatrix}
 \framebox{$\Id_{\bbC^N}\ot\Id_{\bbC^N}$} & & & \\
  & \framebox{$\Id_{\bbC^N}\ot\sfU$} & & \\
  & & \ddots & \\
  & & & \framebox{$\Id_{\bbC^N}\ot\sfU^{N-1}$} 
\end{pmatrix}.
\end{align*}
Finally we have 
\begin{align*}
\det\pa{L(\sfU,X)}=\det\pa{\sfM L(\sfU,X)\sfM^{-1}}=\det\pa{\sfU^{\frac{N(N-1)}{2}}}=1.
\end{align*}
\end{proof}

\subsection{Matrix dilogarithm and bases for the multiplicity spaces}
%\subsection[The operator $S(x)$... bases for mul spaces]{The operator \texorpdfstring{$S(x)$}{S(x)}}
%

Using the fonctions we just defined, we are going to define an operator $S(x)$, where $x\in\bbRzu$. This operator, which correspond to the $\mathfrak{sl}_3$ matrix dilogarithm defined by Kashaev in \cite[Exemple 5]{kashaev1999matrix}, plays a key role as it  will allow us to construct bases for the multiplicity spaces as well as the $6j$-symbols.
\begin{notation}
Let $\sfU$ and $\sfV$ be two operators such that $\sfU\sfV=\om^\alp \sfV\sfU$, where $\alp\in\bbZ_N$. We will write this relation in the following way 
\begin{center}
\includegraphics[width=1\linewidth]{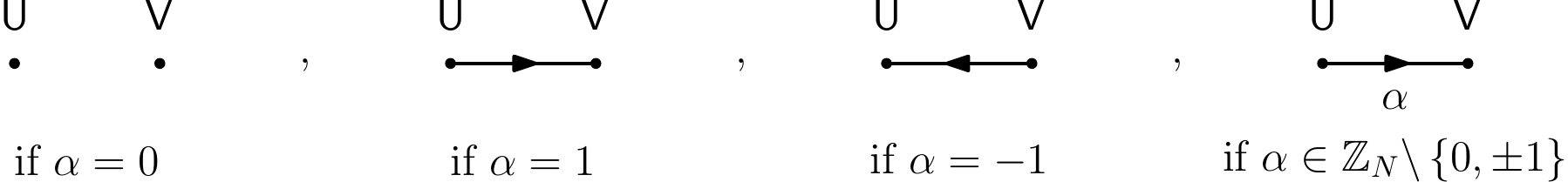}
\end{center} 
\end{notation}
\noindent
Note that in the case of operators $X,Y,U$ and $V$, we have the following relations%\footnote{These relations correspond to the ones in \cite[Example 5]{kashaev1999matrix}.}
\begin{center}
\includegraphics[width=0.28\linewidth]{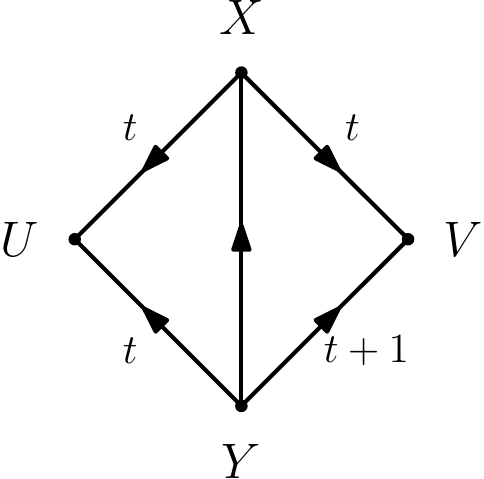}
\end{center}

We now consider, for any $x\in\bbRzu$, the following invertible operator valued function, which is the matrix dilogarithm given in \cite[Exemple 5]{kashaev1999matrix}, 
\begin{align*}
S(x)=\Psi_{x}(E)\Psi_{x}(F)\Psi_{x}(G)\Psi_{x}(H)L(U^tV,X)
\end{align*}
where
\begin{align}\label{def.EFGH}
E=-Y_1^{-1}X_1Y_2, \ \ F=U_1^{-1}X_1^{t+1}V_2E^{-1}, \ \ G=U_2V_2^{-1}F, \ \ H=U_1V_1^{-1}E.
\end{align}
and the subscripts show how the operators are embedded in $\calA^{\ot2}$.  These operators have the following commutation relations 
\begin{center}
\includegraphics[width=0.25\linewidth]{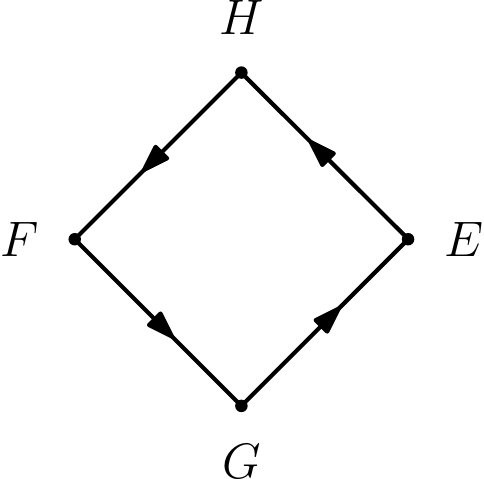}
\end{center}
Note that $E,F,G$ and $H$ satisfy the condition of Lemma \ref{det.bar.Psi}. Thereby, using Lemma \ref{det.L}, we see that the operator $S(x)$ is invertible since $\det\pa{S(x)^N}=1$.
Moreover, since
\begin{align*}
L(U^tV,X)^{-1}=L(U^{-t}V^{-1},X),
\end{align*} 
we have
\begin{align*}
S(x)^{-1}=L(U^{-t}V^{-1},X)\bar\Psi_{x}(H)\bar\Psi_{x}(G)\bar\Psi_{x}(F)\bar\Psi_{x}(E).
\end{align*}

\begin{rem}
The quantum dilogarithm of Fadeev and Kashaev defined in \cite{faddeev1994quantum} correspond to an $\mathfrak{sl}_2$ analogue of the operator $S(x)$ (see  Exemple 4 in \cite{kashaev1999matrix}). Kashaev's invariants as well as Baseilhac and Benedetti's invariants are based on the latter.
\end{rem}

%%%%%%%%%%%%%%%%%%%%%%%%%%%%%%%%%%%%%%%%%%%%%%%%%%%%%%%%%%%%%%%%%%%%%%%%%%%%%%%%%%%%%%%%%%%%%%%%%%%%%%%%%%%%%%%%%%%%%%%%%%%%%%%%%%%%%%%%%%%%%%%%%%%%%%%%%%%%

\begin{proposition}\label{prop.base.Hpq}
For any admissible pair $(p,q)\in(\bbRz)^2$, the following equation is satisfied %$S\left(\frac{q}{p+q}\right)$ satisfies the equation
\begin{align}\label{S.act.commut}
(\pi_p\ot\pi_q)\Del(a)=S\left(\tfrac{q}{p+q}\right)(\pi_{p+q}(a)\ot\Id_{\calA})S\left(\tfrac{q}{p+q}\right)^{-1}, \ \ \forall a\in A_{\om,t}
\end{align}
\end{proposition}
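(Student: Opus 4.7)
Both sides of \eqref{S.act.commut} define algebra homomorphisms from $\calA_{\om,t}$ to $\End(\calV \ot \calV)$ as $a$ varies: the left-hand side since $(\pi_p \ot \pi_q)\Del$ is a composition of algebra maps, and the right-hand side since conjugation by the invertible operator $S(q/(p+q))$ preserves products. It is therefore sufficient to verify \eqref{S.act.commut} for the three generators $k_1, e_1, e_2$ of $\calA_{\om,t}$.

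The verification proceeds by direct computation on each generator, using two principal tools. First, the Weyl commutation relations between $X, Y, U, V$ (and their products) and $E, F, G, H$, as encoded in the two commutation diagrams; these, together with the general fact that $\sfB f(\sfA)\sfB^{-1} = f(\om^k\sfA)$ whenever $\sfB\sfA = \om^k\sfA\sfB$, control the conjugation of any operator by a $\Psi_x(\cdot)$ factor. Second, the defining functional equation of the quantum dilogarithm, which for a Weyl pair of scaling exactly $\om$ yields the conjugation identity
\begin{align*}
\Psi_x(\sfA)\,\sfB\,\Psi_x(\sfA)^{-1} = \sfB\bigl((1-x)^{1/N} - x^{1/N}\sfA\bigr), \qquad \text{when } \sfB\sfA = \om\sfA\sfB.
\end{align*}
With $x = q/(p+q)$, the numerical identities $(1-x)^{1/N}(p+q)^{1/N} = p^{1/N}$ and $x^{1/N}(p+q)^{1/N} = q^{1/N}$ show that this conjugation implements precisely the additive two-term splittings in $\Del(e_1) = e_1\ot 1 + k_1\ot e_1$ and $\Del(e_2) = e_2\ot 1 + k_1^t\ot e_2$. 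Lemma \ref{prop.L1} plays the analogous role for the factor $L(U^tV, X)$.

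For $a = k_1$, only multiplicative $\om$-scalars are collected from each of the five conjugations, and they combine to send $X_1 \mapsto X_1 X_2$. For $a = e_1$, the crucial input is that $Y_1 E = -X_1 Y_2$ together with the Weyl relation $Y_1 E = \om E Y_1$: the displayed identity then yields $\Psi_x(E)Y_1\Psi_x(E)^{-1} = (1-x)^{1/N}Y_1 + x^{1/N}X_1Y_2$, which after pushing through the remaining factors and multiplying by $(p+q)^{1/N}$ produces $(\pi_p\ot\pi_q)\Del(e_1) = p^{1/N}Y_1 + q^{1/N}X_1Y_2$. The divisibility $N\mid t^2+t+1$ is used here to guarantee that $Y$ commutes with $U^tV$, which in turn makes $Y_1$ commute with $L(U^tV, X)$.

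The main technical obstacle is the case $a = e_2$, where the operator $(U_1+V_1)Y_1^{-1}$ must be pushed through all four $\Psi_x$-factors and through $L(U^tV, X)$. Here the conjugation identity is invoked at both $\Psi_x(F)$ and $\Psi_x(G)$, whose arguments are precisely engineered so that $U_1Y_1^{-1}$ and $V_1Y_1^{-1}$ form Weyl pairs of scaling $\om$ with $F$ and $G$ respectively, thereby delivering the required two-term splits for each summand. The proof reduces to careful bookkeeping: one must simultaneously track the coefficients $(1-x)^{1/N}$ and $x^{1/N}$ introduced at each invocation, the $\om$-scalars picked up by non-commutations with the remaining factors, and the contributions from $L(U^tV, X)$, and verify that these combine to produce exactly $(p/2)^{1/N}(U_1+V_1)Y_1^{-1} + (q/2)^{1/N}X_1^t(U_2+V_2)Y_2^{-1}$, with the extra factor $X_1^t$ emerging from the residual non-commutations. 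The structural reason the calculation closes is that the operators $E, F, G, H$ in Kashaev's $\mathfrak{sl}_3$ matrix dilogarithm are chosen precisely so that the functional equation of $\Psi_x$ encodes the coproduct of $\calA_{\om,t}$.
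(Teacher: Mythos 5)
Your reduction to generators and your treatment of $k_1$ and $e_1$ are correct and essentially coincide with the paper's computation. The genuine gap is the case of the third generator $e_2$, which is exactly where the paper deliberately takes a different route. First, the commutation facts your mechanism rests on are wrong. Using $N\mid t^2+t+1$ one finds (and this is what the paper's commutation diagrams record and its $a_1,a_2$ computations use): $U_1$ commutes with $E$ and $H$ and satisfies $U_1F=\om FU_1$, $U_1G=\om GU_1$; $V_1$ commutes with $F$ and $G$ and satisfies $V_1E=\om EV_1$, $V_1H=\om HV_1$; and $Y_1$ commutes with $F,G,H$ with $Y_1E=\om EY_1$. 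Consequently $V_1Y_1^{-1}$ commutes with $E$, $F$ and $G$, and its only scaling-$\om$ partner among the arguments of the $\Psi_{x}$-factors is $H$, while $U_1Y_1^{-1}$ has scaling $\om$ with both $F$ and $G$ and scaling $\om^{-1}$ with $E$. So the advertised scheme --- one invocation of \eqref{psi.fun.eqn} per summand, at $\Psi_{x}(F)$ for $U_1Y_1^{-1}$ and at $\Psi_{x}(G)$ for $V_1Y_1^{-1}$, ``delivering the required two-term splits'' --- is not available; in particular the scaling-$\om^{-1}$ relation of $U_1Y_1^{-1}$ with $E$ forces the functional equation in the inverse direction and introduces a non-polynomial factor.

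Second, and more fundamentally, no bookkeeping can make each summand separately produce a short split: $S(x)\,U_1Y_1^{-1}\,S(x)^{-1}=\bigl(S(x)U_1S(x)^{-1}\bigr)\bigl(S(x)Y_1S(x)^{-1}\bigr)^{-1}$, and $S(x)Y_1S(x)^{-1}=(1-x)^{\frac{1}{N}}Y_1+x^{\frac{1}{N}}X_1Y_2$, whose inverse is a sum of $N$ monomials; the conjugates of $U_1Y_1^{-1}$ and of $V_1Y_1^{-1}$ are therefore long expressions individually, and only their \emph{sum} collapses to the four monomials of $(\pi_p\ot\pi_q)\Del(e_2)$. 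This is precisely why the paper does not verify \eqref{S.act.commut} on $e_2$ directly: it checks it on $k_1,e_1,a_1,a_2$, using $e_2e_1=a_1+a_2$ and the invertibility of the image of $e_1$ to recover the $e_2$ case. The conjugated operators are then the single monomials $U_1$ and $V_1$ (no $Y_1^{-1}$), and two applications of \eqref{psi.fun.eqn} together with the Weyl relations among $E,F,G,H$ themselves (e.g.\ $FG=\om GF$, $HF=\om FH$) reproduce the three-term coproducts \eqref{Del.a}. To repair your argument, either switch to the paper's set $\{k_1,e_1,a_1,a_2\}$, or keep $e_2$ but prove the equivalent identity obtained by multiplying on the right by $S(x)Y_1S(x)^{-1}$, i.e.\ that the conjugate of $U_1+V_1$ equals the claimed expression for $(\pi_p\ot\pi_q)\Del(e_2)$ times $(1-x)^{\frac{1}{N}}Y_1+x^{\frac{1}{N}}X_1Y_2$ (up to scalars) --- which is again, in substance, the $a_1,a_2$ computation.
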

\begin{proof}
Since $e_2e_1=a_1+a_2$, it is enough to check equation \eqref{S.act.commut} for $a\in\lac k_1,e_1, a_1, a_2\rac$. In the following computaions, we set $x=\tfrac{q}{p+q}\in\bbRzu$ and we will thus write $\frac{q-xq}{x}$ instead of $p$ and $\frac{q}{x}$ instead of $p+q$. 
\vspace{0.1cm}

\underline{For $a=k_1$:} Since $X U^tV= \om^{-1}U^tVX$, we have, by Lemma \ref{prop.L1},
\begin{align*}
	L(U^tV,X)X_1 &=\frac{1}{N}\sum_{\alp,\bet\bbZ_N}\om^{-\alp\bet}X_1(\om U_1^tV_1)^\alp X_2^\bet 
	\\
	&=X_1L(\om U^tV,X)
	%\\
	=X_1X_2L(U^tV,X).
\end{align*}
Hence, since $X_1X_2$ commutes with $E,F,G$ and $H$, we have, using \eqref{act.Awt}
\begin{align*}
	S(x)(\pi_{\frac{q}{x}}(k_1)\ot\Id_{\calA}) &= \Psi_{x}(E)\Psi_{x}(F)\Psi_{x}(G)\Psi_{x}(H)L(U^tV,X)X_1 
	\\
	 &= \Psi_{x}(E)\Psi_{x}(F)\Psi_{x}(G)\Psi_{x}(H)X_1X_2L(U^tV,X)
	 \\
	 &= X_1X_2\Psi_{x}(E)\Psi_{x}(F)\Psi_{x}(G)\Psi_{x}(H)L(U^tV,X) 
	 \\
	 &= \left(\pi_{\frac{q-xq}{x}}\ot\pi_q\right)\Del(k_1)S(x).
\end{align*}
	
\underline{For $a=e_1$:} Since $Y_1$ commutes with $F,G, H$ and $U_1^tV_1$ and that $Y_1E=\om EY_1$ we have, using \eqref{act.Awt} and equation \eqref{psi.fun.eqn}
\begin{align*}
	S(x)(\pi_{\frac{q}{x}}(e_1)\ot\Id_{\calA}) &= \Psi_{x}(E)\Psi_{x}(F)\Psi_{x}(G)\Psi_{x}(H)L(U^tV,X)\pa{\tfrac{q}{x}}^{\frac{1}{N}}Y_1 
	\\
	 &= \pa{\tfrac{q}{x}}^{\frac{1}{N}}Y_1\Psi_{x}(\om^{-1}E)\Psi_{x}(F)\Psi_{x}(G)\Psi_{x}(H)L(U^tV,X) 
	\\
	 &= \pa{\tfrac{q}{x}}^{\frac{1}{N}}Y_1\left(\pa{1-x}^{\frac{1}{N}}-x^\frac{1}{N}E\right)S(x) 
	 \\
	 &= \left(\pa{\tfrac{q-xq}{x}}^\frac{1}{N}Y_1+q^\frac{1}{N}X_1Y_2\right)S(x)
	 \\
	 &= \left(\pi_{\frac{q-xq}{x}}\ot\pi_q\right)\Del(e_1)S(x).
\end{align*}

\underline{For $a=a_1$:} Since $U_1$ commutes with $U_1^tV_1$ and has the following commutation relations 
\begin{center}
\includegraphics[width=0.25\linewidth]{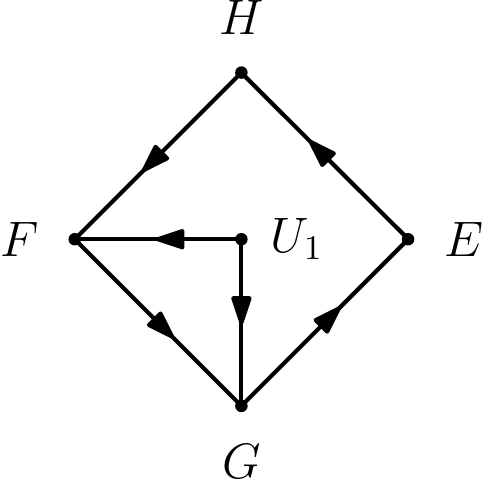}
\end{center}
we have, 
\begin{align*}
	S(x)(\pi_{\frac{q}{x}}(a_1)\ot\Id_{\calA})&= \Psi_{x}(E)\Psi_{x}(F)\Psi_{x}(G)\Psi_{x}(H)L(U^tV,X)\left(\tfrac{q^2}{2x^2}\right)^{\frac{1}{N}}U_1 
	\\
	&= \left(\tfrac{q^2}{2x^2}\right)^{\frac{1}{N}}U_1\Psi_{x}(E)\Psi_{x}(\om^{-1}F)\Psi_{x}(\om^{-1}G)\Psi_{x}(H)L(U^tV,X).
\end{align*}
Now, using equality \eqref{psi.fun.eqn}, we compute
\begin{align*}
	&\Psi_{x}(E)\Psi_{x}(\om^{-1}F)\Psi_{x}(\om^{-1}G)=\Psi_{x}(E)\Psi_{x}(\om^{-1}F)\left((1-x)^{\frac{1}{N}}-x^\frac{1}{N}G\right)\Psi_{x}(G)
	\\
	&=\Psi_{x}(E)\left((1-x)^{\frac{1}{N}}\Psi_{x}(\om^{-1}F)-x^\frac{1}{N}G\Psi_{x}(F)\right)\Psi_{x}(G)
	\\
	&=\left((1-x)^{\frac{1}{N}}\left((1-x)^{\frac{1}{N}}-x^\frac{1}{N}F\right)\Psi_{x}(E)-x^\frac{1}{N}G\Psi_{x}(\om^{-1}E)\right) \Psi_{x}(F)\Psi_{x}(G)
	\\
	&=\left((1-x)^\frac{2}{N}-(x-x^2)^\frac{1}{N}F-x^\frac{1}{N}G\left((1+x)^{\frac{1}{N}}-x^\frac{1}{N}E\right)\right)\Psi_{x}(E)\Psi_{x}(F)\Psi_{x}(G)
	\\
	&=\left((1-x)^\frac{2}{N}+x^\frac{2}{N}GE-(x-x^2)^\frac{1}{N}(F+G)\right)\Psi_{x}(E)\Psi_{x}(F)\Psi_{x}(G).
\end{align*}
Therefore, we finally have, using \eqref{def.EFGH} and \eqref{pi.a}
\begin{align*}
	&S(x)(\pi_{\frac{q}{x}}(a_1)\ot\Id_\calV) 
	= \left(\tfrac{q^2}{2x^2}\right)^{\frac{1}{N}}U_1\Psi_{x}(E)\Psi_{x}(\om^{-1}F)\Psi_{x}(\om^{-1}G)\Psi_{x}(H)L(U^tV,X)
	\\
	&=\left(\tfrac{q^2}{2x^2}\right)^{\frac{1}{N}}U_1\left((1-x)^\frac{2}{N}+x^\frac{2}{N}GE-(x-x^2)^\frac{1}{N}(F+G)\right)S(x)
	\\
	& =\left(\left(\tfrac{1}{2}(\tfrac{q-xq}{x})^2\right)^\frac{1}{N}U_1+\left(\tfrac{1}{2}q^2\right)^\frac{1}{N}U_1GE-\left(\tfrac{q^2-xq^2}{2x}\right)^\frac{1}{N}U_1(F+G)\right)S(x) 
	 \\
	& = \left(\left(\tfrac{1}{2}(\tfrac{q-xq}{x})^2\right)^\frac{1}{N}U_1+\left(\tfrac{1}{2}q^2\right)^\frac{1}{N}X_1^{t+1}U_2-\left(\tfrac{q^2-xq^2}{2x}\right)^\frac{1}{N}X_1^tY_1(U_2+V_2)Y_2^{-1}\right)S(x) 
	 \\
	& = \left(\pi_{\frac{q-xq}{x}}\ot \pi_q\right)\left( a_1\ot 1 + k_1^{t+1}\ot a_1 + k_1^te_1\ot e_2 \right)S(x) 
	\\
	&= \left(\pi_{\frac{q-xq}{x}}\ot\pi_q\right)\Del(a_1)S(x).
\end{align*}

\underline{For $a=a_2$:} Since $V_1$ commutes with $U_1^tV_1$ and has the following commutation relations %

\begin{center}
\includegraphics[width=0.25\linewidth]{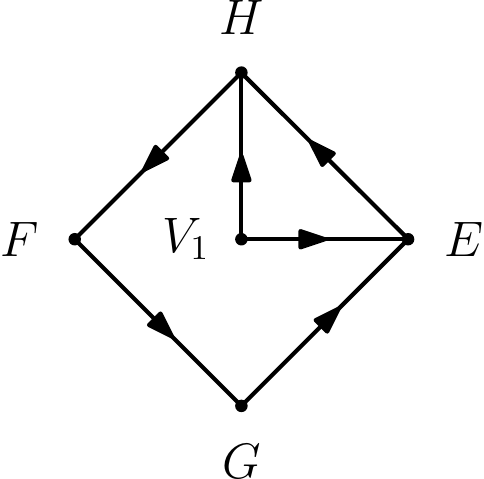}
\end{center}
we have, 
\begin{align*}
	S(x)(\pi_{\frac{q}{x}}(a_2)\ot\Id_\calV) &= \Psi_{x}(E)\Psi_{x}(F)\Psi_{x}(G)\Psi_{x}(H)L(U^tV,X)\left(\tfrac{q^2}{2x^2}\right)^{\frac{1}{N}}V_1 
	\\
	& = \left(\tfrac{q^2}{2x^2}\right)^{\frac{1}{N}}V_1\Psi_{x}(\om^{-1}E)\Psi_{x}(F)\Psi_{x}(G)\Psi_{x}(\om^{-1}H)L(U^tV,X) 
	\\
	& =\left(\tfrac{q^2}{2x^2}\right)^{\frac{1}{N}}V_1\Psi_{x}(\om^{-1}E)\Psi_{x}(F)\Psi_{x}(\om^{-1}H)\Psi_{x}(G)L(U^tV,X).
	\end{align*}
Now, using equality \eqref{psi.fun.eqn}, we compute 
\begin{align*}
\Psi_{x}(F)\Psi_{x}(\om^{-1}H) &=\Psi_{x}(F)\left((1-x)^\frac{1}{N}-x^\frac{1}{N}H\right)\Psi_{x}(H) 
	\\
&	=\left((1-x)^\frac{1}{N}\Psi_{x}(F)-x^\frac{1}{N}H\Psi_{x}(\om^{-1}F)\right)\Psi_{x}(H) 
	\\
&	=\left((1-x)^\frac{1}{N}\Psi_{x}(F)-x^\frac{1}{N}H\left((1-x)^\frac{1}{N}-x^\frac{1}{N}F\right)\Psi_{x}(F)\right)\Psi_{x}(H) 
	\\
&	=\left((1-x)^\frac{1}{N}+x^\frac{2}{N}HF-(x-x^2)^\frac{1}{N}H\right)\Psi_{x}(F)\Psi_{x}(H).
	\end{align*}
Hence, using equality \eqref{psi.fun.eqn} again and the previous equality, we have 
\begin{align*}
&\Psi_{x}(\om^{-1}E)\Psi_{x}(F)\Psi_{x}(\om^{-1}H) 
\\
&=\Psi_{x}(\om^{-1}E)\left((1-x)^\frac{1}{N}+x^\frac{2}{N}HF-(x-x^2)^\frac{1}{N}H\right)\Psi_{x}(F)\Psi_{x}(H).
\\
&=\left((1-x)^\frac{1}{N}\left((1-x)^\frac{1}{N}-x^\frac{1}{N}E\right)+x^\frac{2}{N}HF-(x-x^2)^\frac{1}{N}H\right)\Psi_{x}(E)\Psi_{x}(F)\Psi_{x}(H) 
\\
&=\left((1-x)^\frac{2}{N}+ x^\frac{2}{N}HF-(x-x^2)^\frac{1}{N}(E+H)\right)\Psi_{x}(E)\Psi_{x}(F)\Psi_{x}(H).
\end{align*}
Therefore, using \eqref{def.EFGH}, \eqref{pi.a} and the fact that $\Psi_{x}(H) \Psi_{x}(G)=\Psi_{x}(G) \Psi_{x}(H)$, we finally have
\begin{align*}
	&S(x)(\pi_{\frac{q}{x}}(a_2)\ot\Id_\calV) 
	 =\left(\tfrac{q^2}{2x^2}\right)^{\frac{1}{N}}V_1\Psi_{x}(\om^{-1}E)\Psi_{x}(F)\Psi_{x}(\om^{-1}H)\Psi_{x}(G)L(U^tV,X)
		 \\
		& =\left(\tfrac{q^2}{2x^2}\right)^{\frac{1}{N}}V_1\left((1-x)^\frac{2}{N}+ x^\frac{2}{N}HF-(x-x^2)^\frac{1}{N}(E+H)\right)S(x) 
		 \\
		& =\left(\left(\tfrac{1}{2}(\tfrac{q-xq}{x})^2\right)^\frac{1}{N}V_1+ \left(\tfrac{1}{2}q^2\right)^\frac{1}{N}V_1HF-\left(\tfrac{q^2-xq^2}{2x}\right)^\frac{1}{N}V_1(E+H)\right) S(x) 
		 \\
		& =\left(\left(\tfrac{1}{2}(\tfrac{q-xq}{x})^2\right)^\frac{1}{N}V_1+ \left(\tfrac{1}{2}q^2\right)^\frac{1}{N}X^{t+1}_1V_2-\left(\tfrac{q^2-xq^2}{2x}\right)^\frac{1}{N}(U_1+V_1)Y_1^{-1}X_1Y_2\right) S(x) 
		 \\
		& = \left(\pi_{\frac{q-xq}{x}}\ot \pi_q\right)\left( a_2\ot 1 + k_1^{t+1}\ot a_2 + e_2k_1\ot e_1 \right)S(x) 
		\\
		& = \left(\pi_{\frac{q-xq}{x}}\ot \pi_q\right)\Del(a_2)S(x).
\end{align*}
\end{proof}

We can now define bases for the multiplicity spaces. For any $x\in\bbRzu$ and $\alp\in\bbZ^2_N$, we define 
\begin{align}\label{def.bas.H}
e_\alp(x)=S(x)(\Id_\calV\ot u_\alp) \ \ \text{and} \ \ \bar{e}_\alp(x)=(\Id_\calV\ot \bar{u}_\alp)S(x)^{-1}.
\end{align}
If $(p,q)\in(\bbRz)^2$ is an admissible pair, then by equation \eqref{S.act.commut}, 
\begin{align*}
e_\alp\pa{\tfrac{q}{p+q}}\in\calH_{p,q}\quad \text{and} \quad\bar{e}_\alp\pa{\tfrac{q}{p+q}}\in\bar{\calH}_{p,q},\quad \forall\alp\in\bbZ^2_N.
\end{align*} 
Moreover, $\brkt{e_{\alp}\pa{\tfrac{q}{p+q}}}_{\alp\in\bbZ_N^2}$ and $\brkt{ \bar{e}_\alp\pa{\tfrac{q}{p+q}}}_{\alp\in\bbZ_N^2}$ form dual basis of $\calH_{p,q}$ and $\bar{\calH}_{p,q}$ respectively, where the duality is reflected by the relations
\begin{align}
\bar{e}_\bet\pa{\tfrac{q}{p+q}} e_\alp\pa{\tfrac{q}{p+q}}=\del_{\alp,\bet}\Id_\calV, %\del_{\alp_1,\bet_1}\del_{\alp_2,\bet_2}
\end{align}
and
\begin{align}\label{psi.cond.4}
\sum_{\alp\in\bbZ_N^2}e_\alp\pa{\tfrac{q}{p+q}} \bar{e}_\alp\pa{\tfrac{q}{p+q}}=\Id_\calV\ot\Id_\calV
\end{align}

\begin{rem}
$\mathfrak{sl}_2$ analogue of bases $\brkt{e_{\alp}\pa{x}}_{\alp\in\bbZ_N^2}$ and $\brkt{ \bar{e}_\alp\pa{x}}_{\alp\in\bbZ_N^2}$ are defined in \cite[Section 12]{geer2012tetrahedral} using the associated operator $S(x)$ similarly as in \eqref{def.bas.H}.  
%The bases defined in \eqref{def.bas.H} are the exact analogue of the one defined in \cite[Section 12]{geer2012tetrahedral} (and correspond to multiciplity spaces of certain modules of the Borel subalgebra of $U_q(\mathfrak{sl}_2)$).
\end{rem}

As we have just shown, the non-trivial elements of the multiplicity spaces $\calH_{p,q}$ and $\bar{\calH}_{p,q}$ depend only on $\tfrac{q}{p+q}\in\bbRzu$. We will call this property {\it the scaling invariance property} of the multiplicity spaces. 
%%%Il ne me semble pas necessaire de faire la distinction entre les multiplicity spaces%%%
In order to keep track of the difference between $\calH_{p,q}$ and $\calH_{\lambda p, \lambda q}$ and the difference between $\bar{\calH}_{p,q}$ and $\bar{\calH}_{\lambda p, \lambda q}$ respectively, where $\lambda\in\bbRz$, we define, for all admissible pair $(p,q)\in(\bbRz)^2$, the following isomorphism 
\begin{align*}
h_{p,q}:\calV\to\calH_{p,q}\quad \text{and} \quad \bar{h}_{p,q}:\calV^*\to\bar{\calH}_{p,q}
\end{align*}
by 
\begin{align*}
h_{p,q}(u_\alp)=e_\alp\pa{\tfrac{q}{p+q}},\quad  \quad \bar{h}_{p,q}(\bar{u}_\alp)=\bar{e}_\alp\pa{\tfrac{q}{p+q}}, \quad\forall \alp\in\bbZ^2_N.
\end{align*}

% \subsection[The $\Psi$-system]{The \texorpdfstring{$\Psi$}{\unichar{"03A8}}-system}
\subsection{The \texorpdfstring{$\Psi$}{\unichar{"03A8}}-system}
%We consider the category of $\calA_{\om,t}$-modules. %We are going to show that there is a $\Psi$-system in this category. 
%

\begin{theorem}\label{thm.Psi.syst}
In the category of $\calA_{\om,t}$-modules, the set of objects $\lac V_p\rac_{p\in\bbRz}$ with the involution $p^*=-p$ and the duality morphisms defined in Lemma \ref{det.b_p.d_p} is a $\Psi$-system. 
%for $\lac u_\alp\rac_{\alp\in\bbZ_N^2}$ and $\lac u^*_\bet\rac_{\bet\in\bbZ_N^2}$ \red{standard basis} of $V_p$ and $V_{p^*}$ respectively.
\end{theorem}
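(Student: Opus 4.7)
The plan is to verify the four axioms of a $\Psi$-system in turn, drawing on the results assembled throughout Sections \ref{hopf.algebra.A_w,t.red.cycl.mod} and \ref{sec.psi.sys.in.Awt}. Axioms (1) and (2) are essentially free: Lemma \ref{equiv.rep} together with Schur's lemma gives $\Hom_{\calA_{\om,t}}(V_p,V_q)=0$ for $p\neq q$ and provides the requisite family of simple objects, while $p\mapsto -p$ is visibly an involution of $\bbRz$ and $V_{-p}$ is a well-defined reduced cyclic $\calA_{\om,t}$-module.

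For axiom (3) one must check the snake identities (\ref{del.bp.dp}) for $b_p$ and $d_p$. The morphisms in Lemma \ref{det.b_p.d_p} were constructed precisely for this purpose: $d_p$ was pinned down by the system of equations it must satisfy as a morphism $V_p\ot V_{-p}\to\bbC$, and $b_p$ was defined from $d_p$ through (\ref{del.bp.dp}). It then suffices to apply each composition in (\ref{del.bp.dp}) to a generic basis vector $u_\gam$ and use the explicit formulas (\ref{formule.b}) and (\ref{formule.d}) to confirm that the phase factors collapse to $1$; the Kronecker deltas in the coefficients ensure that the double sum over $\alp,\bet\in\bbZ_N^2$ picks out a unique pair matching $\gam$.

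Axiom (4) is the substantive content and is where all of Section \ref{sec.psi.sys.in.Awt} is brought to bear. Given $p,q\in\bbRz$, suppose there exists $r\in\bbRz$ with $H_r^{p,q}\neq 0$. Proposition \ref{prod.tens} together with Schur's lemma implies that $(p,q)$ must be admissible and $r=p+q$, and that the only non-trivial summand is $\calH_{p,q}\otimes\bar{\calH}_{p,q}$, each factor of dimension $N^2$. The construction of the dual bases $\{e_\alp(x)\}$ and $\{\bar e_\alp(x)\}$ via $S(x)$ in Proposition \ref{prop.base.Hpq} and (\ref{def.bas.H}), specialized to $x=q/(p+q)\in\bbRzu$, then directly exhibits the identity: the relation (\ref{psi.cond.4})
\begin{equation*}
\sum_{\alp\in\bbZ_N^2} e_\alp\pa{\tfrac{q}{p+q}}\circ \bar{e}_\alp\pa{\tfrac{q}{p+q}} \;=\; \Id_{V_p\ot V_q}
\end{equation*}
realizes $\Id_{V_p\ot V_q}$ as the image of $\sum_\alp e_\alp(x)\ot\bar e_\alp(x)$ under the composition map. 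If no such $r$ exists, then $(p,q)$ is non-admissible ($p=-q$) and the condition is vacuous.

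Conceptually the only step that required real work was axiom (4), but that work has already been done: the non-vanishing pattern of the multiplicity spaces is controlled by Proposition \ref{prod.tens}, while the $\mathfrak{sl}_3$ matrix dilogarithm $S(x)$ produced, via Proposition \ref{prop.base.Hpq}, bases in which the completeness identity is transparent. Thus the proof of the theorem reduces to stringing together these ingredients and noting that each axiom is satisfied.
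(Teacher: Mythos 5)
Your proposal is correct and follows essentially the same route as the paper: axioms (1)–(2) via Lemma \ref{equiv.rep} and Schur's lemma, the snake identities via the explicit formulas of Lemma \ref{det.b_p.d_p} (with $b_p$ built from $d_p$ through \eqref{del.bp.dp}), and axiom (4) via Proposition \ref{prod.tens} together with the completeness relation \eqref{psi.cond.4} coming from the bases constructed with $S(x)$. The paper's own proof is just a terser version of exactly this argument.
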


\begin{proof}
By definition,  we have to check the  following three points :  
\begin{enumerate}	
	\item  $\Hom(V_p,V_q)=0$ for all $p \neq q$,
	%The families of 
	\item The morphisms $\lac b_p: \bbC\to V_p\ot V_{-p}\rac_{p\in\bbRz}$ and $\lac d_p: V_p\ot V_{-p}\to\bbC\rac_{p\in\bbRz}$ satisfy
		\begin{align}%\label{del.bp.dp}
		(\Id_\calV\ot d_{-p})(b_p\ot\Id_\calV)=\Id_\calV=(d_p\ot\Id_\calV)(\Id_\calV\ot b_{-p}),  \ \ \forall p\in\bbRz
		\end{align}
	
	\item If $(p,q)$ is admissible, then $\Id_{V_p\ot V_q}$ is in the image of the linear map
		\begin{align*}
		\calH_{p,q}\ot \bar{\calH}_{p,q}\to \End (V_p\ot V_q),\ \ x\ot y\mapsto x y.
		\end{align*}
\end{enumerate}
Point (1) is clear by Schur's Lemma, point (2) is straightforward using Lemma \ref{det.b_p.d_p} and point (3) is given by formula (\ref{psi.cond.4}). 
\end{proof}

%%%%%%%%%%%%%%%%%%%%%%%%%%%%%%
%new section
%%%%%%%%%%%%%%%%%%%%%%%%%%%%%%

%\section[Construction of a $\hat{\Psi}$-system]{Construction of a \texorpdfstring{$\hat{\Psi}$}{\unichar{"03A8}\unichar{"0302}}-system}\label{sec:constr:hat:psi}

%\section{A \texorpdfstring{$\Psi$}{\unichar{"03A8}}-system in the category of \texorpdfstring{$\calA_{\om,t}$}{A\_\{\unichar{"03C9},t\}}-modules}

\section[Operators in the space of multiplicities]{Operators in the space of multiplicities}\label{sec.op.in.H}

We consider the vector space $\calH$, called the {\it space of multiplicities} in \cite{geer2012tetrahedral}, defined as $\calH = \check{\mathcal{H}} \oplus \hat{\mathcal{H}}$ where
\begin{align*}
\check{\calH}=\bigoplus_{\substack{(p,q)\in(\bbRz)^2 \\ \text{admissible}}}\calH_{p,q} \ \ \ \ \text{and} \ \ \ \ \hat{\calH}=\bigoplus_{\substack{(p,q)\in(\bbRz)^2 \\ \text{admissible}}}\bar{\calH}_{p,q}.
\end{align*}
We are going to determine the key operators in $\End(\calH)$ that will allow us to extend the $\Psi$-system defined in Theorem \ref{thm.Psi.syst} into a $\hat{\Psi}$-system. Before we do so, we give some defintions.

Let us first recall that the {\it Mobius group} is defined as the group $\PGL(2,\bbC)$ acting on $\bbC\cup\brkt{\infty}$ as follows
\begin{align*}
\begin{pmatrix}
a & b
\\
c & d
\end{pmatrix}(x)=\left\{\begin{matrix} \dfrac{ax+b}{cx+d} & if & x\in\mathbb{C}\\ \\ \dfrac{a}{c} & if & x=\infty.\end{matrix}\right.
\end{align*}  
The elements of the Mobius group are called {\it Mobius transformation}.
\begin{defin}
We say that $f\in\End(\calH)$ is a {\it standard operator} if $f$ is invertible and if for all admissible pair $(p,q)\in(\bbRz)^2$, there exists an admissible pair $(r,s)\in(\bbRz)^2$ such that:
\begin{enumerate}

\item either \begin{align}\label{standard.op.cas1}
	f(\calH_{p,q})=\calH_{r,s} \ \ \text{and} \ \ f(\bar{\calH}_{p,q})=\bar{\calH}_{r,s},
	\end{align}
	or 
	\begin{align}\label{standard.op.cas2}
	f(\calH_{p,q})=\bar{\calH}_{r,s}, \ \ \text{and} \ \ f(\bar{\calH}_{p,q})=\calH_{r,s},
	\end{align}

\item there exists a Mobius transformation $M\in\PGL(2,\bbC)$ such that 
    \begin{align}
	M\pa{\tfrac{q}{p+q}}=\tfrac{s}{r+s}
	\end{align}
\end{enumerate}
\end{defin}

The scaling invariance property of the multiplicity spaces extends to the standard operators in the following sense : if $f\in\End(\calH)$ is a standard operator, then for all $\alp,\bet\in\bbZ_N^2$, there exists functions 
\begin{align*}
f_{\alp,\bet},\bar{f}_{\alp,\bet}  : \bbRzu\to \bbC
\end{align*}
such that 
\begin{align}\begin{split}
fh_{p,q}(u_\alp)=\sum_{\bet\in\bbZ_N^2}f_{\alp,\bet}\pa{\tfrac{q}{p+q}}h_{r,s}(\bar{u}_\bet),
\\
f\bar{h}_{p,q}(\bar{u}_\alp)=\sum_{\bet\in\bbZ_N^2}\bar{f}_{\alp,\bet}\pa{\tfrac{q}{p+q}}h_{r,s}(u_\bet),
\end{split}
\end{align}
if $f$ satisfies \eqref{standard.op.cas1}
and 
\begin{align}\begin{split}
fh_{p,q}(u_\alp)=\sum_{\bet\in\bbZ_N^2}f_{\alp,\bet}\pa{\tfrac{q}{p+q}}\bar{h}_{r,s}(\bar{u}_\bet),
\\
f\bar{h}_{p,q}(\bar{u}_\alp)=\sum_{\bet\in\bbZ_N^2}\bar{f}_{\alp,\bet}\pa{\tfrac{q}{p+q}}h_{r,s}(u_\bet),
\end{split}
\end{align}
if $f$ satisfies \eqref{standard.op.cas2}.

Following \cite{geer2012tetrahedral} we also consider the following two definitions.
\begin{defin}
An operator $f\in\End(\calH)$ is {\it grading-preserving} if for all admissible pairs $(p,q)\in(\bbRz)^2$ we have 
\begin{align*}
f(\calH_{p,q})\subset\calH_{p,q} \ \ \text{and} \ \ f(\bar{\calH}_{p,q})\subset\bar{\calH}_{p,q}.
\end{align*}
\end{defin}
\noindent
Clearly, the invertible grading-preserving operators are standard.

Let $\pi_{p,q}: \calH\to \calH_{p,q}$ and  $\bar{\pi}_{p,q}:\calH\to\bar{\calH}_{p,q}$
%
% \begin{align*}
% \pi_{p,q}: \calH\to \calH_{p,q}, \ \ \bar{\pi}_{p,q}:\calH\to\bar{\calH}_{p,q}
% \end{align*}
%
be the obvious projections. 
We provide $\calH$ with a symmetric bilinear pairing $\left<,\right>:\calH\ot \calH\to\bbC$ by
\begin{align*}
\left< u,v \right>= \sum_{\substack{(p,q)\in(\bbRz)^2 \\ \text{admissible}}}(\left< \bar{\pi}_{p,q}(u) \pi_{p,q}(v)\right>+\left< \bar{\pi}_{p,q}(v) \pi_{p,q}(u)\right>)
\end{align*}
for any $u,v\in \calH$.
\begin{defin}
A {\it transpose} of $f\in\End(\calH)$ is a map $f^*\in\End(\calH)$ such that $\left<fu,v\right>=\left<u,f^*v\right>$ for all $u,v\in \calH$. We say that $f\in\End(\calH)$ is {\it symmetric} if $f^*=f$.
\end{defin}
Since $\calH_{p,q}$ and $\bar{\calH}_{p,q}$ are dual vector spaces, the transpose $f^*$ of $f\in\End(\calH)$, if it exists, is unique and $(f^*)^*=f$.

If $f\in\End(\calH)$ is standard, the equalities \eqref{standard.op.cas1} and \eqref{standard.op.cas2} ensure that $f^*$ exists. Moreover, in that case, $f^*$ is also standard. 

% \subsection[The operators $A$ and $B$ and their transpose]{The operators \texorpdfstring{$A$ and $B$}{A and B} and their transpose}
\subsection{The operators \texorpdfstring{$A$ and $B$}{A and B} and their transpose}
Following \cite{geer2012tetrahedral}, we define the operators $A,B\in\End(\calH)$ by 
\begin{align}\label{def.A}
Au=\sum_{\substack{(p,q)\in(\bbRz)^2 \\ \text{admissible}}} (\Id_\calV\ot\bar{\pi}_{p,q}(u))(b_{-p}\ot\Id_\calV)+(d_{-p}\ot\Id_\calV)(\Id_\calV\ot\pi_{p,q}(u)),
\end{align}
\begin{align}\label{def.B}
Bu=\sum_{\substack{(p,q)\in(\bbRz)^2 \\ \text{admissible}}} (\bar{\pi}_{p,q}(u)\ot\Id_\calV)(\Id_\calV\ot b_q)+(\Id_\calV\ot d_q)(\pi_{p,q}(u)\ot\Id_\calV).
\end{align}
For each $u\in \calH$, there are only finitely many non-zero terms in these sums, since $u$ has only finitely many non-zero components $\pi_{p,q}(u)$ and $\bar{\pi}_{p,q}(u)$.

Using \eqref{del.bp.dp}, one can easily prove that the operators $A$ and $B$ are involutive (see \cite[Lemma 3]{geer2012tetrahedral}).
Hence, from their definition, we clearly have the following equalities
\begin{align}\begin{split}\label{AB.incl}
A(\calH_{p,q})=\bar{\calH}_{-p,p+q}, & \ \ A(\bar{\calH}_{p,q})=\calH_{-p,p+q}, \\
 B(\calH_{p,q})=\bar{\calH}_{p+q,-q}, & \ \  B(\bar{\calH}_{p,q})=\calH_{p+q,-q}.
\end{split}\end{align}
Moreover, $A$ and $B$ are both standard operators. Indeed, we have 
\begin{align*}
\begin{pmatrix}
0 & 1
\\
1 & 0
\end{pmatrix}
\pa{\tfrac{q}{p+q}}
=
\tfrac{p+q}{q}
\ \ \text{and} \ \
\begin{pmatrix}
1 & 0
\\
1 & -1
\end{pmatrix}
\pa{\tfrac{q}{p+q}}
=
\tfrac{-q}{p}.
\end{align*}
The equalities \eqref{AB.incl} ensure us that we have the following for $A^*$ and $B^*$ 
\begin{align}\begin{split}\label{A*B*.incl}
A^*(\calH_{p,q})=\bar{\calH}_{-p,p+q}, & \ \ A^*(\bar{\calH}_{p,q})=\calH_{-p,p+q}, \\
 B^*(\calH_{p,q})=\bar{\calH}_{p+q,-q}, & \ \  B^*(\bar{\calH}_{p,q})= \calH_{p+q,-q}.
\end{split}\end{align}
%
%%%%%%%%%%%%%%%%%%%%%%%%%%%%%%%%%%%%%%%%%%%%%%%%%%%%%%%%%%%%%%%%%%%%%%%%
%%%%%%%%%%%%%%%%%%%%%%%%%%%%%%%%%%%%%%%%%%%%%%%%%%%%%%%%%%%%%%%%%%%%%%%%
%%%%%%%%%%%%%%%%%%%%%%%%%%%%%%%%%%%%%%%%%%%%%%%%%%%%%%%%%%%%%%%%%%%%%%%%
%%%%%%%%%%%%%%%%%%%%%%%%%%%%%%%%%%%%%%%%%%%%%%%%%%%%%%%%%%%%%%%%%%%%%%%%
The equalities \eqref{AB.incl} and \eqref{A*B*.incl} ensure us that for all $\alp,\bet\in\bbZ_N^2$ there exists functions
\begin{align*}
A_{\alp,\bet},\bar{A}_{\alp,\bet},A^*_{\alp,\bet},\bar{A}^*_{\alp,\bet}  : \bbRzu\to \bbC
\end{align*}
and 
\begin{align*}
B_{\alp,\bet}, \bar{B}_{\alp,\bet},B^*_{\alp,\bet},\bar{B}^*_{\alp,\bet}  : \bbRzu\to \bbC
\end{align*}
% $A_{\alp,\bet},\bar{A}_{\alp,\bet},A^*_{\alp,\bet},\bar{A}^*_{\alp,\bet}$ and $B_{\alp,\bet}, \bar{B}_{\alp,\bet},B^*_{\alp,\bet},\bar{B}^*_{\alp,\bet}$
such that for all admissible pairs $(p,q)\in(\bbRz)^2$ we have
\begin{align*}\begin{split}
Ah_{p,q}(u_\alp)=\sum_{\bet\in\bbZ_N^2}A_{\alp,\bet}\pa{\tfrac{q}{p+q}}\bar{h}_{-p,p+q}(\bar{u}_\bet), 
\\
A\bar{h}_{p,q}(\bar{u}_\alp)=\sum_{\bet\in\bbZ_N^2}\bar{A}_{\alp,\bet}\pa{\tfrac{q}{p+q}}h_{-p,p+q}(u_\bet), 
\\
A^*h_{p,q}(u_\alp)=\sum_{\bet\in\bbZ_N^2}A^*_{\alp,\bet}\pa{\tfrac{q}{p+q}}\bar{h}_{-p,p+q}(\bar{u}_\bet), 
\\
A^*\bar{h}_{p,q}(\bar{u}_\alp)=\sum_{\bet\in\bbZ_N^2}\bar{A}^*_{\alp,\bet}\pa{\tfrac{q}{p+q}}h_{-p,p+q}(u_\bet).
\end{split}\end{align*}
and 
\begin{align*}\begin{split}
Bh_{p,q}(u_\alp)=\sum_{\bet\in\bbZ_N^2}B_{\alp,\bet}\pa{\tfrac{q}{p+q}}\bar{h}_{p+q,-q}(\bar{u}_\bet),
\\
B\bar{h}_{p,q}(\bar{u}_\alp)=\sum_{\bet\in\bbZ_N^2}\bar{B}_{\alp,\bet}\pa{\tfrac{q}{p+q}}h_{p+q,-q}(u_\bet),
\\
B^*h_{p,q}(u_\alp)=\sum_{\bet\in\bbZ_N^2}B^*_{\alp,\bet}\pa{\tfrac{q}{p+q}}\bar{h}_{p+q,-q}(\bar{u}_\bet), 
\\
B^*\bar{h}_{p,q}(\bar{u}_\alp)=\sum_{\bet\in\bbZ_N^2}\bar{B}^*_{\alp,\bet}\pa{\tfrac{q}{p+q}}h_{p+q,-q}(u_\bet).
\end{split}\end{align*}

We define 
\begin{align*}
\epsilon_N=\left\{\begin{matrix} 1 & if & N=1 \mod 4 \\ i & if & N=3 \mod 4\end{matrix}\right.
\end{align*}
and we use the following result.
\begin{proposition}[\cite{karemera2016quantum}, Proposition 3.4]\label{det.A.et.B}
There exist $\mathtt{a},\mathtt{b}\in\bbZ_N$ such that for all $x\in\bbRzu$
and all $\alp,\bet\in\bbZ_N^2$ we have 
\begin{align*}
A_{\alp,\bet}(x)= &\, \epsilon_N^2x^{\frac{2(N-1)}{N}}\frac{\del_{\alp_2,-\bet_2}}{N}\om^{-\frac{1}{2}\pa{t(\alp_1-\alp_2-\bet_1)(\alp_1-\alp_2-\bet_1-1)-\alp_1(\alp_1-3)-(t+1)\alp_2-\bet_1(\bet_1+2t+1)}+\mathtt{a}},
\\
\bar{A}_{\alp,\bet}(x)= &\, \epsilon_N^{-2}x^{\frac{2(N-1)}{N}}\del_{\alp_2,-\bet_2}\om^{\frac{1}{2}\left(t(\bet_1+\alp_2-\alp_1)(\bet_1+\alp_2-\alp_1-1)+\bet_1(\bet_1-3)-(t+1)\alp_2+\alp_1(\alp_1+2t+1)\right)-\mathtt{a}},
\\
A^*_{\alp,\bet}(x)= &\, \epsilon_N^2x^{-\frac{2(N-1)}{N}}\frac{\del_{\alp_2,-\bet_2}}{N}\om^{-\frac{1}{2}\left(t(\bet_1+\alp_2-\alp_1)(\bet_1+\alp_2-\alp_1-1)+\bet_1(\bet_1-3)-(t+1)\alp_2+\alp_1(\alp_1+2t+1)\right)+\mathtt{a}},
\\
\bar{A}^*_{\alp,\bet}(x)= &\, \epsilon_N^{-2}x^{-\frac{2(N-1)}{N}}\del_{\alp_2,-\bet_2}\om^{\frac{1}{2}\left(t(\alp_1-\alp_2-\bet_1)(\alp_1-\alp_2-\bet_1-1)-\alp_1(\alp_1-3)-(t+1)\alp_2-\bet_1(\bet_1+2t+1)\right)-\mathtt{a}},
\end{align*}
and
\begin{align*}
B_{\alp,\bet}(x)= &\; (1-x)^{\frac{2(N-1)}{N}}\del_{\alp,-\bet}\om^{-\frac{1}{2}\pa{(t+1)(\alp_1-\alp_2)(\alp_1-\alp_2-2)+\alp_1^2-\alp_2}+\mathtt{b}},
\\
\bar{B}_{\alp,\bet}(x)= &\; (1-x)^{\frac{2(N-1)}{N}}\del_{\alp,-\bet}\om^{\frac{1}{2}\pa{(t+1)(\alp_1-\alp_2)(\alp_1-\alp_2+2)+\alp_1^2+\alp_2}-\mathtt{b}},
\\
B^*_{\alp,\bet}(x)= &\; (1-x)^{-\frac{2(N-1)}{N}}\del_{\alp,-\bet}\om^{-\frac{1}{2}\pa{(t+1)(\alp_1-\alp_2)(\alp_1-\alp_2+2)+\alp_1^2+\alp_2}+\mathtt{b}},
\\
\bar{B}^*_{\alp,\bet}(x)= &\; (1-x)^{\frac{-2(N-1)}{N}}\del_{\alp,-\bet}\om^{\frac{1}{2}\pa{(t+1)(\alp_1-\alp_2)(\alp_1-\alp_2-2)+\alp_1^2-\alp_2}-\mathtt{b}}.
\end{align*}
\end{proposition}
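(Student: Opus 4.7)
The plan is to obtain the matrix coefficients by direct computation of $A$ and $B$ on the basis elements $h_{p,q}(u_\alpha)$ and $\bar h_{p,q}(\bar u_\alpha)$, and then to derive $A^*, B^*$ from the pairing $\langle\,,\,\rangle$ on $\calH$ together with the duality relation $\bar e_\beta(x)\, e_\alpha(x) = \delta_{\alpha,\beta}\,\Id_\calV$. The coefficients are read off by pairing with the dual bases of $\bar\calH_{-p,p+q}$ and $\bar\calH_{p+q,-q}$.

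Consider $B$ first, as it should be the simpler of the two. For $u = h_{p,q}(u_\alpha) \in \calH_{p,q}$ we have $\bar\pi_{p,q}(u) = 0$, so \eqref{def.B} collapses to $Bu = (\Id_\calV \otimes d_q)(h_{p,q}(u_\alpha) \otimes \Id_\calV)$. Using the definition $h_{p,q}(u_\alpha) = S(q/(p+q))(\Id_\calV \otimes u_\alpha)$ from \eqref{def.bas.H}, together with the explicit form of $d_q$ in Lemma \ref{det.b_p.d_p}, I would reduce the problem to computing the action of $S(q/(p+q))$ on a product of $u$-basis vectors, then contracting against $d_q$. Reading off the coefficients in the dual basis $\bar h_{p+q,-q}(\bar u_\beta) = (\Id_\calV \otimes \bar u_\beta)S(-q/p)^{-1}$ relies on Lemma \ref{Op.pour.u} to evaluate products of $X, Y, U, V$ on the $u_\alpha$. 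A parallel argument for $u = \bar h_{p,q}(\bar u_\alpha)$, now using $b_{-q}$, gives $\bar B_{\alpha,\beta}$.

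For $A$, the same scheme applies with \eqref{def.A}: on $\calH_{p,q}$, $Au = (d_{-p}\otimes\Id_\calV)(\Id_\calV \otimes u)$, while on $\bar\calH_{p,q}$ the $b_{-p}$ term contributes. Once $A_{\alpha,\beta}$ and $B_{\alpha,\beta}$ are obtained, the transposes are determined by unwinding $\langle Ah_{p,q}(u_\alpha),\, h_{-p,p+q}(u_\gamma)\rangle = \langle h_{p,q}(u_\alpha),\, A^* h_{-p,p+q}(u_\gamma)\rangle$ on each dual pair, which forces the $\omega$-exponents in $A^*_{\alpha,\beta}$ to be (up to a global constant) the negatives of those in $\bar A_{\alpha,\beta}$ with indices swapped, and likewise for $B^*$; this produces the apparent symmetries visible in the statement. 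The global $\omega^{\pm\mathtt{a}}, \omega^{\pm\mathtt{b}}$ are the only undetermined normalizations, depending on the choices of $\psi_{x,0}$ and $\bar\psi_{x,0}$ fixed in Lemmas \ref{det.Psi}--\ref{det.bar.Psi}.

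The hard part will be the systematic simplification of $S(x) = \Psi_x(E)\Psi_x(F)\Psi_x(G)\Psi_x(H)\,L(U^tV,X)$ against the duality morphisms $b_p, d_p$: the commutation of the four $\Psi_x$ factors past $d_q$ (or $b_{-p}$) must be carried out carefully using the functional equation \eqref{psi.fun.eqn}, the commutation relations of $E, F, G, H$, and the $q$-binomial identity, while tracking the contribution of the scalars $\psi_{x,0}$ and $\bar\psi_{x,0}$ given by Lemmas \ref{det.Psi} and \ref{det.bar.Psi}. It is precisely those scalars which produce the global prefactors $x^{\pm 2(N-1)/N}$ and $(1-x)^{\pm 2(N-1)/N}$ in the final formulas, while the accumulated $\omega$-exponents from Lemma \ref{Op.pour.u} yield the quadratic expressions in $\alpha, \beta$ visible in the statement.
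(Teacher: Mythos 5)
You have proposed a plan rather than a proof, and for this statement the plan is not the hard part. Note first that the paper itself contains no argument for Proposition \ref{det.A.et.B}: it is imported from \cite[Proposition 3.4]{karemera2016quantum}, with the explicit powers $\omega^{\mathtt{a}},\omega^{\mathtt{b}}$ referred to Chapter 2 of that reference, so a self-contained proof must actually carry out the computation. Your outline is the natural one — on $\calH_{p,q}$ the sums \eqref{def.A}, \eqref{def.B} collapse to $(d_{-p}\otimes\Id)(\Id\otimes u)$ and $(\Id\otimes d_q)(u\otimes\Id)$, one inserts $u=S\bigl(\tfrac{q}{p+q}\bigr)(\Id\otimes u_\alpha)$, the explicit $d_p$ of Lemma \ref{det.b_p.d_p}, and Lemma \ref{Op.pour.u} — but nothing is executed. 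In particular, extracting $B_{\alpha,\beta}$ means evaluating a scalar such as $\bigl\langle Bh_{p,q}(u_\alpha)\circ h_{p+q,-q}(u_\beta)\bigr\rangle$, i.e.\ simplifying a sandwich of two matrix dilogarithms at the distinct arguments $\tfrac{q}{p+q}$ and $\tfrac{-q}{p}$ around the duality morphism; this is where the coefficient expansions \eqref{def.pti.psi}, \eqref{def.pti.bar.psi} enter and where Gauss-sum evaluations occur, and it is those Gauss sums that produce the factors $\epsilon_N^{\pm2}$ and $N^{-1}$ in the $A$-formulas — factors your sketch never mentions. Asserting that $\psi_{x,0},\bar\psi_{x,0}$ "produce" the prefactors $x^{\pm2(N-1)/N}$, $(1-x)^{\pm2(N-1)/N}$ and that Lemma \ref{Op.pour.u} "yields" the quadratic exponents names the ingredients; the exponents themselves are the content of the proposition and are left underived.

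The shortcut you propose for the starred operators is also stated too loosely to be usable. Unwinding $\langle Au,v\rangle=\langle u,A^*v\rangle$ on the relevant components forces relations that combine an index swap with a M\"obius change of the scaling parameter — schematically $A^*_{\alpha,\beta}(x)=A_{\beta,\alpha}(1/x)$ and $B^*_{\alpha,\beta}(x)=B_{\beta,\alpha}\bigl(\tfrac{-x}{1-x}\bigr)$, because $A^*$ on $\calH_{p,q}$ is read off against $A$ on $\calH_{-p,p+q}$, whose parameter is $\tfrac{p+q}{q}$ rather than $\tfrac{q}{p+q}$ — and this is not the same as your claim that the $A^*$ exponents are "the negatives of those in $\bar A_{\alpha,\beta}$ with indices swapped"; verifying the stated $A^*,\bar A^*$ formulas requires tracking exactly this argument change together with the $\delta_{\alpha_2,-\beta_2}$ constraints and the $\mathtt{a},\mathtt{b}$ normalizations. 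Moreover the transpose route presupposes that $A,\bar A,B,\bar B$ are already known, and your treatment of the barred coefficients is a single sentence containing a slip ($B$ on $\bar\calH_{p,q}$ uses $b_q$ in \eqref{def.B}, not $b_{-q}$); an alternative, which you do not mention, is to deduce the barred functions from the involutivity $A^2=B^2=\Id_\calH$. As it stands the proposal correctly identifies the tools but leaves the entire computation undone, so it does not establish the stated formulas.
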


\begin{rem}%\label{ambig.puiss.omega}
In \cite[Proposition 3.4]{karemera2016quantum}, the equalities are given up to integer powers of $\om$. The only difference with the above statement is that these integer powers of $\om$ are made explicit (see \cite[Chapter 2]{karemera2016quantum} for details).
\end{rem}

%%%%%%%%%%%%%%%%%%%%%%%%%%%%%%%%%%%%%%%%%%%%%%%%%%%%%%%%%%%%%%%%%%%%%%%%
%%%%%%%%%%%%%%%%%%%%%%%%%%%%%%%%%%%%%%%%%%%%%%%%%%%%%%%%%%%%%%%%%%%%%%%%
%%%%%%%%%%%%%%%%%%%%%%%%%%%%%%%%%%%%%%%%%%%%%%%%%%%%%%%%%%%%%%%%%%%%%%%%
%%%%%%%%%%%%%%%%%%%%%%%%%%%%%%%%%%%%%%%%%%%%%%%%%%%%%%%%%%%%%%%%%%%%%%%%

% \subsection[The operators $L,R$ and $C$]{The operators \texorpdfstring{$L,R$ and $C$}{L, R and C}} 
\subsection{The operators \texorpdfstring{$L,R$ and $C$}{L, R and C}} 

Following \cite{geer2012tetrahedral}, the operators $L,R$ and $C$ are defined as
\begin{align}\label{def.LRC}
L=A^*A, \ \ R=B^*B,\ \ C=(AB)^3\in\End(\calH).
\end{align}
The operators $L,R$ and $C$ are clearly invertible and by the equalities \eqref{AB.incl} and \eqref{A*B*.incl}, we easily see that they are grading-preserving. Hence, these operators are also standard.

Moreover, these operators are symmetric. It is clear for $L$ and $R$. For $C$, we use \cite[Lemma 5]{geer2012tetrahedral} which states that 
\begin{align*}
(ABA)^*=BAB.
\end{align*}
Hence we have 
\begin{align*}
C=(AB)^3=ABABAB=ABA(ABA)^*.
\end{align*}

Now we can determine these operators using the functions
\begin{align*}
L_{\alp,\bet},\bar{L}_{\alp,\bet},R_{\alp,\bet},\bar{R}_{\alp,\bet},C_{\alp,\bet},\bar{C}_{\alp,\bet}  : \bbRzu\to \bbC
\end{align*}
satisfying 
\begin{align*}
Lh_{p,q}(u_\alp)=\sum_{\bet\in\bbZ_N^2}L_{\alp,\bet}\pa{\tfrac{q}{p+q}}h_{p,q}(u_\bet), 
\\
L\bar{h}_{p,q}(\bar{u}_\alp)=\sum_{\bet\in\bbZ_N^2}\bar{L}_{\alp,\bet}\pa{\tfrac{q}{p+q}}\bar{h}_{p,q}(\bar{u}_\bet), 
\\
Rh_{p,q}(u_\alp)=\sum_{\bet\in\bbZ_N^2}R_{\alp,\bet}\pa{\tfrac{q}{p+q}}h_{p,q}(u_\bet), 
\\
R\bar{h}_{p,q}(\bar{u}_\alp)=\sum_{\bet\in\bbZ_N^2}\bar{R}_{\alp,\bet}\pa{\tfrac{q}{p+q}}\bar{h}_{p,q}(\bar{u}_\bet),
\\
Ch_{p,q}(u_\alp)=\sum_{\bet\in\bbZ_N^2}C_{\alp,\bet}\pa{\tfrac{q}{p+q}}h_{p,q}(u_\bet), 
\\
C\bar{h}_{p,q}(\bar{u}_\alp)=\sum_{\bet\in\bbZ_N^2}\bar{C}_{\alp,\bet}\pa{\tfrac{q}{p+q}}\bar{h}_{p,q}(\bar{u}_\bet),
\end{align*}
for all admissible pairs $(p,q)\in(\bbRz)^2$.
%$A_{\alp,\bet}\left(\frac{p}{q}\right)$ and $B_{\alp,\bet}\left(\frac{p}{q}\right)$ up to a factor depending on $p$ and $q$.

%%%%%%%%%%%%%%%%%%%%%%%%%%%%%%%%%%%%%%%%%%%%%%%%%%%%%%%%%%%%%%%%%%%%%%%%
%%%%%%%%%%%%%%%%%%%%%%%%%%%%%%%%%%%%%%%%%%%%%%%%%%%%%%%%%%%%%%%%%%%%%%%%
%%%%%%%%%%%%%%%%%%%%%%%%%%%%%%%%%%%%%%%%%%%%%%%%%%%%%%%%%%%%%%%%%%%%%%%%
%%%%%%%%%%%%%%%%%%%%%%%%%%%%%%%%%%%%%%%%%%%%%%%%%%%%%%%%%%%%%%%%%%%%%%%%

\begin{proposition}
For all $x\in\bbRzu$ and all $\alp,\bet\in\bbZ_N^2$ we have
\begin{align*}
L_{\alp,\bet}(x)= &\; x^{\frac{4(N-1)}{N}}\del_{\alp_1+2t,\bet_1}\del_{\alp_2,\bet_2}\om^{2t\alp_1+\alp_2-4t-3},
\\
\bar{L}_{\alp,\bet}(x)= &\; x^{\frac{4(N-1)}{N}}\del_{\alp_1-2t,\bet_1}\del_{\alp_2,\bet_2}\om^{2t\alp_1+\alp_2+1},
\\
R_{\alp,\bet}(x)= &\; (1-x)^{\frac{4(N-1)}{N}}\del_{\alp,\bet}\om^{2(t+1)(\alp_1-\alp_2)+\alp_2},
\\
\bar{R}_{\alp,\bet}(x)= &\; (1-x)^{\frac{4(N-1)}{N}}\del_{\alp,\bet}\om^{2(t+1)(\alp_1-\alp_2)+\alp_2},
\\
C_{\alp,\bet}(x)= &\; \del_{\alp,\bet}\om^{-(2t+1)\alp_2+3(\mathtt{b}-\mathtt{a})},
\\
\bar{C}_{\alp,\bet}(x)= &\; \del_{\alp,\bet}\om^{-(2t+1)\alp_2+3(\mathtt{a}-\mathtt{b})},
\end{align*}
with $\mathtt{a},\mathtt{b}\in\bbZ_N$ given in Proposition \ref{det.A.et.B}.
\end{proposition}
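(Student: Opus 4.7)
The proof is a direct matrix computation from the definitions $L=A^*A$, $R=B^*B$, $C=(AB)^3$ together with the explicit coefficients of $A$, $B$, $A^*$, $B^*$ and their barred analogues given in Proposition~\ref{det.A.et.B}. The scaling invariance of the multiplicity spaces reduces everything to a calculation in $\End(\calV)$, with careful tracking of which continuous parameter enters each factor, dictated by the M\"obius transformations $y\mapsto 1/y$ for $A$ and $y\mapsto y/(y-1)$ for $B$.

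For $L$ one writes $L_{\alp,\bet}(x)=\sum_{\gamma\in\bbZ_N^2}A_{\alp,\gamma}(x)\,\bar{A}^*_{\gamma,\bet}(1/x)$, and analogously $\bar L_{\alp,\bet}(x)=\sum_\gamma \bar{A}_{\alp,\gamma}(x)\,A^*_{\gamma,\bet}(1/x)$. The delta factors $\del_{\alp_2,-\gamma_2}$ and $\del_{\gamma_2,-\bet_2}$ force $\bet_2=\alp_2$. Direct inspection of Proposition~\ref{det.A.et.B} shows that the $\gamma_1^2$ contributions of the two factors cancel, so the remaining sum over $\gamma_1\in\bbZ_N$ is a purely linear phase $\om^{c\gamma_1}$; applying $\tfrac{1}{N}\sum_{\gamma_1\in\bbZ_N}\om^{c\gamma_1}=\del_{c,0}$ fixes a linear relation between $\alp_1$ and $\bet_1$, and substituting back yields the residual constant phase. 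The $x$-factors combine as $x^{2(N-1)/N}\cdot(1/x)^{-2(N-1)/N}=x^{4(N-1)/N}$, and the $\mathtt a$-shifts cancel between $A$ and $\bar{A}^*$ (resp.\ between $\bar{A}$ and $A^*$), consistent with the absence of $\mathtt a$ in the stated formulas.

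The computation of $R$ is strictly simpler since $B$ carries a full delta $\del_{\alp,-\bet}$ on both components: the sum $R_{\alp,\bet}(x)=\sum_\gamma B_{\alp,\gamma}(x)\,\bar{B}^*_{\gamma,\bet}(x/(x-1))$ collapses to the single term with $\gamma=-\alp$ and $\bet=\alp$. Using $1-x/(x-1)=1/(1-x)$ the $(1-x)$-factors combine into $(1-x)^{4(N-1)/N}$, and the phases add up by direct substitution to produce $\om^{2(t+1)(\alp_1-\alp_2)+\alp_2}$; the computation for $\bar R$ is identical and yields the same expression.

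The case of $C=ABABAB$ is the main combinatorial obstacle. It is a six-fold composition whose parameters cycle through $x,\,x/(x-1),\,x,\,1-x,\,1/(1-x),\,1/x$, the orbit of $x$ under alternating M\"obius actions of $B$ and $A$ that returns to $x$ because $(AB)^3=\Id$ in $\PGL(2,\bbZ)$; this also confirms that $C$ is grading-preserving. Applied to $h_{p,q}(u_\alp)$ the six factors are $B,\bar{A},B,\bar{A},B,\bar{A}$: the three $B$-factors produce deltas $\del_{\gamma_{i+1},-\gamma_i}$ on both components that telescope to force $\bet_2=\alp_2$, while the three $\bar{A}$-factors produce three linear-phase sums in the first coordinates. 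The hardest step is verifying that the $(\gamma_2)_1^2$ and $(\gamma_4)_1^2$ contributions from the three $\bar{A}$-factors cancel pairwise, so that the free summations evaluate to Kronecker deltas that, together with the $B$-deltas, uniquely pin down all intermediate indices and force $\bet_1=\alp_1$. The $x$- and $(1-x)$-powers then cancel---as they must, since $C$ has finite order---and the remaining phase, simplified using the arithmetic relation $t^2+t+1\equiv 0\pmod N$ (so that $t^{-1}\equiv -1-t$), reduces to $\om^{-(2t+1)\alp_2+3(\mathtt b-\mathtt a)}$; the additive constant comes from three $+\mathtt b$ shifts (one per $B$) and three $-\mathtt a$ shifts (one per $\bar{A}$). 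The case of $\bar C$ is analogous and produces the opposite constant $3(\mathtt a-\mathtt b)$. A useful cross-check is the identity $(ABA)^*=BAB$ of \cite[Lemma~5]{geer2012tetrahedral}, which gives $C=(ABA)(ABA)^*$ and thus implies that $C$ is symmetric, consistent with a purely diagonal matrix representation.
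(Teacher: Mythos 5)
Your overall route coincides with the paper's: the proof there is exactly the announced ``straightforward computation'' from Proposition \ref{det.A.et.B}, composing the coefficient functions of $A,\bar A,A^{*},\bar A^{*},B,\bar B,B^{*},\bar B^{*}$ while transporting the continuous parameter by the Mobius actions $y\mapsto 1/y$ (for $A$) and $y\mapsto y/(y-1)$ (for $B$). Your treatment of $L$, $\bar L$, $R$ and $\bar R$ is the right way to do this: the second-component deltas collapse the sums, the $\gam_1^2$-terms cancel so that $\tfrac1N\sum_{\gam_1}\om^{c\gam_1}=\del_{c,0}$ pins the first components, the real prefactors combine to $x^{4(N-1)/N}$ resp.\ $(1-x)^{4(N-1)/N}$ (using $1-\tfrac{x}{x-1}=\tfrac1{1-x}$), and the $\mathtt a$-, $\mathtt b$-shifts cancel.

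The gap is in the case of $C$, which you correctly identify as the crux but then describe by a mechanism that does not hold. Applied to $h_{p,q}(u_\alp)$ the six factors are indeed $B,\bar A,B,\bar A,B,\bar A$, but after using the $B$-deltas there remain exactly two free summations (over the first components of the second and fourth intermediate indices), not three; and the quadratic phases in these variables do not cancel among the $\bar A$-factors alone. Each $\bar A$-coefficient is quadratic in both of its first-component indices (coefficient $(t+1)/2$ for each square) and carries a cross term in its two indices, so the middle $\bar A$-factor couples the two summation variables by a term proportional to $t\,\gam\gam'$; moreover the second and third $B$-factors also contribute quadratic phases ($-(t+2)/2$ times the square) in the summed indices. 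What one actually faces is a two-dimensional quadratic exponential (Gauss-type) sum whose quadratic form degenerates only along one direction --- this is where $t^{2}+t+1\equiv 0 \pmod N$ must be used --- together with a nontrivial verification that the resulting powers of $N$ and the $\epsilon_N$-factors cancel (note that neither $B$ nor $\bar A$ carries a $1/N$ prefactor, so the collapse of the double sum to a single unimodular term is exactly the delicate point). None of this is covered by ``linear-phase sums evaluating to Kronecker deltas,'' so the stated formulas for $C$ and $\bar C$ are not established by your argument. Two smaller points: the parameter orbit should read $x,\ x/(x-1),\ (x-1)/x,\ 1-x,\ 1/(1-x),\ 1/x$ (your third entry is misstated), and symmetry of $C$ only gives $\bar C_{\alp,\bet}=C_{\bet,\alp}$, which is consistent with, but is no evidence for, diagonality.
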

\begin{proof}
Using Proposition \ref{det.A.et.B}, a straightforward computation leads to the results.
\end{proof}

\begin{rem} Following \cite[Remark 41]{geer2012tetrahedral}, an immediate consequence of the fact that the operator $C$ is non trivial is that the category of $\calA_{\om,t}$-modules is non-pivotal.% 
\end{rem}
%%%%%%%%%%%%%%%%%%%%%%%%%%%%%%%%%%%%%%%%%%%%%%%%%%%%%%%%%%%%%%%%%%%%%%%%
%%%%%%%%%%%%%%%%%%%%%%%%%%%%%%%%%%%%%%%%%%%%%%%%%%%%%%%%%%%%%%%%%%%%%%%%
%%%%%%%%%%%%%%%%%%%%%%%%%%%%%%%%%%%%%%%%%%%%%%%%%%%%%%%%%%%%%%%%%%%%%%%%
%%%%%%%%%%%%%%%%%%%%%%%%%%%%%%%%%%%%%%%%%%%%%%%%%%%%%%%%%%%%%%%%%%%%%%%%

% \subsection[The operators $L^\frac{1}{2},R^\frac{1}{2}$ and $C^\frac{1}{2}$]{The operators \texorpdfstring{$L^\frac{1}{2},R^\frac{1}{2}$}{L\unichar{"02C6}\{\unichar{"00BD}\}, R\unichar{"02C6}\{\unichar{"00BD}\}} and \texorpdfstring{$C^\frac{1}{2}$}{C\unichar{"02C6}\{\unichar{"00BD}\}}}  
\subsection{The operators \texorpdfstring{$L^\frac{1}{2},R^\frac{1}{2}$}{L\unichar{"02C6}\{\unichar{"00BD}\}, R\unichar{"02C6}\{\unichar{"00BD}\}} and \texorpdfstring{$C^\frac{1}{2}$}{C\unichar{"02C6}\{\unichar{"00BD}\}}}

Now we can fix square roots of the operators $L,R$ and $C$. These are the key operators mentioned in the beginning of this Section that will allow us to extend our $\Psi$-system into a $\hat{\Psi}$-system. 

\begin{proposition}\label{sqrtLRC}
The square roots of the operators $L,R$ and $C$ can be chosen to be, respectively, the grading-preserving operators given, for all $x\in\bbRzu$ 
and all $\alp,\bet\in\bbZ_N^2$, by
\begin{align*}
L^{\frac{1}{2}}_{\alp,\bet}(x)= &\; x^{\frac{2(N-1)}{N}}\del_{\alp_1+t,\bet_1}\del_{\alp_2,\bet_2}\om^{\frac{1}{2}\pa{2t\alp_1+\alp_2-3t-2}},
\\
\bar{L}^{\frac{1}{2}}_{\alp,\bet}(x)= &\; x^{\frac{2(N-1)}{N}}\del_{\alp_1-t,\bet_1}\del_{\alp_2,\bet_2}\om^{\frac{1}{2}\pa{2t\alp_1+\alp_2-t}},
\\
R^{\frac{1}{2}}_{\alp,\bet}(x)= &\; (1-x)^{\frac{2(N-1)}{N}}\del_{\alp,\bet}\om^{(t+1)(\alp_1-\alp_2)+\frac{1}{2}\alp_2},
\\
\bar{R}^{\frac{1}{2}}_{\alp,\bet}(x)= &\; (1-x)^{\frac{2(N-1)}{N}}\del_{\alp,\bet}\om^{(t+1)(\alp_1-\alp_2)+\frac{1}{2}\alp_2},
\\
C^{\frac{1}{2}}_{\alp,\bet}(x)= &\; \del_{\alp,\bet}\om^{-(t+\frac{1}{2})\alp_2+\frac{3(\mathtt{b}-\mathtt{a})}{2}},
\\
\bar{C}^{\frac{1}{2}}_{\alp,\bet}(x)= &\; \del_{\alp,\bet}\om^{-\pa{t+\frac{1}{2}}\alp_2+\frac{3(\mathtt{a}-\mathtt{b})}{2}}.
\end{align*}
with $\mathtt{a},\mathtt{b}\in\bbZ_N$ given in Proposition \ref{det.A.et.B}.
\end{proposition}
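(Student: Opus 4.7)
The plan is to verify, by direct calculation of matrix elements, that the explicit operators defined in the statement square to $L$, $R$, and $C$ respectively; the grading-preserving property will then be manifest from the form of their matrix coefficients.

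First I would observe that each of the displayed matrix-element formulas has the structure ``scalar coefficient depending only on $x=\tfrac{q}{p+q}$, times a Kronecker delta in $(\alpha,\beta)$, times a power of $\omega$ depending linearly on $\alpha$ (and on $\mathtt{a},\mathtt{b}$).'' In particular, on each summand $\mathcal{H}_{p,q}$ of $\check{\mathcal{H}}$ and each summand $\bar{\mathcal{H}}_{p,q}$ of $\hat{\mathcal{H}}$, the operators defined by these matrix elements act inside the same multiplicity space (the parameters $p,q$ are not altered). Hence they are grading-preserving by construction, and in particular standard; one only has to verify they square to the previously computed expressions for $L,R,C$.

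Next I would carry out the three squaring computations. For $R^{\frac12}$ and $C^{\frac12}$ this is essentially trivial: the Kronecker $\delta_{\alpha,\beta}$ collapses the matrix product to a single term, the scalar prefactor $(1-x)^{2(N-1)/N}$ gets squared to $(1-x)^{4(N-1)/N}$ (respectively $1$ for $C$), and the exponent of $\omega$ is simply doubled, giving exactly the formulas for $R_{\alpha,\beta}(x)$ and $C_{\alpha,\beta}(x)$ from the preceding proposition. The same trivial doubling works for $\bar R^{\frac12}$ and $\bar C^{\frac12}$.

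The only place where a subtle identity intervenes is the computation of $\bigl(L^{\frac12}\bigr)^2$ and $\bigl(\bar L^{\frac12}\bigr)^2$: the shift $\delta_{\alpha_1+t,\beta_1}$ does not collapse under a second application, so when one writes
\begin{align*}
\bigl(L^{\frac12}\bigr)^2_{\alpha,\gamma}(x)=\sum_{\beta\in\bbZ_N^2}L^{\frac12}_{\alpha,\beta}(x)\,L^{\frac12}_{\beta,\gamma}(x),
\end{align*}
the sole surviving term has $\beta=(\alpha_1+t,\alpha_2)$, yielding an exponent of $\omega$ equal to $\tfrac12(2t\alpha_1+\alpha_2-3t-2)+\tfrac12(2t(\alpha_1+t)+\alpha_2-3t-2)=2t\alpha_1+\alpha_2-3t-2+t^2$. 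Comparing with the target exponent $2t\alpha_1+\alpha_2-4t-3$ in the formula for $L_{\alpha,\gamma}(x)$, I would need the congruence $t^2+t+1\equiv 0\pmod N$, which is exactly the standing assumption on $N$ and $t$ from Section \ref{hopf.algebra.A_w,t.red.cycl.mod}. The analogous verification for $\bar L^{\frac12}$ reduces to the same congruence with the opposite sign. This use of the divisibility hypothesis is the only nontrivial point of the proof; once it is invoked, the two $L$-identities close and the Proposition follows.
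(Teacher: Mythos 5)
Your verification is correct, and it takes a genuinely different route from the paper. The paper's proof is constructive and structural: it factors each operator as a product of commuting pieces, e.g.\ $L=L_0L_1$ with $L_0$ acting on each $\calH_{p,q}\oplus\bar{\calH}_{p,q}$ as the positive scalar $\pa{\tfrac{q}{p+q}}^{\frac{4(N-1)}{N}}$ and $L_1^N=\Id_\calH$, and then uses the oddness of $N$ to set $L^{\frac{1}{2}}=L_0^{\frac{1}{2}}L_1^{\frac{N+1}{2}}$ (similarly for $R$ and $C$); the identification of this square root with the displayed matrix elements is left implicit. You instead take the displayed formulas as given and square them entrywise, which is a legitimate and complete proof of the statement as written: the $R$, $\bar R$, $C$, $\bar C$ cases collapse on the diagonal and simply double the exponents, while the $L$ and $\bar L$ cases produce the shift $\alp_1\mapsto\alp_1+2t$ (resp.\ $-2t$) and an exponent discrepancy of exactly $t^2+t+1$, which vanishes modulo $N$ by the standing hypothesis $N\mid t^2+t+1$ (your two checks reduce to this single congruence, so "opposite sign" is only a cosmetic description). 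Your computation has the merit of making explicit where the divisibility hypothesis enters — in the paper's route the same congruence is hidden in the unwritten computation of $L_1^{\frac{N+1}{2}}$ — and the grading-preservation is indeed immediate since the matrix elements are defined through $h_{p,q}$ and $\bar h_{p,q}$ within a fixed admissible pair. What the paper's decomposition buys in exchange is a conceptual reason why grading-preserving square roots exist at all (positive scalar part times finite-order part, $N$ odd), rather than a verification of guessed formulas; but for the proposition as stated both arguments are sound.
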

\begin{proof}
Let us write for any $L=L_0L_1$ as a product of commuting operators $L_0$ and $L_1$ such that, for any admissible pair $(p,q)\in(\bbRz)^2$ we have % and any $\alp\in\bbZ_N^2$,
\begin{align*}
L_0|_{\calH_{p,q}\oplus\bar{\calH}_{p,q}}= \pa{\tfrac{q}{p+q}}^{\frac{4(N-1)}{N}} \ \  \text{and} \ \ L_1^N|_\calH=\Id_\calH.
\end{align*}
Since $N$ is odd, we set $L^{\frac{1}{2}}=L_0^{\frac{1}{2}}L_1^{\frac{N+1}{2}}$. We do similarly for $R^{\frac{1}{2}}$ and $C^\frac{1}{2}$.
%A straightforward computation leads to the results. \red{ou dire de calculer les puissances $\frac{N+1}{2}$ vu que ces operateurs valent 1 puissance $N$.}
\end{proof}

\begin{rem}
In \cite[equation (35)]{geer2012tetrahedral} the square root of $L$ is defined by
\begin{align}\label{L.sqrt..a.la.GKT}
L^{\frac{1}{2}}=BAR^{-\frac{1}{2}}AB 
\end{align}
and it is shown that it implies that $(L^{\frac{1}{2}})^2=L$. Although $L^{\frac{1}{2}}$ has not been defined this way in our case, a straightforward computation shows that equality \eqref{L.sqrt..a.la.GKT} holds true.
\end{rem}

%%%%%%%%%%%%%%%%%%%%%%%%%%%%%%%%%%%%%%%%%%%%%%%%%%%%%%%%%%%%%%%%%%%%%%%%%%%
%new subsection
%%%%%%%%%%%%%%%%%%%%%%%%%%%%%%%%%%%%%%%%%%%%%%%%%%%%%%%%%%%%%%%%%%%%%%%%%%%
% \section[The $6j$-symbols]{The \texorpdfstring{$6j$}{6\unichar{"006A}}-symbols}
\section{The \texorpdfstring{$6j$}{6\unichar{"006A}}-symbols}\label{sec.6j.symb}

Besides the operators $L,R$ and $C$, an other key component of a $\Psi$-system is its associated $6j$-symbols. In what follows we are going to define them and determine them. As we have already said, the operator $S(x)$ will have an essential role.

% \subsection[Definition of the $6j$-symbols]{Definition of the \texorpdfstring{$6j$}{6j}-symbols}
\subsection{Definition of the \texorpdfstring{$6j$}{6j}-symbols}
Following \cite{kashaev1999matrix} we consider, for any $x\in\bbRzu$, the algebra morphism 
%
%$$\Del_x: \mathcal{A}\to\mathcal{A}\ot\mathcal{A}$$
%\begin{align*}
   $ \Del_x: \mathcal{A}\to\mathcal{A}\ot\mathcal{A}$
%\end{align*}
%
by 
\begin{align}\label{def.Del.x}
\Del_x(a) =  S(x)(a\ot\Id_\calV)S(x)^{-1}
\end{align}
and the following function
\begin{equation*}
\begin{array}{lccc}
*: & (\bbRzu)^2 & \longrightarrow & \bbRzu \\
    & (x,y) & \longmapsto & \dfrac{y-xy}{1-xy} 
\end{array}
\end{equation*}
The latter function is well defined only for pairs $(x,y)\in(\bbRzu)^2$ such that $x\neq y^{-1}$.
\begin{defin}
We say that a pair $(x,y)\in(\bbRzu)^2$ is {\it compatible} if $x\neq y^{-1}$.
\end{defin}
Using the terminology of \cite{kashaev1999matrix}, we show in the next result that $\calA$ is endowed with a {\it generalized comultiplication}. 
\begin{lemma}\label{coass.gen}
For all compatible pairs $(x,y)\in(\bbRzu)^2$, we have
 \begin{align}\label{formule.des.Del}
 \pa{\Del_{x*y}\ot\Id_\calV}\Del_{xy}=\pa{\Id_\calV\ot\Del_{x}}\Del_{y}
 \end{align}
\end{lemma}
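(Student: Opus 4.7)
The plan is to unfold both sides of the desired identity as operators on $\calV^{\ot 3}$ using the definition $\Del_x(a) = S(x)(a\ot\Id_\calV)S(x)^{-1}$ and the standard leg-numbering notation $X_{ij}$ for operators embedded in specific tensor factors. For any $T\in\calA\ot\calA$, a short computation gives $(\Del_{x*y}\ot\Id)(T) = S(x*y)_{12}\, T_{13}\, S(x*y)^{-1}_{12}$ and $(\Id\ot\Del_x)(T) = S(x)_{23}\, T_{12}\, S(x)^{-1}_{23}$, where $T_{ij}$ denotes $T$ embedded in positions $i,j$ of $\calV^{\ot 3}$. Substituting $T=\Del_{xy}(a)$ on the left and $T=\Del_y(a)$ on the right, both sides become conjugations of $a_1:=a\ot\Id\ot\Id$ in $\End(\calV^{\ot 3})$:
\[
\text{LHS}=S(x*y)_{12}S(xy)_{13}\,a_1\,S(xy)^{-1}_{13}S(x*y)^{-1}_{12},\quad \text{RHS}=S(x)_{23}S(y)_{12}\,a_1\,S(y)^{-1}_{12}S(x)^{-1}_{23}.
\]

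Equating these for every $a\in\calA$ is equivalent to showing that
\[
M_{x,y}\ :=\ [S(x)_{23}S(y)_{12}]^{-1}\,[S(x*y)_{12}S(xy)_{13}]
\]
commutes with $a_1$ for all $a\in\calA$. Since $\calA=\End(\calV)$, the commutant of $\calA\ot\Id\ot\Id$ in $\End(\calV^{\ot 3})$ is exactly $\Id_\calV\ot\End(\calV^{\ot 2})$, so the statement of the lemma is equivalent to the pentagon-type operator identity
\[
S(x*y)_{12}\,S(xy)_{13}\ =\ S(x)_{23}\,S(y)_{12}\,(\Id_\calV\ot N_{x,y})
\]
for some $N_{x,y}\in\End(\calV^{\ot 2})$. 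This is nothing but the pentagon identity for Kashaev's $\mathfrak{sl}_3$ matrix dilogarithm.

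To establish that identity, the idea is to exploit the factorization $S(x)=\Psi_x(E)\Psi_x(F)\Psi_x(G)\Psi_x(H)\,L(U^tV,X)$ from Section 3, track how each factor is reindexed when embedded on legs $(1,2), (1,3)$ or $(2,3)$, and push factors past one another using the commutation relations displayed in the paper for $E,F,G,H$ and for $U,V,X$. The crucial analytic input is the quantum dilogarithm pentagon identity obeyed by $\Psi_x$ (equivalent to the Faddeev--Kashaev pentagon $\Psi(\sfU)\Psi(\sfV)=\Psi(\sfV)\Psi(\sfU\sfV)\Psi(\sfU)$ when $\sfU\sfV=\om\sfV\sfU$), applied once for each of the four $\Psi$-factors; the $L$-operator reshuffling is then handled using Lemma~3.8. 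Alternatively, since both sides are algebra homomorphisms $\calA\to\calA^{\ot 3}$ and $\calA$ is generated by $X,Y,U,V$ (Lemma~2.1), one may verify the identity by brute force on these four generators, using the intertwining Proposition~3.11 as the engine behind the computation.

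The main obstacle is precisely this pentagon identity: unlike the $\mathfrak{sl}_2$ case there are four $\Psi$-factors together with an $L$-operator, and the commutation rules between the operators $E,F,G,H$ on different pairs of legs are nontrivial, so the bookkeeping for the rearrangement is delicate. Once the pentagon is secured, the conclusion $M_{x,y}\in\Id_\calV\ot\End(\calV^{\ot 2})$, and hence \eqref{formule.des.Del}, follows immediately; the compatibility hypothesis $x\neq y^{-1}$ is used to guarantee that $x*y\in\bbRzu$ so that all four instances of $S$ are well-defined.
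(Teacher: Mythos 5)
Your reduction is sound: unfolding both sides with $\Del_z(a)=S(z)(a\ot\Id_\calV)S(z)^{-1}$ and using that $\calA=\End(\calV)$ is a full matrix algebra (so the commutant of $\calA\ot\Id_\calV\ot\Id_\calV$ in $\End(\calV^{\ot3})$ is $\Id_\calV\ot\End(\calV^{\ot2})$) correctly shows that the lemma is equivalent to the existence of some $N_{x,y}\in\End(\calV^{\ot2})$ with $S(x*y)_{12}S(xy)_{13}=S(x)_{23}S(y)_{12}\pa{\Id_\calV\ot N_{x,y}}$, and your use of compatibility to keep $xy$ and $x*y$ in $\bbRzu$ is correct. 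The genuine gap is that your primary route then appeals to ``the pentagon identity for Kashaev's $\mathfrak{sl}_3$ matrix dilogarithm'' and only sketches how one might prove it. That identity is not an available input at this point: it is exactly the hard theorem proved later in the paper, where $T(x,y)=S(y*x)$ is established via Theorem \ref{TQD}, the exchange identity $F_{12}H_{23}=E_{23}G_{12}$, and a long chain of commutations (roughly six applications of Theorem \ref{TQD}, not ``once per $\Psi$-factor''), a result the paper notes Kashaev stated without proof. In the paper's logic the present lemma comes first and is what guarantees the existence of $T(x,y)$; your main argument therefore replaces the lemma by an equivalent statement and defers the entire difficulty to a stronger result that you do not actually establish.

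Your ``alternative'' remark, by contrast, is precisely the paper's proof: since both sides of \eqref{formule.des.Del} are algebra morphisms and $\calA$ is generated by $\brkt{X,Y,U,V}$, one extracts from Proposition \ref{prop.base.Hpq} the explicit formulas $\Del_x(X)=X_1X_2$, $\Del_x(Y)=(1-x)^{1/N}Y_1+x^{1/N}X_1Y_2$, and the analogous three-term expressions for $\Del_x(U)$ and $\Del_x(V)$ (with $Z=(U+V)Y^{-1}$), and then checks \eqref{formule.des.Del} by a direct computation on these four generators. Had you made this the main argument and carried out (or at least displayed) those generator formulas, your proof would coincide with the paper's and would avoid the pentagon identity entirely; as submitted, the burden of proof rests on a route whose key step is missing.
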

\begin{proof}
If $(x,y)\in(\bbRzu)^2$ is a compatible pair, then $\Del_{xy}$ and $\Del_{x*y}$ are well defined functions. For any $x\in\bbRzu$, Proposition \ref{prop.base.Hpq} implies the following equalities

\begin{equation*}\begin{array}{l}
\Del_x(X)=X_1X_2, 
\ \
\Del_x(Y)=(1-x)^{\frac{1}{N}}Y_1+x^\frac{1}{N}X_1Y_2 
\\
\Del_x(U)=(1-x)^{\frac{2}{N}}U_1+x^{\frac{2}{N}}X_1^{t+1}U_2+(x-x^2)^\frac{1}{N}X_1^tY_1Z_2,
\\
\Del_x(V)=(1-x)^{\frac{2}{N}}V_1+x^{\frac{2}{N}}X_1^{t+1}V_2+(x-x^2)^\frac{1}{N}Z_1X_1Y_2,
\end{array}\end{equation*}
where $Z=(U+V)Y^{-1}$. Using these equalities, a straightforward computation shows that equation \eqref{formule.des.Del} holds true for $a\in\brkt{X,Y,U,V}$. This ends the proof since $\brkt{X,Y,U,V}$ is a generating set of $\calA$.
\end{proof}

Let $(x,y)\in(\bbRzu)^2$ be compatible. Using \eqref{def.Del.x}, we have for all $a\in\calA$
\begin{align*}
\pa{\Del_{x*y}\ot\Id_\calV}\Del_{xy}(a) & =S_{12}(x*y)S_{13}(xy)(a\ot\Id_\calV\ot\Id_\calV)\pa{S_{12}(x*y)S_{13}(xy)}^{-1}
\end{align*}
and
\begin{align*}
\pa{\Id_\calV\ot\Del_{x}}\Del_{y}(a) & =S_{23}(x)S_{12}(y)(a\ot\Id_\calV\ot\Id_\calV)\pa{S_{23}(x)S_{12}(y)}^{-1}.
\end{align*}
Therefore, by Lemma \ref{coass.gen}, the following equality holds in $\calA^{\ot 3}$ for all $a\in\calA$
%
% \begin{multline*}
% \pa{S_{23}(x)S_{12}(y)}^{-1}S_{12}(x*y)S_{13}(xy)(a\ot\Id_\calV\ot\Id_\calV)
% \\
% =(a\ot\Id_\calV\ot\Id_\calV)\pa{S_{23}(x)S_{12}(y)}^{-1}S_{12}(x*y)S_{13}(xy).
% \end{multline*}
%
\begin{align*}
& \pa{S_{23}(x)S_{12}(y)}^{-1}S_{12}(x*y)S_{13}(xy)(a\ot\Id_\calV\ot\Id_\calV)
\\
& \quad\,=(a\ot\Id_\calV\ot\Id_\calV)\pa{S_{23}(x)S_{12}(y)}^{-1}S_{12}(x*y)S_{13}(xy).
\end{align*}
Since the center of $\calA$ is trivial, the former equality implies the existence of an element $T(x,y)\in\calA^{\ot 2}$ such that 
\begin{align*}
\pa{S_{23}(x)S_{12}(y)}^{-1}S_{12}(x*y)S_{13}(xy)T_{23}(x,y)=\Id_\calV\ot\Id_\calV\ot\Id_\calV.
%\text{Ad}\pa{S_{12}(x)S_{13}(y)}(a\ot\Id_\calV\ot\Id_\calV)=\text{Ad}\pa{S_{23}(x*y)S_{12}(xy)}(a\ot\Id_\calV\ot\Id_\calV)
\end{align*}
Hence  we have 
\begin{align}\label{pent.T}
S_{23}(x)S_{12}(y)=S_{12}(x*y)S_{13}(xy)T_{23}(x,y).
\end{align}
\begin{defin}
The operator $T(x,y)\in\calA^{\ot 2}$ and its inverse are called {\it $6j$-symbols}.
\end{defin}
\begin{remark}\label{rmk.T.as.6j}
The operators $T(x,y)$ and $T(x,y)^{-1}$ correspond to the $6j$-symbols (positive and negative respectively) defined in \cite{geer2012tetrahedral} in the following way : for $p,q,r\in\bbRz$ such that $x=\tfrac{r}{q+r}$ and $y=\tfrac{q+r}{p+q+r}$, we have 
\begin{align*}
(\bar{v}\ot\bar{u})T(x,y)(v\ot u)=\begin{Bmatrix} p & q & p+q \\ r & p+q+r & q+r \end{Bmatrix}(\bar{u}\ot\bar{v}\ot u\ot v)\in\bbC
\end{align*} 
where 
$$\bar{u}\ot\bar{v}\ot u\ot v\in \bar{\calH}_{p+q,r}\ot\bar{\calH}_{p,q}\ot \calH_{q,r}\ot \calH_{p,q+r}$$
and
\begin{align*}
(\bar{u}'\ot\bar{v}')T(x,y)^{-1}(u'\ot v')=\begin{Bmatrix} p & q & p+q \\ r & p+q+r & q+r \end{Bmatrix}^{-}(\bar{u}'\ot\bar{v}'\ot u'\ot v')\in\bbC
\end{align*}
where 
$$\bar{u}'\ot\bar{v}'\ot u'\ot v'\in \bar{\calH}_{p,q+r}\ot\bar{\calH}_{q,r}\ot \calH_{p,q}\ot \calH_{p+q,r}.$$ 
Thus, $T(x,y)$ and $T(x,y)^{-1}$ are interpreted as elements of $\End(\calH^{\ot2})$, or more precisely, 
\begin{align*}
T(x,y):  \calH_{p,q+r}\ot\calH_{q,r}  \longrightarrow  \calH_{p,q}\ot\calH_{p+q,r},
\\
T(x,y)^{-1}:  \calH_{p,q}\ot\calH_{p+q,r}  \longrightarrow  \calH_{p,q+r}\ot\calH_{q,r}.
\end{align*}
\end{remark}
%

% \subsection[Determination of the $6j$-symbols]{Determination of the \texorpdfstring{$6j$}{6j}-symbols}
\subsection{Determination of the \texorpdfstring{$6j$}{6j}-symbols}

The following Theorem, proved by Faddeev and Kashaev in \cite{faddeev1994quantum}, is central to determine the $6j$-symbols $T(x,y)$. For the sake of simplicity, our statement is adapted to our context. It is therefore slightly different from the original one. 
\begin{theorem}[\cite{faddeev1994quantum}]\label{TQD}
Let $(x,y)\in(\bbRzu)^2$ be compatible and $\sfU,\sfV$ be two operators such that $\sfU\sfV=\om^{-1}\sfV\sfU$ and $\sfU^N=\sfV^N=-1$, then
\begin{align*}
\Psi_x(\sfU)\Psi_y(\sfV)=\Psi_{x*y}(\sfV)\Psi_{xy}(-\sfV\sfU)\Psi_{y*x}(\sfU).
\end{align*}
\end{theorem}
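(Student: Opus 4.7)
This is the classical pentagon identity for the Faddeev--Kashaev quantum dilogarithm, and my sketch follows the spirit of their original argument. I would first verify that the right-hand side is well-defined by checking $(-\sfV\sfU)^N = -1$: from $\sfU\sfV = \om^{-1}\sfV\sfU$, a short induction gives $(\sfV\sfU)^n = \om^{-n(n-1)/2}\sfV^n\sfU^n$, and since $N$ is odd, $\om^{-N(N-1)/2} = 1$ and $\sfV^N\sfU^N = 1$, so $(-\sfV\sfU)^N = (-1)^N = -1$. Hence $\Psi_{xy}(-\sfV\sfU)$ makes sense.

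The main step is to expand each factor via its finite series $\Psi_z(\sfW) = \sum_{\alp\in\bbZ_N}\psi_{z,\alp}(-\sfW)^\alp$ and bring both sides to the normal-ordered form $\sum_{\alp,\bet} c_{\alp,\bet}(-\sfU)^\alp(-\sfV)^\bet$ using the reordering rule $\sfU^a\sfV^b = \om^{-ab}\sfV^b\sfU^a$. The identity then reduces to a pointwise scalar equality of the coefficients which, after substituting the explicit formula \eqref{def.pti.psi}, becomes a finite $q$-binomial summation for the cyclic weight function $w(\cdot\mid\alp)$ of \cite{kashaev1993star}. The sum collapses thanks to the algebraic identities
\begin{align*}
1 - x*y = \frac{1-y}{1-xy}, \qquad 1 - y*x = \frac{1-x}{1-xy},
\end{align*}
both immediate from the definition of $*$, which are exactly what makes the Ramanujan-type summation work.

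A slicker conceptual route is to set $L := \Psi_x(\sfU)\Psi_y(\sfV)$ and $R := \Psi_{x*y}(\sfV)\Psi_{xy}(-\sfV\sfU)\Psi_{y*x}(\sfU)$ and to show that both implement the same automorphism of the algebra generated by $\sfU$ and $\sfV$. Using the functional equation \eqref{psi.fun.eqn} iteratively, together with the $*$-identities above, one computes directly that $L^{-1}\sfU L = R^{-1}\sfU R$ and $L^{-1}\sfV L = R^{-1}\sfV R$; by Schur's lemma applied to the irreducible $N$-dimensional representation spanned by eigenvectors of $\sfU$, this forces $R = c\, L$ for some scalar $c$, and the normalization \eqref{def.psi0} together with Lemma \ref{det.Psi} pin down $c$ via $\det L = \det R = 1$ (so $c^{N^2}=1$), the remaining phase being fixed by evaluation on a single vector. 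The main obstacle will be the bookkeeping of the half-integer powers of $\om$ arising from the normalizations $\psi_{\cdot,0}$ and from the reorderings; the collapse of the three-fold $q$-binomial sum on the right-hand side to the two-term product on the left is the combinatorial core of the Faddeev--Kashaev identity and is where the cross-ratio structure of $*$ enters decisively.
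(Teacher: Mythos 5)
First, a point of comparison: the paper does not prove Theorem \ref{TQD} at all --- it is imported from Faddeev and Kashaev \cite{faddeev1994quantum}, merely restated in the root-of-unity normalization used here --- so there is no in-paper argument to measure your proposal against; it must stand on its own as a proof of the classical pentagon identity.

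Judged that way, your preliminary verifications are correct (from $\sfV\sfU=\om\,\sfU\sfV$ one gets $(\sfV\sfU)^n=\om^{-n(n-1)/2}\sfV^n\sfU^n$, hence $(-\sfV\sfU)^N=-1$ for odd $N$; and indeed $1-x*y=\tfrac{1-y}{1-xy}$, $1-y*x=\tfrac{1-x}{1-xy}$), but neither of your two routes is actually carried out, and the omissions are exactly the content of the theorem. In the first route, after normal ordering, the equality of coefficients is a finite Ramanujan-type summation for the cyclic weight $w(\cdot\,|\,\alp)$ with the specific normalizations $\psi_{x,0},\psi_{x*y,0},\psi_{xy,0},\psi_{y*x,0}$ of \eqref{def.psi0}; you assert that it ``collapses'' but never exhibit the summation or its evaluation, and without it nothing is proved (note the identity is claimed exactly, not up to a root of unity, so the normalization bookkeeping you defer is part of the statement). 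In the second route, the equalities $L^{-1}\sfU L=R^{-1}\sfU R$ and $L^{-1}\sfV L=R^{-1}\sfV R$ are asserted without computation --- conjugation by $\Psi_{xy}(-\sfV\sfU)$ mixes $\sfU$ and $\sfV$ and verifying these requires a genuine calculation with \eqref{psi.fun.eqn} and the $*$-identities --- and the scalar is not pinned down: $\det L=\det R=1$ only constrains $c$ to be a root of unity, and ``evaluation on a single vector'' is not available unless you compute a matrix element of both sides, which is again the combinatorial identity you skipped. Two smaller repairs: in the paper's application the operators ($E,F,G,H$, etc.) act on $\calV^{\ot2}$, where the Weyl pair is represented with multiplicity, so Schur's lemma for ``the irreducible $N$-dimensional representation'' does not literally apply; the correct argument is that $RL^{-1}$ lies in the algebra generated by $\sfU,\sfV$ (isomorphic to $M_N(\bbC)$) and commutes with it, hence is central and scalar. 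Also, Lemma \ref{det.Psi} is stated for operators conjugate to $-X\ot\Id_\calV$, so invoking $\det\Psi=1$ for $\sfU$, $\sfV$ and $-\sfV\sfU$ needs the (easy, but unstated) remark that each of these has equidistributed spectrum in the relevant representation. In short, the plan is a reasonable sketch of the standard strategy, but the combinatorial core is missing, so there is a genuine gap.
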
 

\begin{lemma}\label{L2}
Let $\sfU,\sfV\in\calA$ be such that $\sfU^N=\sfV^N=1$ and $\sfU\sfV=\om \sfV\sfU$.Then 
\begin{align*}
 L_{23}(\sfU,\sfV)L_{12}(\sfU,\sfV)
 =L_{12}(\sfU,\sfV)L_{13}(\sfU,\sfV)L_{23}(\sfU,\sfV).
\end{align*} 
\end{lemma}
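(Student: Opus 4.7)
The plan is a direct computation in a normal-ordered basis. Every element of $\calA^{\ot 3}$ can be written uniquely as a linear combination of monomials $\sfU^i\ot\sfU^j\sfV^k\ot\sfV^l$, because $\sfU\sfV=\om\sfV\sfU$ implies the exchange relation $\sfV^\bet\sfU^\gam=\om^{-\bet\gam}\sfU^\gam\sfV^\bet$. I would expand both sides of the pentagon identity as $\bbC$-linear combinations in this basis and compare coefficients.

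First, a direct multiplication of the two defining sums yields
\begin{align*}
L_{23}(\sfU,\sfV)L_{12}(\sfU,\sfV)=\frac{1}{N^2}\sum_{\alp,\bet,\gam,\del\in\bbZ_N}\om^{-\alp\bet-\gam\del}\sfU^\alp\ot\sfU^\gam\sfV^\bet\ot\sfV^\del,
\end{align*}
with no reordering needed (the $\sfV^\bet$ in the middle slot already sits to the right of $\sfU^\gam$ in the right order). For the right-hand side, I would expand the triple product with four summation indices $(a,\bet',\eps,\zeta,\gam',\del')$, and apply the exchange relation once in the middle slot to bring the factor into normal form $\sfU^{\gam'}\sfV^{\bet'}$, picking up the phase $\om^{-\bet'\gam'}$. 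After relabeling $\alp=a+\eps$, $\del=\zeta+\del'$, $\bet=\bet'$, $\gam=\gam'$, the excess variables $\eps,\zeta$ appear only in an inner sum
\begin{align*}
\sum_{\eps,\zeta\in\bbZ_N}\om^{\eps(\bet-\zeta)+\gam\zeta}=N\om^{\gam\bet},
\end{align*}
where the $\eps$-sum collapses to $N\del_{\bet,\zeta}$ by finite Fourier orthogonality. The resulting extra phase $\om^{\gam\bet}$ cancels the reordering phase $\om^{-\bet\gam}$ exactly, and the leftover $1/N$ from $1/N^3 \cdot N$ matches the $1/N^2$ of the LHS. Matching coefficients then finishes the proof.

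The single genuine obstacle is index bookkeeping: one must carry out the commutation step and the two changes of variables with the right signs in the exponents of $\om$ so that the Fourier orthogonality produces exactly the Kronecker $\del$ needed to cancel the normal-ordering phase. Conceptually, the identity is the standard pentagon relation for the canonical element of the Weyl/Heisenberg pair $(\sfU,\sfV)$; once the computation is arranged in normal-ordered form there is no hidden difficulty, only arithmetic in the exponents.
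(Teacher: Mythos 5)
Your computation is correct: the expansion of the left side, the single normal-ordering phase $\om^{-\bet\gam}$ in the middle tensor slot of the right side, the change of variables, and the inner sum $\sum_{\eps,\zeta\in\bbZ_N}\om^{\eps(\bet-\zeta)+\gam\zeta}=N\om^{\gam\bet}$ all check out, and the phases and the powers of $N$ cancel exactly as you claim. Your route differs from the paper's mainly in organization. The paper never expands all six indices at once: it first records the orthogonality identity for the operators $P_i=\frac{1}{N}\sum_{j}\om^{-ij}\sfV^j$ (in the form $\sum_{k}P_iP_k=P_i$, i.e.\ $P_iP_k=\del_{i,k}P_i$), then uses Lemma \ref{prop.L1} to pull $L_{12}(\sfU,\sfV)$ to the left of $\sfU_2^i\sfV_3^j$, which reduces the left-hand side to $L_{12}(\sfU,\sfV)\,\frac{1}{N}\sum_{i,j}\om^{-ij}\sfU_1^i\sfU_2^i\sfV_3^j$, and finally uses the projection identity to decouple the diagonal sum over $i$ into the product $L_{13}(\sfU,\sfV)L_{23}(\sfU,\sfV)$. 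Both arguments ultimately rest on the same finite Fourier orthogonality $\sum_{\eps}\om^{\eps m}=N\del_{m,0}$; yours is self-contained and purely coefficientwise, while the paper's reuses Lemma \ref{prop.L1} and trades the six-index bookkeeping for a short operator manipulation. Two cosmetic remarks: your opening claim that \emph{every} element of $\calA^{\ot 3}$ is uniquely a combination of the monomials $\sfU^i\ot\sfU^j\sfV^k\ot\sfV^l$ is an overstatement (these $N^4$ monomials span only a small subspace of $\calA^{\ot3}$), but nothing is lost, since your computation in fact exhibits both sides as the \emph{same} explicit normal-ordered sum, which yields equality without any independence statement (and the monomials $\sfU^j\sfV^k$ are in any case linearly independent because $\om$ is a primitive $N$-th root of unity); and you announce ``four summation indices'' while listing six, which is clearly just a slip.
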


\begin{proof}
%%%%%%%%%%%%%%%%%%%%%%%%%%%%%%%%%%%%%%%%%%%%%%%%%%%%%%%%%%%%%%%%
% J'AVAIS CALCULE L'IDENTITE EN EVALUANT LA MATRICE \tX. EN FAIT, CE N'EST PAS NECESSAIRE:-P
%Since for all $\alp,\bet\in\bbZ_{N}^2$, $\bkt{ \bet |\tX| \alp}=\om^{-\alp_1}\del_{\alp_1,\bet_1}\del_{\alp_2,\bet_2}$, a straightforward computation shows that for all $i\in\bbZ_N$ and $\alp,\bet\in\bbZ_N^2$, 
%\begin{align*}
%\bkt{\bet | \frac{1}{N}\sum_{j\in\bbZ_N}\om^{-ij}\tX^j | \alp}
%=\del_{\alp_1,-i}\del_{\alp_1,\bet_1}\del_{\alp_2,\bet_2}.%=:\la\bet_1,\bet_2 | E_{-i,-i}\ot\bbun_N | \alp_1,\alp_2\ra.
%\end{align*}
%In particular, for all $i\in\bbZ_N$ and $\alp,\bet\in\bbZ_N^2$, we have 
%\begin{multline*}
%\bkt{\bet | \frac{1}{N}\sum_{j\in\bbZ_N}\om^{-ij}\tX^j | \alp}
%\\
 %= \del_{\alp_1,-i}\del_{\alp_1,\bet_1}\del_{\alp_2,\bet_2}
 % = \sum_{k\in\bbZ_N}\del_{\alp_1,-i}\del_{\alp_1,\bet_1}\del_{\alp_2,\bet_2} \del_{\alp_1,-k}\del_{\alp_1,\bet_1}\del_{\alp_2,\bet_2} 
  %\\
  %= \bkt{ \bet | \sum_{k\in\bbZ_N}\pa{\frac{1}{N}\sum_{j\in\bbZ_N}\om^{-ij}\tX^j}\pa{\frac{1}{N}\sum_{l\in\bbZ_N}\om^{-kl}\tX^l} | \alp},
%\end{multline*}
%%%%%%%%%%%%%%%%%%%%%%%%%%%%%%%%%%%%%%%%%%%%%%%%%%%%%%%%%%%%%%%%
For all $i\in\bbZ_N$ we have 
%
% \begin{multline*}
% \sum_{k\in\bbZ_N}\pa{\frac{1}{N}\sum_{j\in\bbZ_N}\om^{-ij}\sfV^j}\pa{\frac{1}{N}\sum_{l\in\bbZ_N}\om^{-kl}\sfV^l} 
% \\
% = \frac{1}{N^2}\sum_{l,k\in\bbZ_N}\om^{-kl}\pa{\sum_{j\in\bbZ_N}\om^{-ij}\sfV^{j+l}} = \frac{1}{N^2}\sum_{l,k\in\bbZ_N}\om^{l(i-k)}\pa{\sum_{j\in\bbZ_N}\om^{-ij}\sfV^{j}}
% \\
% = \pa{\sum_{j\in\bbZ_N}\om^{-ij}\sfV^{j}}\frac{1}{N^2}\sum_{l,k\in\bbZ_N}\om^{l(i-k)} = \pa{\sum_{j\in\bbZ_N}\om^{-ij}\sfV^{j}}\frac{1}{N^2}\sum_{k\in\bbZ_N}N\del_{i,k} 
%  \\
%  = \frac{1}{N}\sum_{j\in\bbZ_N}\om^{-ij}\sfV^j.
% \end{multline*}
%
\begin{align*}
& \sum_{k\in\bbZ_N}\pa{\frac{1}{N}\sum_{j\in\bbZ_N}\om^{-ij}\sfV^j}\pa{\frac{1}{N}\sum_{l\in\bbZ_N}\om^{-kl}\sfV^l} 
%\\
 = \frac{1}{N^2}\sum_{l,k\in\bbZ_N}\om^{-kl}\pa{\sum_{j\in\bbZ_N}\om^{-ij}\sfV^{j+l}} 
\\
& = \frac{1}{N^2}\sum_{l,k\in\bbZ_N}\om^{l(i-k)}\pa{\sum_{j\in\bbZ_N}\om^{-ij}\sfV^{j}}
% \\
% &
= \pa{\sum_{j\in\bbZ_N}\om^{-ij}\sfV^{j}}\frac{1}{N^2}\sum_{l,k\in\bbZ_N}\om^{l(i-k)} 
\\
& = \pa{\sum_{j\in\bbZ_N}\om^{-ij}\sfV^{j}}\frac{1}{N^2}\sum_{k\in\bbZ_N}N\del_{i,k} 
%  \\
% &
= \frac{1}{N}\sum_{j\in\bbZ_N}\om^{-ij}\sfV^j.
\end{align*}
%hence for all $i\in\bbZ_N$ we have
%$\begin{align*}
%\frac{1}{N}\sum_{j\in\bbZ_N}\om^{-ij}\tX^j = \sum_{k\in\bbZ_N}\pa{\frac{1}{N}\sum_{j\in\bbZ_N}\om^{-ij}\tX^j}\pa{\frac{1}{N}\sum_{l\in\bbZ_N}\om^{-kl}\tX^l}.
%\end{align*}
Using the identity above and Lemma \ref{prop.L1}, we easily make the following computation
% \begin{multline*}
%  L_{23}(\sfU,\sfV)L_{12}(\sfU,\sfV)  =  \frac{1}{N}\sum_{i,j\in\bbZ_N}\om^{-ij}\sfU_2^i\sfV_3^jL_{12}(\sfU,\sfV) 
%  \\
%   = \frac{1}{N}\sum_{i,j\in\bbZ_N}\om^{-ij}L_{12}(\sfU,\om^i\sfV)\sfU_2^i\sfV_3^j 
%   = L_{12}(\sfU,\sfV)\frac{1}{N}\sum_{i,j\in\bbZ_N}\om^{-ij}\sfU_1^i\sfU_2^i\sfV_3^j 
%   \\ 
%   = L_{12}(\sfU,\sfV)\sum_{i\in\bbZ_N}\sfU_1^i\sfU_2^i\sum_{k\in\bbZ_N}\pa{\frac{1}{N}\sum_{j\in\bbZ_N}\om^{-ij}\sfV_3^j}\pa{\frac{1}{N}\sum_{l\in\bbZ_N}\om^{-kl}\sfV_3^l} 
%   \\
%   = L_{12}(\sfU,\sfV)\sum_{i,k\in\bbZ_N}\sfU_1^i\sfU_2^k\pa{\frac{1}{N}\sum_{j\in\bbZ_N}\om^{-ij}\sfV_3^j}\pa{\frac{1}{N}\sum_{l\in\bbZ_N}\om^{-kl}\sfV_3^l} 
%   \\
%   = L_{12}(\sfU,\sfV)\pa{\frac{1}{N}\sum_{i,j\in\bbZ_N}\om^{-ij}\sfU_1^i\sfV_3^j}\pa{\frac{1}{N}\sum_{k,l\in\bbZ_N}\om^{-kl}\sfU_2^k\sfV_3^l} 
%   \\
%   = L_{12}(\sfU,\sfV)L_{13}(\sfU,\sfV)L_{23}(\sfU,\sfV).
% \end{multline*}
%
\begin{align*}
    & L_{23}(\sfU,\sfV)L_{12}(\sfU,\sfV)  =  \frac{1}{N}\sum_{i,j\in\bbZ_N}\om^{-ij}\sfU_2^i\sfV_3^jL_{12}(\sfU,\sfV) 
 \\
 & = \frac{1}{N}\sum_{i,j\in\bbZ_N}\om^{-ij}L_{12}(\sfU,\om^i\sfV)\sfU_2^i\sfV_3^j 
  = L_{12}(\sfU,\sfV)\frac{1}{N}\sum_{i,j\in\bbZ_N}\om^{-ij}\sfU_1^i\sfU_2^i\sfV_3^j 
  \\ 
&  = L_{12}(\sfU,\sfV)\sum_{i\in\bbZ_N}\sfU_1^i\sfU_2^i\sum_{k\in\bbZ_N}\pa{\frac{1}{N}\sum_{j\in\bbZ_N}\om^{-ij}\sfV_3^j}\pa{\frac{1}{N}\sum_{l\in\bbZ_N}\om^{-kl}\sfV_3^l} 
  \\
 & = L_{12}(\sfU,\sfV)\sum_{i,k\in\bbZ_N}\sfU_1^i\sfU_2^k\pa{\frac{1}{N}\sum_{j\in\bbZ_N}\om^{-ij}\sfV_3^j}\pa{\frac{1}{N}\sum_{l\in\bbZ_N}\om^{-kl}\sfV_3^l} 
  \\
&  = L_{12}(\sfU,\sfV)\pa{\frac{1}{N}\sum_{i,j\in\bbZ_N}\om^{-ij}\sfU_1^i\sfV_3^j}\pa{\frac{1}{N}\sum_{k,l\in\bbZ_N}\om^{-kl}\sfU_2^k\sfV_3^l} 
  \\
&  = L_{12}(\sfU,\sfV)L_{13}(\sfU,\sfV)L_{23}(\sfU,\sfV).
\end{align*}
\end{proof}

In the following Theorem, we determine the $6j$-symbols $T(x,y)$ by show that $T(x,y)=S(y*x)$ is a solution of the equation \eqref{pent.T}. This result was suggested without a proof in \cite{kashaev1999matrix}.  %\blue{Note that the same result (with a similar proof) is also true for the $\mathfrak{sl}_2$ analogue of the operator $S(x)$ defined in \cite[Section 12]{geer2012tetrahedral}}.

\begin{theorem}
$S_{23}(x)S_{12}(y)= S_{12}(x*y)S_{13}(xy)S_{23}(y*x)$.
\end{theorem}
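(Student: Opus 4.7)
The plan is to verify this pentagon identity for $S(x)$ by expanding both sides using the definition
\[
S(x)=\Psi_x(E)\Psi_x(F)\Psi_x(G)\Psi_x(H)L(U^tV,X),
\]
and then reducing to the Faddeev--Kashaev pentagon identity (Theorem \ref{TQD}) applied four times, together with one application of the $L$-identity (Lemma \ref{L2}). A simple factor count supports this strategy: the left-hand side contains eight $\Psi$'s and two copies of $L$, whereas the right-hand side contains twelve $\Psi$'s and three copies of $L$. Each application of Theorem \ref{TQD} converts $\Psi_x(\sfU)\Psi_y(\sfV)$ into three dilogarithms (net gain of one $\Psi$), while one application of Lemma \ref{L2} replaces $L_{23}L_{12}$ by $L_{12}L_{13}L_{23}$ (net gain of one $L$); these match exactly the discrepancy between the two sides.

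Concretely, I would first move the interior factor $L_{23}(U^tV,X)$ past the four $\Psi_y$ factors of $S_{12}(y)$ by conjugation, computing the transformed arguments from the commutation rules encoded in Lemma \ref{prop.L1}. This brings the two copies of $L$ together, after which Lemma \ref{L2} converts $L_{23}L_{12}$ into $L_{12}L_{13}L_{23}$; the rescaling $(U^tV)\mapsto(U^tV)^{a}$ with $a(t+1)\equiv 1\pmod N$ required to match the commutation relation of Lemma \ref{L2} is legitimate because the hypothesis $N\mid t^2+t+1$ gives $t(t+1)\equiv -1\pmod N$, so $t+1$ is a unit in $\bbZ_N$ (with inverse $-t$). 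I would then apply Theorem \ref{TQD} layer by layer to each of the four pairs $(E_{23},E_{12})$, $(F_{23},F_{12})$, $(G_{23},G_{12})$, $(H_{23},H_{12})$. A direct computation from $\sfX\sfY=\om^{-1}\sfY\sfX$ shows, for instance, that $E_{23}E_{12}=\om^{-1}E_{12}E_{23}$, and similar relations hold for the other three pairs, so Theorem \ref{TQD} applies in each case. Each application yields a factor $\Psi_{x*y}(\sfV_{12})$ that joins $S_{12}(x*y)$ on the left, a middle factor $\Psi_{xy}(-\sfV_{12}\sfU_{23})$ that should contribute to $S_{13}(xy)$, and a factor $\Psi_{y*x}(\sfU_{23})$ that joins $S_{23}(y*x)$ on the right.

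The main obstacle is the bookkeeping of the middle factors. The operator $-\sfV_{12}\sfU_{23}$ produced by Theorem \ref{TQD} is not precisely the corresponding $(1,3)$-generator; for instance $-E_{12}E_{23}=X_2E_{13}$, which carries an extra factor $X_2$ supported on position~$2$, and similarly for the pairs built from $F,G,H$. These spurious single-position factors must be commuted through the subsequent $\Psi$'s of neighbouring layers and through the $L$-operators, and absorbed using the conjugation rules of Lemma \ref{prop.L1}. The scalars coming from the normalizations $\psi_{x,0}$ (Lemma \ref{det.Psi}) and $\bar\psi_{x,0}$ (Lemma \ref{det.bar.Psi}) have to be tracked throughout as well. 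Once this reorganization is carried out, the residual product of middle factors assembles into $S_{13}(xy)$, and the desired identity $S_{23}(x)S_{12}(y)=S_{12}(x*y)S_{13}(xy)S_{23}(y*x)$ follows.
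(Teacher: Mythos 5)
Your factor count and the first reduction (commuting the $L$-factors with Lemma \ref{prop.L1} and merging them with Lemma \ref{L2}) are in the same spirit as the paper; note only that no rescaling $(U^tV)\mapsto (U^tV)^a$ is needed, since $U^tV$ and $X$ already satisfy $(U^tV)X=\om X(U^tV)$, so Lemma \ref{L2} applies verbatim with $\sfU=U^tV$, $\sfV=X$. The genuine gap is in your main step: the identity does \emph{not} decompose into four pentagon identities for the same-letter pairs $(E_{23},E_{12})$, $(F_{23},F_{12})$, $(G_{23},G_{12})$, $(H_{23},H_{12})$. A direct computation with the commutation phases of $X,Y,U,V$ shows that $F_{12}$ commutes with $F_{23}$ and $H_{12}$ commutes with $H_{23}$ (the exponents conspire through $N\mid t^2+t+1$ to give total phase $\om^{-(t^2+t+1)}=1$), so Theorem \ref{TQD} is simply not applicable to two of your four pairs. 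This is also visible on the target side: after removing the $L$'s, the middle factors coming from $S_{13}(xy)$ are exactly $\Psi_{xy}(-E_{12}E_{23})$, $\Psi_{xy}(-G_{12}F_{23})$, $\Psi_{xy}(-G_{12}G_{23})$, $\Psi_{xy}(-H_{12}E_{23})$, i.e.\ they pair \emph{different} letters ($G$ with $F$, $H$ with $E$), which a layer-by-layer same-letter argument can never produce. Incidentally, the factors $X_2$, $X_2^t$ you call spurious are not an obstacle at all: one has the exact identities $X_2E_{13}=-E_{12}E_{23}$, $X_2^tF_{13}=-G_{12}F_{23}$, $X_2^tG_{13}=-G_{12}G_{23}$, $X_2H_{13}=-H_{12}E_{23}$, with no scalars or $\psi_{x,0}$-normalizations to track.

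What is actually needed, and what the paper does, is the following: after the $L$-reduction and one application of Theorem \ref{TQD} to the pair $(E_{23},E_{12})$ (the only "easy" layer, together with $(G_{23},G_{12})$), the remaining identity is proved by applying Theorem \ref{TQD} to \emph{cross} pairs such as $(H_{23},F_{12})$, $(G_{23},G_{12})$, $(E_{23},G_{12})$ on one side and $(H_{12},E_{23})$, $(G_{12},F_{23})$ on the other, interleaved with a precise sequence of commutations, and crucially using the non-obvious operator identity $F_{12}H_{23}=E_{23}G_{12}$ to convert the middle factor produced by the $(H_{23},F_{12})$ pentagon into the one appearing in $S_{13}(xy)$. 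None of these ingredients (the cross pairings, the identity $F_{12}H_{23}=E_{23}G_{12}$, the ordering of the moves) is present in your outline, and without them the reorganization you describe cannot be carried out; so the proposal, as it stands, does not prove the theorem.
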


\begin{proof} 
The following commutation relations hold true %\blue{(attention placement figure)}
\begin{figure}[htb]\label{commut.E12's.E23's}
\begin{center}
\includegraphics[width=0.75\linewidth]{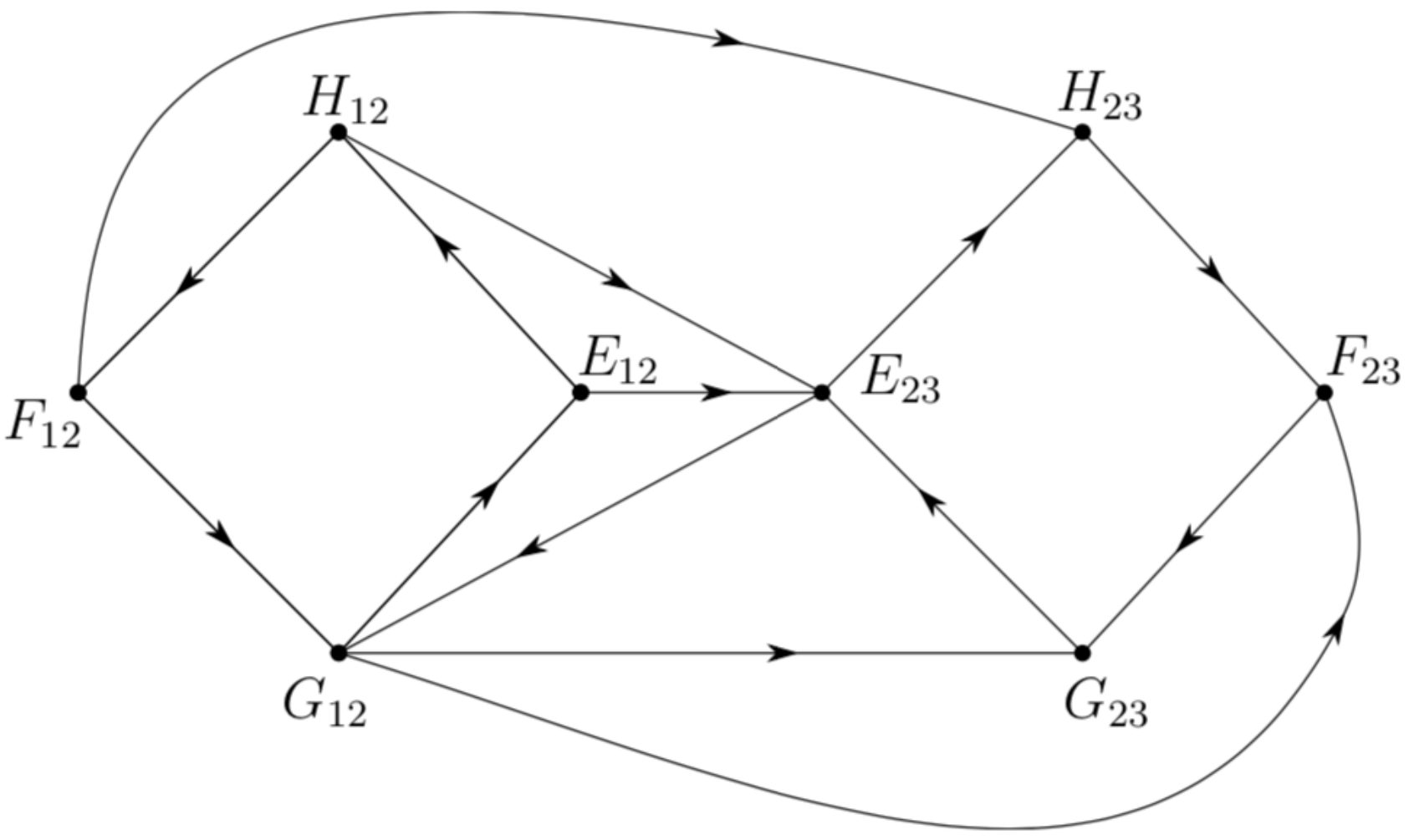}
\end{center}
\end{figure}

\noindent
Using Lemma \ref{prop.L1} and Lemma \ref{L2} for $\sfU=U^tV$, we see that the proposition is equivalent to the following equality
%\begin{equation}\begin{split}\label{equ1}
% & \Psi_x(E_{23})\Psi_x(F_{23})\Psi_x(G_{23})\Psi_x(H_{23})\Psi_y(E_{12})\Psi_y(F_{12})\Psi_y(G_{12})\Psi_y(H_{12}) 
%\\
%=  & \Psi_{x*y}(E_{12})\Psi_{x*y}(F_{12})\Psi_{x*y}(G_{12})\Psi_{x*y}(H_{12})\Psi_{xy}(\tX_2E_{13}) 
% \Psi_{xy}(\tX_2^tF_{13}) \times 
%\\  
% & \Psi_{xy}(\tX_2^tG_{13})\Psi_{xy}(\tX_2H_{13}) 
% \Psi_{y*x}(E_{23})\Psi_{y*x}(F_{23})\Psi_{y*x}(G_{23})\Psi_{y*x}(H_{23}).
%\end{split} 
%\end{equation}
%
% \begin{equation}\begin{multlined}\label{equ1}
%   \Psi_x(E_{23})\Psi_x(F_{23})\Psi_x(G_{23})\Psi_x(H_{23})\Psi_y(E_{12})\Psi_y(F_{12})\Psi_y(G_{12})\Psi_y(H_{12}) 
% \\
% =   \Psi_{x*y}(E_{12})\Psi_{x*y}(F_{12})\Psi_{x*y}(G_{12})\Psi_{x*y}(H_{12})\Psi_{xy}(X_2E_{13}) 
%  \Psi_{xy}(X_2^tF_{13}) 
% \\  
%  \times  \Psi_{xy}(X_2^tG_{13})\Psi_{xy}(X_2H_{13})\Psi_{y*x}(E_{23})\Psi_{y*x}(F_{23})\Psi_{y*x}(G_{23})\Psi_{y*x}(H_{23}).
% \end{multlined}\end{equation}
%

\begin{align}\begin{split}\label{equ1}
&  \Psi_x(E_{23})\Psi_x(F_{23})\Psi_x(G_{23})\Psi_x(H_{23})\Psi_y(E_{12})\Psi_y(F_{12})\Psi_y(G_{12})\Psi_y(H_{12}) 
\\
& =   \Psi_{x*y}(E_{12})\Psi_{x*y}(F_{12})\Psi_{x*y}(G_{12})\Psi_{x*y}(H_{12})\Psi_{xy}(X_2E_{13}) 
 \Psi_{xy}(X_2^tF_{13})
\\  
& \quad\, \times  \Psi_{xy}(X_2^tG_{13})\Psi_{xy}(X_2H_{13})\Psi_{y*x}(E_{23})\Psi_{y*x}(F_{23})\Psi_{y*x}(G_{23})\Psi_{y*x}(H_{23}).
\end{split}\end{align}
%
%\begin{align*}
%& \Psi_x(E_{23})\Psi_x(F_{23})\Psi_x(G_{23})\Psi_x(H_{23})\Psi_y(E_{12})\Psi_y(F_{12})\Psi_y(G_{12})\Psi_y(H_{12}) &  \\
%= & \Psi_{x*y}(E_{12})\Psi_{x*y}(F_{12})\Psi_{x*y}(G_{12})\Psi_{x*y}(H_{12})\Psi_{xy}(X_2E_{13})
% \Psi_{xy}(X_2^tF_{13}) \times &  \\ & \Psi_{xy}(X_2^tG_{13})\Psi_{xy}(X_2H_{13}) 
% \Psi_{y*x}(E_{23})\Psi_{y*x}(F_{23})\Psi_{y*x}(G_{23})\Psi_{y*x}(H_{23}). &
%\end{align*}
 %%%%%%%%%%%%%%%%%%%%%%%%%%%%%%%%%%%%%%%%%%%%%%%%
Since 
\begin{align*}
X_2E_{13} = & -E_{12}E_{23}, & X_2^tF_{13} = & -G_{12}F_{23}, \\
X_2^tG_{13} = & -G_{12}G_{23}, & X_2H_{13} = & -H_{12}E_{23},
\end{align*}
the former equality is equivalent to
% \begin{equation*}
% \begin{multlined}%\label{eq.pour.penta}
%  \Psi_x(E_{23})\Psi_x(F_{23})\Psi_x(G_{23})\Psi_x(H_{23})\Psi_y(E_{12})\Psi_y(F_{12})\Psi_y(G_{12})\Psi_y(H_{12})   
%  \\
% =   \Psi_{x*y}(E_{12})\Psi_{x*y}(F_{12})\Psi_{x*y}(G_{12})\Psi_{x*y}(H_{12})\Psi_{xy}(-E_{12}E_{23})\Psi_{xy}(-G_{12}F_{23}) 
%  \\ 
% \times  \Psi_{xy}(-G_{12}G_{23})\Psi_{xy}(-H_{12}E_{23})\Psi_{y*x}(E_{23})\Psi_{y*x}(F_{23})\Psi_{y*x}(G_{23})\Psi_{y*x}(H_{23}).  
%  \end{multlined}
%  \end{equation*}
 \begin{align*}
& \Psi_x(E_{23})\Psi_x(F_{23})\Psi_x(G_{23})\Psi_x(H_{23})\Psi_y(E_{12})\Psi_y(F_{12})\Psi_y(G_{12})\Psi_y(H_{12})   
 \\
& =   \Psi_{x*y}(E_{12})\Psi_{x*y}(F_{12})\Psi_{x*y}(G_{12})\Psi_{x*y}(H_{12})\Psi_{xy}(-E_{12}E_{23})\Psi_{xy}(-G_{12}F_{23})
 \\ 
& \quad\, \times  \Psi_{xy}(-G_{12}G_{23})\Psi_{xy}(-H_{12}E_{23})\Psi_{y*x}(E_{23})\Psi_{y*x}(F_{23})\Psi_{y*x}(G_{23})\Psi_{y*x}(H_{23}).  
 \end{align*}
% 
 %%%%%%%%%%%%%%%%%%%%%%%%%%%%%%%%%%%%%%%%%%%%%%%% by \ref{commut.E12's.E23's}
On the left hand side, we can apply Theorem \ref{TQD} to $E_{12}$ and $E_{23}$. Indeed, $E_{12}$ commutes with $F_{23}$, $G_{23}$ and $H_{23}$ and $E_{23}E_{12}=\om^{-1}E_{12}E_{23}$. This gives
\begin{align*}
& \Psi_x(E_{23})\Psi_x(F_{23})\Psi_x(G_{23})\Psi_x(H_{23})\Psi_y(E_{12})\Psi_y(F_{12})\Psi_y(G_{12})\Psi_y(H_{12}) 
\\
&=   \Psi_{x*y}(E_{12})\Psi_{xy}(-E_{12}E_{23})\Psi_{y*x}(E_{23})\Psi_x(F_{23})\Psi_x(G_{23})\Psi_x(H_{23}) \Psi_y(F_{12}) \Psi_y(G_{12})
\\
& \quad\, \times \Psi_y(H_{12}). 
\end{align*}
%
%%%%%%%%%%%%%%%%%%%%%%%%%%%%%%%%%%%%%%%%%%%%%%%%
On the right hand side, since $E_{12}E_{23}$ commutes with $F_{12}$, $G_{12}$ and $H_{12}$, we have
\begin{align*}
& \Psi_{x*y}(E_{12})\Psi_{x*y}(F_{12})\Psi_{x*y}(G_{12})\Psi_{x*y}(H_{12})\Psi_{xy}(-E_{12}E_{23})
 \Psi_{xy}(-G_{12}F_{23}) 
 \\ 
& \quad\,\times  \Psi_{xy}(-G_{12}G_{23})\Psi_{xy}(-H_{12}E_{23}) \Psi_{y*x}(E_{23})\Psi_{y*x}(F_{23})\Psi_{y*x}(G_{23})\Psi_{y*x}(H_{23}) 
 \\
&=  \Psi_{x*y}(E_{12})\Psi_{xy}(-E_{12}E_{23})\Psi_{x*y}(F_{12})\Psi_{x*y}(G_{12})\Psi_{x*y}(H_{12})
 \Psi_{xy}(-G_{12}F_{23})
 \\ 
 & \quad\,\times \Psi_{xy}(-G_{12}G_{23})\Psi_{xy}(-H_{12}E_{23}) \Psi_{y*x}(E_{23})\Psi_{y*x}(F_{23})\Psi_{y*x}(G_{23})\Psi_{y*x}(H_{23})
 \end{align*}
 %
 %%%%%%%%%%%%%%%%%%%%%%%%%%%%%%%%%%%%%%%%%%%%%%%% 
Hence, (\ref{equ1}) is equivalent to 
%\begin{equation}\begin{split}\label{equ2}
% & \Psi_{y*x}(E_{23})\Psi_x(F_{23})\Psi_x(G_{23})\Psi_x(H_{23})\Psi_y(F_{12})\Psi_y(G_{12})\Psi_y(H_{12})  \\
%=  &  \Psi_{x*y}(F_{12})\Psi_{x*y}(G_{12})\Psi_{x*y}(H_{12})
% \Psi_{xy}(-G_{12}F_{23})\Psi_{xy}(-G_{12}G_{23})\times  \\ & \Psi_{xy}(-H_{12}E_{23})
% \Psi_{y*x}(E_{23})\Psi_{y*x}(F_{23})\Psi_{y*x}(G_{23})\Psi_{y*x}(H_{23}) 
%\end{split}
%\end{equation}
% \begin{equation}
% \begin{multlined}\label{equ2}
%   \Psi_{y*x}(E_{23})\Psi_x(F_{23})\Psi_x(G_{23})\Psi_x(H_{23})\Psi_y(F_{12})\Psi_y(G_{12})\Psi_y(H_{12}) 
%  \\
% =    \Psi_{x*y}(F_{12})\Psi_{x*y}(G_{12})\Psi_{x*y}(H_{12})\Psi_{xy}(-G_{12}F_{23})\Psi_{xy}(-G_{12}G_{23})  
%  \\
% \times    \Psi_{xy}(-H_{12}E_{23}) \Psi_{y*x}(E_{23})\Psi_{y*x}(F_{23})\Psi_{y*x}(G_{23})\Psi_{y*x}(H_{23}) 
% \end{multlined}
% \end{equation}
%
\begin{equation}
\begin{aligned}\label{equ2}
&  \Psi_{y*x}(E_{23})\Psi_x(F_{23})\Psi_x(G_{23})\Psi_x(H_{23})\Psi_y(F_{12})\Psi_y(G_{12})\Psi_y(H_{12}) 
 \\
&=    \Psi_{x*y}(F_{12})\Psi_{x*y}(G_{12})\Psi_{x*y}(H_{12})\Psi_{xy}(-G_{12}F_{23})\Psi_{xy}(-G_{12}G_{23})  
 \\%
& \quad\,\times    \Psi_{xy}(-H_{12}E_{23}) \Psi_{y*x}(E_{23})\Psi_{y*x}(F_{23})\Psi_{y*x}(G_{23})\Psi_{y*x}(H_{23}) 
\end{aligned}
\end{equation}
%
 %%%%%%%%%%%%%%%%%%%%%%%%%%%%%%%%%%%%%%%%%%%%%%%% 
On the left hand side of (\ref{equ2}), we have
\begin{align*}
 & \Psi_{y*x}(E_{23})\Psi_x(F_{23})\Psi_x(G_{23})\udl{\Psi_x(H_{23})\Psi_y(F_{12})}\Psi_y(G_{12})\Psi_y(H_{12})  
 \\
& = \Psi_{y*x}(E_{23})\Psi_x(F_{23})\Psi_x(G_{23})\Psi_{x*y}(F_{12})\udl{\Psi_{xy}(-F_{12}H_{23})}\Psi_{y*x}(H_{23})\Psi_y(G_{12})\Psi_y(H_{12})  \\
 & = \Psi_{y*x}(E_{23})\Psi_x(F_{23})\Psi_x(G_{23})\udl{\Psi_{x*y}(F_{12})}\Psi_{xy}(-E_{23}G_{12})\udl{\Psi_{y*x}(H_{23})}\Psi_y(G_{12})\Psi_y(H_{12})  
  \\
 & = \Psi_{x*y}(F_{12})\Psi_{y*x}(E_{23})\Psi_x(F_{23})\udl{\Psi_x(G_{23})\Psi_{xy}(-E_{23}G_{12})\Psi_y(G_{12})}\Psi_y(H_{12})\Psi_{y*x}(H_{23})  
  \\
 & = \Psi_{x*y}(F_{12})\udl{\Psi_{y*x}(E_{23})\Psi_x(F_{23})\Psi_{xy}(-E_{23}G_{12})\Psi_{x*y}(G_{12})}\Psi_{xy}(-G_{12}G_{23})\Psi_{y*x}(G_{23})
  \\
  & \quad\,\times \Psi_y(H_{12})\Psi_{y*x}(H_{23})  
  \\
 & = \Psi_{x*y}(F_{12})\Psi_x(F_{23})\Psi_y(G_{12})\Psi_x(E_{23})\Psi_{xy}(-G_{12}G_{23})\udl{\Psi_{y*x}(G_{23})\Psi_y(H_{12})}\Psi_{y*x}(H_{23})  
  \\
 & = \Psi_{x*y}(F_{12})\Psi_x(F_{23})\Psi_y(G_{12})\Psi_x(E_{23})\Psi_{xy}(-G_{12}G_{23})\Psi_y(H_{12})\Psi_{y*x}(G_{23})\Psi_{y*x}(H_{23}) 
\end{align*}
%
%\begin{multline*}
%  \Psi_{y*x}(E_{23})\Psi_x(F_{23})\Psi_x(G_{23})\udl{\Psi_x(H_{23})\Psi_y(F_{12})}\Psi_y(G_{12})\Psi_y(H_{12})  
%\\
% =   \Psi_{y*x}(E_{23})\Psi_x(F_{23})\Psi_x(G_{23})\Psi_{x*y}(F_{12})\udl{\Psi_{xy}(-F_{12}H_{23})}\Psi_{y*x}(H_{23})\Psi_y(G_{12})\Psi_y(H_{12})  
%\\
%  =   \Psi_{y*x}(E_{23})\Psi_x(F_{23})\Psi_x(G_{23})\udl{\Psi_{x*y}(F_{12})}\Psi_{xy}(-E_{23}G_{12})\udl{\Psi_{y*x}(H_{23})}\Psi_y(G_{12})\Psi_y(H_{12}) 
%\\
%  =   \Psi_{x*y}(F_{12})\Psi_{y*x}(E_{23})\Psi_x(F_{23})\udl{\Psi_x(G_{23})\Psi_{xy}(-E_{23}G_{12})\Psi_y(G_{12})}\Psi_y(H_{12})\Psi_{y*x}(H_{23})
%\\
%  =   \Psi_{x*y}(F_{12})\udl{\Psi_{y*x}(E_{23})\Psi_x(F_{23})\Psi_{xy}(-E_{23}G_{12})\Psi_{x*y}(G_{12})}\Psi_{xy}(-G_{12}G_{23})\times\hspace{1.4cm}
%\\
% \hspace{5cm}\Psi_{y*x}(G_{23})\Psi_y(H_{12})\Psi_{y*x}(H_{23})  
%\\
%  =   \Psi_{x*y}(F_{12})\Psi_x(F_{23})\Psi_y(G_{12})\Psi_x(E_{23})\Psi_{xy}(-G_{12}G_{23})\udl{\Psi_{y*x}(G_{23})\Psi_y(H_{12})}\Psi_{y*x}(H_{23})  
%\\
%  =   \Psi_{x*y}(F_{12})\Psi_x(F_{23})\Psi_y(G_{12})\Psi_x(E_{23})\Psi_{xy}(-G_{12}G_{23})\Psi_y(H_{12})\Psi_{y*x}(G_{23})\Psi_{y*x}(H_{23}) 
%\end{multline*}
where we successively
\begin{enumerate}

	\item applied theorem \ref{TQD} to $H_{23}$ and $F_{12}$,
	
	\item used the equality $F_{12}H_{23}=E_{23}G_{12}$,
	
	\item used the fact that $F_{12}$ commutes with $E_{23}$, $F_{23}$ and $G_{23}$, and the fact that $H_{23}$ commutes with $G_{12}$ and $H_{12}$,
	
	\item used the fact that $G_{23}$ commutes with $E_{23}G_{12}$, and applied Theorem \ref{TQD} to $G_{23}$ and $G_{12}$,
	
	\item used the fact that $E_{23}$ commutes with $F_{23}$, and applied Theorem \ref{TQD} to $E_{23}$ and $G_{12}$,
	
	\item used the fact that $G_{23}$ commutes with $H_{12}$.

\end{enumerate}
%%%%%%%%%%%%%%%%%%%%%%%%%%%%%%%%%%%%%%%%%%%%%%%% 
On the right hand side of (\ref{equ2}), we have
\begin{align*}
&  \Psi_{x*y}(F_{12})\Psi_{x*y}(G_{12})\udl{\Psi_{x*y}(H_{12})\Psi_{xy}(-G_{12}F_{23})\Psi_{xy}(-G_{12}G_{23})\Psi_{xy}(-H_{12}E_{23}) \Psi_{y*x}(E_{23})} 
\\%\hspace{0.5cm} 
& \quad\,\times\Psi_{y*x}(F_{23})\Psi_{y*x}(G_{23})\Psi_{y*x}(H_{23}) 
\\
%= & \Psi_{x*y}(F_{12})\Psi_{x*y}(G_{12})\Psi_{xy}(-G_{12}F_{23})\Psi_{xy}(-G_{12}G_{23})\times  \\ & \udl{\Psi_{x*y}(H_{12})\Psi_{xy}(-H_{12}E_{23})
 %\Psi_{y*x}(E_{23})}\Psi_{y*x}(F_{23})\Psi_{y*x}(G_{23})\Psi_{y*x}(H_{23}) \\
& =  \Psi_{x*y}(F_{12})\udl{\Psi_{x*y}(G_{12})\Psi_{xy}(-G_{12}F_{23})\Psi_{xy}(-G_{12}G_{23})\Psi_x(E_{23})\Psi_y(H_{12})\Psi_{y*x}(F_{23})}  
\\
  &  \quad\,\times\Psi_{y*x}(G_{23})\Psi_{y*x}(H_{23}) 
\\
&  =  \Psi_{x*y}(F_{12})\Psi_x(F_{23})\Psi_y(G_{12})\udl{\Psi_{xy}(-G_{12}G_{23})\Psi_x(E_{23})}\Psi_y(H_{12})\Psi_{y*x}(G_{23})\Psi_{y*x}(H_{23}) 
\\
& =  \Psi_{x*y}(F_{12})\Psi_x(F_{23})\Psi_y(G_{12})\Psi_x(E_{23})\Psi_{xy}(-G_{12}G_{23})\Psi_y(H_{12})\Psi_{y*x}(G_{23})\Psi_{y*x}(H_{23}). 
\end{align*}
%
%\begin{multline*}
%  \Psi_{x*y}(F_{12})\Psi_{x*y}(G_{12})\udl{\Psi_{x*y}(H_{12}) \Psi_{xy}(-G_{12}F_{23})\Psi_{xy}(-G_{12}G_{23})\times}  
% \\ 
%  \udl{\Psi_{xy}(-H_{12}E_{23}) \Psi_{y*x}(E_{23})}\Psi_{y*x}(F_{23})\Psi_{y*x}(G_{23})\Psi_{y*x}(H_{23}) 
% \\
%= & \Psi_{x*y}(F_{12})\Psi_{x*y}(G_{12})\Psi_{xy}(-G_{12}F_{23})\Psi_{xy}(-G_{12}G_{23})\times  \\ & \udl{\Psi_{x*y}(H_{12})\Psi_{xy}(-H_{12}E_{23})
 %\Psi_{y*x}(E_{23})}\Psi_{y*x}(F_{23})\Psi_{y*x}(G_{23})\Psi_{y*x}(H_{23}) \\
% =   \Psi_{x*y}(F_{12})\udl{\Psi_{x*y}(G_{12})\Psi_{xy}(-G_{12}F_{23})\Psi_{xy}(-G_{12}G_{23})\times}  
% \\
%   \udl{\Psi_x(E_{23})\Psi_y(H_{12})\Psi_{y*x}(F_{23})}\Psi_{y*x}(G_{23})\Psi_{y*x}(H_{23}) 
% \\
%  =   \Psi_{x*y}(F_{12})\Psi_x(F_{23})\Psi_y(G_{12})\udl{\Psi_{xy}(-G_{12}G_{23})\Psi_x(E_{23})}\Psi_y(H_{12})\Psi_{y*x}(G_{23})\Psi_{y*x}(H_{23}) 
% \\
% =   \Psi_{x*y}(F_{12})\Psi_x(F_{23})\Psi_y(G_{12})\Psi_x(E_{23})\Psi_{xy}(-G_{12}G_{23})\Psi_y(H_{12})\Psi_{y*x}(G_{23})\Psi_{y*x}(H_{23}). 
%\end{multline*}
where we successively
\begin{enumerate}

	\item used the fact that $H_{12}$ commutes with $G_{12}F_{23}$ and $G_{12}G_{23}$, and applied Theorem \ref{TQD} to $H_{12}$ and $E_{23}$,
	
	\item used the fact that $F_{23}$ commutes with $H_{12}$ $E_{23}$ and $G_{12}G_{23}$, and applied Theorem \ref{TQD} to $G_{12}$ and $F_{23}$,
	
	\item used the fact that $E_{23}$ commutes with $G_{12}G_{23}$.
	
\end{enumerate}
\end{proof}

\begin{remark}\label{rmk.S.as.6j}
Setting $z=y*x$ and using Remark \ref{rmk.T.as.6j}, we have $z=\tfrac{pr}{(p+q)(q+r)}$, 
\begin{align*}
(\bar{v}\ot\bar{u})S(z)(v\ot u)=\begin{Bmatrix} p & q & p+q \\ r & p+q+r & q+r \end{Bmatrix}(\bar{u}\ot\bar{v}\ot u\ot v)\in\bbC
\end{align*}
where 
$$\bar{u}\ot\bar{v}\ot u\ot v\in \bar{\calH}_{p+q,r}\ot\bar{\calH}_{p,q}\ot \calH_{q,r}\ot \calH_{p,q+r}$$
and
\begin{align*}
(\bar{u}'\ot\bar{v}')S(z)^{-1}(u'\ot v')=\begin{Bmatrix} p & q & p+q \\ r & p+q+r & q+r \end{Bmatrix}^{-}(\bar{u}'\ot\bar{v}'\ot u'\ot v')\in\bbC
\end{align*}
where 
$$\bar{u}'\ot\bar{v}'\ot u'\ot v'\in \bar{\calH}_{p,q+r}\ot\bar{\calH}_{q,r}\ot \calH_{p,q}\ot \calH_{p+q,r}.$$  Thus, $S(z)$ and $S(z)^{-1}$ are interpreted as operators with the following source and target spaces  
\begin{align*}
S(z):  \calH_{p,q+r}\ot\calH_{q,r}  \longrightarrow  \calH_{p,q}\ot\calH_{p+q,r},%\label{dom.def.S(z)}
\\
S(z)^{-1}:   \calH_{p,q}\ot\calH_{p+q,r}  \longrightarrow  \calH_{p,q+r}\ot\calH_{q,r}.
\end{align*}
\end{remark}

%%%%%%%%%%%%%%%%%%%%%%%%%%%%%%%%%%%%%%%%%%%%%%%%%%%%%%%%%%%%%%%%%%%%%%%%
%%%%%%%%%%%%%%%%%%%%%%%%%%%%%%%%%%%%%%%%%%%%%%%%%%%%%%%%%%%%%%%%%%%%%%%%
%%%%%%%%%%%%%%%%%%%%%%%%%%%%%%%%%%%%%%%%%%%%%%%%%%%%%%%%%%%%%%%%%%%%%%%%
%%%%%%%%%%%%%%%%%%%%%%%%%%%%%%%%%%%%%%%%%%%%%%%%%%%%%%%%%%%%%%%%%%%%%%%%

%%%%%%%%%%%%%%%%%%%%%%%%%%%%%%%%%%%%%%%%%%%%%%%%%%%%%%%%%%%%%%%%%%%%%%%%%%%
%new subsection
%%%%%%%%%%%%%%%%%%%%%%%%%%%%%%%%%%%%%%%%%%%%%%%%%%%%%%%%%%%%%%%%%%%%%%%%%%%

% \section[A $\hat{\Psi}$-system in the category of $\calA_{\om,t}$-modules]{\texorpdfstring{A $\hat{\Psi}$}{\unichar{"03A8}\unichar{"0302}}-system in the category of \texorpdfstring{$\calA_{\om,t}$}{A\_\{\unichar{"03C9},t\}}-modules}\label{sec.hat.psi.sys.in.Awt}

\section{A \texorpdfstring{$\hat{\Psi}$}{\unichar{"03A8}\unichar{"0302}}-system in the category of \texorpdfstring{$\calA_{\om,t}$}{A\_\{\unichar{"03C9},t\}}-modules}\label{sec.hat.psi.sys.in.Awt}

In this Section, we show that a 3-manifold invariant can be defined. To do so, we are going to prove that \cite[Conjecture 42]{geer2012tetrahedral} holds true for the $\Psi$-system of Theorem \ref{thm.Psi.syst}. More precisely, we show that 
\begin{enumerate}
    \item the $\Psi$-system of Theorem \ref{thm.Psi.syst} extends to a $\hat{\Psi}$-system,
    \item a particular operator $\frakq\in\End(\mathcal{H})$ is equal to $q\Id_{\check{\mathcal{H}}}\oplus\, q^{-1}\Id_{\hat{\mathcal{H}}}$, where $q\in\bbC$. 
\end{enumerate}
%

% \subsection[The $\hat{\Psi}$-system]{The \texorpdfstring{$\hat{\Psi}$}{\unichar{"03A8}\unichar{"0302}}-system}
\subsection{The \texorpdfstring{$\hat{\Psi}$}{\unichar{"03A8}\unichar{"0302}}-system}

Following \cite{geer2012tetrahedral}, the $\Psi$-system of Theorem \ref{thm.Psi.syst} extends to a $\hat{\Psi}$-system if two sets of equalities hold true. The first set involves the operators $A,B,L^{\frac{1}{2}},R^{\frac{1}{2}}$ and $C^{\frac{1}{2}}$: 
\begin{align}\begin{split}\label{psi.syst.eq1}
     \left(C^{\frac{1}{2}}\right)^{2}=C, & \quad A C^{\frac{1}{2}} A=B C^{\frac{1}{2}} B=C^{-\frac{1}{2}}, \quad L^{\frac{1}{2}}=BAR^{-\frac{1}{2}}AB, \\
\left(R^{\frac{1}{2}}\right)^{2}=R, & \quad B R^{\frac{1}{2}} B=R^{-\frac{1}{2}}, \quad R^{\frac{1}{2}} C^{\frac{1}{2}}=C^{\frac{1}{2}} R^{\frac{1}{2}}.
\end{split}
\end{align}
The second set of equalities involves the $6j$-symbols $S(z)$ as described in Remark~\ref{rmk.S.as.6j}:
\begin{align}\begin{split}\label{psi.syst.eq2}
C_1^{\frac{1}{2}}C_2^{\frac{1}{2}}S(z)&=S(z)C_1^{\frac{1}{2}}C_2^{\frac{1}{2}}, 
\\
L_1^{\frac{1}{2}}R_2^{\frac{1}{2}}S(z)&=S(z)L_1^{\frac{1}{2}}R_2^{\frac{1}{2}},
\\
R_1^{\frac{1}{2}}R_2^{\frac{1}{2}}S(z)&=S(z)R_1^{\frac{1}{2}}C_2^{\frac{1}{2}},
\\
L_2^{\frac{1}{2}}S(z)&=C_1^{\frac{1}{2}}S(z)L_1^{\frac{1}{2}}L_2^{\frac{1}{2}}.
\end{split}
\end{align}

\begin{theorem}\label{thm.hat.Psi.syst}
The $\Psi$-system of Theorem \ref{thm.Psi.syst} extends to a $\hat{\Psi}$-system with $L^\frac{1}{2},R^\frac{1}{2}$ and $C^\frac{1}{2}$ given in Proposition \ref{sqrtLRC}. 
\end{theorem}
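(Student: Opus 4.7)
The plan is to verify separately the two sets of equalities \eqref{psi.syst.eq1} and \eqref{psi.syst.eq2} that, according to \cite{geer2012tetrahedral}, are sufficient to upgrade the $\Psi$-system of Theorem \ref{thm.Psi.syst} into a $\hat{\Psi}$-system.

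First I would handle the identities \eqref{psi.syst.eq1}. The identities $(C^{1/2})^{2}=C$ and $(R^{1/2})^{2}=R$ are immediate from the explicit grading-preserving formulas of Proposition \ref{sqrtLRC} together with the requirement made in its proof that the $N$-th root be replaced by the $(N+1)/2$-th power (which is licit since $N$ is odd). The relation $L^{1/2}=BAR^{-1/2}AB$ has already been flagged in the remark following Proposition \ref{sqrtLRC} and only needs a direct matrix-element check against Proposition \ref{det.A.et.B}. The identity $R^{1/2}C^{1/2}=C^{1/2}R^{1/2}$ is trivial since both operators are diagonal in the basis $\{h_{p,q}(u_{\alpha})\}$ and $\{\bar h_{p,q}(\bar u_{\alpha})\}$. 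The remaining three identities, namely $AC^{1/2}A=C^{-1/2}$, $BC^{1/2}B=C^{-1/2}$, and $BR^{1/2}B=R^{-1/2}$, reduce to the routine task of plugging the formulas for $A_{\alpha,\beta},\bar A_{\alpha,\beta},B_{\alpha,\beta},\bar B_{\alpha,\beta}$ from Proposition \ref{det.A.et.B} and for $C^{1/2},R^{1/2}$ from Proposition \ref{sqrtLRC} and collecting the exponents of $\omega$. One should perform these computations in matrix form on $\calH_{p,q}\oplus\bar\calH_{p,q}$, tracking how the Mobius transformations $x\mapsto (1-x)/x$ (for $A$) and $x\mapsto 1-x$ (for $B$) act on the scaling factors $x^{(N-1)/N}$ and $(1-x)^{(N-1)/N}$; each Kronecker delta in Proposition \ref{det.A.et.B} kills enough indices so that only a finite combinatorial identity modulo $N$ remains.

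Next I would attack the four identities \eqref{psi.syst.eq2}. Here I would use the interpretation of $S(z)$ given in Remark \ref{rmk.S.as.6j} as a map $\calH_{p,q+r}\otimes\calH_{q,r}\to\calH_{p,q}\otimes\calH_{p+q,r}$, together with the definition $e_{\alpha}(x)=S(x)(\Id_{\calV}\otimes u_{\alpha})$ from \eqref{def.bas.H}, so that matrix elements of $S$ can be computed by composing with $\bar e_{\beta}(x)$ on the left. For each of the four identities, the strategy is:
\begin{enumerate}
\item convert both sides into matrix elements in the bases $\{e_{\alpha}(x)\}$ and $\{\bar e_{\alpha}(x)\}$;
\item use that $L^{1/2},R^{1/2},C^{1/2}$ are diagonal (up to the shift $\alpha_{1}\mapsto\alpha_{1}\pm t$ for $L^{1/2}$), so composing with them merely multiplies rows/columns by explicit scalars and permutes indices;
\item exploit the explicit form $S(x)=\Psi_{x}(E)\Psi_{x}(F)\Psi_{x}(G)\Psi_{x}(H)L(U^{t}V,X)$ and the commutation diagrams of $E,F,G,H$ with $X_{i},Y_{i},U_{i},V_{i}$ to rewrite the scalar coefficients.
\end{enumerate}
After this bookkeeping, each identity reduces to a polynomial identity in $\omega$ and $x$ whose verification is again a finite computation modulo $N$. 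The cleanest path for the last (mixed) identity $L_{2}^{1/2}S(z)=C_{1}^{1/2}S(z)L_{1}^{1/2}L_{2}^{1/2}$ is to transport the $L^{1/2}$'s through $S(z)$ using the Mobius transformation $z\mapsto z$ attached to a grading-preserving operator and to balance the resulting $C^{1/2}$ factor using the explicit integers $\mathtt a,\mathtt b\in\bbZ_{N}$ appearing in Propositions \ref{det.A.et.B} and \ref{sqrtLRC}.

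The main obstacle will be the bookkeeping involved in the four $6j$-symbol identities \eqref{psi.syst.eq2}: one must carefully track the roles of the two tensor factors in $S(z)$ and of the scaling-invariance isomorphisms $h_{p,q},\bar h_{p,q}$, and verify that the collection of $\omega$-exponents on both sides match for \emph{all} $\alpha\in\bbZ_{N}^{2}$ (not just up to a global $\omega$-power). In particular, since $C^{1/2}$ involves the half-integer factor $(\mathtt b-\mathtt a)/2$, one must keep the integers $\mathtt a,\mathtt b$ consistent throughout and check that the normalisation conventions chosen for $\psi_{x,0}$ and $\bar\psi_{x,0}$ in \eqref{def.psi0} and \eqref{def.bar.psi0} do not introduce stray scalars when $S(z)$ is conjugated by $C_{1}^{1/2}C_{2}^{1/2}$. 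Once these bookkeeping issues are settled, the theorem follows by combining the verifications of \eqref{psi.syst.eq1} and \eqref{psi.syst.eq2} with the criterion of \cite{geer2012tetrahedral}.
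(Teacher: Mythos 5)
Your treatment of \eqref{psi.syst.eq1} coincides with the paper's: both amount to inserting the explicit formulas of Propositions \ref{det.A.et.B} and \ref{sqrtLRC} and matching exponents of $\om$, and this half of your plan is sound (the relation $L^{\frac{1}{2}}=BAR^{-\frac{1}{2}}AB$ is indeed already flagged in the remark following Proposition \ref{sqrtLRC}).

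The gap is in your handling of \eqref{psi.syst.eq2}. You propose to verify these four relations by brute-force computation of the matrix elements of $S(z)$ in the bases $e_\alp$, $\bar e_\alp$, but the computation is never carried out, and the claim that each identity ``reduces to a polynomial identity in $\om$ and $x$'' is not justified: the coefficients of $S(z)$ involve the products $w\pa{(1-z)^{1/N}\big|\alp}$ from \eqref{def.pti.psi}, and for the $z$-dependence to cancel you would at least need the selection rules of $S(z)$ viewed, via Remark \ref{rmk.S.as.6j}, as a map $\calH_{p,q+r}\ot\calH_{q,r}\to\calH_{p,q}\ot\calH_{p+q,r}$, together with a check that the diagonal/shift scalars of $L^{\frac{1}{2}},R^{\frac{1}{2}},C^{\frac{1}{2}}$, evaluated at the parameters of the four different multiplicity spaces involved ($\tfrac{q}{p+q}$, $\tfrac{r}{p+q+r}$, $\tfrac{q+r}{p+q+r}$, $\tfrac{r}{q+r}$), recombine into functions of the single variable $z=\tfrac{pr}{(p+q)(q+r)}$. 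None of this appears in your plan, and it is precisely the hard part. The paper avoids it altogether: by \cite[Lemma 13]{geer2012tetrahedral} the relations \eqref{psi.syst.eq2} hold for any $\Psi$-system with $L,R,C$ in place of their square roots, and since the square roots of Proposition \ref{sqrtLRC} are built as the positive square root of the scalar part times the $\tfrac{N+1}{2}$-th power of the finite-order part ($N$ odd), the square-root versions follow formally from the unsquare-rooted ones. Missing this structural shortcut, your proposal leaves the verification of \eqref{psi.syst.eq2} --- the essential content of the theorem --- as an unproven promissory note.
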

\begin{proof}

We can easily show that the set of equalities \eqref{psi.syst.eq1} holds true using Propositions \ref{det.A.et.B} and \ref{sqrtLRC}.
Regarding the set of equalities \eqref{psi.syst.eq2}, by \cite[Lemma 13]{geer2012tetrahedral}, we can observe that these equatilies hold true, for any $z\in\bbRzu$, without the square roots. %(see \cite[equations (33a), (33b), (33c) and (33d)]{geer2012tetrahedral}). 
It is then easy to see that they also hold true with $L^{\frac{1}{2}},R^{\frac{1}{2}}$ and $C^{\frac{1}{2}}$, for any $z\in\bbRzu$.
\end{proof}

\subsection{Existence of 3-manifolds invariant from the \texorpdfstring{$\hat{\Psi}$}{\unichar{"03A8}\unichar{"0302}}-system}

In order to prove that \cite[Conjecture 42]{geer2012tetrahedral} holds true, we consider the operator 
\begin{align}\label{def.frakq}
\frakq=R^{\frac{1}{2}}BL^{\frac{1}{2}}BL^{-\frac{1}{2}}C^{-\frac{1}{2}}\in\End(\calH),
\end{align}
and show that there exists $q\in\bbC$ such that $\frakq = q\Id_{\check{\mathcal{H}}}\oplus\, q^{-1}\Id_{\hat{\mathcal{H}}}$. Using Propositions \ref{det.A.et.B} and \ref{sqrtLRC}, a straightforward computation leads to the following results.
\begin{lemma}\label{prop.frak.q}
For all $x\in\bbRzu$ and all $\alp\in\bbZ_N^2$, the operator $\frakq\in\End(\calH)$ defined by \eqref{def.frakq} is given by 
\begin{align*}
\frakq e_\alp(x)=\om^{\frac{1}{2}+\frac{3(\mathtt{a}-\mathtt{b})}{2}}e_{\alp}(x) \quad \text{and} \quad \frakq \bar{e}_\alp(x)=\om^{-\frac{1}{2}+\frac{3(\mathtt{b}-\mathtt{a})}{2}}\bar{e}_{\alp}(x),
\end{align*}
with $\mathtt{a},\mathtt{b}\in\bbZ_N$ given in Proposition \ref{det.A.et.B}.
\end{lemma}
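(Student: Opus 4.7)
The plan is to verify this lemma by direct computation in the explicit bases $\{e_\alpha(x)\}$ and $\{\bar{e}_\alpha(x)\}$, substituting the matrix elements from Propositions~\ref{det.A.et.B} and \ref{sqrtLRC} and evaluating the six factors of $\frakq = R^{\frac{1}{2}} B L^{\frac{1}{2}} B L^{-\frac{1}{2}} C^{-\frac{1}{2}}$ in sequence. Since $C$, $L$, $R$ are grading-preserving and $B^2 = \mathrm{Id}$, the composition $\frakq$ is itself grading-preserving, so its value on each basis vector is a scalar; the task is to identify that scalar.

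First I would track the subspaces along the way. Starting from $e_\alpha(x)\in\calH_{p,q}$ with $x=q/(p+q)$, the operators $C^{-\frac{1}{2}}$ and $L^{-\frac{1}{2}}$ preserve $\calH_{p,q}$; the first $B$ sends to $\bar{\calH}_{p+q,-q}$, whose associated parameter is $x'=-q/p$; the operator $L^{\frac{1}{2}}$ acts there as $\bar{L}^{\frac{1}{2}}(x')$; the second $B$ returns to $\calH_{p,q}$; and $R^{\frac{1}{2}}$ acts diagonally. The parallel computation for $\bar{e}_\alpha(x)$ uses the overlined versions $\bar{C}^{-\frac{1}{2}}, \bar{L}^{-\frac{1}{2}}, \bar{B}, L^{\frac{1}{2}}, B, \bar{R}^{\frac{1}{2}}$. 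A preliminary step is to read off $L^{-\frac{1}{2}}$ from Proposition~\ref{sqrtLRC}; this inversion already uses the hypothesis $N\mid t^2+t+1$, producing the simplification $L^{-\frac{1}{2}} e_\alpha(x) = x^{-2(N-1)/N}\, \omega^{-(2t\alpha_1+\alpha_2-t)/2}\, e_{(\alpha_1-t,\alpha_2)}(x)$.

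Two cancellations then need to be verified. The first is the $x$-dependence: the six matrix coefficients contribute a total factor $[x^{-1}(1-x)^2 x'(1-x')]^{2(N-1)/N}$, which reduces via the elementary identities $(1-x)(1-x')=1$ and $x^{-1}(1-x)x'=-1$ to $(-1)^{2(N-1)/N}$, and since $N$ is odd the real $N$-th root convention gives $(-1)^{1/N}=-1$, so this factor equals $1$. The second is the $\omega$-phase: summing the six contributions yields a polynomial in $\alpha_1,\alpha_2$. The quadratic parts (coming from the $(t+1)(\gamma_1-\gamma_2)(\gamma_1-\gamma_2\mp 2)$ and squared terms in the $B,\bar B$ entries with shifted indices $\gamma$) cancel identically without invoking anything about $N$, while the linear parts and the constants rely on $t^2\equiv -t-1\pmod N$ (equivalently $t(t+1)\equiv -1$). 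After using this congruence, the $\alpha_1$ and $\alpha_2$ coefficients both vanish and the constant collapses to exactly $\tfrac{1}{2}+\tfrac{3(\mathtt{a}-\mathtt{b})}{2}$.

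The main obstacle is thus the bookkeeping of the phase cancellation: six polynomial contributions to add, two of them non-trivial in the indices through the $B$ and $\bar B$ matrix elements. Once this is carried out, the claim for $e_\alpha(x)$ is immediate. The computation on $\bar{e}_\alpha(x)$ is entirely analogous; the formulas of Propositions~\ref{det.A.et.B} and \ref{sqrtLRC} show that the overlined variants differ from their unoverlined counterparts essentially by the exchange $\mathtt{a}\leftrightarrow\mathtt{b}$ and a sign flip on the $\alpha$-independent half-integer, which is precisely why the eigenvalue on $\bar{e}_\alpha(x)$ comes out to $\omega^{-\frac{1}{2}+\frac{3(\mathtt{b}-\mathtt{a})}{2}}$, reciprocal to the eigenvalue on $e_\alpha(x)$, as expected from the duality between $\calH_{p,q}$ and $\bar{\calH}_{p,q}$.
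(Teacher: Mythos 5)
Your proposal is correct and follows the same route as the paper, which establishes the lemma precisely by the ``straightforward computation'' you describe: composing the six factors of $\frakq=R^{\frac{1}{2}}BL^{\frac{1}{2}}BL^{-\frac{1}{2}}C^{-\frac{1}{2}}$ using the explicit matrix elements of Propositions \ref{det.A.et.B} and \ref{sqrtLRC}, tracking the grading $\calH_{p,q}\to\bar{\calH}_{p+q,-q}\to\calH_{p,q}$ with the parameter change $x\mapsto x'=-q/p$, and checking that the radial factors cancel via $(1-x)(1-x')=1$, $x^{-1}(1-x)x'=-1$ while the $\om$-phase collapses (using $t^2+t+1\equiv 0 \bmod N$) to the stated constants. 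The only cosmetic point is that being grading-preserving does not by itself make each $e_\alp(x)$ an eigenvector; that diagonality is instead delivered by your index bookkeeping, where the shifts $\alp\mapsto(\alp_1-t,\alp_2)\mapsto(-\alp_1+t,-\alp_2)\mapsto(-\alp_1,-\alp_2)\mapsto\alp$ return to $\alp$, so the argument is complete as computed.
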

%
% \begin{proof}
% Using Propositions \ref{det.A.et.B} and \ref{sqrtLRC}, a straightforward computation leads to the results.
% \end{proof}

The direct consequence of Theorem \ref{thm.hat.Psi.syst} and Lemma \ref{prop.frak.q} is that we can define a topological invariant of 3-manifolds using the $\hat{\Psi}$-system of Theorem \ref{thm.hat.Psi.syst}. This invariant is described in the following Section.

%
% \begin{theorem}\label{main.thm}
% %Conjecture 42 of
% The $\Psi$-system of Theorem \ref{thm.Psi.syst} is such that \cite[Conjecture 42]{geer2012tetrahedral} holds true, \blue{i.e. we can define a 3-manifold invariant from the ...}.
% \end{theorem}

% \blue{This topological invariant is a $\mathfrak{sl}_3$ analogue of  Kashaev's invariants (based on modules of Borel subalgebra of $U_q(\mathfrak{sl}_2)$),  given the similarities between the $6j$-symbols defined in ... and the ones given in \cite[Section 12]{geer2012tetrahedral}.}

% \blue{The similarities between this $\hat{\Psi}$-system and the $\hat{\Psi}$-system based on modules of the Borel subalgebra of $U_q(\mathfrak{sl}_2)$ given in \cite[Section 12]{geer2012tetrahedral}, which leads to generalized Kashaev's invariants, allow one expect interesting link between hyperbolic geometry and...}

%\blue{...Conculre en disant qu'il y aura un lien avec la geo hyperbolique vu qu'on est analogue a \cite{kashaev1994quantum} and \cite{kashaev1995link}...} 

%%%%A ajouter au cas ou ils veulent avoir plus de precision sur le type de 3-manifold on parle%%%%%%%%

%More precisely, letting $G$ be the abelian group $(\bbR, +)$, the $\hat{\Psi}$-system of Theorem \ref{thm.hat.Psi.syst} gives rise to a topological invariant of the triple (a closed connected 3-manifold $M$, a non-empty link in $M$, a conjugacy class of homomorphism $\pi_1(M)\to G$).

\section{State sum invariants}\label{hyperbolic.invariant}

%Following \cite[section 9]{geer2012tetrahedral}, a topological invariant of 3-manifolds can be defined from the $\hat{\Psi}$-system of Theorem \ref{thm.hat.Psi.syst}.  

\subsection{\texorpdfstring{$H$}{H}-triangulation of \texorpdfstring{$(M,L)$}{(M,L)}}
Let $M$ be a closed connected oriented 3-manifold. A {\it quasi-regular triangulation $\calT$ of $M$} is a decomposition of $M$ as a union of embedded tetrahedra (3-simplices) such that the intersection of any two tetrahedra is a union (possibly, empty) of several of their vertices (0-simplices), edges (1-simplices) and faces (2-simplices). Quasi-regular triangulations differ from the usual triangulations in that they may have tetrahedra meeting along several vertices, edges, and faces. Note that each edge of a quasi-regular triangulation has two distinct endpoints. In the sequel, we denote $\Lambda_i(\calT)$ the set of $i$-simplices of $\calT$ for $i\in\brkt{0,1,2,3}$. 

%\subsection{$H$-triangulation of $(M,L)$}
Consider a non-empty link $L\subset M$. An {\it $H$-triangulation of $(M,L)$} is a pair $(\calT,\calL)$ where $\calT$ is a quasi-regular triangulation of $M$ and $\calL\subset\Lambda_1(\calT)$ is such that each vertex of $\calT$ belongs to exactly two edges of $\calL$ and $L$ is the union of the elements of $\calL$.
\begin{proposition}[\cite{baseilhac2004quantum}, Proposition 4.20]
For any non-empty link $L$ in $M$, the pair $(M, L)$ admits an $H$-triangulation.
\end{proposition}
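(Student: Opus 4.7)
The result is essentially a PL-topology statement and does not involve any of the algebraic machinery developed in the previous sections, so the plan is to produce an $H$-triangulation by starting from an arbitrary triangulation and then performing a controlled sequence of modifications. First, I would invoke the standard PL topology fact that $M$ admits a triangulation $\calT_0$ in which $L$ appears as a subcomplex of the $1$-skeleton: one takes any triangulation of $M$, approximates $L$ by a PL loop in general position, and iteratively subdivides so that each simplex meets $L$ in at most an edge. Replacing $\calT_0$ by a further (barycentric) subdivision if necessary, I may assume that $\calT_0$ is quasi-regular, i.e.\ each tetrahedron has four distinct vertices and the pairwise intersections of tetrahedra are unions of common subsimplices. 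After this step, $L$ is the union of a set $\calL\subset\Lambda_1(\calT_0)$ of edges, and since $L$ is a closed $1$-manifold, every vertex of $\calT_0$ lying on $L$ is incident to exactly two elements of $\calL$. The remaining obstruction is that $\calT_0$ may still contain vertices lying outside of $L$.

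The core of the argument is therefore the elimination of vertices $v\in\Lambda_0(\calT_0)\setminus L$. For each such $v$ I would choose a simplicial path $\gamma$ from $v$ to some vertex $w\in L$ inside the $1$-skeleton of $\calT_0$, and then collapse $v$ along $\gamma$ by a finite sequence of elementary moves. The most convenient set of moves is that of Pachner (bistellar) $2$-$3$ and $1$-$4$ moves, together with their inverses, since Pachner's theorem guarantees that they suffice to relate any two PL triangulations of $M$; in particular, they allow me to realise an ``edge contraction'' of an edge $vw$ via a $3$-$2$ move after preparing the star of $vw$ by auxiliary subdivisions. Each such contraction either eliminates $v$ (if the combinatorics around $vw$ is a single bipyramid) or strictly decreases the simplicial distance from $v$ to $L$, so the process terminates after finitely many steps. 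Throughout, I perform all moves away from the edges of $\calL$, which ensures that the subset $\calL\subset\Lambda_1$ together with the condition ``each vertex of $L$ is incident to two elements of $\calL$'' is preserved.

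After all non-link vertices are eliminated, the resulting triangulation $\calT$ has $\Lambda_0(\calT)\subset L$ and contains $\calL$ as a distinguished set of edges whose union is $L$. A final round of barycentric subdivision of the tetrahedra not meeting $L$, followed by local flips to absorb any tetrahedra that have become degenerate (two faces identified or an edge with coinciding endpoints), restores quasi-regularity without creating new vertices off $L$; note that the subdivisions used in this cleanup can be chosen so that each new vertex lies on an edge of $\calL$, hence still on $L$. The main obstacle in this program is the vertex-elimination step: one must show that the Pachner moves and auxiliary subdivisions can indeed be arranged to preserve both the manifold structure and the subcomplex $\calL$. An alternative, more constructive route that avoids Pachner's theorem would start from a triangulated tubular neighborhood $N(L)$ of $L$ whose vertices all lie on $L$ (immediate because $N(L)$ is a disjoint union of solid tori with $L$ as the union of the core circles) and extend this to a triangulation of $M\setminus\mathrm{int}(N(L))$ relative to $\partial N(L)$, then collapse $\partial N(L)$ radially onto $L$; the quasi-regularity can then be arranged by a final subdivision step.
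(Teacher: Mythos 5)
Note first that the paper you are working from contains no proof of this statement: it is imported verbatim from Baseilhac--Benedetti \cite{baseilhac2004quantum} (Proposition 4.20), so your argument can only be measured against that source and on its own merits. Your overall strategy --- first realize $L$ simplicially in some quasi-regular triangulation, then modify the triangulation until every vertex lies on $L$ --- is the right shape of argument, since the defining condition that \emph{every} vertex of $\calT$ belongs to exactly two edges of $\calL$ forces $\Lambda_0(\calT)\subset L$, and that is indeed the only nontrivial point.

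However, the step that carries the whole proof, the elimination of vertices off $L$, is not actually established. An edge contraction is not realized by a $3$-$2$ Pachner move: the $3$-$2$ move removes a valence-three edge, replacing three tetrahedra by two, and does not identify the endpoints of any edge; the only Pachner move that deletes a vertex is the $4$-$1$ move, which applies only when the star of that vertex consists of exactly four tetrahedra forming a subdivided tetrahedron, and producing that local configuration around an arbitrary vertex while keeping $L$ simplicial, keeping quasi-regularity, and never touching $\calL$ is precisely what has to be proved. Pachner's theorem gives no such relative statement (it says nothing about moves performed in the complement of a prescribed $1$-dimensional subcomplex, nor about quasi-regular triangulations in which multiple adjacencies are allowed), your termination claim (``each contraction \dots strictly decreases the simplicial distance from $v$ to $L$'') is asserted without argument, and your own text concedes the problem (``the main obstacle in this program is the vertex-elimination step''). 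The final cleanup is also internally inconsistent: a barycentric subdivision of the tetrahedra not meeting $L$ necessarily creates new vertices (barycenters of those tetrahedra, faces and edges) that do \emph{not} lie on $L$, violating the Hamiltonian condition you just arranged, and ``local flips to absorb degenerate tetrahedra'' is not a defined operation with the stated effect. The alternative sketch at the end has the same defect: radially collapsing $\partial N(L)$ onto $L$ is not a simplicial or quasi-regular operation and produces edges with coinciding endpoints. So the central step of the proposition is still missing; to repair it you should either follow the explicit construction of \cite{baseilhac2004quantum} or prove a genuine relative modification result for triangulations containing $L$ as a distinguished subcomplex.
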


$H$-triangulations of $(M,L)$ can be related by elementary moves of two types, the {\it $H$-bubble moves} and the {\it $H$-Pachner} 2-3 {\it moves}. The {\it positive} $H$-bubble move on an $H$-triangulation $(\calT ,\calL)$ starts with a choice of a face $F = v_0v_1v_2\in\Lambda_2(\calT)$, where $v_0,v_1,v_2\in\Lambda_0(\calT)$, such that at least one of its edges, say $v_0v_1$, is in $\calL$. Consider two tetrahedra of $T$ meeting along $F$. We unglue these tetrahedra along $F$ and insert a 3-ball between the resulting two copies of $F$. We triangulate this 3-ball by adding a vertex $v_3$ at its center and three edges connecting $v_3$ to $v_0, v_1$, and $v_2$. The edge $v_0v_1$ is removed from $\calL$ and replaced by the edges $v_0v_3$ and $v_1v_3$. This move can be visualized as the transformation
\begin{center}
\includegraphics[width=0.95\linewidth]{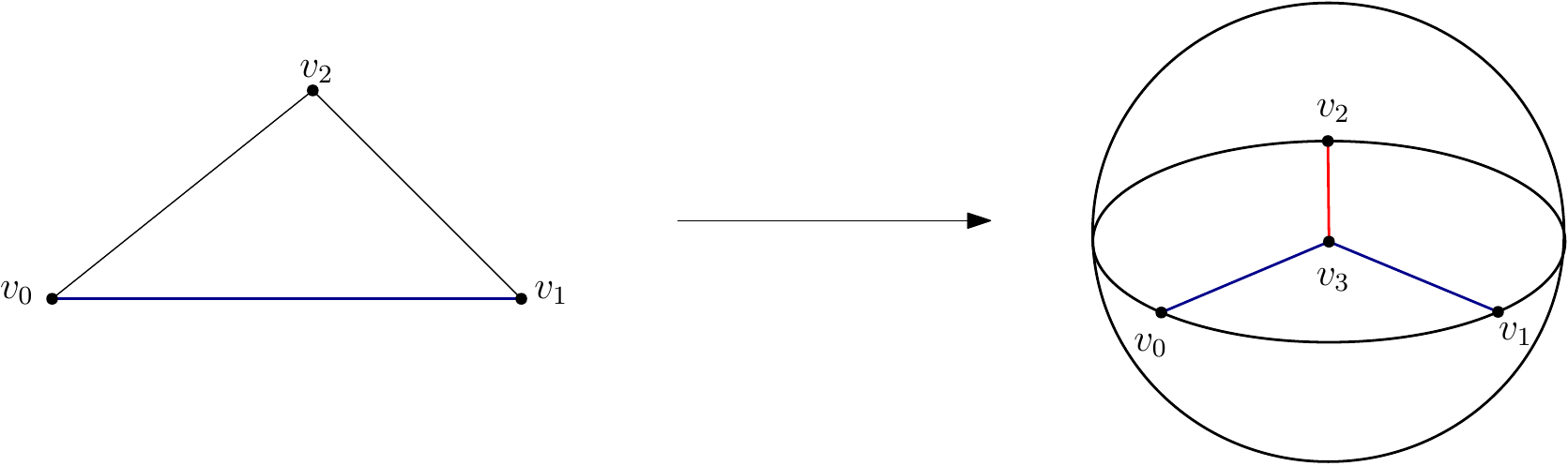}
\end{center}
where the blue edges belong to $\calL$. The inverse move is the {\it negative $H$-bubble move}. The {\it positive H-Pachner} 2-3 {\it move} can be visualized as the transformation
\begin{center}
\includegraphics[width=0.95\linewidth]{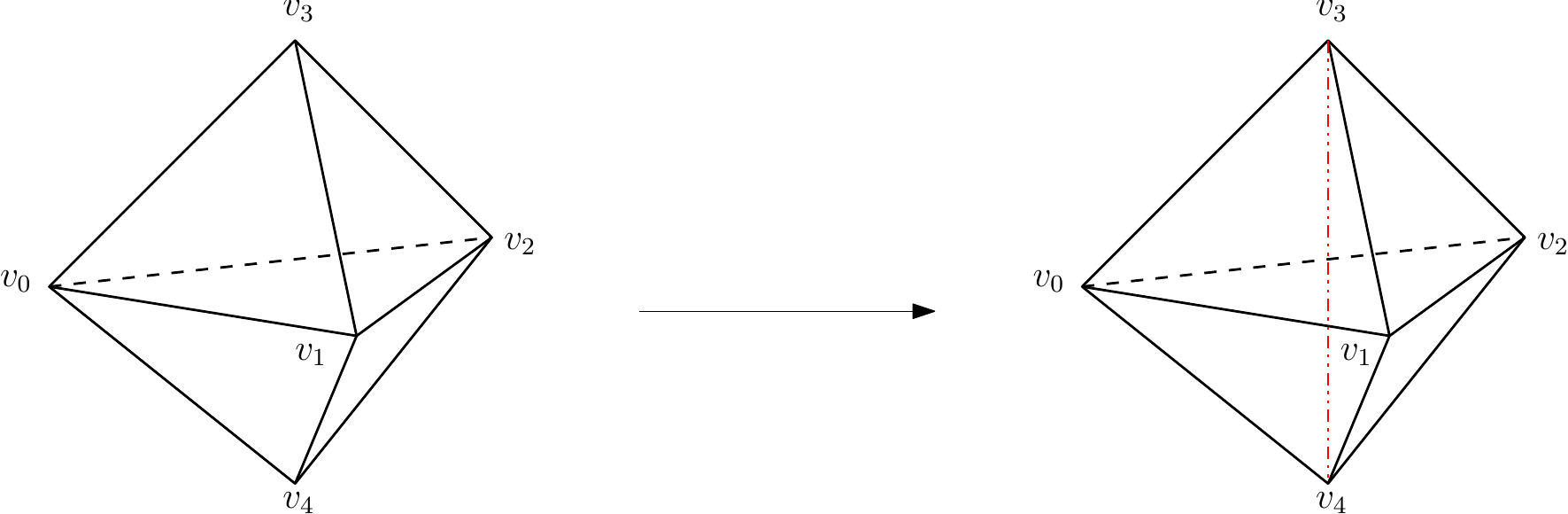}
\end{center}
This transformation preserve the set $\calL$. The inverse move is the {\it negative $H$-Pachner move}; it is allowed only when the edge common to the three tetrahedra on the right is not in $\calL$.

\begin{proposition}[\cite{baseilhac2004quantum}, Proposition 4.23]
Let $L$ be a non-empty link in $M$. Any two $H$-triangulations of $(M,L)$ can be related by a finite sequence of $H$-bubble moves and $H$-Pachner moves in the class of $H$-triangulations of (M,L).
\end{proposition}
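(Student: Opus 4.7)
My plan is to reduce the statement to the classical Pachner theorem for the closed $3$-manifold $M$, while tracking the extra combinatorial data of the link $L$ encoded in $\calL$. Given two $H$-triangulations $(\calT_1,\calL_1)$ and $(\calT_2,\calL_2)$ of $(M,L)$, I would proceed in two stages: first reconcile the induced triangulations of $L$, then reconcile the ambient triangulations.

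In the first stage, I would normalize the combinatorics along the link. By construction, an $H$-bubble move replaces a single edge of $\calL$ by two edges meeting at a new vertex, and its inverse performs the opposite desubdivision; at the level of $L$ this is an ordinary edge-subdivision move on a PL $1$-manifold. Since any two PL-triangulations of a $1$-manifold are related by such subdivisions and desubdivisions, a suitable sequence of positive and negative $H$-bubble moves brings $\calL_1$ and $\calL_2$ into agreement as combinatorial triangulations of $L$. Because every vertex of an $H$-triangulation lies on $L$, this also forces the full vertex sets of both quasi-regular triangulations to coincide.

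In the second stage, with the triangulations of $L$ matched, I would apply Pachner's theorem to the underlying triangulations of $M$: $\calT_1$ and $\calT_2$ are related by a finite sequence of $2$-$3$ and $1$-$4$ bistellar moves. A $1$-$4$ move would insert an interior vertex off $L$, violating the $H$-triangulation condition, so any $1$-$4$ occurring in the Pachner sequence must eventually be canceled by a $4$-$1$ inverse; such pairs can be eliminated by the standard reduction that rewrites $1$-$4$ together with $4$-$1$ as a product of $2$-$3$ moves. What remains are $2$-$3$ moves, and those whose central edge avoids $\calL$ are by definition $H$-Pachner moves.

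The main obstacle is handling the $2$-$3$ moves whose central edge lies in $\calL$, which are forbidden as $H$-Pachner moves. The remedy is a local rewriting trick: precede such a move by an $H$-bubble applied to a face incident to the offending edge, which subdivides that edge of $\calL$ into two shorter edges, neither of which is the central edge of the intended move; the desired $2$-$3$ move is now an admissible $H$-Pachner move, after which the inverse $H$-bubble restores the original triangulation of $\calL$ along that edge. The bulk of the work is then a case analysis showing that this substitution can be performed consistently whenever it is needed, and that the resulting overall sequence stays within the class of $H$-triangulations of $(M,L)$.
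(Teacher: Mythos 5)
The paper does not prove this statement at all: it is quoted verbatim from Baseilhac--Benedetti (\cite{baseilhac2004quantum}, Proposition 4.23), where the proof occupies a delicate, multi-step argument. Measured against what that proof actually has to accomplish, your outline has genuine gaps. First, quasi-regular triangulations are not simplicial complexes (two tetrahedra may meet along several faces), so the classical Pachner theorem with $1$-$4$ and $2$-$3$ bistellar moves does not apply; the relevant tool is the Matveev--Piergallini-type theorem for singular triangulations, and even then one must show that the connecting sequence can be chosen so that \emph{every intermediate} triangulation is again quasi-regular (each edge with distinct endpoints) and contains a Hamiltonian copy of $L$ --- this preservation of the class of $H$-triangulations, which the statement explicitly requires, is the heart of the matter and your two-stage plan never addresses it: after your first stage the ambient triangulations are still unrelated, and the moves of the second stage will in general pass through triangulations with vertices off $L$ or with $L$ no longer realized as a subcomplex.

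Second, the specific reductions you invoke do not exist in the form you use them. A $2$-$3$ move preserves the number of vertices, so a $1$-$4$/$4$-$1$ pair cannot be ``rewritten as a product of $2$-$3$ moves''; the only vertex-changing move available here is the bubble move, and its new vertex must be placed on $L$ with the Hamiltonian condition re-established --- there is no automatic pairing or cancellation of $1$-$4$ moves inside an arbitrary Pachner sequence. Also note a smaller but telling misreading: the $H$-bubble move does not subdivide the edge $v_0v_1$ of the triangulation; it keeps that edge and merely replaces it in $\calL$ by the two new edges $v_0v_3$ and $v_1v_3$, so your ``local rewriting trick'' for a negative $2$-$3$ move whose disappearing edge lies in $\calL$ (re-route $\calL$ by a bubble, do the move, undo the bubble) is plausible in spirit but the claimed inverse bubble need not be available after the move, and you yourself defer the required case analysis. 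As it stands the proposal is a heuristic reduction to theorems that do not quite apply, with the constraint that defines the problem --- staying within $H$-triangulations throughout --- left untreated.
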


% \subsection[Charge on $(\calT,\calL)$ and $\bbR$-coloring of $\calT$]{Charge on \texorpdfstring{$(\calT,\calL)$}{(T,L)} and \texorpdfstring{$\bbR$}{\unichar{"211D}}-coloring of \texorpdfstring{$\calT$}{T}} 
\subsection{Charge on \texorpdfstring{$(\calT,\calL)$}{(T,L)} and \texorpdfstring{$\bbR$}{\unichar{"211D}}-coloring of \texorpdfstring{$\calT$}{T}} 

A {\it charge on $T\in\Lambda_3(\calT)$} is a map $c:\Lambda_1(T)\to\bbZ$ such that
\begin{enumerate}
	\item $c(e)=c(e')$ if $e,e'$ are opposite edges,
	\item $c(e_1)+c(e_2)+c(e_3)=1$ if $e_1,e_2,e_3$ are edges of a face of $T$.
\end{enumerate}

We denote $\Lambda_3^1(\calT)=\brkt{(T,e)\,|\, T\in\Lambda_3(\calT), e\in\Lambda_1(T)}$ and consider the obvious projection $\epsilon_\calT:\Lambda_3^1(\calT)\to \Lambda_1(\calT)$. For any edge $e$ of $\calT$, the set $\epsilon^{-1}_\calT(e)$ has $n$ elements, where $n$ is the number of tetrahedron adjacent to $e$. A {\it charge on $(\calT,\calL)$} is a map $c:\Lambda_3^1(\calT)\to\bbZ$ such that 
\begin{enumerate}

	\item the restriction of $c$ to any tetrahedron $T$ of $\calT$ is a charge on $T$,
	
	\item for each edge $e$ of $\calT$ not belonging to $\calL$ we have $\sum_{e'\in\epsilon^{-1}_\calT(e)}c(e')=2$,
	
	\item for each edge $e$ of $\calT$ belonging to $\calL$ we have $\sum_{e'\in\epsilon^{-1}_\calT(e)}c(e')=0$,
	
	%\item to every $c$ which satisfies the last two conditions one can associate an element $[c]\in H^1(M,\bbZ/2\bbZ)$; we require that $[c]=0$.
\end{enumerate}
Each charge $c$ on $(\calT,\calL)$ determines a cohomology class $[c]\in H^1(M,\bbZ/2\bbZ)$ (see \cite{baseilhac2004quantum} and \cite{geer2012tetrahedral} for details). Following \cite{baseilhac2004quantum}, we further require to only consider charges $c$ that are such that $[c]=0.$

% \noindent
% \red{on a plus besoin de thm disant que les charges existent pour toute H-triangulation... de meme pour le R-coloriage... Le thm d'invariance permet ensuite de statuer le fait que les charges}

% \begin{proposition}[\cite{baseilhac2004quantum}, Lemma 4.10]
% Let $(\calT,\calL)$ and $(\calT',\calL')$ be $H$-triangulations of $(M,L)$ such that $(\calT',\calL')$ is obtained from $(\calT , \calL)$ by an $H$-Pachner move or an $H$-bubble move. Let $c$ be a charge on $(\calT ,\calL)$. Then there exists a charge $c'$ on $(\calT' ,\calL')$ such that $c'$ equals $c$ on all pairs $(T,e)\in\Lambda_3^1(\calT)$ such that $T$ is not involved in the move and for any common edge $e$ of $\calT$ and $\calT'$,
% %
% \begin{align*}
% \sum_{a\in\epsilon^{-1}_{\calT}(e)}c(a)=\sum_{a'\in\epsilon^{-1}_{\calT'}(e)}c'(a').
% \end{align*}
% %
% Moreover, $[c] = [c']$.
% \end{proposition}

% A {\it $\bbR$-coloring of $\calT$} is a map $\Phi$ from the oriented edges of $\calT$ to $\bbR$ such that

A {\it $G$-coloring of $\calT$} is a map $\Phi$ from the oriented edges of $\calT$ to a group $G$ such that
 \begin{enumerate}

	\item $\Phi(-e)=\Phi(e)^{-1}$ for any oriented edge $e$ of $\calT$, where $-e$ is $e$ with the opposite orientation,
	
	\item $\Phi(e_1)\Phi(e_2)\Phi(e_3)=1$ if $e_1,e_2,e_3$ are edges of a face of $\calT$ with the following orientations:
		\begin{center}
		\includegraphics[width=0.2\linewidth]{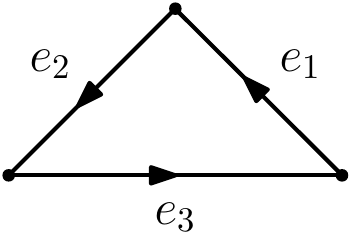}
		\end{center}
		\end{enumerate} 

\noindent

A {\it$G$-gauge} of $\calT$ is a map from the set of vertices of $\calT$ to $G$. The $G$-gauges of $\calT$ form a multiplicative group which acts on the set of $G$-colorings of $\calT$ as follows: if $\delta$ is a $G$-gauge of $\calT$ and $\Phi$ is a $G$-coloring of $\calT$, then the $G$-coloring $\delta \Phi$ is given by
\begin{align}\label{eq.gauge.rel}
(\delta \Phi)(e)=\delta\left(v_{e}^{-}\right) \Phi(e) \delta\left(v_{e}^{+}\right)^{-1}    
\end{align}
where $v_{e}^{-}$ (resp. $v_{e}^{+}$) is the initial (resp. terminal) vertex of an oriented edge $e$.

Let $\mathcal{M}(M,G)$ be the set of conjugacy classes of group homomorphisms from the fundamental group of $M$ to $G$. As explained in \cite{geer2012tetrahedral}, there is a bijective correspondence between the elements of $\mathcal{M}(M,G)$ and the $G$-colorings of $\calT$ considered up to gauge transformations. 

%In fact, the elements of $\mathcal{M}(M,G)$ bijectively correspond to the $G$-colorings of $\calT$ considered up to gauge transforamtions.

In what follows, we are going to consider edge orientation on $\calT$ given by a total order on $\Lambda_0(\calT)$ with the following rule: since any edge of $\calT$ has two distinct endpoints, each edge is oriented by the arrow emanating from its smallest endpoint. We also let $G$ be the additive group $(\bbR,+)$ and we consider $\bbR$-colorings on $\calT$.

\begin{proposition}[\cite{baseilhac2004quantum}, Theorem 4.7]
For any $H$-triangultaion $(\calT,\calL)$ of $(M,L)$, any choice of total order on $\Lambda_0(\calT)$ and any class $[\Phi]\in \mathcal{M}(M,\bbR)$ there exist a $\bbR$-coloring $\Phi$ of $\calT$ representing $[\Phi]$ and a charge $c$ on $(\calT,\calL)$ such that $[c]=0$.% in $H^1(M,\bbZ/2\bbZ)$.
\end{proposition}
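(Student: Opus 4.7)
The proposition decomposes into two logically independent assertions: existence of an $\bbR$-coloring $\Phi$ representing the prescribed class $[\Phi]\in\mathcal{M}(M,\bbR)$, and existence of a charge $c$ on $(\calT,\calL)$ with $[c]=0$. The first is immediate from the bijective correspondence recalled above between $\bbR$-colorings of $\calT$ up to the gauge action \eqref{eq.gauge.rel} and $\mathcal{M}(M,\bbR)$: any representative of the given orbit does the job. The real content lies in producing the charge, and my plan is three-step: (i) verify that the defining linear system has a rational solution; (ii) upgrade to an integer solution; (iii) modify within the integer solution set so that $[c]=0$.

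For (i), I would establish consistency of the edge-sum conditions with the tetrahedron conditions by an Euler-characteristic count. Each tetrahedron contributes $2$ to the global edge-charge (summing $a+b+c=1$ over the three opposite-edge pairs), giving a total of $2|\Lambda_3(\calT)|$; by the edge-sum conditions this must equal $2(|\Lambda_1(\calT)|-|\calL|)$. In an $H$-triangulation, a double-count of vertex-edge incidences yields $|\calL|=|\Lambda_0(\calT)|$, and combining $\chi(M)=0$ with the standard relation $2|\Lambda_2(\calT)|=4|\Lambda_3(\calT)|$ gives exactly $|\Lambda_3(\calT)|=|\Lambda_1(\calT)|-|\Lambda_0(\calT)|$, confirming compatibility. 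Solvability over $\bbQ$ then follows from a rank argument, and an explicit rational solution can in fact be produced by distributing the face constraint uniformly within each tetrahedron. For (ii), the linear system has integer coefficients and its affine space of solutions is preserved by a rich family of integer-supported local moves---adding compensating $\pm 1$'s across adjacent tetrahedra in ways that preserve both the tetrahedron and edge conditions---and these suffice to clear denominators in any rational solution.

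For (iii), the assignment $c\mapsto [c]\in H^1(M,\bbZ/2\bbZ)$ factors through the reduction of $c$ modulo $2$. The lattice of constraint-preserving integer moves acts on this reduction, and a direct analysis via a dual cellular description shows that the induced action on $H^1(M,\bbZ/2\bbZ)$ is transitive, so one can always translate an integer solution into the orbit where $[c]=0$. The main obstacle is the integrality step (ii): while $\bbQ$-solvability is transparent from the Euler-characteristic computation, producing enough explicit integer-preserving local moves to drive an arbitrary rational solution into the integer lattice requires a careful combinatorial analysis of the $H$-triangulation together with the $\calL$-constraint, and this is where the substantive work of Baseilhac--Benedetti lies.
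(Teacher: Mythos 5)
You should first note that the paper itself gives no argument for this statement: it is quoted verbatim from Baseilhac--Benedetti (Theorem 4.7 of \cite{baseilhac2004quantum}), so the only honest comparison is between your sketch and the proof in that reference. Your first part (choosing a representative $\Phi$ of $[\Phi]$ via the bijection between $\bbR$-colorings modulo gauge and $\mathcal{M}(M,\bbR)$) is fine as far as the literal statement goes. The problem is the charge. Your step (i) only checks one global linear compatibility: the Euler-characteristic count shows that the sum of all tetrahedral contributions equals the sum imposed by the edge conditions, using $|\calL|=|\Lambda_0(\calT)|$ and $|\Lambda_3|=|\Lambda_1|-|\Lambda_0|$. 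That is a necessary condition, not a proof of solvability even over $\bbQ$: the constraint system (face conditions, opposite-edge equalities, edge sums $2$ or $0$) can have many further linear dependencies, and the ``rank argument'' you invoke is precisely what has to be established. In the literature this is done via Neumann's analysis of the combinatorics of triangulations (the theory of flattenings/charges), which identifies the integral solution lattice and the cohomological obstructions; it is not a consequence of a single counting identity.

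Steps (ii) and (iii) are where the theorem actually lives, and in your write-up they are asserted rather than proved. ``Integer-supported local moves suffice to clear denominators'' is equivalent to saying that the integral charge lattice is an affine lattice of full rank inside the rational solution space, which is the substantive content of the existence theorem (you concede this yourself in the last sentence, which means the proposal is a plan, not a proof). Likewise, the claim in (iii) that the constraint-preserving integer moves act transitively on the mod-$2$ class $[c]\in H^1(M,\bbZ/2\bbZ)$ needs the explicit description of how elementary charge modifications change $[c]$ and a surjectivity statement; this is exactly what Baseilhac--Benedetti extract from Neumann's results, and no argument for it appears in your sketch. So the decomposition is reasonable and the bookkeeping in (i) is correct, but the two essential steps --- integral solvability and control of $[c]$ --- are missing, and they cannot be supplied by the elementary Euler-characteristic/local-move scheme as described.
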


\subsection{Charged $6j$-symbols and the symmetry relations}
The building blocks of the state sum invariant we are going to define are a modified version of the $6j$-symbols called {\it charged $6j$-symbols}. These operators are defined as follow:
%
%\begin{definition}
for any $z\in\bbRzu$ and $a,c\in\frac{1}{2}\bbZ$ we define the charged 6$j$-symbols by 
\begin{align}
S(z|a,c)= \frakq_2^{-4ac}R_2^cR_1^{-a}S(z)L_2^{-a}R_2^{-c}
\end{align}
\begin{align}
S(z|a,c)^{-1}= \frakq_1^{4ac}R_2^{-c}L_2^{-a}S(z)^{-1}R_1^{-a}R_2^c
\end{align}
%\end{definition}

The charged $6j$-symbols verify particular set of relations called {\it symmetry relations}.%.. \blue{donne les relations et dit peut-être qu'elles sont utiles pour montrer l'invariance par rapport à l'ordre des sommets (see \cite[Lemma 31]{geer2012tetrahedral} for details)... (les 6$j$-symbols ne jouissent pas de tels propriétés)}

The symmetry relations of the charged $6j$-symbols are expressed with the  symmetric operators $\sfA=AL^{-\frac{1}{2}}$ and $\sfB=BR^{-\frac{1}{2}}$ in $\End(\calH)$. Similarly to other operators $\End(\calH)$ considered thus far, we consider, for any $\alp,\bet\in\bbZ_N^2$,  
the functions
\begin{align*}
\sfA_{\alp,\bet},\bar{\sfA}_{\alp,\bet},\sfB_{\alp,\bet},\bar{\sfB}_{\alp,\bet}  : \bbRzu\to \bbC
\end{align*}
satisfying 
\begin{align*}
\sfA h_{p,q}(u_\alp)=\sum_{\bet\in\bbZ_N^2}\sfA_{\alp,\bet}\pa{\tfrac{q}{p+q}}h_{p,q}(u_\bet), 
\\
\sfA\bar{h}_{p,q}(\bar{u}_\alp)=\sum_{\bet\in\bbZ_N^2}\bar{\sfA}_{\alp,\bet}\pa{\tfrac{q}{p+q}}\bar{h}_{p,q}(\bar{u}_\bet), 
\\
\sfB h_{p,q}(u_\alp)=\sum_{\bet\in\bbZ_N^2}\sfB_{\alp,\bet}\pa{\tfrac{q}{p+q}}h_{p,q}(u_\bet), 
\\
\sfB\bar{h}_{p,q}(\bar{u}_\alp)=\sum_{\bet\in\bbZ_N^2}\bar{\sfB}_{\alp,\bet}\pa{\tfrac{q}{p+q}}\bar{h}_{p,q}(\bar{u}_\bet),
\end{align*}
for all admissible pairs $(p,q)\in(\bbRz)^2$. A straightforward computation, using Lemmas \ref{det.A.et.B} and \ref{sqrtLRC}, leads us to the followings results.

\begin{lemma}
The operators $\sfA,\sfB\in\End(\calH)$ are symmetric involutions. Moreover, for all $\alp,\bet\in\bbZ_N^2$ and all $x\in\bbRzu$, we have 
\begin{align*}
\sfA_{\alp,\bet}(x)  =\sfA_{\alp,\bet} & =  \epsilon_N^2\frac{\del_{\alp_2,-\bet_2}}{N}\om^{-\frac{1}{2}\pa{t(\alp_1-\alp_2-\bet_1)^2+\alp_1(\alp_1+t-1)+\bet_1(\bet_1+t-1)-1}+\mathtt{a}},\\
\bar{\sfA}_{\alp,\bet}(x)  =\bar{\sfA}_{\alp,\bet} & =  \epsilon_N^{-2}\del_{\alp_2,-\bet_2}\om^{\frac{1}{2}\pa{t(\alp_1-\alp_2-\bet_1)^2+\alp_1(\alp_1+t-1)+\bet_1(\bet_1+t-1)-1}-\mathtt{a}},\\
\sfB_{\alp,\bet}(x) =\sfB_{\alp,\bet} & = \del_{\alp,-\bet}\om^{-\frac{1}{2}\pa{(t+1)(\alp_1-\alp_2)^2+\alp_1^2}+\mathtt{b}},\\
\bar{\sfB}_{\alp,\bet}(x)  =\bar{\sfB}_{\alp,\bet} & =  \del_{\alp,-\bet}\om^{\frac{1}{2}\pa{(t+1)(\alp_1-\alp_2)^2+\alp_1^2}-\mathtt{b}},
\end{align*}
with $\mathtt{a},\mathtt{b}\in\bbZ_N$ given in Proposition \ref{det.A.et.B}. 
\end{lemma}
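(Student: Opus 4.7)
The plan is to verify the three assertions of the lemma in order: the involutive property, symmetry, and the explicit formulas.

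\textbf{Involutive property.} I would first treat $\sfB$. By \eqref{psi.syst.eq1} we have $BR^{\frac{1}{2}}B=R^{-\frac{1}{2}}$; multiplying on the right by $B$ and using $B^2=\Id$ gives the equivalent form $R^{-\frac{1}{2}}B=BR^{\frac{1}{2}}$. Then
\begin{align*}
\sfB^{2}=BR^{-\frac{1}{2}}\,BR^{-\frac{1}{2}}=B\bigl(R^{-\frac{1}{2}}B\bigr)R^{-\frac{1}{2}}=B\bigl(BR^{\frac{1}{2}}\bigr)R^{-\frac{1}{2}}=B^{2}=\Id.
\end{align*}
The case of $\sfA$ is analogous once the identity $AL^{\frac{1}{2}}A=L^{-\frac{1}{2}}$ is established. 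This identity can be derived by combining $L^{\frac{1}{2}}=BAR^{-\frac{1}{2}}AB$ with $BR^{\frac{1}{2}}B=R^{-\frac{1}{2}}$ and with $(AB)^{3}=C$ together with $ACA=BCB=C^{-1}$ from \eqref{psi.syst.eq1}; alternatively, one checks it directly from the explicit entries of $A$ and $L^{\pm\frac{1}{2}}$ supplied by Propositions \ref{det.A.et.B} and \ref{sqrtLRC}. Once $AL^{\frac{1}{2}}A=L^{-\frac{1}{2}}$ is in hand, the same manipulation as for $\sfB$ yields $\sfA^{2}=\Id$.

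\textbf{Symmetry.} Because $R^{\frac{1}{2}}$ is grading-preserving and diagonal in the basis $\{u_\alp\}_{\alp\in\bbZ_N^2}$ with equal ``bar'' and ``unbar'' eigenvalues (Proposition \ref{sqrtLRC}), it is symmetric with respect to the pairing $\langle\,,\rangle$, and hence so is $R^{-\frac{1}{2}}$. From the defining identity $R=B^{*}B$ together with $B^{2}=\Id$ one gets $B^{*}=RB$. Combining these facts with $R^{\frac{1}{2}}B=BR^{-\frac{1}{2}}$ yields
\begin{align*}
\sfB^{*}=(R^{-\frac{1}{2}})^{*}B^{*}=R^{-\frac{1}{2}}RB=R^{\frac{1}{2}}B=BR^{-\frac{1}{2}}=\sfB.
\end{align*}
An identical argument, using $L^{\frac{1}{2}}A=AL^{-\frac{1}{2}}$ and $A^{*}=LA$, gives $\sfA^{*}=\sfA$.

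\textbf{Explicit matrix elements.} Since $L^{-\frac{1}{2}}$ acts on the basis $\{u_\alp\}$ by a single shift $\alp\mapsto(\alp_1-t,\alp_2)$ together with a scalar (the inverse of the eigenvalue of $L^{\frac{1}{2}}$), and similarly $R^{-\frac{1}{2}}$ acts diagonally, the matrix products
\begin{align*}
\sfA_{\alp,\bet}=\sum_{\gamma\in\bbZ_N^2}L^{-\frac{1}{2}}_{\alp,\gamma}\,A_{\gamma,\bet},\qquad \sfB_{\alp,\bet}=\sum_{\gamma\in\bbZ_N^2}R^{-\frac{1}{2}}_{\alp,\gamma}\,B_{\gamma,\bet},
\end{align*}
collapse to single terms; the analogous reductions hold for $\bar{\sfA}$ and $\bar{\sfB}$. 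In each case, the factor $x^{\pm 2(N-1)/N}$ supplied by $L^{-\frac{1}{2}}$ or $R^{-\frac{1}{2}}$ cancels the one coming from $A$ or $B$, so $\sfA$ and $\sfB$ are independent of $x$, as stated. What remains is to simplify the resulting sums of powers of $\om$. Using the congruences $t^{2}\equiv -t-1$ and $t^{3}\equiv 1\pmod N$ (both consequences of $N\mid t^{2}+t+1$), together with the fact that $2$ is invertible mod $N$, one brings the exponent into the symmetric form displayed in the statement.

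\textbf{Main obstacle.} The first two steps are essentially formal; the real work lies in the third, specifically in the algebraic bookkeeping for the $\om$-exponents, where the quadratic expressions in $\alp_1,\alp_2,\bet_1$ produced by shifting $\alp_1$ by $\pm t$ inside $A_{\gamma,\bet}$ must be rearranged, using $t^{2}\equiv -t-1$, into the manifestly symmetric form $t(\alp_1-\alp_2-\bet_1)^{2}+\alp_1(\alp_1+t-1)+\bet_1(\bet_1+t-1)-1$ (and its analogue for $\sfB$). The constants $\mathtt{a},\mathtt{b}\in\bbZ_N$ track overall ambiguities absorbed from the normalizations chosen in Proposition \ref{det.A.et.B}.
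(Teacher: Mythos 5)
Your plan is broadly workable, but note that the paper's own argument for this lemma is nothing more than the direct computation you relegate to step three: it reads off $\sfA_{\alp,\bet}=\sum_\gamma L^{-\frac{1}{2}}_{\alp,\gamma}A_{\gamma,\bet}$ and $\sfB_{\alp,\bet}=\sum_\gamma R^{-\frac{1}{2}}_{\alp,\gamma}B_{\gamma,\bet}$ (single terms, with the rational-function prefactors cancelling) from Propositions \ref{det.A.et.B} and \ref{sqrtLRC}, and extracts the involutivity and symmetry from the resulting explicit matrices. So your correct observations in step three (collapse to one term, cancellation of the continuous parameter, reduction of the exponent via $t^{2}\equiv-t-1$, $t^{3}\equiv1$) are exactly the paper's proof; what you add is an attempt to get the structural claims formally from \eqref{psi.syst.eq1}. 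One small slip there: the factor cancelling against $B$ and $R^{-\frac{1}{2}}$ is a power of $(1-x)$, not of $x$.

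The genuine soft spot is the $\sfA$ half of your formal argument. The identity $AL^{\frac{1}{2}}A=L^{-\frac{1}{2}}$, on which both your involutivity and your symmetry arguments for $\sfA$ rest, is not among the relations recorded in \eqref{psi.syst.eq1}, and it does not follow from $ALA=L^{-1}$ alone: that only says $AL^{\frac{1}{2}}A$ is \emph{some} square root of $L^{-1}$, and compatibility with the particular root fixed in Proposition \ref{sqrtLRC} must be checked. Your sketch "by combining $L^{\frac{1}{2}}=BAR^{-\frac{1}{2}}AB$ with the other relations" is not a derivation, so in practice you are thrown back on your stated alternative — verifying it entrywise from Propositions \ref{det.A.et.B} and \ref{sqrtLRC} — which is again precisely the computation the paper performs, so nothing is gained over the direct route. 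Similarly, "an identical argument" for the symmetry of $\sfA$ is too quick: unlike $R^{\frac{1}{2}}$, the operator $L^{\frac{1}{2}}$ is not diagonal (it shifts $\alp_1$ by $t$), so its symmetry amounts to the identity $\bar{L}^{\frac{1}{2}}_{\alp,\bet}=L^{\frac{1}{2}}_{\bet,\alp}$, which holds but itself requires the reduction $t^{2}\equiv-t-1$ and must be checked, not inherited from the diagonal case. With those two verifications supplied (both of the same computational nature as the rest), your argument closes; the $\sfB$ half, using $BR^{\frac{1}{2}}B=R^{-\frac{1}{2}}$, $B^{2}=\Id$ and $B^{*}=RB$, is fine as written.
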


Using the previous Lemma, Remark \ref{rmk.S.as.6j} and Proposition \ref{prop.frak.q}, the following Proposition is a direct adaptation of the formulas (50), (51) and (52) in \cite{geer2012tetrahedral}. Note that we use $\equiv$ to denote equality up to multiplication by an integer power of $\om$.
\begin{proposition}[The symmetry relations]
The charged $6j$-symbols verify the following symmetry relations
\begin{align*}
\Big \langle \bar{u}_\alp\ot \bar{u}_\nu \Big| S(z|a,c) \Big| u_\mu\ot u_\bet \Big \rangle
&\equiv
\sum_{\alp',\mu'\in\bbZ_N^2} \sfA_{\alp,\alp'}\bar{\sfA}_{\mu,\mu'}\Big \langle \bar{u}_{\mu'}\ot \bar{u}_\nu \Big| S\pa{\tfrac{z}{z-1}|a,b}^{-1} \Big| u_{\alp'}\ot u_\bet \Big \rangle \\
\Big \langle \bar{u}_\alp\ot \bar{u}_\mu \Big| S(z|a,c) \Big| u_\bet\ot u_\nu \Big \rangle 
&\equiv
\sum_{\alp',\nu'\in\bbZ_N^2} \bar{\sfA}_{\nu,\nu'}\sfB_{\alp,\alp'}\Big \langle \bar{u}_{\mu}\ot \bar{u}_{\nu'} \Big| S\pa{z^{-1}|b,c}^{-1} \Big| u_{\alp'}\ot u_\bet \Big \rangle\\
\Big \langle \bar{u}_\mu\ot \bar{u}_\bet \Big| S(z|a,c) \Big| u_\alp\ot u_\nu \Big \rangle
&\equiv
\sum_{\bet',\nu'\in\bbZ_N^2} \sfB_{\bet,\bet'}\bar{\sfB}_{\nu,\nu'}\Big \langle \bar{u}_{\mu}\ot \bar{u}_{\nu'} \Big| S\pa{\tfrac{z}{z-1}|a,b}^{-1} \Big| u_{\alp}\ot u_{\bet'} \Big \rangle
\end{align*} 
for any $a,b,c\in\frac{1}{2}\bbZ$ such that $a+b+c=\frac{1}{2}$.
\end{proposition}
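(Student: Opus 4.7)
The plan is to adapt the derivation of formulas (50)--(52) in \cite{geer2012tetrahedral} directly to our $\hat{\Psi}$-system; no new ideas are required beyond careful book-keeping. Each of the three symmetry relations corresponds to one of the three generating reflections of the tetrahedral symmetry group. At the categorical level, each translates into an operator identity in $\End(\calH^{\ot 2})$ of the form
\begin{equation*}
S(z|a,c) \equiv F \cdot S\bigl(M(z)|a',c'\bigr)^{-1} \cdot G,
\end{equation*}
where $F$ and $G$ are products of $\sfA\ot 1$, $1\ot\sfA$, $\sfB\ot 1$, $1\ot\sfB$ selecting the reflection in question, and $M\in\PGL(2,\bbC)$ is the associated Mobius transformation ($z\mapsto z/(z-1)$ for the $\sfA$-type reflections, $z\mapsto z^{-1}$ for the mixed one). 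Taking matrix elements in the basis $\{u_\alp\ot u_\mu\}$ and using that $\sfA,\sfB$ are symmetric involutions converts each such operator identity into one of the three displayed sums; the factors $\sfA_{\alp,\alp'}\bar{\sfA}_{\mu,\mu'}$ (resp. $\sfB_{\alp,\alp'}\bar{\sfA}_{\nu,\nu'}$, resp. $\sfB_{\bet,\bet'}\bar{\sfB}_{\nu,\nu'}$) precisely encode the action of $F$ on the kets and of $G$ on the bras.

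First I would expand both sides of the operator identity via the definition $S(z|a,c)=\frakq_2^{-4ac}R_2^cR_1^{-a}S(z)L_2^{-a}R_2^{-c}$ and then push the grading-preserving factors $R^{\pm},L^{\pm},\frakq^{\pm}$ past the $\sfA$'s and $\sfB$'s using the $\hat{\Psi}$-system identities \eqref{psi.syst.eq1}--\eqref{psi.syst.eq2}. The essential ingredients are $\sfA^2=\sfB^2=1$, $AL^{1/2}A=L^{-1/2}$, $BR^{1/2}B=R^{-1/2}$, $AC^{1/2}A=BC^{1/2}B=C^{-1/2}$, together with the four commutation rules between $S(z)$ and the various square roots listed in \eqref{psi.syst.eq2}. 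The constraint $a+b+c=\tfrac{1}{2}$ is exactly what ensures that the exponents of $L,R,C$ and of $\frakq$ match on both sides once every square root has been pushed through $F$ and $G$.

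After these manipulations, each symmetry relation reduces to a ``bare'' identity of the form $S(z)\equiv(\sfA\ot\sfA)\,S\bigl(\tfrac{z}{z-1}\bigr)^{-1}(\sfA\ot\sfA)$ and its $\sfB$-type analogue with $z\mapsto z^{-1}$. Such an identity expresses that the Mobius action of $A,B$ on the parameter $z$ recorded in \eqref{AB.incl}--\eqref{A*B*.incl} intertwines with the tensor-square action of $\sfA,\sfB$ on the multiplicity spaces; in our setting it can be verified directly by comparing the explicit matrix entries of $\sfA,\sfB$ from the preceding Lemma with the action of $S(z)$ on $u_\alp\ot u_\mu$ deduced from Proposition \ref{prop.base.Hpq} and the definition \eqref{def.bas.H} of the multiplicity-space bases. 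Since the proposition is only asserted modulo integer powers of $\om$, the residual scalar factors arising in the reduction can be absorbed into $\equiv$.

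The main obstacle is purely combinatorial: one must track how the input charges $(a,c)$ redistribute into $(a',c')$ under the constraint $a+b+c=\tfrac{1}{2}$, and verify that the numerous powers of $\om$ produced by the commutations of $\sfA,\sfB,R^{\pm},L^{\pm},\frakq^{\pm}$ recombine correctly to match the right-hand side. This is essentially the same calculation already carried out in \cite[Section 11]{geer2012tetrahedral}; since by Theorem \ref{thm.hat.Psi.syst} our $\hat{\Psi}$-system satisfies the same defining relations used there, the verification transfers verbatim modulo a change of notation, which is the reason the statement is phrased as a ``direct adaptation''.
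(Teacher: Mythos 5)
Your proposal is correct and follows essentially the same route as the paper: the paper likewise obtains the symmetry relations as a direct adaptation of formulas (50)--(52) of \cite{geer2012tetrahedral}, justified by the $\hat{\Psi}$-system of Theorem \ref{thm.hat.Psi.syst} together with the explicit symmetric involutions $\sfA=AL^{-\frac{1}{2}}$, $\sfB=BR^{-\frac{1}{2}}$, Remark \ref{rmk.S.as.6j} and Lemma \ref{prop.frak.q}. Your additional bookkeeping of how the charges and square-root factors are pushed through is consistent with that adaptation, so no further changes are needed.
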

% \begin{proposition}[The symmetry relations]
% The charged $6j$-symbols verify the following symmetry relations
% \begin{align*}
% \Big \langle \bar{u}_\alp\ot \bar{u}_\nu \Big| S(z|a,c) \Big| u_\mu\ot u_\bet \Big \rangle
% &\equiv
% \om^{a\red{+ccc}}\sum_{\alp',\mu'\in\bbZ_N^2} \sfA_{\alp,\alp'}\bar{\sfA}_{\mu,\mu'}\Big \langle \bar{u}_{\mu'}\ot \bar{u}_\nu \Big| S\pa{\tfrac{z}{z-1}|a,b}^{-1} \Big| u_{\alp'}\ot u_\bet \Big \rangle \\
% \Big \langle \bar{u}_\alp\ot \bar{u}_\mu \Big| S(z|a,c) \Big| u_\bet\ot u_\nu \Big \rangle 
% &\equiv
% \om^{-c\red{+ccc}}\sum_{\alp',\nu'\in\bbZ_N^2} \bar{\sfA}_{\nu,\nu'}\sfB_{\alp,\alp'}\Big \langle \bar{u}_{\mu}\ot \bar{u}_{\nu'} \Big| S\pa{z^{-1}|b,c}^{-1} \Big| u_{\alp'}\ot u_\bet \Big \rangle\\
% \Big \langle \bar{u}_\mu\ot \bar{u}_\bet \Big| S(z|a,c) \Big| u_\alp\ot u_\nu \Big \rangle
% &\equiv
% \om^{a\red{+ccc}}\sum_{\bet',\nu'\in\bbZ_N^2} \sfB_{\bet,\bet'}\bar{\sfB}_{\nu,\nu'}\Big \langle \bar{u}_{\mu}\ot \bar{u}_{\nu'} \Big| S\pa{\tfrac{z}{z-1}|a,b}^{-1} \Big| u_{\alp}\ot u_{\bet'} \Big \rangle
% \end{align*} 
% for any $a,b,c\in\frac{1}{2}\bbZ$ such that $a+b+c=\frac{1}{2}$.
% \end{proposition}
%
%\blue{A topological interpretation of these relations is given in \cite[Appendix]{geer2012tetrahedral}}
A topological interpretation of these relations is given in \cite[Appendix]{geer2012tetrahedral}.

\subsection{State sum invariant}
%\subsection{State sum}

%\red{The main result of this paper is the construction of a {\it $\hat{\Psi}$-system} in a particular monoidal abelian category. This allows us to construct a state sum invariant of any tuple $(M,L,[\Phi],[c])$. The key ingredients in this state sum are the {\it charged $6j$-symbols} $S(z|a,c)^{\pm 1}\in\calA^{\ot2}$, where $\calA=\End(\calV)$ and $\calV=\bbC^N\ot\bbC^N$, and more precisley their matrix elements. We give explicit expression of the matrix elements using a suitable basis $\brkt{u_\alp}_{\alp\in\bbZ_N^2}$ of $\calV$, where $\bbZ_N=\bbZ/ N\bbZ$, and its dual basis $\brkt{\bar{u}_\alp}_{\alp\in\bbZ_N^2}$ (defined in Lemma \ref{bases.u.de.calV}).}

\noindent
%\blue{Définir la state sum et donner le thm d'invariance}

% \blue{Fix an integer $t$, a natural number $N\notin3\bbN$ which divides $t^2+t+1$ and a primitive $N$-th root of unity $\om$. Remark that $N$ is odd. Therefore, for any $a\in\frac{1}{2}\bbZ$, we will write $\om^a$ instead of $\om^{a(N+1)}$. For any $z\in\bbRzu=\bbR\bksl\brkt{0,1}$, any $a,c\in\frac{1}{2}\bbZ$ and any $\alp,\bet,\mu,\nu\in\bbZ_N^2$ we have 
% \begin{multline*}
% \Big \langle \bar{u}_\mu\ot \bar{u}_\nu \Big| S(z|a,c) \Big| u_\alp\ot u_\bet \Big \rangle
% =  \Big \langle \bar{u}_{\mu}\ot \bar{u}_{\nu} \Big| S(z) \Big| u_\alp\ot u_{(\bet_1-2ta,\bet_2)} \Big \rangle\times
% \\
% \pa{z^{a}(1-z)^{c}}^{\frac{4(1-N)}{N}}
% \om^{a(2c-2t\bet_1-\bet_2)+2(t+1)\brkt{a(a+\mu_1)+c(\bet_1-\nu_1)}+(2t+1)\brkt{a(\mu_2+1)+c(\bet_2-\nu_2)}}
% \end{multline*}
% and 
% \begin{multline*}
% \Big \langle \bar{u}_\mu\ot \bar{u}_\nu \Big| S(z|a,c)^{-1} \Big| u_\alp\ot u_\bet \Big \rangle
% = \Big \langle \bar{u}_\mu\ot \bar{u}_{(\nu_1+2ta,\nu_2)} \Big| S(z)^{-1} \Big| u_{\alp}\ot u_{\bet} \Big \rangle\times
% \\
% \pa{z^{a}(1-z)^{c}}^{\frac{4(1-N)}{N}}
% \om^{a(2c-2t\nu_1-\nu_2)+2(t+1)\brkt{a(a-\alp_1)+c(\bet_1-\nu_1)}+(2t+1)\brkt{a(\alp_2+1)+c(\nu_2-\bet_2)}}
% \end{multline*} 
% where $\langle \bar{u}_\mu\ot \bar{u}_{\nu} | S(z) | u_{\alp}\ot u_{\bet}  \rangle$ and $ \langle \bar{u}_\mu\ot \bar{u}_{\nu} | S(z)^{-1} | u_{\alp}\ot u_{\bet}  \rangle$ are given in Proposition ??%\ref{charact of the $6j$-s} 
% and where the Nth root is chosen to be the unique real root.}

Let $(\calT,\calL)$ be an $H$-triangulation of $(M,L)$. Fix a total order on $\Lambda_0(\calT)$ and consider a $\bbR$-coloring $\Phi$ of $\calT$, a charge $c$ on $(\calT,\calL)$ that is such that $[c]=0$ and a map $\alp: \Lambda_2(\calT)\to\bbZ_N^2$.
From this data, we define the state sum as follows.
Let $T$ be a tetrahedron of $\calT$ with ordered vertices $v_0,v_1,v_2,v_3$. We say that $T$ is {\it right oriented} if the vertices $v_0,v_1$ and $v_2$ go round in the counter-clockwise direction when we look at them from $v_3$ in the increasing order. Otherwise, $T$ is {\it left oriented}. Set 
\begin{align}\label{def.param.6j}
    p=\Phi(\overrightarrow{v_0v_1}),\ \ q=\Phi(\overrightarrow{v_1v_2}), \ \ r=\Phi(\overrightarrow{v_2v_3})
\end{align}
where $\overrightarrow{v_iv_j}$ is the oriented edge of $T$ going from $v_i$ to $v_j$. %Note that up to a gauge transformation, we can assume that p,q,r, (p+q), (q+r) and (p+q+r) are in $\bbRzu$ since each edge has two distinct endpoints.  
Then set  
\begin{align}\label{ideal.tetra.param}
    z=\frac{pr}{(p+q)(q+r)}\in\bbRzu
\end{align}
%
%remarque referee: on peut faire une transfo de gauge si jamais p,q ou r = 0 ainsi que si une des combo (p+q), (q+r) ou (p+q+r).... check avec Rinat!!!
and denote $c_{ij}=\frac{1}{2} c(v_iv_j)$ and $\alp_i=\alp(v_jv_kv_l)$ for $\brkt{i,j,k,l}=\brkt{0,1,2,3}$. We associate the following matrix element to the tetrahedron $T$
\begin{align}  
T(\Phi,c,\alp)=\begin{footnotesize}\left\{\begin{array}{ll}
						\Big \langle \bar{u}_{\alp_2}\ot \bar{u}_{\alp_0} \Big| S(z|c_{12},c_{23}) \Big| u_{\alp_3}\ot u_{\alp_1} \Big \rangle & \text{if } T\text{ is right oriented},  \\
						\Big \langle \bar{u}_{\alp_3}\ot \bar{u}_{\alp_1} \Big| S(z|c_{12},c_{23})^{-1} \Big| u_{\alp_2}\ot u_{\alp_0} \Big \rangle & \text{if } T\text{ is left oriented}. \\
				\end{array}
			\right.\end{footnotesize}
\end{align}
and we define the state sum as follows
%%%%%%%%%%%%%%%%%%%%%%%%%%%%%%%%%%%%%%%%%%%%%%%%%%%%%%%%%%%%%%%%%%%%%%%%%%%%%%%%%
%%%%%%%%%%%%%%%%%%%%%%%%%%%%%%%%%%%%%%%%%%%%%%%%%%%%%%%%%%%%%%%%%%%%%%%%%%%%%%%%%
%\begin{align}
%T(\Phi,c,\alp)=\begin{footnotesize}\left\{\begin{array}{ll}
%						 S(z|c_{12},c_{23})_{\alp_2,\alp_0}^{\alp_3,\alp_1} & \text{if } T\text{ is right oriented},  \\
%						 \bar{S}(z|c_{12},c_{23})_{\alp_3,\alp_1}^{\alp_2,\alp_0} & \text{if } T\text{ is left oriented}. \\
%				\end{array}
%			\right.\end{footnotesize}
%\end{align}
%%%%%%%%%%%%%%%%%%%%%%%%%%%%%%%%%%%%%%%%%%%%%%%%%%%%%%%%%%%%%%%%%%%%%%%%%%%%%%%%%
%%%%%%%%%%%%%%%%%%%%%%%%%%%%%%%%%%%%%%%%%%%%%%%%%%%%%%%%%%%%%%%%%%%%%%%%%%%%%%%%%
\begin{align}\label{state.sum}
\sfK_N(\calT,\calL,\Phi,c)=N^{-2|\Lambda_0(\calT)|}\sum_{\alp}\prod_{T\in\calT}T(\Phi,c,\alp)\in\bbC.
\end{align}
\begin{theorem}\label{main.thm.inv}
Up to multiplication by integer powers of $\om$, %the state sum
$\sfK_N(\calT,\calL,\Phi,c)$ depends only on the isotopy class of $L$ in $M$ and the (conjugacy) class $[\Phi]\in \mathcal{M}(M,\bbR)$ (and does not depend on the choice of $\Phi$ and $c$ in their respective classes, the $H$-triangulation $(\calT,\calL)$ of $(M,L)$, and the ordering of the vertices of $\calT$).
\end{theorem}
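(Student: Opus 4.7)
The strategy is to apply the general state-sum machinery of \cite{geer2012tetrahedral} to the $\hat{\Psi}$-system constructed in Theorem~\ref{thm.hat.Psi.syst}, and to verify, in our concrete $\mathfrak{sl}_3$ setting, the hypotheses that make the state sum \eqref{state.sum} an invariant up to integer powers of $\omega$. Concretely, once one has a $\hat{\Psi}$-system together with a distinguished operator $\mathfrak{q}$ acting as a scalar on $\check{\calH}$ and as its inverse on $\hat{\calH}$, the formal invariance follows from three kinds of local identities among charged $6j$-symbols: (i)~the charged pentagon identity, (ii)~the bubble identity, (iii)~the tetrahedral symmetry relations. Theorem~\ref{thm.hat.Psi.syst} and Lemma~\ref{prop.frak.q} provide precisely the inputs for (i) and (ii), and the symmetry relations stated immediately before the theorem provide (iii). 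I will organize the proof around these three items and then address the remaining ``static'' independence statements (gauge, charge representative, vertex order).

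\smallskip

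\textbf{Step 1: Pachner $2$-$3$ move.} I would first reformulate the matrix element $T(\Phi,c,\alpha)$ associated to a tetrahedron as a (charged) $6j$-symbol via Remarks~\ref{rmk.T.as.6j} and~\ref{rmk.S.as.6j}, with the shape parameter $z$ given by \eqref{ideal.tetra.param}. The key algebraic input is the pentagon identity
\[
S_{23}(x)S_{12}(y)=S_{12}(x*y)S_{13}(xy)S_{23}(y*x),
\]
established in Section~\ref{sec.6j.symb}. Following \cite{geer2012tetrahedral}, I would promote this to the \emph{charged} pentagon: conjugating each $S$ by appropriate factors of $R^{1/2}$, $L^{1/2}$ and $\mathfrak{q}$ using the commutation relations \eqref{psi.syst.eq2}, and using that the charges on the two sides of an $H$-Pachner move differ by the two conditions $c(e_1)+c(e_2)+c(e_3)=1$ on each new face together with the compatibility at the common edge. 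Equality of the sums of the two sides of the move, summed over the internal index, is then exactly the charged pentagon. The shape parameters $z$ on both sides match because the cross-ratio $z=pr/((p+q)(q+r))$ is precisely the one appearing in \eqref{pent.T} via $y*x$, as in Remark~\ref{rmk.S.as.6j}.

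\smallskip

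\textbf{Step 2: $H$-bubble move.} This is where Lemma~\ref{prop.frak.q} enters. The bubble move inserts a vertex inside a face and creates a new edge lying in $\calL$; the change in the state sum is the contraction of two $S(z|a,c)^{\pm1}$ against a cap and cup given by the duality morphisms $b_p,d_p$ of Lemma~\ref{det.b_p.d_p}. Using relations \eqref{del.bp.dp} together with the definition of $\mathfrak{q}$ in \eqref{def.frakq}, this contraction reduces, as in \cite[\S6]{geer2012tetrahedral}, to a single scalar factor coming from $\mathfrak{q}$. Because $[c]=0$, the charges on the two edges of $\calL$ touching the new vertex sum to $0$ rather than~$2$, which is what cancels the nontrivial powers of $\mathfrak{q}$ and leaves only a power of $\omega$ and the normalization factor $N^{-2|\Lambda_0(\calT)|}$. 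The exponent $-2|\Lambda_0(\calT)|$ is calibrated precisely so that the bubble move preserves the state sum up to a power of $\omega$.

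\smallskip

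\textbf{Step 3: Vertex order and gauge invariance.} To change the total order on $\Lambda_0(\calT)$ it suffices to exchange two adjacent vertices, which locally permutes the labels $v_0,v_1,v_2,v_3$ of each tetrahedron containing them. The effect on $T(\Phi,c,\alpha)$ is given by the symmetry relations displayed just before the theorem, rewritten via the involutions $\sfA$ and $\sfB$, and gives the same state sum up to a power of $\omega$ after summing over $\alpha$. For gauge invariance of $\Phi$, the transformation \eqref{eq.gauge.rel} shifts each of $p,q,r$ by the same additive constant at each vertex; but the cross-ratio $z$ in \eqref{ideal.tetra.param} is invariant under the common additive shift of the edge labels meeting at a vertex, so the tetrahedral weight is unchanged. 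Independence from the representative $c$ of its class $[c]\in H^1(M;\bbZ/2\bbZ)$ follows in the same way, using that two such representatives differ by a coboundary and the symmetry relations absorb the coboundary contributions into an overall power of $\omega$; this is the $\mathfrak{sl}_3$ analogue of \cite[Lemma~14 and Theorem~3]{baseilhac2004quantum}, and the formal argument is identical.

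\smallskip

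\textbf{Main obstacle.} The routine adaptation of the general framework of \cite{geer2012tetrahedral} handles items (i)--(iii) once the $\hat{\Psi}$-system is in place. The delicate point is bookkeeping the $\omega$-power ambiguities: the constants $\mathtt{a},\mathtt{b}\in\bbZ_N$ appearing in Propositions~\ref{det.A.et.B} and~\ref{sqrtLRC}, and the phase in Lemma~\ref{prop.frak.q}, enter multiple times in each local move. Controlling them well enough to conclude that the overall indeterminacy is a power of $\omega$ (and no more) is the chief computational obstacle. I expect to collect all these phases into a single global $\omega$-factor per elementary move, so that after applying finitely many $H$-Pachner and $H$-bubble moves to go between any two $H$-triangulations of $(M,L)$, the state sums agree up to $\omega^{\bbZ}$, which is the statement of the theorem.
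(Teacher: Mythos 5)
Your proposal is correct and follows essentially the same route as the paper: the paper's proof simply invokes Theorem 29 of \cite{geer2012tetrahedral} applied to the $\hat{\Psi}$-system of Theorem \ref{thm.hat.Psi.syst} together with Lemma \ref{prop.frak.q}, and your steps (charged pentagon for the $H$-Pachner move, $\mathfrak{q}$-scalar cancellation for the $H$-bubble move, symmetry relations for vertex reordering, gauge and charge independence) are exactly the ingredients that the cited general theorem packages. You merely unpack that citation in more detail, which is harmless but not a different argument.
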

\begin{proof}
Using Lemma \ref{prop.frak.q}, the statement of the Theorem is a direct adaptation of Theorem 29 of \cite{geer2012tetrahedral}.
\end{proof} %The reason why $\sfK_N(\calT,\calL,\Phi,c)$ is invariant only up to multiplication by integer powers of $\om$ is explained in Subsection \ref{ch6jsym}.

%\subsection{Kashaev's link invariant}

\subsection{\texorpdfstring{$\mathfrak{sl}_3$}{sl_3} Kashaev's invariant}

%The $6j$-symbols entering the state sum \eqref{state.sum} have the particularity that each of these operators depend only on a single variable. 

The value $z$ in \eqref{ideal.tetra.param} entering the (charged) $6j$-symbols has a natural geometric interpretation. Indeed, following \cite{baseilhac2002qhi}, given a $\bbR$-coloring $\Phi$ of $\calT$ and a tetrahedron $T$ of $\calT$ with ordered vertices $v_0,v_1,v_2,v_3$, we consider
\begin{align*}
    p_0 = \Phi(\overrightarrow{v_0v_1}) \cdot \Phi(\overrightarrow{v_2v_3}), \quad p_1 = \Phi(\overrightarrow{v_1v_2}) \cdot \Phi(\overrightarrow{v_0v_3}), \quad p_2 = -\Phi(\overrightarrow{v_0v_2}) \cdot \Phi(\overrightarrow{v_1v_3}), 
\end{align*}
and 
\begin{align*}
& w_0=-p_2/p_0,\quad w_1=-p_0/p_1, \quad w_2=-p_1/p_2.
%   & w_0=-p_1/p_2,\quad w_1=-p_2/p_0,\quad w_2=-p_0/p_1.
   % \\
  % \text{ou} &
  % \\
 %  & w_0=-p_1/p_2,\quad w_1=-p_2/p_0,\quad w_2=-p_0/p_1.
\end{align*}
We can observe that the following relations hold true
\begin{align*}
    w_1=\dfrac{1}{1-w_0}, \ \quad \ w_2=\dfrac{w_0-1}{w_0}.
\end{align*}
These relations imply that $w_0,w_1$ and $w_2$ are shape parameters of (flat) ideal tetrahedra in $\mathbb{H}^3$ which means that these numbers and their inverses determine a congruence class of ideal tetrahedra in $\mathbb{H}^3$. Our claim on $z$ in \eqref{ideal.tetra.param} follows from the fact that $z=w_0^{-1}$. %means that the modular triple $(z,\frac{1}{1-z},\frac{z-1}{z})$ corresponds .... determines a congruence class of oriented ideal tetrahedron in $\mathbb{H}^3$ with opposite orientation than the one of the modular triple $(w_0,w_1,w_2)$.% also correspond to a shape parameter of an h $(z,\frac{1}{1-z},\frac{z-1}{z})$ . 

% , $\{w_0,w_1,w_2\}$ is  actually determined by one of its element. Indeed, let $w_1=(1-w_0)^{-1}$ and $w_2=(w_0-1)/w_0$, where $w=w_i$ for some $i\in\{0,1,2\}$, we have 
% %
% \begin{align*}
%     \{w_0,w_1,w_2\}=\{w,w',w''\},
% \end{align*}
% %
% and we clearly have $z=w_0$. The triple $(w_0,w_1w_2)$ corresponds to the shape parameters of an oriented hyperbolic ideal tetrahedron in $\mathbb{H}^3$ up to orientation preserving isometry. 

In the particular case where $[\Phi]\in\mathcal{M}(M,\bbR)$ is the class of the trivial representation, one can even see that $w_0,w_1$ and $w_2$ are actually cross-ratios. Indeed, it follows directly from the fact that \eqref{def.param.6j} is given, in that case, by
\begin{align*}
    \Phi(\overrightarrow{v_0v_1}) = \delta(v_0)-\delta(v_1),\ \ \Phi(\overrightarrow{v_1v_2})= \delta(v_1)-\delta(v_2), \ \ \Phi(\overrightarrow{v_2v_3})= \delta(v_2)-\delta(v_3)
\end{align*}
where $\delta$ is any $\bbR$-gauge  of $\calT$. Let us denote by $\delta_{i}$ the value $\delta(v_i)$, %$\delta_{ij}$ the difference $\delta(v_i)-\delta(v_j)$, 
 for $i\in\{0,1,2,3\}$. Then $z$ in  \eqref{ideal.tetra.param} is the following cross-ratio
\begin{align*}
    z = \dfrac{(\delta_{0}-\delta_{1})(\delta_{2}-\delta_{3})}{(\delta_{0}-\delta_{2})(\delta_{1}-\delta_{3})} = [\delta_0:\delta_3 :\delta_1 :\delta_2]. %=
   % \dfrac{\delta_{01}\delta_{23}}{\delta_{02}\delta_{13}}% \dfrac{\left(\delta(v_0)-\delta(v_1)\right)\cdot\left(\delta(v_2)-\delta(v_3)\right)}{\left(\delta(v_0)-\delta(v_2)\right)\cdot\left(\delta(v_1)-\delta(v_3)\right)}.
\end{align*}
In this special case, the state sum \eqref{state.sum} defines a link invariant that correspond to a $\mathfrak{sl}_3$ version of Kashaev's invariants defined in \cite{kashaev1994quantum} and \cite{kashaev1995link}. The construction of a $\mathfrak{sl}_3$ version of Baseilhac and Benedetti's quantum hyperbolic invariants with $PSL(2,\bbC)$-Characters is left for further research.

%\blue{Further developpemt blabla QHI of \cite{baseilhac2004quantum}...}

% \section{Conclusion}
% %
% We have shown that charged $6j$-symbols can be determined from a quantum group related to $U_q(\mathfrak{sl}_3)$ giving rise to a family of invariants of triple $(M,L,[\Phi])$ where $M$ is a closed connected oriented 3-manifold, $L\subset M$ a non-empty link and   $[\Phi]\in\mathcal{M}(M,\bbR)$. The quantum dilogarithm operator used to define the (charged) $6j$-symbols enabled us to depend on a parameter that can be interpreted as a shape parameter of ideal tetrahedra in $\mathbb{H}^3$. These results allows us to expect that  

%  \blue{Given the fact that our... nos $6j$-symb sont def via le quantum dilo et ne dependent que de shape variable comme dans \cite{baseilhac2004quantum} permet d'esperer etendre cette construction en des QHI, i.e., en des invariant de   $(M,L,[\Phi])$ where... $[\Phi]\in\mathcal{M}(M,PSL(2,\bbC))$.}

\section*{Acknowledgment}
The author wishes to thank Prof. R. Kashaev for valuable discussion and support as well as Prof. H. Schenck for useful comments. %is work is part of my PhD thesis \cite{karemera2016quantum}, done under the supervision of Prof. Rinat Kashaev. I would like to express my gratitude for his continuous support and guidance throughout the years.%, as well as for his inspiring passion for mathematics.  

%\newpage 
\bibliographystyle{amsplain}
\bibliography{mybibfile}

\end{document}